\theoremstyle{plain}\newtheorem{Theorem}{Theorem}[section]\newtheorem{Corollary}[Theorem]{Corollary}\newtheorem{Lemma}[Theorem]{Lemma}\newtheorem{Proposition}[Theorem]{Proposition} %
\theoremstyle{definition}\newtheorem{Assumption}[Theorem]{Assumption}%
 \theoremstyle{remark}\newtheorem{Remark}[Theorem]{Remark} %
\newcommand{\Pas}{\text{$\mathbb{P}$--a.s.}}\newcommand{\esssup}{\operatorname*{\mathrm{ess\,sup}}}
\DeclareMathOperator{\cl}{cl}\DeclareMathOperator{\conv}{conv}
\begin{document}
\title[Optimal investment and consumption with labor income
]{
Optimal investment and consumption with labor income in incomplete markets
}

\author{Oleksii Mostovyi}\thanks{The first author has been supported by the National Science Foundation under grant No. DMS-1600307 (2015 - 2018), the second author supported by the National Science Foundation under grant No. DMS-1517664 (2015 - 2018). Any opinions, findings, and conclusions or recommendations expressed in this material are those of the authors and do not necessarily reflect the views of the National Science Foundation.}

\address{Oleskii Mostovyi, Department of Mathematics, University of Connecticut, Storrs, CT 06269, United States}%
\email{oleksii.mostovyi@uconn.edu}%

\author{Mihai S\^ irbu}
\address{Mihai S\^ irbu, Department of Mathematics, University of Texas at Austin, Austin, TX 78712, United States}
\email{sirbu@math.utexas.edu}%

\subjclass[2010]{91G10, 93E20. \textit{JEL Classification:} C61, G11.}
\keywords{Utility maximization, optimal investment, unified framework of admissibility, labor income, local martingale measure, duality theory, semimartingale, incomplete market, random endowment, admissibility, complementary slackness, optional strong supermartingale, optional strong supermartingale deflator, Skorokhod's representation theorem.
}%

\date{\today}%

\begin{abstract}
We consider the problem of optimal consumption from labor income and
  investment in a general incomplete semimartingale market. The economic
agent cannot borrow against future income, so the total wealth is required to be positive
at (all or some) previous times. Under very general conditions, we show that an optimal
consumption and investment plan exists and is unique, and provide a dual characterization
in terms of 
an optional strong supermartingale deflator 
and a decreasing part, which charges only the times when the no-borrowing constraint is binding. The analysis relies on the infinite-dimensional parametrization of the income/liability streams and, therefore, provides the first-order dependence of the optimal
investment and consumption plans on future income/liabilities (as well as a pricing rule).
An emphasis is placed on mathematical  generality.
%
\end{abstract}
\maketitle

\section{Introduction}
Optimal investment with intermediate consumption and a stream of labor income (or liabilities) is one of the central problems in mathematical economics. 
If borrowing against the future income is prohibited, the main  technical difficulty    lies in the fact that there are infinitely many constraints. Even in the deterministic case of no stocks and non-random income, a classical approach is based on the {\it convexification of the constraints} that leads to a non-trivial dual problem formulated over decreasing nonnegative functions.

Borrowing constraints imposed at all times  not only affect the  notion of admissibility, leading to more difficult mathematical analysis,  but also
change the meaning to fundamental concepts of mathematical finance such as replicability and completeness. The latter is   formulated via the attainability of every (bounded) contingent claim by a portfolio of traded  assets.   For a  labor income/liability streams that pays off dynamically, 
there is no a priori guarantee that such a replicating portfolio  (if it exists at all)  is admissible, i.e., satisfies the  constraints. Thus, in the terminology of \cite{HePages}, even a  complete market becomes dynamically incomplete under the borrowing constraints. 
 The  analysis of such a problem (in otherwise complete Brownian settings with a corresponding unique risk-neutral measure), is performed in \cite{HePages} and later in \cite{ElKarouiJeanblanc}. 
The  nonnegative decreasing {\it processes}   (that parametrize the dynamic incompleteness mentioned above)  play an important role in the characterizations of   optimal investment and consumption plans. The analysis  in \cite{HePages} and \cite{ElKarouiJeanblanc} is connected with optimal stopping techniques from~\cite{Kar89}. 

Incomplete markets with no-borrowing constraints have been analyzed only in specific Markovian models  in \cite{DuffieZarip1993} and \cite{DuffieFlemingSonerZarip1997} based on partial differential equations techniques. 
 The goal of the present paper is to study the problem of consumption and investment with no-borrowing constraints in general  (so, non-Markovian) incomplete   models. This leads to having, simultaneously, two layers of incompleteness. One comes from the many martingale measures, the other from a similar class of non-decreasing processes  (as above) that describe the dynamic incompleteness.  
 We refer to \cite{MalTrub07} for the examples of market incompleteness in finance and macroeconomics. 

In contrast to \cite{HePages} and \cite{ElKarouiJeanblanc},   our model not only allows for incompleteness, but also for jumps. Mathematically, this means we choose to work in a general semimartingale framework.
As in  \cite{HePages} and \cite{ElKarouiJeanblanc}, 
our approach is based on duality.  One of the principal difficulties is in the construction of the dual feasible set and the dual value function. It is well-known that the martingale measures   drive the dual domain in many problems of mathematical finance. On the other hand, the convexification of constraints leads to the decreasing processes as the central dual object as well. We show that the dual elements in the incomplete case can be approximated by  products of the densities of martingale measures and such nonnegative decreasing processes. This is one of the primary results of this work, see section \ref{secStructureOfDualDomain}, that leads to the complementary slackness characterization of optimal wealth in section \ref{secSlackness}, where it is shown that the approximating sequence for the dual minimizer leads to a nonincreasing process, which decreases at most when the constraints are attained. In turn, the dual minimizer can be written as a product of such a nondecreasing process and an optional strong supermartingale deflator. In the case of  complete Brownian markets  a similar result is proved in \cite{HePages} and   \cite{ElKarouiJeanblanc}.

In order to implement the approach, we {\it increase the dimensionality} of the problem and treat as arguments of the indirect utility not only the initial wealth, but also the function that specifies the number of units of labor income (or  the  stream of liabilities)  at any later time. This parametrization has the spirit of \cite{HK04}, however unlike \cite{HK04}, we go into (infinite-dimensional) non-reflexive spaces, which gives both novelty and technical difficulties to our analysis.
 Also, our formulation permits to price by marginal rate of substitution the whole  labor income process. This is done 
through  the subdifferentiability results in section \ref{secSubdifferentiability}.  Note that  the sub differential elements (prices) are time-dependent, so infinite-dimensional, unlike in  \cite{HK04}.

Another contribution of the paper lies in the {\it unified framework of admissibility} outlined in section \ref{secModel}.  More precisely, we assume that no-borrowing constraints are imposed  starting from  some pre-specified stopping time and hold up to the terminal time horizon.  
This framework allows us to treat in one formulation both the problem of no-borrowing constraints at all times (described above) and the one where borrowing against the future income is permitted  with a constraint only at the end. The latter is well-studied in the literature, see \cite{CvitanicSchachermayerWang}, \cite{Karatzas-Zitkovic-2003}, \cite{HK04}, \cite{Zitkovic}, and \cite{MostovyiRandEnd}.
 In such a formulation, the constraints reduce to a single inequality and the decreasing processes in the dual feasible set become constants.  
 
Among the many possibilities of constraints, \cite{Cuoco1997}  considers the problem of investment and consumption with labor income and no-borrowing constraints in Brownian market, even allowing for incompleteness. The dual problem cannot be solved directly (in part because for these constraints the dual space considered is too small) but the primal can be solved with direct methods. An approximate dual sequence can then be recovered from the primal. We generalize \cite{HePages} and \cite{ElKarouiJeanblanc} (complete Brownian markets) and \cite{Cuoco1997} (possibly incomplete Brownian markets) to the case of general semi-martingale incomplete markets. Our dual approach allows us, at the same time to obtain a dual characterization (complementary slackness) of the optimal consumption plan (not present in \cite{Cuoco1997}), similar to the complete case in \cite{HePages} and \cite{ElKarouiJeanblanc} and the possibility to study the dependence on labor income streams, through the parametrization of such streams.
 

Embedding path dependent problems into the convex duality framework have been analyzed in \cite{XiangYuHabit}, \cite{HaoMat}, \cite[Section 3.3]{Pham01}, whereas without duality but with random endowment it is considered in \cite{Miklos2018}, in the abstract singular control setting the duality approach is investigated in \cite{BankKaupila}.
Our embedding does not require any condition on labor income {replicability}, which becomes highly technical in the presence of extra admissibility constraints. Even in the case where the only constraint is imposed at maturity, in this part our approach differs from the one in \cite{HK04}, where non-replicability of the endowment (in the appropriate sense) is used in the proofs as it ensures that the effective domain of the dual problem has the same dimensionality as the primal domain. Note that even if 
 the  labor income is spanned by the same sources of randomness as the stocks, the idea of replicating the labor income and then reducing the problem to the one without it, does not necessarily work under the borrowing constraints, see the discussion in \cite[pp. 671-673]{HePages}.

 Some of the more specific   technical contributions  of this paper can be summarized as follows:

\begin{itemize}
\item We  analyze the {\it boundary behavior} of the value functions.  Note that the value functions are defined over infinite-dimensional spaces.

\item The finiteness of the indirect utilities {\it without} labor income is imposed  only, as a necessary and sufficient condition that allows for the standard conclusions of the utility maximization theory, see \cite{MostovyiNec}.

\item We show existence-uniqueness results for the {\it unbounded} labor income both from above and below.

%

\item We observe that the ``Snell envelope proposition'' \cite[Proposition 4.3]{K96} can be extended to 
 the envelope over all stopping times that exceed a  given initial stopping time $\theta _0$.
\item We represent the dual value function in terms of {\it uniformly integrable} densities of martingale measures, i.e., the densities of martingale measures under which the maximal wealth process of a self-financing portfolio that superreplicates the labor income, is a uniformly integrable martingale, see Lemma \ref{finitenessOverZ'} below. 

\end{itemize}

%
%

{\bf Organization of the paper}. In Section \ref{secModel}, we specify the model. We state and prove existence, uniqueness, semicontinuity and biconjugacy results in Section \ref{secProofs}, subdifferentiability is proven in Section \ref{secSubdifferentiability}. Structure of the dual domain is analyzed in Section \ref{secStructureOfDualDomain} and complimentary slackness is established in Section \ref{secSlackness}.
\section{Model}\label{secModel}

We consider a {financial market model} with finite time horizon
$[0,T]$ and a zero interest rate. The price process $S=(S^i)_{i=1}^d$
of the stocks is assumed to be a 
semimartingale on a complete stochastic basis
$\left( \Omega, \mathcal F, \left(\mathcal F_t \right)_{t\in[0,T]}, \mathbb
P\right)$, where $\mathcal F_0$ is trivial. 

Let $(e)_{t\in[0,T]}$ be an optional process that specifies the labor income rate, which is assumed to follow a certain stochastic clock, that we specify below.
Both processes $S$ and $e$ are given exogenously. 

We define a \textit{stochastic clock} as a
nondecreasing, c\`adl\`ag, adapted process such that
\begin{equation}
 \tag{finClock}
  \label{finClock}
\kappa_0 = 0, ~~ \mathbb P\left[\kappa_T>0 \right]>0,\text{ and } \kappa_T\leq A
\end{equation}
for some finite constant $A$. We note that the stochastic clock allows to include multiple standard formulations of the utility maximization problem in one formulation, see e.g., \cite[Example 2.5 - 2.9]{MostovyiNec}.
Let us define 
\begin{equation}\label{defK}
K_t {:=} \mathbb E\left[\kappa_t \right],\quad t\in[0,T].
\end{equation}
\begin{Remark}
The function $K$ defined in \eqref{defK} is right-continuous with left limits and takes values in $[0,A]$. 
\end{Remark}
We assume the income and consumption are given in terms of the clock $\kappa$.
Define a {portfolio} $\Pi$ as a quadruple $(x, q, H, c),$ where the
constant $x$ is the initial value of the portfolio, the function $q:[0,T]\to\mathbb R$ is a bounded and Borel measurable function,
 which 
specifies the amount of labor income rate, $H =
(H_i)_{i=1}^d$ is a predictable $S$-integrable process that
corresponds to the amount of each stock in the portfolio, and
$c=(c_t)_{t\in[0,T]}$ is the consumption rate, which we assume to be
 optional and nonnegative.

The \textit{wealth process} $V=(V_t)_{t\in[0,T]}$ {generated by the}
portfolio is 
\begin{equation}\nonumber
V_t = x + \int_0^t H_sdS_s + \int_0^t\left(q_se_s-c_s\right)d\kappa_s,\quad t\in[0,T].
\end{equation}
A portfolio $\Pi$ with $c\equiv 0$ and $q \equiv 0$ is called
\textit{self-financing}. The collection of nonnegative wealth
processes {generated by} self-financing portfolios with initial value $x\geq 0$ is
denoted by $\mathcal X(x)$, i.e.
\begin{equation}\nonumber
\mathcal X(x) {:=} \left\{X\geq 0:~X_t=x + \int_0^t
H_sdS_s,~~t\in[0,T] \right\},~~x\geq 0.
\end{equation}
A probability measure $\mathbb Q$ is an \textit{equivalent local
martingale measure} if $\mathbb Q$ is equivalent to $\mathbb P$ and
every $X\in\mathcal X(1)$ is a local martingale under $\mathbb Q$. We
denote the family of equivalent local martingale measures by
$\mathcal M$ and assume that
\begin{equation}\tag{noArb}\label{NFLVR}
\mathcal M \neq \emptyset.
\end{equation}
This condition is  equivalent to the absence of arbitrage
opportunities 
{in} the market
, see
 \cite{DS, DS1998} as well as 
\cite{KarKar07} for the exact statements and further
references.

To rule out doubling strategies in the presence of random endowment,
we need to impose additional restrictions. Following  \cite{DS97}, we say that a
nonnegative process in $\mathcal X(x)$ is \textit{maximal} if its
terminal value cannot be dominated by that of any other process in
$\mathcal X(x)$.
As in \cite{DS97}, we define an
\textit{acceptable} process to be a process of the form $X = X' -
X'',$ where $X'$ is a nonnegative wealth process {generated by} a self-financing portfolio and $X''$ is maximal.

 Our unified framework of admissibility is given by a  fixed stopping time $\theta_0$. The no-borrowing constraints will hold starting at this stopping time until the end.   Let $\Theta$ be the set of stopping times that are greater or equal than  $\theta_0$. 
\begin{Lemma}\label{3311}
Let $q^1$ and $q^2$ be bounded, Borel measurable functions on $[0,T]$, such that $q^1= q^2$, $dK$-a.e. Then, the cumulative labor income processes $\int_0^{\cdot}q^i_se_sd\kappa_s$, $i = 1,2$,  are indistinguishable. 
\end{Lemma}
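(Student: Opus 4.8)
The plan is to reduce the claim to the assertion that the random Lebesgue--Stieltjes measure $d\kappa$ almost surely does not charge the deterministic Borel set $N:=\{s\in[0,T]:q^1_s\neq q^2_s\}$; once this is known, the two cumulative income processes coincide at once, since their integrands agree off $N$.

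First I would record a Fubini-type identity linking $d\kappa$ and $dK$. For a Borel set $B\subseteq[0,T]$ put $\mu(B):=\exE{\int_0^T\mathbf 1_B(s)\,d\kappa_s}$. Since $\kappa$ is nonnegative, nondecreasing and bounded by $A$, this is a well-defined finite measure on $[0,T]$: $\sigma$-additivity follows from two applications of monotone convergence (first for the pathwise integral, then for the expectation), and $\mu([0,T])=\exE{\kappa_T}=K_T\le A<\infty$. Moreover, for every $b\in[0,T]$ one has $\int_0^T\mathbf 1_{[0,b]}(s)\,d\kappa_s=\kappa_b$ (recall $\kappa_0=0$), hence $\mu([0,b])=K_b$, which is also the mass that the Lebesgue--Stieltjes measure $dK$ of the right-continuous nondecreasing function $K$ assigns to $[0,b]$ (recall $K_0=0$). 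The sets $[0,b]$, $b\in[0,T]$, form a $\pi$-system generating $\mathcal B([0,T])$, so by the uniqueness theorem for finite measures $\mu=dK$ on $\mathcal B([0,T])$.

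Next, since $q^1=q^2$ $dK$-a.e., the set $N$ is $dK$-null, and therefore $\mu(N)=0$, i.e.\ $\exE{\int_0^T\mathbf 1_N(s)\,d\kappa_s}=0$. As the random variable inside the expectation is nonnegative, there is a set $\Omega_0\in\mathcal F$ with $\mathbb P[\Omega_0]=1$ such that, for each $\omega\in\Omega_0$, the measure $d\kappa_\cdot(\omega)$ gives zero mass to $N$. Fixing such an $\omega$: for every $s\notin N$ the integrand $q^1_se_s(\omega)-q^2_se_s(\omega)=(q^1_s-q^2_s)e_s(\omega)$ vanishes, so it vanishes $d\kappa_\cdot(\omega)$-almost everywhere on $[0,T]$; consequently $\int_0^t q^1_se_s\,d\kappa_s=\int_0^t q^2_se_s\,d\kappa_s$ for every $t\in[0,T]$. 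Since this holds on the full-measure set $\Omega_0$, the two processes are indistinguishable. Note that no integrability assumption on $e$ enters, precisely because the difference of the integrands is zero off a $d\kappa$-null set.

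I do not expect a genuine obstacle here; the only step that needs a little care is the identification $\mu=dK$, that is, the fact that taking expectations converts the pathwise Stieltjes integral against $\kappa$ into the deterministic Stieltjes integral against $K$. This is a routine measure-uniqueness (monotone-class) argument, and it is exactly where the boundedness $\kappa_T\le A$ is used, to guarantee $\mu$ is finite. One should also note in passing that $N$ is indeed Borel, since $q^1$ and $q^2$ are Borel measurable, so that ``$dK(N)=0$'' is meaningful.
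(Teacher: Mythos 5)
Your proof is correct, and it takes a genuinely different route from the one in the paper. You argue directly at the level of the clocks: you check that the measure $\mu(B)=\exE{\int_0^T \mathbf 1_B(s)\,d\kappa_s}$ coincides with $dK$ (via a $\pi$-system / monotone-class step), conclude that the Borel set $N=\{q^1\neq q^2\}$ has $\mathbb P$-a.s.\ zero $d\kappa(\cdot,\omega)$-mass, and observe that the two integrands agree off $N$ so the pathwise Stieltjes integrals coincide. The paper instead invokes the machinery already built for Lemma~\ref{6-16-1}: it fixes $\mathbb Q\in\mathcal M'$ with density $Z$, constructs $r\in\mathbb L^1(dK)$ by $\int_0^t q_sr_s\,dK_s=\mathbb E\big[\int_0^t q_sZ_s|e_s|\,d\kappa_s\big]$, deduces from $\bar q:=|q^1-q^2|=0$ $dK$-a.e.\ that $\mathbb E^{\mathbb Q}\big[\int_0^T\bar q_s|e_s|\,d\kappa_s\big]=0$, and then transfers the a.s.\ vanishing from $\mathbb Q$ back to $\mathbb P$ by equivalence. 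The two arguments share the same germ — inside the paper's proof of Lemma~\ref{6-16-1}, the identification $K(A)=\mathbb E\big[\int_0^T 1_A(t)\,d\kappa_t\big]$ is exactly your $\mu=dK$ — but what you do differently is stop there and conclude directly, instead of routing through a martingale measure and the Radon--Nikodym density $r$. What your approach buys is economy of hypotheses: you use only \eqref{finClock}, whereas the paper's route is not self-contained and tacitly requires \eqref{NFLVR} and Assumption~\ref{asEnd} to have $\mathcal M'\neq\emptyset$ and $r\in\mathbb L^1(dK)$; the statement of the lemma carries no such hypotheses, so your proof is arguably the ``right'' one. What the paper's route buys is consistency with the surrounding development — it recycles the construction already needed for Lemma~\ref{6-16-1} and Corollary~\ref{cor1} rather than introducing a separate measure-identification argument, which is reasonable within the paper's logical flow since those standing assumptions are in force throughout. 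One small stylistic point: your observation that no integrability of $e$ is needed (because the difference of integrands vanishes off a $d\kappa(\cdot,\omega)$-null set) is correct, and is a genuine advantage over the paper's version, which multiplies by $|e|$ and relies on $|e|\le X'$ to make the resulting expectations finite.
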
  The proof of this lemma is given in section \ref{subSecCharDualDomain}.
Following
 \cite{HK04}, we denote by
$\mathcal X(x,q)$ the set of acceptable processes with initial {values}
$x$, that dominate the labor income on $\Theta$:
\begin{equation}\nonumber
\begin{array}{rcl}
\mathcal X(x, q) &{:=}& \left\{ 
{\rm acceptable}~X: X_0 = x~{\rm and}\right.\\
&&\left.~ X_{\tau} + \int_0^{\tau}q_se_sd\kappa_s \geq 0, ~\mathbb P-a.s.~{\rm for~every~}\tau\in\Theta
\right\}.\\
\end{array}
\end{equation}
Let us set
\begin{equation}\nonumber
\mathcal K {:=} \left\{(x,q): \mathcal X(x,q) \neq \emptyset \right\},
\end{equation}
Let $\mathring{\mathcal K}$ denote the interior or $\mathcal K$ in the $\mathbb R\times \mathbb L^\infty(dK)$-norm topology.
We characterize $\mathcal K$ in Lemma~\ref{lemma1} below that in particular asserts that under Assumption \ref{asEnd}, $\mathring{\mathcal K}\neq \emptyset$. 
  
The set of admissible consumptions is defined as
\begin{equation}\nonumber
\begin{array}{rcl}
\mathcal A(x,q)&{:=} &\left\{ 
optional ~c\geq 0:~~there~exists~X\in\mathcal X(x,q), ~such~that~ \right.\\
&& 
\left.\int_0^{\tau}c_sd\kappa_s\leq X_{\tau} + \int_0^{\tau}q_se_sd\kappa_s,
~~for~every~\tau\in\Theta\right\},\quad (x,q)\in\mathcal K.\\ 
\end{array}
\end{equation}
Note that $c\equiv 0$ belongs to $\mathcal A(x,q)$ for every $(x,q)\in\mathcal K$. 
\begin{Remark}\label{Remark-Mihai} The no-borrowing constraints can also  be written  as
$$
\mathbb{P}\left (\int_0^{t}c_sd\kappa_s\leq X_{t} + \int_0^{t}q_se_sd\kappa_s,
~~for~every~ \theta _0\leq t\leq T \right)=1.$$
We write the constraints in terms of stopping times $\tau \in \Theta$ as 
we use the stopping times $\tau \in \Theta$ (and the corresponding decreasing processes that jump from one to zero at these  times) as  the building blocks of our analysis. 
\end{Remark}
\begin{Remark}\label{3312}
It follows from Lemma \ref{3311}, for every $x\in\mathbb R$,  we have
$$\mathcal X(x,q^1) = \mathcal X(x,q^2)\quad and \quad \mathcal A(x,q^1) = \mathcal A(x, q^2),$$
where some of these sets might be empty
\end{Remark}  

Hereafter, we shall impose the following conditions on the  endowment process.
\begin{Assumption}\label{asEnd}
There exists a maximal wealth process $X'$ such that
\begin{equation}\nonumber
X'_t \geq |e_t|, \quad for~every~t\in[0,T],\quad\Pas.
\end{equation}
Moreover, Assumption \ref{asEnd} and \eqref{NFLVR} imply that all the assertions of Lemma \ref{lemma1} hold.
\end{Assumption}

\begin{Remark}
If $\theta _0 = \{T\}$, then Assumption~\ref{asEnd} is equivalent to the assumptions on endowment in~\cite{HK04} (for the case of one-dimensional random endowment).
\end{Remark}


The preferences of an economic agent are modeled with a
\textit{utility stochastic field}
$U=U(t, \omega, x):[0,T]\times\Omega\times[0,\infty)\to\mathbb R\cup \{-\infty\}$.
We assume that $U$
satisfies the conditions below.
\begin{Assumption}\label{assumptionOnU}
For every $(t, \omega)\in[0, T]\times\Omega$, the function $x\to
  U(t, \omega, x)$ is strictly concave, increasing, continuously
  differentiable on $(0,\infty)$ and satisfies the Inada conditions:
  \begin{equation}\nonumber
    \lim\limits_{x\downarrow 0}U'(t, \omega, x) =
    \infty \quad \text{and} \quad \lim\limits_{x\to
      \infty}U'(t, \omega, x) = 0,
  \end{equation}
  where $U'$ denotes the partial derivative with respect to the third argument.
 At $x=0$ we suppose, by continuity, $U(t, \omega, 0) = \lim\limits_{x \downarrow 0}U(t, \omega, x)$, {which}
 may be $-\infty$.
For every
  $x\geq 0$ the stochastic process $U\left( \cdot, \cdot, x \right)$ is
  optional. Below, following the standard convention, we will not write $\omega$ in $U$.
\end{Assumption}

The agent can control investment and consumption. {The goal is to maximize}
 expected utility. The value function $u$ is
defined as:
\begin{equation}\label{primalProblem}
u(x, q) {:=} \sup\limits_{c\in\mathcal A(x, q)}\mathbb
E\left[\int_0^T U(t, c_t)d\kappa_t\right], \quad
(x, q)\in\mathcal K.
\end{equation}
In \eqref{primalProblem}, we use the convention
\begin{displaymath}
  \mathbb{E}\left[
    \int_0^TU(t,c_t)d\kappa_t \right] {:=} -\infty
  \quad \text{if} \quad \mathbb{E}\left[ \int_0^TU^{-}(t,c_t)d\kappa_t
  \right]= \infty.
\end{displaymath}
Here and below, $W^{-}$ and $W^{+}$ denote the negative and positive parts of a
stochastic field $W$, respectively.

 We employ duality techniques to obtain the standard conclusions of the utility maximization theory.  We first  define the convex conjugate stochastic field
\begin{equation}\label{defV}
V(t, y) {:=} \sup\limits_{x>0}\left(
U(t, x)-xy\right),\quad (t, y)\in[0,T]\times [0,\infty),
\end{equation}
 and then  observe that 
$-V$ satisfies Assumption \ref{assumptionOnU}.
In order to construct the feasible set of the dual problem, we define the set
$\mathcal L$ as the  polar cone of $-\mathcal K$:
\begin{equation}\label{defL}
\mathcal L {:=} \left\{(y,r)\in\mathbb R\times\mathbb L^1(dK): xy + \int_0^Tq_sr_sdK_s \geq
0{\rm~for~every~}(x,q)\in\mathcal K\right\}.
\end{equation}
\begin{Remark}
Under the conditions \eqref{finClock}, \eqref{NFLVR} and Assumption \eqref{asEnd}, the set 
$\mathcal L$ is non-empty.  By definition,  it is closed in $\mathbb R\times\mathbb L^1(d K)$-norm and $\sigma(\mathbb R\times\mathbb L^1(dK), \mathbb R\times\mathbb L^{\infty}(dK))$ topologies. Also,  as shown later, the set $\mathcal L = (-\mathring{\mathcal K})^o$, i.e. the polar of $-\mathring{\mathcal K}$.
\end{Remark}
By ${\mathcal Z}$, we denote the set of c\`adl\`ag densities of
equivalent local martingale measures:
\begin{equation}\label{defZ}
{\mathcal Z} {:=} \left\{{\rm c\grave adl\grave ag}~\left(\frac{d\mathbb Q_t}{d\mathbb P_t}\right)_{t\in[0,T]}:\quad \mathbb Q\in\mathcal M\right\}.
\end{equation}
Let us denote by $\mathbb L^0 = \mathbb L^0(d\kappa \times \mathbb P)$ the linear space of (equivalence classes of) real-valued optional processes on the stochastic basis $(\Omega, \mathcal F, (\mathcal F_t)_{t\in[0,T]}, \mathbb P)$ which we equip with the topology  of convergence in measure $(d\kappa \times \mathbb P)$. 
For each $y \geq 0$ we define
\begin{equation}\label{oldY}
\begin{array}{c}
 \hspace{-15mm}\mathcal Y(y) {:=} {\cl}\left\{ Y: ~Y{\rm~is~c\grave{a}dl\grave{a}g~adapted~and~}\right. \\
 \hspace{35mm}\left.0\leq Y\leq yZ ~\left(d\kappa\times\mathbb
P\right){\rm
~a.e.~for~some~}Z\in{\mathcal Z}\right\},\\
\end{array}
\end{equation}
where the closure is taken in
$\mathbb L^0$. 
Now we are ready to set the
domain of the dual problem:
\begin{equation}\label{defY}
\begin{array}{rcl}
\mathcal Y(y,r)&{:=} &\left\{
 Y: Y \in\mathcal Y(y)
~{\rm and}\right.\\
&&
\quad\mathbb E\left[\int_0^Tc_sY_sd\kappa_s \right]\leq xy + \int_0^Tq_sr_sdK_s,\\
&&\left.\quad{\rm for~every}~(x,q)\in\mathcal K {\rm ~and~}c\in\mathcal A(x,q)\right\}\\
\end{array}
\end{equation}
Note that the definition \eqref{defY}  requires that every element of $\mathcal Y(y,r)$ is in $\mathcal Y(y)$, $y\geq 0$. Also, for every $(y,r)\in\mathcal L$, $\mathcal Y(y,r)\neq \emptyset$, since $0\in\mathcal Y(y,r)$. 

 We can now  state the dual optimization problem:
\begin{equation}\label{dualProblem}
v(y, r) {:=} \inf\limits_{Y\in\mathcal Y(y, r)}\mathbb
E\left[\int_0^T V(t, Y_t)d\kappa_t\right],\quad(y, r)\in\mathcal L,
\end{equation}
where we use the convention:
\begin{displaymath}
  \mathbb{E}\left[ \int_0^TV(t,Y_t
    )d\kappa_t \right] {:=} \infty
  \quad \text{if} \quad \mathbb{E}\left[ \int_0^TV^{+}(t, Y_t )d\kappa_t
  \right] = \infty.
\end{displaymath}
Also, we set 
\begin{equation}\label{3313}
v(y,r) {:=} \infty~for~(y,r)\in\mathbb R\times \mathbb L^1(dK)\backslash
 \mathcal L\quad and \quad u(x,q){:=} -\infty~for~(x,q)\in\mathbb R\times \mathbb L^{\infty}(dK)\backslash\mathcal K.\\ 
\end{equation}
With this definition, it will be shown below in Theorem  \ref {mainTheorem} that   $u<\infty$ and $v>-\infty$ everywhere,  so  $u$ and $v$ are proper   functions  in the language of convex analysis.  
Let us recall that in the absence of random endowment, the dual value function is defined as
\begin{equation}\nonumber
\tilde w(y) {:=} \inf\limits_{Y\in\mathcal Y(y)}\mathbb E\left[\int_0^TV(t, Y_s)d\kappa_s \right],\quad y>0,
\end{equation}
whereas the primal value function is given by
\begin{equation}\label{defw}
w(x) {:=} u(x,0),\quad x>0.
\end{equation}
\section{Existence, uniqueness, and biconjugacy}\label{secProofs}
\begin{Theorem}\label{mainTheorem}
Let (\ref{finClock}) and (\ref{NFLVR}), Assumptions~\ref{asEnd}
and \ref{assumptionOnU} hold true and
\begin{equation}\tag{finValue}\label{finValue}
w(x) > -\infty\quad for~every~x>0\quad and \quad 
 \widetilde w(y) <\infty\quad for~every~y>0.
 \end{equation}
 Then we have:

$(i)$ $u$ is finite-valued on $\mathring{\mathcal K}$ and $u<\infty$ on $\mathbb R\times \mathbb L^\infty(dK)$.  The dual value function $v$ satisfies  $v>-\infty$ on $\mathbb R\times\mathbb L^1(dK)$,  and  the set $\{v<\infty\}$ is a nonempty convex subset of $\mathcal L$, whose closure in $\mathbb R\times\mathbb L^1(dK)$ equals to $\mathcal L$.  

$(ii)$ $u$ is concave, proper, and upper semicontinuous with respect to the  norm-topology of $\mathbb R\times\mathbb L^\infty(dK)$ and the  weak-star topology $\sigma (\mathbb R\times\mathbb L^\infty(dK), \mathbb R\times\mathbb L^1(dK))$. For every $(x,q)\in \{u>-\infty\}$, there exists a unique solution to (\ref{primalProblem}). Likewise, $v$ is convex, proper, and lower semicontinuous with respect to the norm-topology of $\mathbb R\times \mathbb L^1(dK)$ and the the weak topology  $\sigma(\mathbb R\times\mathbb L^1(dK),\mathbb R\times\mathbb L^{\infty}(dK))$. For every $(y,r)\in \{v<\infty\}$, there exists a unique solution to (\ref{dualProblem}). 

$(iii)$ The functions $u$ and $v$ satisfy the biconjugacy relations
\begin{equation}\nonumber
\begin{array}{rcll}
u(x,q) &=& \inf\limits_{(y,r)\in \mathcal L}\left(v(y,r) + xy + \int_0^Tr_sq_sdK_s\right),& (x,q)\in \mathcal K,\\
v(y,r) &=& \sup\limits_{(x,q)\in \mathcal K}\left(u(x,q) - xy - \int_0^Tr_sq_sdK_s\right),& (y,r)\in \mathcal L.\\
\end{array}
\end{equation}
\end{Theorem}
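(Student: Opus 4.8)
The plan is to reduce the problem to an abstract bipolar duality of Kramkov--Schachermayer type, in the spirit of \cite{MostovyiNec} and \cite{MostovyiRandEnd}, while carrying along the infinite-dimensional parameters $(x,q)\in\mathbb R\times\mathbb L^\infty(dK)$ and $(y,r)\in\mathbb R\times\mathbb L^1(dK)$. The first and most structural step is to establish the polarity between the primal consumption sets and the dual sets: for every $(x,q)\in\mathcal K$,
\[
\mathcal A(x,q)=\Big\{c\ge 0\ \text{optional}:\ \mathbb E\big[\textstyle\int_0^T c_sY_s\,d\kappa_s\big]\le xy+\textstyle\int_0^T q_sr_s\,dK_s,\ \forall\,(y,r)\in\mathcal L,\ \forall\, Y\in\mathcal Y(y,r)\Big\}.
\]
The inclusion ``$\subseteq$'' is the defining property of $\mathcal Y(y,r)$; the nontrivial inclusion ``$\supseteq$'' is a superreplication statement, for which I would combine the characterization of $\mathcal K$ from Lemma~\ref{lemma1}, the optional decomposition theorem for optional strong supermartingales, and the structure of acceptable processes dominating the labor income (Assumption~\ref{asEnd}). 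An immediate consequence is weak duality: $u(x,q)\le v(y,r)+xy+\int_0^T q_sr_s\,dK_s$ for all $(x,q)\in\mathcal K$ and $(y,r)\in\mathcal L$, equivalently $v(y,r)\ge\sup_{(x,q)\in\mathcal K}\big(u(x,q)-xy-\int_0^T q_sr_s\,dK_s\big)$.

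For part $(i)$: since $(1,0)\in\mathcal K$ and $u(1,0)=w(1)$, weak duality gives $v(y,r)\ge w(1)-y>-\infty$ on $\mathcal L$ by \eqref{finValue}, and $v\equiv\infty$ off $\mathcal L$ by \eqref{3313}, so $v>-\infty$ on $\mathbb R\times\mathbb L^1(dK)$. To see that $\{v<\infty\}$ is nonempty I would use $\widetilde w(y)<\infty$ together with Lemma~\ref{finitenessOverZ'} to produce, from a uniformly integrable martingale-measure density, an element $Y\in\mathcal Y(y,r)$ with $\mathbb E\big[\int_0^T V^{+}(t,Y_t)\,d\kappa_t\big]<\infty$, hence $v(y,r)<\infty$. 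Convexity of $\{v<\infty\}$ follows from convexity of $v$, the inclusion $\{v<\infty\}\subseteq\mathcal L$ from \eqref{3313}, and the fact that its $\mathbb L^1(dK)$-closure is all of $\mathcal L$ uses $\mathcal L=(-\mathring{\mathcal K})^o$ and an approximation of an arbitrary $(y,r)\in\mathcal L$ by rescaled and truncated dual elements of finite value (the mechanism detailed in Section~\ref{secStructureOfDualDomain}). For $u$: finiteness from above on $\mathbb R\times\mathbb L^\infty(dK)$ is weak duality against any $(y,r)\in\{v<\infty\}$ together with the convention $u\equiv-\infty$ off $\mathcal K$; finiteness from below on $\mathring{\mathcal K}$ follows because an interior point $(x,q)$ satisfies $(x-\varepsilon,q)\in\mathcal K$ for small $\varepsilon>0$, so adding to a wealth process witnessing $(x-\varepsilon,q)\in\mathcal K$ a wealth process funding a near-optimal consumption for $w(\varepsilon)>-\infty$ produces an admissible consumption with finite expected utility.

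For part $(ii)$: concavity of $u$ is immediate from concavity of the set-valued map $(x,q)\mapsto\mathcal A(x,q)$ and of $U(t,\cdot)$, properness from $(i)$; likewise $v$ is convex and proper. Existence of the primal maximizer on $\{u>-\infty\}$ I would obtain by a Koml\'os-type argument: pass from a maximizing sequence in $\mathcal A(x,q)$ to forward convex combinations converging $(d\kappa\times\mathbb P)$-a.e., use that $\mathcal A(x,q)$ is convex, solid, and closed in $\mathbb L^0(d\kappa\times\mathbb P)$ to keep the limit feasible, and upgrade to optimality via upper semicontinuity of the functional along the sequence (positive parts uniformly integrable by de la Vall\'ee-Poussin against $V(t,Y_t)$ for a finite-value dual $Y$, negative parts controlled by Fatou); uniqueness comes from strict concavity of $U$. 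The same scheme, with $\mathcal Y(y,r)$ closed in $\mathbb L^0$ by construction and $V(t,\cdot)$ strictly convex, yields existence and uniqueness for \eqref{dualProblem} on $\{v<\infty\}$. The upper semicontinuity of $u$ in the norm and weak-star topologies (and the lower semicontinuity of $v$) I would then read off from part $(iii)$, since $u$ is the pointwise infimum of the $\sigma(\mathbb R\times\mathbb L^\infty(dK),\mathbb R\times\mathbb L^1(dK))$-continuous affine maps $(x,q)\mapsto v(y,r)+xy+\int_0^T q_sr_s\,dK_s$.

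Part $(iii)$ is the crux. Weak duality already gives $u\le\hat u$ on $\mathcal K$, where $\hat u(x,q):=\inf_{(y,r)\in\mathcal L}\big(v(y,r)+xy+\int_0^T q_sr_s\,dK_s\big)$, and symmetrically $v\ge u^{*}$ on $\mathcal L$ in the sense of concave/convex conjugacy; the task is to exclude a duality gap. I would first prove $u=\hat u$ on $\mathring{\mathcal K}$ by a Hahn--Banach separation of the hypograph of $u$ in $\mathbb R\times\mathbb R\times\mathbb L^\infty(dK)$ at a point $(u(x_0,q_0),x_0,q_0)$ with $(x_0,q_0)\in\mathring{\mathcal K}$: since $u$ is finite on $\mathring{\mathcal K}$ the hypograph has nonempty interior there, the interior condition forces the separating functional to be non-vertical, and by the definition of $\mathcal L$ it corresponds to some $(\hat y,\hat r)\in\mathcal L$ attaining the infimum; attainment of the dual optimizer in the $\mathbb L^0$-closure $\mathcal Y(\hat y,\hat r)$ and the no-gap equality then follow by reducing the localized problem to the abstract intermediate-consumption results of \cite{MostovyiNec} and \cite{MostovyiRandEnd} under \eqref{finValue}. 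The identity is then propagated from $\mathring{\mathcal K}$ to $\mathcal K$ by upper semicontinuity of both sides along segments joining an interior point to a boundary point (using $\mathring{\mathcal K}\neq\emptyset$ from Lemma~\ref{lemma1} and the convention \eqref{3313}), and the second biconjugacy relation follows from Fenchel--Moreau, as $v$ is convex, proper and $\sigma(\mathbb R\times\mathbb L^1(dK),\mathbb R\times\mathbb L^\infty(dK))$-lower semicontinuous and its conjugate is, by the first relation, the right-hand side of the second identity. The main obstacle is the non-reflexivity of $\mathbb L^\infty(dK)$ and $\mathbb L^1(dK)$: no reflexive minimax theorem is available, so the gap must be closed by separation on the interior and then transported to the boundary, and one must verify throughout that the dual infimum is genuinely attained over $\mathcal Y(y,r)$ and not merely over densities of martingale measures.
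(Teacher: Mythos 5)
Your outline of parts (i) and (ii) tracks the paper's argument closely: polarity between $\mathcal A(x,q)$ and $\bigcup_{(y,r)}\mathcal Y(y,r)$ via superreplication and optional decomposition, finiteness via weak duality against a finite-value dual element, and existence/uniqueness via a forward convex-combination (Koml\'os) argument with de la Vall\'ee-Poussin control. These are essentially the same steps as Lemmas 3.9--3.14 of the paper.

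For part (iii), however, your Hahn--Banach route has a genuine gap. You propose to separate the hypograph of $u$ inside $\mathbb R\times\mathbb R\times\mathbb L^\infty(dK)$ at a point $(u(x_0,q_0),x_0,q_0)$ with $(x_0,q_0)\in\mathring{\mathcal K}$, and then to read off an element $(\hat y,\hat r)\in\mathcal L$ attaining the infimum ``by the definition of $\mathcal L$.'' But the separating functional produced by Hahn--Banach lives in $\mathbb R\times\big(\mathbb R\times\mathbb L^\infty(dK)\big)^{*}=\mathbb R\times\mathbb R\times ba(dK)$, and $ba(dK)$ strictly contains $\mathbb L^1(dK)$. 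To obtain $(\hat y,\hat r)$ with $\hat r\in\mathbb L^1(dK)$ you would need to discard the purely finitely additive part of the measure coming out of the separation, which is precisely the hard step in the $\mathbb L^\infty$-duality literature (Yosida--Hewitt decomposition plus an argument that dropping the singular part does not change the Lagrangian), and you do not address it. You identify the obstacle as ``no reflexive minimax theorem,'' but the actual problem is that separation hands you a functional in the wrong space. Note also that the claim ``since $u$ is finite on $\mathring{\mathcal K}$ the hypograph has nonempty interior'' is not automatic in infinite dimensions; one needs $u$ locally bounded below, which is not implied by pointwise finiteness alone.

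The paper avoids all of this. It never passes through $ba(dK)$: Lemma~\ref{conjugacy} parametrizes the dual side by the scalar multiplier $z>0$ and the normalized set $\mathcal B(x,q)\subset\mathcal L$, and applies the abstract one-dimensional conjugacy of \cite[Theorem 3.2]{MostovyiNec} to the pair $\mathcal A(x,q)$, $\cl\mathcal D(x,q)\subset\mathbb L^0(d\kappa\times\mathbb P)$. This yields $u(x,q)=\inf_{z>0}\big(\inf_{(y,r)\in z\mathcal B(x,q)}v(y,r)+z\big)=\inf_{(y,r)\in\mathcal L}\big(v(y,r)+xy+\int q_sr_s\,dK_s\big)$ on $\mathring{\mathcal K}$ without ever asserting that the outer infimum over $(y,r)$ is attained --- indeed, attainment of a subgradient inside $\mathbb R\times\mathbb L^1(dK)$ is precisely Theorem~\ref{mainTheorem2}, which requires the additional Assumption~\ref{asACclock}. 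The extension to all of $\mathcal K$ is then carried out by first proving $v^{*}=-\infty$ off $\mathcal K$ (Lemma~\ref{lemv**}), proving upper semicontinuity of $u$ directly by a Fatou-type argument (Lemma~\ref{existenceUniqueness}), and invoking \cite[Corollary I.2.1]{EkelandTemam}; this avoids the circularity in your outline, where you propose to read off upper semicontinuity of $u$ from the infimum representation while simultaneously using it to propagate that representation from $\mathring{\mathcal K}$ to the boundary.
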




\begin{Lemma}\label{lemma1}
Let (\ref{NFLVR}) and Assumption~\ref{asEnd} hold. Then we have:

$(i)$ for every $x>0$, $(x,0)$ belongs to $\mathring{\mathcal K}$ (in particular, $\mathring{\mathcal K}\neq \emptyset$),

$(ii)$ for every $q\in\mathbb L^\infty(dK)$, there exists $x>0$ such that $(x,q)\in\mathring{\mathcal K}$,

$(iii)$ $\sup\limits_{\mathbb Q\in\mathcal M}\mathbb {E^Q}\left[\int_0^T|e_s|d\kappa_s \right] <\infty$,

$(iv)$ there exists a nonnegative maximal wealth process $X^{''}$, such that
\begin{displaymath}
X^{''}_T \geq \int_0^T|e_s|d\kappa_s,\quad \Pas,
\end{displaymath}

$(v)$ there exists a nonnegative maximal wealth process $X^{''}$, such that
\begin{equation}\nonumber
X^{''}_t \geq \int_0^t|e_s|d\kappa_
s,\quad t\in[0,T],~\mathbb P-a.s.
\end{equation}

\end{Lemma}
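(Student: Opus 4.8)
The plan is to derive the five assertions from Assumption \ref{asEnd}, that is, from the existence of a maximal wealth process $X'$ with $X'_t \geq |e_t|$ for all $t$, together with \eqref{NFLVR}. I would begin with $(iii)$, since it is the quantitative heart of the matter. By \cite{DS97}, a nonnegative self-financing wealth process $X'$ is maximal if and only if there exists $\mathbb Q\in\mathcal M$ under which $X'$ is a uniformly integrable martingale; more precisely, the set of $\mathbb Q\in\mathcal M$ for which $X'$ is a UI martingale is non-empty, and for \emph{every} $\mathbb Q\in\mathcal M$ the process $X'$ is a supermartingale, so $\mathbb E^{\mathbb Q}[X'_T]\leq X'_0$. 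Using $|e_t|\leq X'_t$ and the fact that $\int_0^T d\kappa_t$ is dominated (by \eqref{finClock}, $\kappa_T\le A$), I would estimate, for any $\mathbb Q\in\mathcal M$,
\[
\mathbb E^{\mathbb Q}\!\left[\int_0^T|e_s|\,d\kappa_s\right]\leq \mathbb E^{\mathbb Q}\!\left[\int_0^T X'_s\,d\kappa_s\right]\leq \mathbb E^{\mathbb Q}\!\left[\int_0^T X'_T\,d\kappa_s\right]\ \text{?}
\]
but $X'$ need not be increasing, so instead I would use that $X'$ is a $\mathbb Q$-supermartingale to bound $\mathbb E^{\mathbb Q}[X'_s]\le X'_0$ pointwise in $s$ and then integrate against $d\kappa_s$ using Fubini and $\kappa_T\le A$, giving $\mathbb E^{\mathbb Q}[\int_0^T|e_s|d\kappa_s]\le X'_0\,A$, a bound uniform in $\mathbb Q\in\mathcal M$. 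This proves $(iii)$.

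Next, for $(iv)$ and $(v)$, I would apply the optional decomposition theorem (or the superreplication duality for the random endowment $\int_0^T|e_s|d\kappa_s$, as in \cite{DS97, HK04}): since $(iii)$ shows $\sup_{\mathbb Q\in\mathcal M}\mathbb E^{\mathbb Q}[\int_0^T|e_s|d\kappa_s]<\infty$, there is a nonnegative maximal wealth process $X''\in\mathcal X\big(\sup_{\mathbb Q}\mathbb E^{\mathbb Q}[\int_0^T|e_s|d\kappa_s]\big)$ with $X''_T\geq\int_0^T|e_s|d\kappa_s$ a.s.; this is $(iv)$. For $(v)$, one wants the stronger pathwise domination $X''_t\geq\int_0^t|e_s|d\kappa_s$ for all $t$; I would obtain this by running the superreplication/optional decomposition for the \emph{increasing} process $B_t:=\int_0^t|e_s|d\kappa_s$ and noting that the superreplicating wealth process $X''$ minus the consumed/withdrawn part $B$ stays nonnegative, so $X''_t\ge B_t$ for all $t$ (equivalently, $X''$ superreplicates the running liability stream, which is exactly the admissibility notion built into $\mathcal X(x,q)$). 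Alternatively one can take $X''$ to be the maximal wealth process associated, via optional decomposition, to the $\mathcal M$-Snell envelope $\operatorname*{ess\,sup}_{\mathbb Q\in\mathcal M}\mathbb E^{\mathbb Q}[B_T\mid\mathcal F_t]$, which dominates $B_t$ for every $t$.

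With $(iii)$–$(v)$ in hand, $(i)$ and $(ii)$ follow. For $(i)$: given $x>0$, the process $X\equiv x\in\mathcal X(x)$ is acceptable (take $X'$ maximal dominating it, e.g. $X'' + $ const as above is not needed — the constant process is itself in $\mathcal X(x)$ and maximal if the market has no arbitrage and $x>0$, or write $x = x - X''_0 + X''$), and $X_\tau + \int_0^\tau 0\cdot e_s d\kappa_s = x\ge 0$, so $(x,0)\in\mathcal K$; that $(x,0)$ is in fact interior requires showing that for $q$ in a small $\mathbb L^\infty(dK)$-ball around $0$ and $x$ in a neighborhood, $(x,q)\in\mathcal K$. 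Here I would use $(v)$: if $\|q\|_{\mathbb L^\infty(dK)}\le\varepsilon$, then $\big|\int_0^\tau q_se_sd\kappa_s\big|\le\varepsilon\int_0^T|e_s|d\kappa_s\le\varepsilon X''_T$ and, more usefully, $\le\varepsilon X''_t$ at time $t$ by $(v)$ (after passing to $\int_0^\tau|e_s|d\kappa_s\le X''_\tau$); hence $x\,\mathbf 1 + \varepsilon X'' \in\mathcal X(x+\varepsilon X''_0)$ is acceptable and dominates $-\int_0^\cdot q_se_sd\kappa_s$ on $\Theta$, proving $(x',q)\in\mathcal K$ for all $(x',q)$ with $x'\ge x$, $\|q\|\le\varepsilon$; since $x>0$ we can also decrease $x'$ slightly, giving an open neighborhood, hence $(x,0)\in\mathring{\mathcal K}$. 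Assertion $(ii)$ is the same computation without the need for an interior: given $q\in\mathbb L^\infty(dK)$ with $\|q\|_{\mathbb L^\infty(dK)}=m$, the process $m X''\in\mathcal X(mX''_0)$ is acceptable and, by $(v)$, $mX''_\tau\ge m\int_0^\tau|e_s|d\kappa_s\ge -\int_0^\tau q_se_sd\kappa_s$, so for $x > mX''_0$ we get $(x,q)\in\mathcal K$, and the same perturbation argument as in $(i)$ upgrades this to $(x,q)\in\mathring{\mathcal K}$.

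The main obstacle I anticipate is $(v)$ — upgrading the terminal-time superreplication $(iv)$ to the running domination $X''_t\ge\int_0^t|e_s|d\kappa_s$ for \emph{all} $t$ simultaneously. This is where one must be careful about which version of the optional decomposition / superhedging theorem is invoked (the one for American-type claims or running liabilities, rather than a single European claim at $T$), and it is precisely the feature that makes the no-borrowing-at-intermediate-times structure work; $(iii)$ is a clean a priori estimate, and $(i)$–$(ii)$ are then soft consequences via elementary perturbations in the $\mathbb L^\infty(dK)$-norm.
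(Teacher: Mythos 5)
Your blueprint diverges from the paper in two substantive places, one of which contains a genuine gap.

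\textbf{The logical ordering and the Fubini step for $(iii)$.} You derive $(iii)$ first, then invoke superreplication duality for $(iv)$. The paper runs the implications in the opposite direction, and for a reason. Your argument for $(iii)$ rests on the step ``bound $\mathbb E^{\mathbb Q}[X'_s]\le X'_0$ pointwise in $s$, then integrate against $d\kappa_s$ using Fubini and $\kappa_T\le A$.'' This does not go through: $d\kappa$ is a \emph{random} measure, and there is no Fubini-type interchange that lets you trade $\mathbb E^{\mathbb Q}\bigl[\int_0^T X'_s\,d\kappa_s\bigr]$ for $\int_0^T\mathbb E^{\mathbb Q}[X'_s]\,d(\cdot)$ when the clock is adapted rather than deterministic. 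What does work --- and is the paper's first and central step --- is integration by parts (\cite[Proposition I.4.49]{JS}):
\[
\int_0^T X'_s\,d\kappa_s=-\int_0^T\kappa_{s-}\,dX'_s+\kappa_T X'_T\le AX'_0+\int_0^T(A-\kappa_{s-})\,dX'_s,
\]
which exhibits the running liability $\int_0^T|e_s|\,d\kappa_s$ as dominated by the \emph{terminal value of a self-financing wealth process} started at $AX'_0$. From there, \cite[Theorem 2.3]{DS97} immediately gives a nonnegative maximal $X''$ dominating $\int_0^T|e_s|\,d\kappa_s$ (this is $(iv)$), and $(iii)$ is then read off from the supermartingale bound \cite[Theorem 5.12]{DS1998}. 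So in the paper it is $(iv)\Rightarrow(iii)$, and the key move (IBP) is exactly the one missing from your proposal. Your superreplication route for $(iv)$ would be fine once $(iii)$ is established, but the way you propose to establish $(iii)$ is the gap.

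\textbf{Step $(v)$ is overcomplicated.} You flag $(v)$ as the obstacle and propose superhedging the running stream via the $\mathcal M$-Snell envelope and the optional decomposition theorem. That does produce a dominating wealth process, but much heavier machinery than needed, and it is not the paper's route. The simple observation is that the maximal process $X''$ from $(iv)$ is already a uniformly integrable martingale under some $\mathbb Q\in\mathcal M$; combined with the fact that the claim is increasing,
\[
X''_t=\mathbb E^{\mathbb Q}[X''_T\mid\mathcal F_t]\ge\mathbb E^{\mathbb Q}\!\left[\int_0^T|e_s|\,d\kappa_s\Bigm|\mathcal F_t\right]\ge\int_0^t|e_s|\,d\kappa_s,
\]
so the very same $X''$ works pathwise. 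No Snell envelope is needed. Your $(i)$--$(ii)$ perturbation arguments using $X''$ and the cone structure of $\mathring{\mathcal K}$ are essentially the paper's.
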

\begin{proof}
First, via \cite[Proposition I.4.49]{JS} and \eqref{finClock}, we get
\begin{equation}\nonumber
\begin{array}{c}
\int_0^T |e_s|d\kappa_s \leq \int_0^T X'_sd\kappa_s = -\int_0^T\kappa_{s-}dX'_s + \kappa_TX'_T \\ \leq -\int_0^T\kappa_{s-}dX'_s +AX'_T =AX'_0 + \int_0^T(A - \kappa_{s-})dX'_s.\\
\end{array}
\end{equation}
Therefore, the exists a self-financing wealth process $\bar X$, such that
$$\int_0^T |e_s|d\kappa_s \leq \bar X_T.$$
Consequently, \cite[Theorem 2.3]{DS97} asserts the existence of a nonnegative {\it maximal} process $X^{''}$, such that
$$\int_0^T |e_s|d\kappa_s \leq  X_T^{''},$$
i.e.,  $(iv)$ holds. Therefore $(iii)$ is valid as well, by \cite[Theorem 5.12]{DS1998}. 

Relation $(iv)$ and \eqref{NFLVR} imply $(v)$.
To prove $(i)$ and $(ii)$, without loss of generality, we will suppose that in $(v)$, $X^{''}_0> 0.$ Let $q\in\mathbb L^{\infty}(dK)$ and $\varepsilon >0$ be fixed.  Let us define
\begin{equation}\nonumber
x(q){:=} \left(||q||_{\mathbb L^{\infty}(dK)} + 2\varepsilon\right)X^{''}_0.
\end{equation}
We claim that $(x(q), q)\in\mathring{\mathcal K}.$
Let 
us consider arbitrary 
\begin{equation}\label{454}
|x'|\leq \varepsilon\quad {\rm and} \quad q'\in\mathbb L^\infty(dK)
:~||q'||_{\mathbb L^{\infty}(dK)}\leq {\varepsilon},
\end{equation}  
and set
\begin{equation}\label{455}
\tilde X_t {:=} \left(||q||_{\mathbb L^{\infty}(dK)} + 2\varepsilon+x'\right)X^{''}_t,\quad t\in[0,T].
\end{equation} Then, by item $(v)$, we have
\begin{displaymath}
\begin{array}{c}
\tilde X_t\geq \left(||q||_{\mathbb L^{\infty}(dK)}+{\varepsilon}\right)X^{''}_t\geq \left(||q||_{\mathbb L^{\infty}(dK)}+{\varepsilon}\right)\int_0^t|e_s|d\kappa_s\geq -\int_0^t(q_s + q'_s)|e_s|d\kappa_s.
\end{array}
\end{displaymath}
We deduce that $\tilde X\in\mathcal X(\left(||q||_{\mathbb L^{\infty}(dK)} + 2\varepsilon+x'\right)X^{''}_0, q+q')$. In particular,  $$\mathcal X(\left(||q||_{\mathbb L^{\infty}(dK)} + 2\varepsilon+x'\right)X^{''}_0, q+q')\neq \emptyset.$$ As $x'$ and $q'$ are arbitrary elements satisfying \eqref{454}, we deduce that $(x(q), q)\in\mathring{\mathcal K}$. This proves $(ii)$.

In order to show $(i)$, first we observe that $\mathring{\mathcal K}$ is a convex cone. Therefore, it suffices to prove that, for a given $\varepsilon>0$, we have
\begin{equation}\label{2121}
\left(2\varepsilon X^{''}_0, 0\right)\in\mathring{\mathcal K}.
\end{equation} 
Again, let us consider $x'$ and $q'$ satisfying \eqref{454} and $\tilde X$ satisfying \eqref{455} for $q\equiv 0$. 
Then for every $t\in[0,T]$, we have
$$\tilde X_t\geq {\varepsilon}X^{''}_t\geq {\varepsilon}\int_0^t|e_s|d\kappa_s\geq -\int_0^tq'_s|e_s|d\kappa_s.$$
Thus, $\mathcal X(\left(2\varepsilon+x'\right)X^{''}_0, q')\neq \emptyset$.  Consequently, as $x'$ and $q'$ are arbitrary elements satisfying \eqref{454}, \eqref{2121} holds and so is $(i)$. 
 This completes the proof of the lemma. 
\end{proof}
\begin{Remark}
A close look at the proofs shows that the conclusions of Theorem \ref{mainTheorem} also hold if instead of Assumption \ref{asEnd}, we impose any of the equivalent assertions $(iii) - (v)$ of Lemma \ref{lemma1}.
\end{Remark}


\subsection{Characterization of the primal and dual domains}\label{subSecCharDualDomain}
The polar, $A^{o}$, of a nonempty subset $A$ of $\mathbb R\times\mathbb L^\infty(dK)$, is
 the subset of 
 $\mathbb R\times\mathbb L^1(dK)$, defined by
$$A^o{:=} \left\{(y,r)\in \mathbb R\times \mathbb L^1(dK):~xy + \int_0^T q_sr_sdK_s \leq 1, \quad for~every~(x,q)\in A\right\}.$$
The polar of a subset of $\mathbb R\times\mathbb L^1(dK)$ is defined similarly.
\begin{Proposition}\label{prop1} Under Assumption \ref{asEnd} and conditions  \eqref{finClock} and \eqref{NFLVR}, we have:

$(i)$ Let $(x,q)\in\mathbb R\times\mathbb L^{\infty}(dK)$. Then $c\in \mathcal A(x,q)$ (thus, $\mathcal A(x,q)\neq \emptyset$ so $(x,q)\in \mathcal K$ )  if and only if
\begin{equation}\nonumber
\mathbb E\left[\int_0^T c_sY_sd\kappa_s \right]\leq xy + \int_0^Tq_sr_sdK_s,
~~ {\rm for~every}~(y,r)\in \mathcal L{\rm ~ and }~ Y\in\mathcal Y(y,r).
\end{equation}

$(ii)$ Likewise, for $(y,r)\in \mathbb{R}\times \mathbb{L}^1 (dK)$ we have  $(y,r)\in \mathcal L$ and $Y\in \mathcal Y(y,r)$ if and only if
\begin{equation}\nonumber
\mathbb E\left[\int_0^T c_sY_sd\kappa_s \right]\leq xy + \int_0^Tq_sr_sdK_s,
~~ {\rm for~every}~(x,q)\in\mathcal K{\rm ~ and }~c\in\mathcal A(x,q).
\end{equation}

We also have $\mathcal K = (-\mathcal L)^{o} = \cl{\mathring{\mathcal K}}$, where the closure is taken both in norm $\mathbb R\times \mathbb L^1(dK)$ and $\sigma(\mathbb R\times\mathbb L^\infty(dK), \mathbb R\times\mathbb L^1(dK))$ topologies.
\end{Proposition}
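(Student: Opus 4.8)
The plan is to prove the two bipolar-type characterizations $(i)$ and $(ii)$ together with the identity $\mathcal K = (-\mathcal L)^o = \cl \mathring{\mathcal K}$ by a combination of a direct superreplication/duality argument and an abstract bipolar theorem in the dual pair $(\mathbb R\times\mathbb L^\infty(dK),\ \mathbb R\times\mathbb L^1(dK))$. First I would establish the ``easy'' inclusions in $(i)$ and $(ii)$: if $c\in\mathcal A(x,q)$ then there is an acceptable $X\in\mathcal X(x,q)$ with $\int_0^\tau c_s\,d\kappa_s\le X_\tau+\int_0^\tau q_s e_s\,d\kappa_s$ for all $\tau\in\Theta$; combining this with the supermartingale property of $XY$ (for $Y\in\mathcal Y(y)$, using that densities of elements of $\mathcal M$ turn nonnegative wealth processes into supermartingales, and passing to the $\mathbb L^0$-closure) and with the defining inequality of $\mathcal Y(y,r)$ should yield $\mathbb E[\int_0^T c_sY_s\,d\kappa_s]\le xy+\int_0^T q_s r_s\,dK_s$. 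Symmetrically, the membership test in the definition of $\mathcal Y(y,r)$ is exactly the right-hand inequality of $(ii)$, so one direction there is essentially by definition, and the membership $(y,r)\in\mathcal L$ follows by testing against $c\equiv 0$ together with $X\equiv$ a scaled maximal process from Lemma \ref{lemma1}.

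The substance is the reverse implications. For $(i)$: suppose the inequality $\mathbb E[\int_0^T c_sY_s\,d\kappa_s]\le xy+\int_0^T q_s r_s\,dK_s$ holds for all $(y,r)\in\mathcal L$ and $Y\in\mathcal Y(y,r)$; I want to produce an acceptable wealth process witnessing $c\in\mathcal A(x,q)$. The natural route is an optional-decomposition / superhedging duality: the process $t\mapsto \int_0^t q_s e_s\,d\kappa_s - \int_0^t c_s\,d\kappa_s$ must be superhedgeable starting from $x$ while respecting the constraint on $\Theta$, and the dual description of such superhedging prices is precisely tested against the enlarged dual domain $\bigcup_{(y,r)\in\mathcal L}\mathcal Y(y,r)$, which contains in particular all products $Z\cdot D$ with $Z\in\mathcal Z$ and $D$ a nonnegative decreasing process that drops to zero at a single stopping time $\tau\in\Theta$ (these are exactly the ``building blocks'' alluded to in Remark \ref{Remark-Mihai}). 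Testing against $Z\cdot\mathbf 1_{[0,\tau)}$ for each $\tau\in\Theta$ recovers, after an optional-decomposition argument in the spirit of \cite{DS97} and Lemma \ref{lemma1}$(v)$, the pathwise constraint $X_\tau+\int_0^\tau q_s e_s\,d\kappa_s\ge 0$ for all $\tau\in\Theta$; the hard part is to carry out the optional decomposition in the presence of the decreasing processes and acceptable (not merely nonnegative) wealth processes, handling integrability by restricting to the uniformly integrable densities of Lemma \ref{finitenessOverZ'} and using Lemma \ref{lemma1}$(iii)$. For $(ii)$ the reverse implication is the statement that the polar of $\bigcup_{(x,q)\in\mathcal K}\{c\in\mathcal A(x,q)\}$, in the appropriate sense, is $\{(y,r,Y):(y,r)\in\mathcal L,\ Y\in\mathcal Y(y,r)\}$; here one needs that $\mathcal Y(y)$ is already $\mathbb L^0$-closed, solid, and ``large enough'' to detect all constraints, again via the same superhedging duality read in the other direction.

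For the final identity, I would argue as follows. The inclusion $\mathcal K\subseteq(-\mathcal L)^o$ is immediate from the definition of $\mathcal L$ (with the normalization absorbed since $\mathcal K$ is a cone). The inclusion $\cl\mathring{\mathcal K}\subseteq\mathcal K$ follows because $\mathcal K$ is closed: closedness of $\mathcal K$ in the $\mathbb R\times\mathbb L^1(dK)$-norm (equivalently weak-star) topology is obtained from part $(i)$, which exhibits $\mathcal K$ as an intersection of closed half-spaces $\{(x,q):xy+\int_0^T q_s r_s\,dK_s\ge 0\}$, hence $\sigma(\mathbb R\times\mathbb L^\infty,\mathbb R\times\mathbb L^1)$-closed and convex. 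The remaining inclusion $(-\mathcal L)^o\subseteq\cl\mathring{\mathcal K}$ is the bipolar theorem: since $\mathcal L$ was defined as the polar cone of $-\mathcal K$, and since $\mathring{\mathcal K}\ne\emptyset$ by Lemma \ref{lemma1}, the bipolar of $-\mathcal K$ in the dual pair $(\mathbb R\times\mathbb L^\infty(dK),\mathbb R\times\mathbb L^1(dK))$ is $\cl\conv(-\mathcal K\cup\{0\})=\overline{-\mathcal K}$; convexity of $\mathcal K$ is inherited from convexity of each $\mathcal X(x,q)$ and $\mathcal A(x,q)$, and $\cl\mathcal K=\cl\mathring{\mathcal K}$ holds for any convex set with nonempty interior. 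I expect the main obstacle to be the optional decomposition step underlying the reverse implication in $(i)$: making precise the ``building block'' decreasing processes $D=\mathbf 1_{[0,\tau)}$, $\tau\in\Theta$, showing $Z D\in\mathcal Y(y,r)$ for a suitable $(y,r)\in\mathcal L$, and then extracting from the family of dual inequalities an actual acceptable wealth process with the correct pathwise domination on all of $\Theta$ simultaneously — this is where the non-reflexivity of $\mathbb L^\infty(dK)$ and the need to work with optional strong supermartingales (rather than càdlàg ones) genuinely bite, and is presumably deferred in part to Section \ref{secStructureOfDualDomain}.
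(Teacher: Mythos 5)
Your proposal matches the paper's proof in strategy: the easy directions are essentially definitional given how $\mathcal Y(y,r)$ is defined; the hard direction of $(i)$ proceeds by testing against products $Z\Lambda$ with $Z\in\mathcal Z'$ and $\Lambda\in\Upsilon$ (Lemma~\ref{6-16-1} supplies $Z\Lambda\in\mathcal Y(1,r)$) and then applying an optional-decomposition argument to construct an acceptable wealth process (Lemma~\ref{9-5-1}); part $(ii)$ reduces to the endowment-free characterization via \cite[Proposition 4.4]{MostovyiNec}; and the final identity is the bipolar theorem in the pair $(\mathbb R\times\mathbb L^\infty(dK),\mathbb R\times\mathbb L^1(dK))$ together with $\mathcal K=\cl\mathring{\mathcal K}$ for a closed convex set with nonempty norm-interior. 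What you gesture at as ``superhedging duality'' the paper carries out via a two-step mechanism you do not name: it first rewrites the dual inequalities as $\sup_{\mathbb Q\in\mathcal M',\tau\in\Theta}\mathbb E^{\mathbb Q}\bigl[\int_0^{\tau}(c_s-q_se_s)\,d\kappa_s\bigr]\le x$ and forms the Snell envelope $V_t=\esssup_{\tau\in\Theta,\tau\ge t,\mathbb Q\in\mathcal M'}\mathbb E^{\mathbb Q}[f_\tau\mid\mathcal F_t]$ of the nonnegative shift $f_t=\|q\|_{\mathbb L^\infty(dK)}X''_t+\int_0^t(c_s-q_se_s)\,d\kappa_s$ (an extension of \cite[Proposition 4.3]{K96} to stopping times dominating $\theta_0$, a flagged contribution of the paper), and only then applies \cite[Theorem 3.1]{FK} to $V$; the Snell envelope is the step that aggregates the uncountable family of scalar constraints into a single c\`adl\`ag supermartingale to which optional decomposition applies. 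Two small corrections: the building blocks are $\Lambda=1_{[0,\tau]}$ (closed at $\tau$, hence left-continuous), not $1_{[0,\tau)}$, so the constraint is detected at $\tau$ itself; and the entire argument for this proposition is carried out in the c\`adl\`ag setting of Lemma~\ref{9-5-1}, so neither optional strong supermartingale machinery nor Section~\ref{secStructureOfDualDomain} is needed here.
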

\begin{Remark}\label{remNonEmpty}
It follows from Proposition \ref{prop1} that 
for every $(x,q)\in\mathcal K, \mathcal A(x,q)\neq \emptyset$ as $0\in\mathcal A(x,q)$. Likewise, 
for every $(y,r)\in \mathcal L, \mathcal Y(y,r)\neq \emptyset$ as $0\in\mathcal Y(y,r)$. Moreover, for every $(x,q)\in  \mathring{\mathcal K}$, each of the sets $\mathcal A(x,q)$ and $\bigcup\limits_{(y,r)\in \mathcal L: ~xy + \int_0^Tr_sq_sdK_s \leq 1}\mathcal Y(y,r)$ contain a strictly positive element, see Lemma \ref{6-12-1} below. 
\end{Remark}
The proof of Proposition~\ref{prop1} will be given via several lemmas.
Let 
\begin{equation}\nonumber
\begin{array}{l}
\mathcal M'~be~the~set~of~equivalent~local~martingale~measures,~under~which\\X^{''}~(from~Lemma~\ref{lemma1},~item~(v))~is~a~uniformly~integrable~martingale. 
\end{array}
\end{equation}
Note that by \cite[Theorem 5.2]{DS97}, $\mathcal M'$ is a nonempty, convex subset of $\mathcal M$, which is also dense in $\mathcal M$ in the total variation norm. 

\begin{Remark}
Even though the results in \cite{DS97} are obtained under the condition that $S$ is a locally bounded process, they also hold without local boundedness assumption, see the discussion in \cite[Remark 3.4]{HKS05}.
\end{Remark}

Let $Z'$ denote the set of the corresponding c\`adl\`ag densities,~i.e., 
\begin{equation}\label{defZ'}
\begin{array}{rcl}
\mathcal Z' &{:=}& \left\{{\rm c\grave adl\grave ag}~Z: Z_t=\mathbb E\left[ \frac{d\mathbb Q}{d\mathbb P}|\mathcal F_t\right],~t\in[0,T],~\mathbb Q\in\mathcal M'\right\}.\\
\end{array}
\end{equation}
 We also set
\begin{equation}\label{defUpsilon}
\Upsilon {:=} \left\{1_{[0,\tau]}(t),t\in[0,T]:~~\tau\in\Theta\right\}.
\end{equation}

\begin{Lemma}\label{6-16-1}
Let   the conditions of Proposition \ref{prop1} hold, $\mathbb Q\in\mathcal M'$, ${Z} = {Z}^{\mathbb Q}$ be the corresponding element of $\mathcal Z'$,  and $\Lambda \in \Upsilon$.  Then  there exists $r\in \mathbb L^1(dK)$, uniquely defined~by
\begin{equation}\label{6-25-1}
\int_0^tr_sdK_s = \mathbb E\left[\int_0^t\Lambda_s{Z}_se_sd\kappa_s \right],\quad t\in[0,T],
\end{equation}
such that $(1,r)\in \mathcal L$  and  ${Z}\Lambda = ({Z}_t\Lambda_t)_{t\in[0,T]}\in\mathcal Y(1,r)$.
\end{Lemma}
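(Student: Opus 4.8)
I want to show two things: that the $r$ defined by the integral equation $\int_0^t r_s\,dK_s = \mathbb E[\int_0^t \Lambda_s Z_s e_s\,d\kappa_s]$ genuinely lives in $\mathbb L^1(dK)$ and gives $(1,r)\in\mathcal L$, and that $Z\Lambda \in \mathcal Y(1,r)$. For the first part, the key observation is that the process $t\mapsto \mathbb E[\int_0^t \Lambda_s Z_s e_s\,d\kappa_s]$ is of finite variation (as a difference of two increasing processes built from $\Lambda Z e^{\pm}$) and absolutely continuous with respect to $K_t = \mathbb E[\kappa_t]$, because $d\kappa$-null sets are $dK\times\mathbb P$-null by Lemma~\ref{3311}'s circle of ideas; more directly, $|\mathbb E[\int_0^t \Lambda_s Z_s e_s\,d\kappa_s]| \le \mathbb E[\int_0^t Z_s|e_s|\,d\kappa_s]$, and since $\mathbb Q\in\mathcal M'$ makes $X''$ (dominating $\int_0^\cdot|e_s|d\kappa_s$ by Lemma~\ref{lemma1}(v)) a uniformly integrable $\mathbb Q$-martingale, the right-hand side is finite and the total mass at $T$ is $\le \mathbb E^{\mathbb Q}[X''_T] = X''_0 < \infty$. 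A Radon–Nikodym / differentiation argument then produces the density $r$, and uniqueness is immediate since two $\mathbb L^1(dK)$ functions with the same $dK$-integral over every $[0,t]$ agree $dK$-a.e.

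**Showing $(1,r)\in\mathcal L$.** By the definition \eqref{defL}, I need $x + \int_0^T q_s r_s\,dK_s \ge 0$ for every $(x,q)\in\mathcal K$. Using \eqref{6-25-1} at $t=T$ and Fubini, $\int_0^T q_s r_s\,dK_s = \mathbb E[\int_0^T q_s\Lambda_s Z_s e_s\,d\kappa_s]$. Now fix $(x,q)\in\mathcal K$ and pick $X\in\mathcal X(x,q)$, so that $X$ is acceptable with $X_0=x$ and $X_\tau + \int_0^\tau q_s e_s\,d\kappa_s \ge 0$ for every $\tau\in\Theta$. With $\Lambda = 1_{[0,\tau]}$ for the relevant $\tau\in\Theta$, I want to evaluate the $\mathbb Q$-expectation of $X_\tau + \int_0^\tau q_s e_s\,d\kappa_s \ge 0$. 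Since $X = X' - X''$ with $X'$ nonnegative self-financing and $X''$ maximal, and since $\mathbb Q\in\mathcal M'$, both $X'$ and $X''$ are $\mathbb Q$-supermartingales (indeed $X''$ is a uniformly integrable $\mathbb Q$-martingale by choice of $\mathcal M'$, and $X'$ is a nonnegative $\mathbb Q$-local martingale hence supermartingale), so $\mathbb E^{\mathbb Q}[X_\tau] \le X_0 = x$ by optional stopping; combined with $\mathbb E^{\mathbb Q}[\int_0^\tau q_s e_s\,d\kappa_s] = \mathbb E[\int_0^T q_s\Lambda_s Z_s e_s\,d\kappa_s] = \int_0^T q_s r_s\,dK_s$ this yields $0 \le \mathbb E^{\mathbb Q}[X_\tau + \int_0^\tau q_s e_s\,d\kappa_s] \le x + \int_0^T q_s r_s\,dK_s$, as required. (Here one must be slightly careful that the "$\le X_0$" step is legitimate for acceptable processes — this is exactly the content of \cite[Theorem~5.12]{DS1998}/\cite{DS97}, that acceptable processes are $\mathbb Q$-supermartingales for $\mathbb Q\in\mathcal M'$.)

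**Showing $Z\Lambda\in\mathcal Y(1,r)$.** First, $Z\Lambda$ is càdlàg, adapted, and $0\le Z\Lambda \le Z$ pointwise with $Z\in\mathcal Z$, so $Z\Lambda\in\mathcal Y(1)$ by \eqref{oldY}. It remains to verify the budget inequality in \eqref{defY}: for every $(x,q)\in\mathcal K$ and $c\in\mathcal A(x,q)$,
\begin{equation}\nonumber
\mathbb E\left[\int_0^T c_s Z_s\Lambda_s\,d\kappa_s\right] \le x + \int_0^T q_s r_s\,dK_s.
\end{equation}
Given $c\in\mathcal A(x,q)$, there is $X\in\mathcal X(x,q)$ with $\int_0^\tau c_s\,d\kappa_s \le X_\tau + \int_0^\tau q_s e_s\,d\kappa_s$ for all $\tau\in\Theta$. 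Writing $\Lambda = 1_{[0,\tau_0]}$, I integrate $c_s Z_s$ against $d\kappa_s$ on $[0,\tau_0]$ and use that $Z$ is (up to the density-process identification) a $\mathbb Q$-martingale to pass to a $\mathbb Q$-expectation; the clean way is to first handle $\Lambda\equiv 1$ (so $\mathbb E[\int_0^T c_s Z_s\,d\kappa_s] = \mathbb E^{\mathbb Q}[\int_0^T c_s\,d\kappa_s] \le \mathbb E^{\mathbb Q}[X_T + \int_0^T q_s e_s\,d\kappa_s] \le x + \int_0^T q_s r_s\,dK_s$ as above), and then observe that the general $\Lambda = 1_{[0,\tau_0]}$ case only makes the left side smaller while the constraint at $\tau_0\in\Theta$ still gives $\int_0^{\tau_0} c_s\,d\kappa_s \le X_{\tau_0} + \int_0^{\tau_0} q_s e_s\,d\kappa_s$, so $\mathbb E[\int_0^{\tau_0} c_s Z_s\,d\kappa_s] \le \mathbb E^{\mathbb Q}[X_{\tau_0}] + \mathbb E^{\mathbb Q}[\int_0^{\tau_0} q_s e_s\,d\kappa_s] \le x + \int_0^T q_s r_s\,dK_s$. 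The subtle point to get right throughout is the interchange of $\mathbb E$ and $\mathbb E^{\mathbb Q}$ inside the time integral — i.e. that $\mathbb E[\int_0^{\tau_0} c_s Z_s\,d\kappa_s] = \mathbb E^{\mathbb Q}[\int_0^{\tau_0} c_s\,d\kappa_s]$ — which follows from optional projection / the fact that $Z$ is the density process and $c\kappa$ is nonnegative and optional (Fubini plus the martingale property of $Z$), and this, together with the optional-stopping/supermartingale step for acceptable processes, is where I expect the main technical care to be needed.
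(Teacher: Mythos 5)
Your overall outline is the same as the paper's: derive $r$ by differentiating $R_t=\mathbb E\left[\int_0^t\Lambda_sZ_se_s\,d\kappa_s\right]$ against $K$, then check $(1,r)\in\mathcal L$ and the budget inequality for $Z\Lambda$ by taking $\mathbb Q$-expectations of the admissibility constraints. You correctly flag the interchange of $\mathbb E$ and $\mathbb E^{\mathbb Q}$ inside the time integral as the main technical step; the paper carries this out by localization and integration by parts in \eqref{281}--\eqref{283}, and your absolute-continuity argument for $dR\ll dK$ is, once cleaned up, the paper's $\pi$--$\lambda$ argument.

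There is, however, a real gap in your justification of the $\mathbb Q$-supermartingale property of $X\in\mathcal X(x,q)$. You write that $X=X'-X''$ with $X''$ maximal, and that $X''$ is a uniformly integrable $\mathbb Q$-martingale ``by choice of $\mathcal M'$.'' That is not what $\mathcal M'$ gives you: $\mathcal M'$ is defined so that one \emph{fixed} maximal process --- the $X''$ from Lemma~\ref{lemma1}$(v)$ dominating $\int_0^\cdot|e_s|\,d\kappa_s$ --- is a UI $\mathbb Q$-martingale. A general acceptable $X$ carries an arbitrary maximal process in its decomposition, and membership of $\mathbb Q$ in $\mathcal M'$ says nothing about that one; if that $X''$ is merely a nonnegative $\mathbb Q$-local martingale, then $-X''$ is only a $\mathbb Q$-submartingale and the supermartingale property of $X=X'-X''$ does not follow. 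The argument must instead use the lower bound coming from admissibility: for $\tau\in\Theta$ one has $X_\tau\geq -\int_0^\tau q_se_s\,d\kappa_s\geq -\|q\|_{\mathbb L^\infty(dK)}X''_\tau$ with the \emph{specific} $X''$ of Lemma~\ref{lemma1}, and it is this control by a process that is a UI $\mathbb Q$-martingale under every $\mathbb Q\in\mathcal M'$ that makes the acceptable $X$ a $\mathbb Q$-supermartingale. The paper handles this in one line by citing \cite[Lemma 4]{HK04}; your fallback references (\cite[Theorem 5.12]{DS1998} and \cite{DS97}) are characterizations of maximal processes and do not by themselves give the supermartingale property you invoke.
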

\begin{proof}
Let $(x,q)\in\mathcal K$ and $c\in\mathcal A(x,q)$. 
Then there exists  
$X\in\mathcal X(x,q)$, such that
\begin{equation}\label{246}
\int_0^\tau c_sd\kappa_s\leq X_\tau + \int_0^\tau q_se_sd\kappa_s,\quad for~every~\tau\in\Theta.
\end{equation}
In particular, \eqref{246} holds for the particular $\tau$, such that $\Lambda_t = 1_{[0,\tau]}(t)$, $t\in[0,T]$.  
Let  $G_t{:=} \int_0^t(q_se_s)^{+}d\kappa_s$, $t\in[0,T]$, where $(\cdot)^{+}$ denotes the positive part. By \cite[Theorem III.29, p. 128]{Pr}, $\int_0^tG_{s-}d{Z}_s$, $t\in[0,T]$, is a local martingale, so let $(\sigma_n)_{n\in\mathbb N}$ be its localizing sequence. Then by the monotone convergence theorem, integration by parts formula, and the optional sampling theorem, we get
\begin{equation}\label{281}
\begin{array}{c}
\mathbb E\left[\int_0^T {Z}_s(q_se_s)^{+}\Lambda_sd\kappa_s \right] = \lim\limits_{n\to\infty}\mathbb E\left[\int_0^{\sigma_n\wedge \tau} {Z}_sdG \right] \\
= \lim\limits_{n\to\infty} \mathbb E\left[{Z}_{\sigma_n\wedge \tau}G_{\sigma_n\wedge \tau}\right] 
-\lim\limits_{n\to\infty} \mathbb E\left[ \int_0^{\sigma_n\wedge \tau}G_{s-}d{Z}_s\right] 
\\=
\lim\limits_{n\to\infty} \mathbb E\left[{Z}_{\sigma_n\wedge \tau}G_{\sigma_n\wedge \tau} \right] = \lim\limits_{n\to\infty} \mathbb E^{\mathbb Q}\left[G_{\sigma_n\wedge \tau} \right] 
 =  \mathbb E^{\mathbb Q}\left[G_\tau \right],\\
\end{array}
\end{equation}
where in the last equality we used the monotone convergence theorem again.
Here finiteness of $\mathbb E^{\mathbb Q}\left[G_\tau \right]=\mathbb E^{\mathbb Q}\left[\int_0^\tau(q_se_s)^{+}d\kappa_s \right]$ follows from Assumption \ref{asEnd} via Lemma \ref{lemma1}, part (iii). 
In a similar manner, we can show that
\begin{equation}\label{283}
\begin{array}{rcl}
\mathbb E\left[\int_0^T(q_se_s)^{-}{Z}_s\Lambda_sd\kappa_s\right] &= &\mathbb E^{\mathbb Q}\left[ \int_0^\tau (q_se_s)^{-}d\kappa_s\right]<\infty,\\
\mathbb E\left[\int_0^Tc_s{Z}_s\Lambda_sd\kappa_s\right] &=& \mathbb E^{\mathbb Q}\left[ \int_0^\tau c_sd\kappa_s\right]<\infty.\\
\end{array}
\end{equation}
\cite[Lemma 4]{HK04} applies here and asserts that $X$ in \eqref{246} is a supermartingale under $\mathbb Q$. 
Therefore, from \eqref{246}, using \eqref{281} and \eqref{283}, and taking expectation under $\mathbb Q$, we get
\begin{equation}\label{6-22-2}
 \mathbb E\left[ \int_0^Tc_s\Lambda_s{Z}_sd\kappa_s\right]\leq x + \mathbb E\left[\int_0^Tq_se_s{Z}_s\Lambda_sd\kappa_s\right].
\end{equation}
Let us define
\begin{equation}\nonumber
R_t {:=} \mathbb E\left[\int_0^t\Lambda_s{Z}_se_sd\kappa_s \right],\quad t\in[0,T].
\end{equation}
Using the monotone class theorem, we obtain
\begin{equation}\label{6-25-2}
\mathbb E\left[\int_0^T\tilde q_se_s{Z}_s\Lambda_sd\kappa_s\right]
=
\int_0^T\tilde q_sdR_s,\quad for~every\quad\tilde q\in\mathbb L^\infty(dK).
\end{equation}
We claim that
$dR$ is absolutely continuous with respect to $dK$. First, using the $\pi-\lambda$ theorem, one can show that for every Borel-measurable subset $A$ of $[0,T]$, we have
$$K(A) = \mathbb E\left[\int_0^T 1_A(t)d\kappa_t \right]\quad and \quad R(A) = \mathbb E\left[\int_0^T1_A(t)\Lambda_t{Z}_te_td\kappa_t\right].$$
Thus, if for some $A$, $K(A) = 0$, then $\int_0^T 1_A(t)d\kappa_t=0$ a.s. and $\kappa^A_t {:=} \int_0^t 1_A(s)d\kappa_s$, $t\in[0,T]$, satisfies $\kappa^A_T = 0$ a.s. and $\int_0^T \Lambda_t{Z}_te_t d\kappa^A_t = \int_0^T\Lambda_t{Z}_te_t1_A(t)d\kappa_t = 0$ a.s. 


As $dR$ is absolutely continuous with respect to $dK$,  
there exists a unique $r\in\mathbb L^1(dK)$, such that \eqref{6-25-1}
holds.
Since the left-hand side in (\ref{6-22-2}) is nonnegative and since $(x,q)$ is an arbitrary element of $\mathcal K$, we deduce from the definition of $\mathcal L$, \eqref{defL},  that $(1,r)\in \mathcal L$. Finally, it follows from \eqref{6-22-2} and \eqref{6-25-2}
that ${Z}\Lambda \in\mathcal Y(1,r)$. This completes the proof of the~lemma.
\end{proof}
\begin{Remark}\label{Remark-Mihai-2}
The  natural convexification of  the set $\Upsilon$  consists of non-negative  left-continuous  decreasing and adapted processes $D$ such that $D_{\theta _0}=1$. In the context of utility-maximization constraints (and for $\theta _0=0$), this is follows  from  \cite{HePages} and \cite{ElKarouiJeanblanc}. 
We investigate convexification of the constraints  in  the later Sections \ref{secStructureOfDualDomain} and \ref{secSlackness}, where will extend Lemma \ref{6-16-1} to a more general set of decreasing processes than $\Lambda$ that drives the dual domain and that allows for the multiplicative decomposition of the dual minimizer.
\end{Remark}
\begin{Corollary}\label{cor1}
Let   the conditions of Proposition \ref{prop1} hold, $\mathbb Q\in\mathcal M'$, ${Z}$ be the c\'adl\'ag modification of the density process $\mathbb E\left[ \frac{d\mathbb Q}{d\mathbb P}|\mathcal F_t\right]$, $t\in[0,T]$. Then there exists $r\in\mathbb L^1(dK)$, such that
 ${Z}\in\mathcal Y(1,r)$,
where
$$
\int_0^tr_sdK_s = \mathbb E\left[\int_0^t{Z}_se_sd\kappa_s \right],\quad t\in[0,T],
$$
and $(1,r)\in \mathcal L$.
\end{Corollary}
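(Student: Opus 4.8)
The plan is to read this off directly from Lemma~\ref{6-16-1}, of which it is the special case $\tau\equiv T$. Recall that $\Theta$ is the set of stopping times dominating $\theta_0$, and since $\theta_0$ takes values in $[0,T]$, the deterministic time $T$ is a stopping time with $T\ge\theta_0$, hence $T\in\Theta$. Consequently the process $\Lambda_t:=1_{[0,T]}(t)\equiv 1$, $t\in[0,T]$, belongs to the set $\Upsilon$ defined in \eqref{defUpsilon}. Moreover, the $Z$ appearing in the statement of the corollary — the c\`adl\`ag modification of $\big(\mathbb E[\tfrac{d\mathbb Q}{d\mathbb P}\mid\mathcal F_t]\big)_{t\in[0,T]}$ for a given $\mathbb Q\in\mathcal M'$ — is precisely the element of $\mathcal Z'$ associated with $\mathbb Q$, see \eqref{defZ'}. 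So both hypotheses of Lemma~\ref{6-16-1} (namely $\mathbb Q\in\mathcal M'$ with its associated $Z\in\mathcal Z'$, and $\Lambda\in\Upsilon$) are met with this choice.

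I would then simply invoke Lemma~\ref{6-16-1} with this data. It produces a unique $r\in\mathbb L^1(dK)$ characterized by $\int_0^t r_s\,dK_s=\mathbb E[\int_0^t\Lambda_s Z_s e_s\,d\kappa_s]=\mathbb E[\int_0^t Z_s e_s\,d\kappa_s]$ for every $t\in[0,T]$, such that $(1,r)\in\mathcal L$ and $Z\Lambda=(Z_t\Lambda_t)_{t\in[0,T]}\in\mathcal Y(1,r)$. Since $\Lambda\equiv 1$ we have $Z\Lambda=Z$, and the three asserted conclusions — $Z\in\mathcal Y(1,r)$, the formula for $\int_0^t r_s\,dK_s$, and $(1,r)\in\mathcal L$ — follow verbatim.

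There is essentially no obstacle here: the only point requiring a word is the observation $T\in\Theta$ (immediate from $\theta_0\le T$), which legitimizes the specialization $\Lambda\equiv 1$ of Lemma~\ref{6-16-1}; everything substantive — the absolute continuity of $dR$ with respect to $dK$, the supermartingale property of admissible wealth processes under $\mathbb Q$, and the passage to the limit along a localizing sequence — has already been carried out in the proof of that lemma.
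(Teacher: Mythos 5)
Your proof is correct and matches the paper's intent exactly: the corollary is stated without proof precisely because it is the specialization of Lemma~\ref{6-16-1} to $\tau = T$, i.e.\ $\Lambda \equiv 1 \in \Upsilon$, which is exactly the route you take. Your one-line justification that $T \in \Theta$ (hence $1_{[0,T]} \in \Upsilon$) is the only observation needed, and it is valid.
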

\begin{proof}[Proof of Lemma \ref{3311}] Let us fix $\mathbb Q\in\mathcal M'$ and let $Z\in\mathcal Z'$ be the corresponding density process. As in Lemma \ref{6-16-1}, we can show that there exists $r\in\mathbb L^1(dK)$, such that for every bounded and Borel measurable function $q$ on $[0,T]$ we have
\begin{equation}\label{5192}
\int_0^tq_s r_sdK_s = \mathbb E\left[\int_0^t q_sZ_s|e_s|d\kappa_s \right],\quad t\in[0,T].
\end{equation}
Let $\bar q {:=} |q^1 - q^2|$, then as $\bar q = 0$, $dK$-a.e., we get
\begin{displaymath}
\mathbb E\left[\int_0^T \bar q_s|e_s|Z_sd\kappa_s \right] = \int_0^T \bar q_sr_sdK = 0.
\end{displaymath}
Therefore, using integration by parts and via \eqref{5192}, we obtain
$$
0 = \int_0^T \bar q_sr_sdK = \mathbb E\left[\int_0^T q_sZ_s|e_s|d\kappa_s \right] = \mathbb E^{\mathbb Q}\left[\int_0^T q_s|e_s|d\kappa_s \right].
$$
Consequently, $\int_0^T q_s|e_s|d\kappa_s = 0$, $\mathbb Q$-a.s., and by the equivalence of $\mathbb Q$ and $\mathbb P$, also $\mathbb P$-a.s. 
As, by construction $\int_0^t q_s|e_s|d\kappa_s = 0$, $t\in[0,T]$, is a nonnegative and non-decreasing process, whose terminal value is $0$,  $\mathbb P$-a.s., we conclude that it is indistinguishable from the $0$-valued process. The assertions of the lemma follows.

\end{proof}
\begin{Lemma}\label{9-5-1}
Let   the conditions of Proposition \ref{prop1} hold,  $(x,q)\in\mathcal K$, and $c$ is a nonnegative optional process. Then $c\in\mathcal A(x,q)$ if and only if
\begin{equation}\label{241}
\mathbb E\left[\int_0^Tc_s\Lambda_s{Z}_sd\kappa_s \right]\leq
x + \mathbb E\left[\int_0^Tq_se_s\Lambda_s{Z}_sd\kappa_s \right] = x + \int_0^Tq_sr_sdK_s,
\end{equation}
for every ${Z}\in\mathcal Z'$ and $\Lambda \in \Upsilon$, where $r$ is given by (\ref{6-25-1}).
\end{Lemma}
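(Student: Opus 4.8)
The plan is to prove the two implications in \eqref{241} separately. The forward implication is essentially a corollary of Lemma~\ref{6-16-1}: given $c\in\mathcal A(x,q)$, $Z\in\mathcal Z'$ and $\Lambda\in\Upsilon$, that lemma supplies $r\in\mathbb L^1(dK)$ defined by \eqref{6-25-1} with $(1,r)\in\mathcal L$ and $Z\Lambda\in\mathcal Y(1,r)$, and then the defining inequality of $\mathcal Y(1,r)$ in \eqref{defY}, specialized to the pair $(x,q)$ and to $c$, reads $\mathbb E[\int_0^T c_s\Lambda_s Z_s\,d\kappa_s]\le x+\int_0^T q_s r_s\,dK_s$; combined with the identity $\int_0^T q_s r_s\,dK_s=\mathbb E[\int_0^T q_s e_s\Lambda_s Z_s\,d\kappa_s]$, which follows from \eqref{6-25-1} by the monotone class theorem exactly as in \eqref{6-25-2} (splitting $qe$ into positive and negative parts, both of finite $\mathbb Q$-expectation by Lemma~\ref{lemma1}$(iii)$), this is precisely \eqref{241}.

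For the converse I would first recast the hypothesis. Assuming $c\ge 0$ optional with \eqref{241} for all $Z\in\mathcal Z'$, $\Lambda\in\Upsilon$, I take $\Lambda=1_{[0,\tau]}$ with $\tau\in\Theta$ and apply the change-of-measure identity for nonnegative optional integrands against $d\kappa$ (as used in \eqref{281}--\eqref{283}, valid because the density process of any $\mathbb Q\in\mathcal M'$ is a uniformly integrable martingale) to obtain
\[
\mathbb E^{\mathbb Q}\Big[\int_0^\tau (c_s-q_se_s)\,d\kappa_s\Big]\le x\qquad\text{for every }\mathbb Q\in\mathcal M',\ \tau\in\Theta ,
\]
all the integrals being finite by Lemma~\ref{lemma1}$(iii)$ and, for the $c$--integral with $\tau=T$, by \eqref{241} itself. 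It then suffices to produce an acceptable process $X$ with $X_0=x$ and $X_\tau\ge\int_0^\tau (c_s-q_se_s)\,d\kappa_s$ $\mathbb P$-a.s.\ for all $\tau\in\Theta$, since such an $X$ lies in $\mathcal X(x,q)$ and exhibits $c\in\mathcal A(x,q)$.

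To build $X$ I would take (a c\`adl\`ag modification of) the robust Snell envelope of $B_t:=\int_0^t (c_s-q_se_s)\,d\kappa_s$ over the stopping times of $\Theta$ and the measures of $\mathcal M'$, that is
\[
X_t:=\esssup_{\mathbb Q\in\mathcal M'}\ \esssup_{\tau\in\Theta,\ \tau\ge t}\ \mathbb E^{\mathbb Q}\big[B_\tau\,\big|\,\mathcal F_t\big],\qquad t\in[\theta_0,T],
\]
extended for $t<\theta_0$ by $\esssup_{\mathbb Q\in\mathcal M'}\mathbb E^{\mathbb Q}[X_{\theta_0}\mid\mathcal F_t]$. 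Upward directedness of the inner family and stability of $\mathcal M'$ under pasting (inherited from $\mathcal M$, using that the maximal process $X''$ of Lemma~\ref{lemma1}$(v)$ is a uniformly integrable martingale under each $\mathbb Q\in\mathcal M'$) let me interchange both essential suprema with $\mathcal F_s$-conditioning; this is exactly where I would use the extension of \cite[Proposition~4.3]{K96} to envelopes over stopping times exceeding $\theta_0$ announced in the introduction. Thus $X$ is a supermartingale under every $\mathbb Q\in\mathcal M'$, with $X_0\le x$ and $X_\tau\ge B_\tau$ on $\Theta$; since $B_\tau\ge-\|q\|_{\mathbb L^{\infty}(dK)}X''_\tau$, one gets $X\ge-\|q\|_{\mathbb L^{\infty}(dK)}X''$ throughout, and then an approximation argument along a total-variation dense sequence in $\mathcal M'\subset\mathcal M$, applied to the nonnegative process $X+\|q\|_{\mathbb L^{\infty}(dK)}X''$, upgrades $X$ to a supermartingale under every $\mathbb Q\in\mathcal M$. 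The optional decomposition theorem, applied to the nonnegative supermartingale $X+\|q\|_{\mathbb L^{\infty}(dK)}X''$ (available here without local boundedness of $S$, cf.\ \cite[Remark~3.4]{HKS05}) and using that $X''=X''_0+\int_0^{\cdot}H''_s\,dS_s$, then yields $X=X_0+\int_0^{\cdot}H_s\,dS_s-C$ with $H$ predictable and $S$-integrable and $C$ nondecreasing, $C_0=0$; since $Y:=X_0+\int_0^{\cdot}H_s\,dS_s=X+C\ge-\|q\|_{\mathbb L^{\infty}(dK)}X''$, the process $Y+\|q\|_{\mathbb L^{\infty}(dK)}X''$ is a nonnegative self-financing wealth process, so $Y$ is acceptable, and replacing $Y$ by $Y+(x-Y_0)$ --- legitimate since $\mathcal F_0$ is trivial and $x-Y_0\ge 0$ --- gives the required element of $\mathcal X(x,q)$ and hence $c\in\mathcal A(x,q)$.

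The crux is the robust Snell envelope step on $[\theta_0,T]$: proving that the essential suprema over $\tau\in\Theta$ and over $\mathbb Q\in\mathcal M'$ genuinely commute with conditioning, so that $X$ is a bona fide supermartingale and not merely a collection of pointwise inequalities --- this is exactly what the $\theta_0$-version of \cite[Proposition~4.3]{K96} provides. The remaining ingredients --- upgrading from an $\mathcal M'$- to an $\mathcal M$-supermartingale, invoking optional decomposition, and massaging the self-financing component into an acceptable process with initial value $x$ --- are comparatively routine, but they rely essentially on the maximal dominating process $X''$ of Lemma~\ref{lemma1}$(v)$ and on the boundedness of $q$.
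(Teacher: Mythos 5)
Your proof is correct and follows the paper's route: the forward implication reuses the $\mathbb Q$-supermartingale argument of Lemma~\ref{6-16-1}, and the converse is established via the robust Snell envelope over $\tau\in\Theta$, $\mathbb Q\in\mathcal M'$ (the $\theta_0$-extension of \cite[Proposition~4.3]{K96}), Fatou upgrade from $\mathcal M'$ to $\mathcal M$, and the optional decomposition theorem. The one place you should be slightly more careful is that \cite[Proposition~4.3]{K96} is stated for nonnegative processes, so the envelope should be taken of the shifted process $f_t = \|q\|_{\mathbb L^\infty(dK)}X''_t + B_t$ (as the paper does) and then re-centered using the $\mathbb Q$-martingale property of $X''$ for $\mathbb Q\in\mathcal M'$; you in fact invoke this shift later when identifying $X + \|q\|_{\mathbb L^\infty(dK)}X''$ as a nonnegative supermartingale, and your explicit subtraction of the $X''$-hedge $H''$ when extracting the acceptable process is a useful clarification of the final inequality chain.
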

\begin{proof}
Let $c\in\mathcal A(x,q)$. Then for every ${Z}\in\mathcal Z'$ and $\Lambda \in \Upsilon$,  the validity of \eqref{241} follows from the definition of $\mathcal A(x,q)$, integration by parts formula and supermartingale property of every $X\in\mathcal X(x,q)$ under every $\mathbb Q\in\mathcal M'$, which in turn follows from \cite[Lemma 4]{HK04}. 

Conversely, let \eqref{241} holds for every ${Z}\in\mathcal Z'$ and $\Lambda \in \Upsilon$. 
Then, we have
\begin{equation}\label{243}
\mathbb E\left[\int_0^T(c_s-q_se_s){Z}_s\Lambda_sd\kappa_s \right]\leq x,
\end{equation}
which, in view of  the definition of $\Upsilon$ in \eqref{defUpsilon}, localization, and integration by parts, implies that
\begin{displaymath}
\sup\limits_{\mathbb Q\in\mathcal M', \tau\in \Theta}\mathbb {E^Q}\left[\int_0^{\tau}(c_s-q_se_s)d\kappa_s \right]\leq x,
\end{displaymath}
For $X^{''}$ given by Lemma \ref{lemma1}, item $(v)$, let us denote
\begin{equation}\label{244}
f_t{:=} ||q||_{\mathbb L^\infty(dK)} X^{''}_t +\int_0^{t}(c_s-q_se_s)d\kappa_s,\quad t\in[0,T].
\end{equation}
It follows from Assumption \ref{asEnd} and item $(v)$ of Lemma \ref{lemma1} 
 that 
 $f$ is a nonnegative process.
We observe that the proof of \cite[Proposition 4.3]{K96} goes through, if we only take stopping times in $\Theta$ and measures in $\mathcal M'$. This proposition allows to conclude that there exists a nonnegative c\`adl\`ag process $V$, such that 
\begin{equation}\label{defV2}
V_t = \esssup\limits_{\tau\in \Theta: \tau\geq t,\mathbb Q \in\mathcal M'}\mathbb {E^Q}\left[f_{\tau}|\mathcal F_t \right],\quad t\in[0,T],
\end{equation}
 which is a supermartingale for every $\mathbb Q \in\mathcal M'$. Therefore, by the density of $\mathcal M'$ in $\mathcal M$ in the norm topology of $\mathbb L^1(\mathbb P)$ and Fatou's lemma, $V$ is a supermartingale under every $\mathbb Q\in\mathcal M$. Moreover, $V_0$ satisfies  
$$V_0\leq x + ||q||_{\mathbb L^\infty(dK)} X^{''}_0,$$ by \eqref{defV2}, \eqref{243}, and by following the argument in the proof of Lemma \ref{6-16-1}. 

We would like to apply  
the optional decomposition theorem of F\"olmer and Kramkov, \cite[Theorem 3.1]{FK}. For this, we need to show that $V$ is a local supermartingale under every $\mathbb Q$, such that every $X\in\mathcal X(1)$ is a $\mathbb Q$-local supermartingale. However, $\mathcal M$ is dense in the set of such measures in the norm topology of $\mathbb L^1(\mathbb P)$, by the results of Delbaen and Schachermayer, see \cite[Proposition 4.7]{DS1998}. Therefore, the supermartingale property of $V$ under every such $\mathbb Q$ follows from Fatou's lemma and supermartingale property of $V$ under every $\mathbb Q\in\mathcal M$ established above. Therefore, by \cite[Theorem 3.1]{FK}, we get
$$V_t = V_0 + H\cdot S_t - A_t,\quad t\in[0,T],$$
where $A$ is a nonnegative increasing process that starts at $0$. 
Subtracting the constant $||q||_{\mathbb L^\infty(dK)} X^{''}_0$ from both sides of \eqref{244}, we get
\begin{equation}\nonumber\begin{array}{c}
\int_0^{\tau}(c_s-q_se_s)d\kappa_s = f_\tau -||q||_{\mathbb L^\infty(dK)} X^{''}_0 \leq V_\tau -||q||_{\mathbb L^\infty(dK)} X^{''}_0\\= V_0 -||q||_{\mathbb L^\infty(dK)} X^{''}_0 + H\cdot S_\tau-A_\tau \leq x + H\cdot S_\tau, \quad \tau\in\Theta,
\end{array}\end{equation}
where $x + H\cdot S$ is acceptable, since $V$ is nonnegative. 
Consequently, 
$c\in\mathcal A(x,q)$. This completes the proof of the lemma.
\end{proof}


\begin{proof}[Proof of Proposition~\ref{prop1}]
The assertions of item $(i)$ follow from Lemma~\ref{9-5-1}. 
It remains to show that the affirmations of item $(ii)$ hold.
Fix a $(y,r)\in \mathcal L$. If $Y\in\mathcal Y(y,r)$, $(ii)$ follows from the definition of $\mathcal Y(y,r)$. Conversely,
if $(ii)$ holds for a nonnegative process $Y$, then since $(x,0)\in \mathcal K$ for every $x>0$, we have 
$$\mathbb E\left[\int_0^Tc_sY_sd\kappa_s \right]\leq 1\quad {\rm for~every~}c\in\mathcal A\left(\frac{1}{y},0\right).$$
Via \cite[Proposition 4.4]{MostovyiNec}, we deduce that $Y\in\mathcal Y(y)$ and is such that $(ii)$ holds. Therefore,
$Y\in\mathcal Y(y,r)$.  

We have  $\mathring{\mathcal K}\neq \emptyset$, where the interior is taken with respect to the norm-topology. According to Proposition \ref{prop1} (i)  we have   also $\mathcal K = (-\mathcal L)^{o}$.  The set $\mathcal{K}$, as the polar of $\mathcal L$,  is convex and  closed both in  (strong) $\mathbb R\times \mathbb L^{\infty}(dK)$ and $\sigma(\mathbb R\times\mathbb L^\infty(dK), \mathbb R\times\mathbb L^1(dK))$ topologies. Having non-empty strong interior, we obtain $\mathcal K =  \cl{\mathring{\mathcal K}}$ where the closure is in the strong-topology. 
Since $\mathcal{K}$ is also closed in the weaker $\sigma(\mathbb R\times\mathbb L^\infty(dK), \mathbb R\times\mathbb L^1(dK))$ topology we obtain that 
$$\mathcal K = (-\mathcal L)^{o} = \cl{\mathring{\mathcal K}},$$ where the closure is taken in both
topologies.
\end{proof}

\subsection{Preliminary properties of the value functions, in particular finiteness. }

Let $\mathbf  L^0_{+}$ be the positive orthant of $\mathbf  L^0$. The polar of a set $A \subseteq \mathbf  L^0_{+}$ is defined as
\begin{displaymath} 
A^o{:=} \left\{ c\in\mathbf  L^0_{+}:~\mathbb E\left[\int_0^T c_sY_s d\kappa_s\right]\leq 1,\quad for~every~Y \in A\right\}.
\end{displaymath}

We recall that the sets $\mathcal Z$ and $\mathcal Z'$ are defined in \eqref{defZ} and \eqref{defZ'}, respectively. 

\begin{Lemma}\label{finitenessOverZ'}
Under the conditions of Theorem~\ref{mainTheorem}, we have
\begin{equation}\label{bipolarZ'}
(\mathcal Z')^{oo} = \mathcal Y(1)
\end{equation}
and
\begin{equation}\label{2145}
\tilde w(y) = \inf\limits_{Y\in\mathcal Z'}\mathbb E\left[\int_0^TV(t, yY_s)d\kappa_s \right]<\infty, \quad y>0.
\end{equation}
\end{Lemma}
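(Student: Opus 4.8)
The plan is to establish the two claims in order: first the bipolar identity \eqref{bipolarZ'}, then the representation \eqref{2145}. For the bipolar relation, I would argue that $\mathcal Z' \subseteq \mathcal Y(1)$: indeed, by Corollary \ref{cor1}, every $Z \in \mathcal Z'$ lies in $\mathcal Y(1,r)$ for the associated $r$, and $\mathcal Y(1,r) \subseteq \mathcal Y(1)$ by definition \eqref{defY}. Taking bipolars (in the duality between $\mathbf L^0_+$ and itself under the pairing $(c,Y) \mapsto \mathbb E[\int_0^T c_s Y_s d\kappa_s]$) yields $(\mathcal Z')^{oo} \subseteq \mathcal Y(1)^{oo} = \mathcal Y(1)$, the last equality because $\mathcal Y(1)$ is already closed, convex, solid and contains $0$ (it is defined as a closure in $\mathbf L^0$, and the bipolar theorem of Brannath--Schachermayer applies). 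For the reverse inclusion, the key point is that $\mathcal Z'$ is dense in $\mathcal Z$ in total variation (as noted after the definition of $\mathcal M'$, via \cite[Theorem 5.2]{DS97}), hence $\{yZ : Z \in \mathcal Z'\}$ is dense in $\{yZ : Z \in \mathcal Z\}$ in $\mathbf L^0$; since $\mathcal Y(1)$ is by definition the $\mathbf L^0$-closure of the solid hull of $\{Z : Z \in \mathcal Z\}$, and taking the solid hull and $\mathbf L^0$-closure are monotone operations compatible with this density, we get $\mathcal Y(1) \subseteq (\mathcal Z')^{oo}$. Equivalently: $\mathcal Z$ and $\mathcal Z'$ have the same polar in $\mathbf L^0_+$ because total-variation density is preserved under the relevant pairing (bounded test functions $c$), and $\mathcal Y(1) = \mathcal Z^{oo}$ is a standard fact from \cite{MostovyiNec} (cf.\ \cite[Proposition 4.4]{MostovyiNec}, already invoked in the proof of Proposition \ref{prop1}).

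For \eqref{2145}, I would use the abstract duality for utility maximization without endowment from \cite{MostovyiNec}: under \eqref{finClock}, \eqref{NFLVR}, Assumption \ref{assumptionOnU} and \eqref{finValue}, one has $\tilde w(y) = \inf_{Y \in \mathcal Y(y)} \mathbb E[\int_0^T V(t,Y_s)\,d\kappa_s] < \infty$ for all $y>0$, which is precisely the infimum over the bipolar set. It then suffices to show the infimum over the larger set $\mathcal Y(y)$ is attained, or at least approximated, over the smaller set $\{yZ : Z \in \mathcal Z'\}$. Since $Y \mapsto \mathbb E[\int_0^T V(t,Y_s)d\kappa_s]$ is convex and lower semicontinuous in $\mathbf L^0$ (by Fatou, using $V \geq V^+ - $ nothing problematic because the convention sets it to $+\infty$ on the bad set), and since $\{yZ : Z \in \mathcal Z'\}$ is dense in $\{yZ : Z \in \mathcal Z\}$ which in turn generates $\mathcal Y(y)$ as its closed solid hull, the infimum of a monotone (decreasing in $Y$, as $V$ is decreasing) convex functional over $\mathcal Y(y)$ equals its infimum over $\{yZ : Z \in \mathcal Z\}$, and by total-variation density and lower semicontinuity, over $\{yZ : Z \in \mathcal Z'\}$. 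Monotonicity of $V$ in its second argument is what lets us pass from the solid hull back down to $\mathcal Z$: for $Y \leq yZ$ one has $V(t,Y) \geq V(t,yZ)$ pointwise, so the infimum is not decreased by restricting to the extreme elements $yZ$.

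The main obstacle I anticipate is the careful handling of the passage from $\mathcal Z$ to $\mathcal Z'$ in the infimum \eqref{2145}: total-variation density of $\mathcal M'$ in $\mathcal M$ gives convergence of densities in $\mathbf L^1(\mathbb P)$, hence (along a subsequence) $(d\kappa \times \mathbb P)$-a.e., but $V$ need not be continuous or bounded, so one must combine Fatou's lemma (for the lower-semicontinuity direction, giving $\liminf \geq$) with the a priori finiteness $\tilde w(y) < \infty$ and possibly a truncation/uniform-integrability argument to get the matching upper bound. One clean route is: the infimum over $\mathcal Z'$ is $\geq \tilde w(y)$ trivially since $\mathcal Z' \subseteq \mathcal Y(y)/y$; for the reverse, take a minimizing sequence in $\mathcal Z$, approximate each element in total variation by elements of $\mathcal Z'$, and use that $V(t, yZ^n) \to V(t,yZ)$ along a subsequence together with the de la Vallée-Poussin / uniform-integrability bound coming from $\tilde w(2y) < \infty$ (a standard trick: $V(t, \cdot)$ at $2y$ dominates the needed tail of $V(t,\cdot)$ at $y$) to pass to the limit in the expectation. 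This is the one place where genuine care, rather than bookkeeping, is required.
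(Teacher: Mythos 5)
For the bipolar identity \eqref{bipolarZ'}, your route is essentially the paper's: both use density of $\mathcal Z'$ in $\mathcal Z$ (from \cite[Theorem 5.2]{DS97}), a Fatou argument, and the Brannath--Schachermayer bipolar theorem together with the fact from \cite{MostovyiNec} that $\mathcal Y(1) = \mathcal Z^{oo}$. You organize it as two inclusions rather than directly showing $(\mathcal Z')^o = \mathcal Z^o$, but the content is identical.

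The second part \eqref{2145} is where you and the paper genuinely part ways, and where your sketch has gaps. The paper's move is structural: it verifies that $\mathcal Z'$ is closed under countable convex combinations (martingale property from monotone convergence, c\`adl\`ag structure from \cite[Theorem VI.18]{DelMey82}) and then invokes \cite[Theorem 3.3]{MostovyiNec} as a black box, which is designed to give exactly the reduction from $\mathcal Y(y)$ to a generating set with that closure property. You attempt a direct density/semicontinuity/uniform-integrability argument instead, and there are two real problems. First, lower semicontinuity of $Y \mapsto \mathbb E[\int_0^T V(t,Y_t)\,d\kappa_t]$ in $\mathbf L^0$ gives $\mathbb E[\int V(\cdot,\lim Y^n)] \le \liminf \mathbb E[\int V(\cdot,Y^n)]$, which only yields $\inf_{\cl A} f \le \inf_A f$ --- but that is the trivial direction, since $A\subseteq \cl A$. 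The needed direction, that passing to the closure of the solid hull does not lower the infimum, requires an upper-semicontinuity-along-approximating-sequences argument of the perturbation-plus-dominated-convergence type used in the paper's Lemma~\ref{auxiliaryLemma} (add $\delta\bar y Z'$ to get a dominating integrable upper bound, then send $\delta\downarrow 0$ using finiteness and convexity); monotonicity of $V$ alone does not supply it. Moreover, even lower semicontinuity itself requires uniform integrability of the negative parts $V^-(t,Y^n_t)$ (the paper gets this from \cite[Lemma 3.5]{MostovyiNec}), which you implicitly assume without justification. Second, your proposed repair via de la Vall\'ee-Poussin from $\tilde w(2y)<\infty$ does not produce the required bound as stated: finiteness of $\tilde w(2y)$ only guarantees one element of $\mathcal Y(2y)$ with finite dual value, not that $\mathbb E[\int V(t,2yZ^n_t)d\kappa_t]$ is finite for the approximating sequence $Z^n\in\mathcal Z'$ you would need to dominate. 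In short, the paper's observation that $\mathcal Z'$ is closed under countable convex combinations is doing real work that the density/lsc route does not replace, and the sketch as written does not close the argument.
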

\begin{proof}
It follows from 
 \cite[Theorem 5.2]{DS97} 
 that $\mathcal Z'$ is dense in $\mathcal Z$ 
 in $\mathbf  L^0$. 
It follows from Fatou's lemma that $$(\mathcal Z')^{o} = \mathcal Z^{o}.$$
Therefore, \cite[Lemma 4.2 and Proposition 4.4]{MostovyiNec} imply \eqref{bipolarZ'}.  

One can show that $\mathcal Z'$ is closed under countable convex combinations, where the martingale property follows from the monotone convergence theorem \and the c\`adl\`ag structure of the limit is guaranteed by \cite[Theorem VI.18]{DelMey82}, see also \cite[ Proposition 5.1]{KLPO14} for more details in similar settings. 
Now, \eqref{2145} follows (up to a notational change) from \cite[Theorem 3.3]{MostovyiNec}. This completes the proof of the lemma.
\end{proof}
\begin{Lemma}\label{6-17-1} Under the conditions of Theorem~\ref{mainTheorem}, 
for every $(x,q)\in\mathcal K$ and $(y,r)\in \mathcal L$, we have
$$u(x,q) \leq v(y,r) + xy +\int_0^Tr_sq_sdK_s.$$
\end{Lemma}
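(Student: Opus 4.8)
The plan is to prove the weak duality inequality
$$u(x,q) \leq v(y,r) + xy + \int_0^T r_s q_s dK_s$$
by the standard Fenchel-inequality argument, using the characterization of the primal and dual domains obtained in Proposition~\ref{prop1}. Fix $(x,q)\in\mathcal K$ and $(y,r)\in\mathcal L$. If $v(y,r)=\infty$ there is nothing to prove, so assume $v(y,r)<\infty$, which in particular forces $(y,r)\in\mathcal L$ and $\mathcal Y(y,r)\neq\emptyset$; similarly we may assume $u(x,q)>-\infty$. The first step is to pick an arbitrary admissible consumption $c\in\mathcal A(x,q)$ and an arbitrary dual element $Y\in\mathcal Y(y,r)$, and apply the pointwise Fenchel-Young inequality coming from the definition \eqref{defV} of $V$:
$$U(t,c_t)\leq V(t,Y_t)+c_tY_t,\qquad (t,\omega)\in[0,T]\times\Omega.$$

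Next I would integrate this against $d\kappa_t$ and take $\mathbb E$. Care is needed with the sign conventions: using the convention that $\mathbb E\left[\int_0^T U(t,c_t)d\kappa_t\right]=-\infty$ when the negative part is not integrable, and the analogous convention for $V$, the inequality
$$\mathbb E\left[\int_0^T U(t,c_t)d\kappa_t\right]\leq \mathbb E\left[\int_0^T V(t,Y_t)d\kappa_t\right]+\mathbb E\left[\int_0^T c_tY_t d\kappa_t\right]$$
holds in $[-\infty,\infty]$ provided the right-hand side is not of the form $\infty-\infty$; since we are in the case $v(y,r)<\infty$ the $V$-term is finite (or $-\infty$) and the $c Y$-term is finite by the constraint defining $\mathcal Y(y,r)$, so this step is legitimate. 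Then the key inequality from the definition of $\mathcal Y(y,r)$ in \eqref{defY} (equivalently, part (ii) of Proposition~\ref{prop1}) gives
$$\mathbb E\left[\int_0^T c_tY_t d\kappa_t\right]\leq xy+\int_0^T q_s r_s dK_s.$$
Combining the last two displays yields
$$\mathbb E\left[\int_0^T U(t,c_t)d\kappa_t\right]\leq \mathbb E\left[\int_0^T V(t,Y_t)d\kappa_t\right]+xy+\int_0^T q_s r_s dK_s.$$

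Finally I would take the supremum over $c\in\mathcal A(x,q)$ on the left (obtaining $u(x,q)$ by \eqref{primalProblem}) and the infimum over $Y\in\mathcal Y(y,r)$ on the right (obtaining $v(y,r)$ by \eqref{dualProblem}), which gives the claimed inequality. The remaining boundary cases are immediate from the conventions in \eqref{3313}: if $(x,q)\notin\mathcal K$ then $u(x,q)=-\infty$ and the inequality is trivial, and likewise if $(y,r)\notin\mathcal L$ then $v(y,r)=\infty$. I expect the only delicate point to be the bookkeeping with the $\{\pm\infty\}$ conventions when integrating the Fenchel-Young inequality — specifically, justifying that the right-hand side is never $\infty-\infty$ so that the inequality survives taking expectations — together with checking measurability so that $\int_0^T c_tY_t d\kappa_t$ and $\int_0^T V(t,Y_t)d\kappa_t$ are well-defined; both are routine given that $c$, $Y$ are optional, $V$ inherits the measurability of $U$, and $\kappa$ is adapted and càdlàg.
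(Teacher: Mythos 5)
Your proposal is correct and follows essentially the same Fenchel--Young argument as the paper's proof of Lemma~\ref{6-17-1}; the paper merely orders the two steps the other way (first adding the nonnegative quantity $xy+\int_0^T q_s r_s\,dK_s-\mathbb E[\int_0^T c_sY_s\,d\kappa_s]\geq 0$ to the right-hand side and then invoking $U(t,c)-cY\leq V(t,Y)$). One small slip in your side remark: $v(y,r)<\infty$ does \emph{not} guarantee that $\mathbb E\left[\int_0^T V(t,Y_t)\,d\kappa_t\right]<\infty$ for the \emph{particular} $Y\in\mathcal Y(y,r)$ you picked, since the infimum being finite says nothing about each individual element; however, this is harmless because the $cY$-term is finite and nonnegative (being squeezed between $0$ and $xy+\int_0^T q_s r_s\,dK_s$), so the right-hand side is never of the indeterminate form $\infty-\infty$ regardless of whether the $V$-term is $+\infty$, finite, or $-\infty$.
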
 
\begin{proof}
Fix an arbitrary $(x,q)\in\cl\mathcal K$, $c\in\mathcal A(x,q)$ as well as $(y,r) \in \mathcal L$, $Y\in\mathcal Y(y,r)$. Using Proposition
\ref{prop1} and \eqref{defV}, we get
\begin{displaymath}
\begin{array}{rcl}
\mathbb E\left[\int_0^T U (t, c_s)d\kappa_s \right] &\leq &\mathbb E\left[\int_0^T U(t, c_s)d\kappa_s \right] + xy + \int_0^Tr_sq_sdK_s - \mathbb E\left[\int_0^T c_sY_sd\kappa_s \right]\\
&\leq &\mathbb E\left[\int_0^T V(t, Y_s)d\kappa_s \right] + xy + \int_0^Tr_sq_sdK_s.  \\
\end{array}
\end{displaymath}
This implies the assertion of the lemma.
\end{proof}
For every $(x,q)$ in $\mathcal K$, we define
\begin{equation}\label{auxiliarySets}
\begin{array}{rcl}
\mathcal B(x,q)&{:=}& \left\{(y,r)\in \mathcal L: ~xy + \int_0^Tr_sq_sdK_s \leq 1 \right\},\\
\mathcal D(x,q)&{:=}& \bigcup\limits_{(y,r)\in \mathcal B(x,q)}\mathcal Y(y,r).\\
\end{array}
\end{equation}
The subsequent lemma established boundedness of $\mathcal B(x,q)$ for $(x,q)$ in $\mathring{\mathcal K}$ in $\mathbb R\times \mathbb L^1(dK)$.
\begin{Lemma}
\label{7-17-1}
Under the conditions of Theorem~\ref{mainTheorem}, for every $(x,q)\in\mathring{\mathcal K}$, 
$\mathcal B(x,q)$ is bounded
 in $\mathbb R\times \mathbb L^1(dK)$.
\end{Lemma}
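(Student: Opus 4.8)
The plan is to show that boundedness of $\mathcal{B}(x,q)$ in the $\mathbb{R}\times\mathbb{L}^1(dK)$-norm follows from the fact that $(x,q)\in\mathring{\mathcal{K}}$, i.e., the pair sits strictly inside the primal feasible set. Recall that $\mathcal{L}=(-\mathring{\mathcal{K}})^o$ by the Remark following the definition of $\mathcal{L}$ and by Proposition~\ref{prop1}. So for $(y,r)\in\mathcal{L}$ and any $(x',q')$ with $|x'|\le\varepsilon$, $\|q'\|_{\mathbb{L}^\infty(dK)}\le\varepsilon$, we have $(x+x',q+q')\in\mathcal{K}$ for $\varepsilon$ small enough (that is the meaning of being in the interior), hence
\begin{displaymath}
(x+x')y + \int_0^T (q_s+q'_s) r_s\, dK_s \geq 0.
\end{displaymath}
Subtracting the inequality $xy+\int_0^T q_s r_s\, dK_s \ge 0$ and taking the supremum over admissible $(x',q')$ yields
\begin{displaymath}
\varepsilon\, |y| + \varepsilon \int_0^T |r_s|\, dK_s \leq xy + \int_0^T q_s r_s\, dK_s.
\end{displaymath}

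First I would make precise the choice of $\varepsilon$: by $(x,q)\in\mathring{\mathcal{K}}$ there is an open ball $B_\varepsilon\subseteq\mathcal{K}$ around $(x,q)$ in the product norm, and by convexity of $\mathcal{K}$ together with the cone structure one can take the perturbations $(x',q')$ to range over $\{|x'|\le\varepsilon\}\times\{\|q'\|_{\mathbb{L}^\infty(dK)}\le\varepsilon\}$. For the lower bound step, the key computation is that for $(y,r)\in\mathbb{R}\times\mathbb{L}^1(dK)$,
\begin{displaymath}
\sup_{|x'|\le\varepsilon,\ \|q'\|_{\mathbb{L}^\infty(dK)}\le\varepsilon}\left( x'y + \int_0^T q'_s r_s\, dK_s\right) = \varepsilon|y| + \varepsilon\int_0^T |r_s|\, dK_s = \varepsilon\big(|y| + \|r\|_{\mathbb{L}^1(dK)}\big),
\end{displaymath}
where the choice $q'_s = \varepsilon\,\mathrm{sgn}(r_s)$ (and $x'=\varepsilon\,\mathrm{sgn}(y)$) achieves the supremum since $r\in\mathbb{L}^1(dK)$ and $\mathrm{sgn}(r)\in\mathbb{L}^\infty(dK)$. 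Then for $(y,r)\in\mathcal{B}(x,q)$ we have $xy+\int_0^T q_s r_s\, dK_s\le 1$, so combining the two displays gives $\varepsilon(|y|+\|r\|_{\mathbb{L}^1(dK)})\le 1$, i.e., $\mathcal{B}(x,q)$ is contained in the ball of radius $1/\varepsilon$.

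The step I expect to require the most care is not a deep one but a bookkeeping point: justifying that the perturbation set can genuinely be taken as a full product neighborhood $\{|x'|\le\varepsilon\}\times\{\|q'\|_{\mathbb{L}^\infty(dK)}\le\varepsilon\}$ sitting inside $\mathcal{K}$, as opposed to just some ball in the product norm — but since the product norm on $\mathbb{R}\times\mathbb{L}^\infty(dK)$ is equivalent to the max of the two norms, shrinking $\varepsilon$ handles this, so there is no real obstacle. One should also note that $\mathcal{B}(x,q)$ is non-empty (it contains, e.g., scaled versions of the $(1,r)$ produced in Corollary~\ref{cor1}), so the statement is not vacuous; and the argument shows more, namely that $\mathcal{B}(x,q)\subseteq\{(y,r):|y|+\|r\|_{\mathbb{L}^1(dK)}\le 1/\varepsilon\}$, a quantitative bound that will be convenient later when extracting minimizing sequences and applying compactness/Komlós-type arguments to the dual problem.
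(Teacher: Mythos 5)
Your proof is correct and follows essentially the same route as the paper's: use that $(x,q)\in\mathring{\mathcal K}$ gives an $\varepsilon$-ball of perturbations $(x',q')$ staying in $\mathcal K$, apply the polar-cone inequality to $(x+x',q+q')$, and combine with the defining bound $xy+\int_0^T q_sr_s\,dK_s\le 1$ of $\mathcal B(x,q)$. The only cosmetic differences are that the paper tests two specific perturbations (first $x'=-\varepsilon,\ q'\equiv 0$, then $x'=0,\ q'=-\varepsilon\operatorname{sgn}(r)$) rather than taking the supremum over the full ball at once, and your prose ``subtracting the inequality $xy+\int_0^T q_sr_s\,dK_s\ge 0$'' is a slight misstatement (one rearranges rather than subtracts) even though the displayed inequality that follows is exactly right.
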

\begin{proof}
Fix an $(x,q)\in\mathring{\mathcal K}$. Then there exists $\varepsilon > 0$, such that
for every
\begin{equation}\label{7-20-1}
|x'|\leq\varepsilon\quad and \quad||q'||_{\mathbb L^{\infty}} \leq\varepsilon,
\end{equation}
 we have
$(x + x', q+q')\in{\mathring{\mathcal K}}.$
Let us fix an arbitrary $(y,r)\in \mathcal B(x,q)$. Then for every $(x',q')$ satisfying (\ref{7-20-1}), by the definitions of $\mathcal L$ and $\mathcal B(x,q)$, respectively,~we~get 
\begin{displaymath}
\begin{array}{rcl}
xy + \int_0^Tq_sr_sdK_s + x'y + \int_0^Tq'_sr_sdK_s &\geq& 0, \\
xy + \int_0^Tq_sr_sdK_s  &\leq& 1, \\
\end{array}
\end{displaymath}
which implies that
\begin{equation}\label{7-17-2}
-x'y - \int_0^Tq'_sr_sdK_s \leq xy + \int_0^Tq_sr_sdK_s  \leq 1.
\end{equation}
Taking 
\begin{displaymath}
x' = -\varepsilon\quad{and}\quad q'\equiv 0,
\end{displaymath}
we deduce from (\ref{7-17-2}) that $y\leq \frac{1}{\varepsilon}$. Also, by the definition of $\mathcal L$ and Lemma \ref{lemma1}, item $(i)$,  $y\geq 0$. 
In turn, setting
\begin{displaymath}
x' = 0\quad{and}\quad q' = -\varepsilon 1_{\{r\geq 0\}} + \varepsilon 1_{\{r<0\}},
\end{displaymath}
we obtain from  (\ref{7-17-2}) that $||r||_{\mathbb L^1}\leq \frac{1}{\varepsilon}$. This completes the proof of the lemma.
\end{proof}
\begin{Lemma}\label{6-12-1}
Let the conditions of Theorem~\ref{mainTheorem} hold and $(x,q)$ be an arbitrary element of $\mathring{\mathcal K}$.
Then, we have:
\newline
$(i)$ $\mathcal A(x,q)$ contains a strictly positive process.
\newline
$(ii)$ The constant $\bar y(x,q)$ given by  
\begin{equation}\label{defBary}
\bar y(x,q) {:=} \frac{1}{|x| + ||q||_{\mathbb L^\infty(dK)}\sup\limits_{\mathbb Q\in\mathcal M}\mathbb {E^Q}\left[\int_0^T|e_s|d\kappa_s \right]},
\end{equation}
takes values in $(0,\infty)$ and satisfies
\begin{equation}\label{6-18-1}
\bar y(x,q)\mathcal Z'\subseteq \mathcal D(x,q).
\end{equation}
In particular, for every $z>0$, $z\mathcal D(x,q)$ contains a strictly positive process $Y$ such that 
\begin{equation}\label{6-16-2}
\mathbb E\left[\int_0^TV(s, Y_s)d\kappa_s \right] <\infty.
\end{equation}
\end{Lemma}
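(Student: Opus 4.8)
The plan is to prove the two items in turn, drawing on Lemma~\ref{lemma1}, Corollary~\ref{cor1} and Lemma~\ref{finitenessOverZ'}. For item~$(i)$, I would use that $(x,q)\in\mathring{\mathcal K}$ to pick $\varepsilon>0$ with $(x-\varepsilon,q)\in\mathcal K$, and choose any $X\in\mathcal X(x-\varepsilon,q)$. Adding the constant $\varepsilon$ to the riskless holding (the interest rate is zero) produces an acceptable process $\tilde X:=X+\varepsilon\in\mathcal X(x,q)$ with $\tilde X_\tau+\int_0^\tau q_se_sd\kappa_s\geq\varepsilon$ for every $\tau\in\Theta$. Then the strictly positive constant consumption rate $c\equiv\varepsilon/A$, with $A$ the bound from \eqref{finClock}, satisfies $\int_0^\tau c_sd\kappa_s\leq(\varepsilon/A)\kappa_T\leq\varepsilon\leq\tilde X_\tau+\int_0^\tau q_se_sd\kappa_s$ for every $\tau\in\Theta$, so $c\in\mathcal A(x,q)$ and item~$(i)$ follows.

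For item~$(ii)$, the denominator in \eqref{defBary} is finite by Lemma~\ref{lemma1}, item~$(iii)$; it is strictly positive because $\mathring{\mathcal K}$, being the interior of the convex cone $\mathcal K$ which is a \emph{proper} subset of $\mathbb R\times\mathbb L^\infty(dK)$ (as $\mathcal L\neq\{0\}$ by Corollary~\ref{cor1}), cannot contain the origin, while in the degenerate case $e=0$, $(d\kappa\times\mathbb P)$-a.e., membership in $\mathring{\mathcal K}$ additionally forces $x>0$ by the usual no-arbitrage argument for acceptable processes (a nonnegative self-financing wealth is a supermartingale and a maximal process a uniformly integrable martingale under a common $\mathbb Q\in\mathcal M$). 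Hence $\bar y(x,q)\in(0,\infty)$.

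Next, to establish $\bar y(x,q)\mathcal Z'\subseteq\mathcal D(x,q)$, I would fix $Z\in\mathcal Z'$, the density of some $\mathbb Q\in\mathcal M'$, and invoke Corollary~\ref{cor1} to obtain $r^Z\in\mathbb L^1(dK)$ with $\int_0^tr^Z_sdK_s=\mathbb E\big[\int_0^tZ_se_sd\kappa_s\big]$, $(1,r^Z)\in\mathcal L$ and $Z\in\mathcal Y(1,r^Z)$; a monotone class argument, as in the proof of Lemma~\ref{6-16-1}, upgrades the defining relation to $\int_0^Tq_sr^Z_sdK_s=\mathbb E^{\mathbb Q}\big[\int_0^Tq_se_sd\kappa_s\big]$. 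Since $\mathcal L$ is a cone and $\lambda\mathcal Y(y,r)\subseteq\mathcal Y(\lambda y,\lambda r)$ for $\lambda>0$ (immediate from \eqref{oldY} and \eqref{defY}), we get $(\bar y,\bar yr^Z)\in\mathcal L$ and $\bar yZ\in\mathcal Y(\bar y,\bar yr^Z)$; and $\bar y\big(x+\int_0^Tq_sr^Z_sdK_s\big)=\bar y\big(x+\mathbb E^{\mathbb Q}[\int_0^Tq_se_sd\kappa_s]\big)\leq\bar y\big(|x|+||q||_{\mathbb L^\infty(dK)}\sup_{\mathbb Q'\in\mathcal M}\mathbb E^{\mathbb Q'}[\int_0^T|e_s|d\kappa_s]\big)=1$, so $(\bar y,\bar yr^Z)\in\mathcal B(x,q)$ and therefore $\bar yZ\in\mathcal Y(\bar y,\bar yr^Z)\subseteq\mathcal D(x,q)$, as $Z\in\mathcal Z'$ was arbitrary.

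Finally, for the concluding ``in particular'', given $z>0$ I would apply \eqref{2145} with $y=z\bar y(x,q)$: since $\widetilde w(z\bar y(x,q))<\infty$ there is $Y\in\mathcal Z'$ with $\mathbb E\big[\int_0^TV(t,z\bar y(x,q)Y_t)d\kappa_t\big]<\infty$, and $Y$ is strictly positive as a density process of an equivalent martingale measure; by the inclusion just proved, $z\bar y(x,q)Y\in z\mathcal D(x,q)$, and this process satisfies \eqref{6-16-2}. The main obstacle is the uniformity in the inclusion $\bar y(x,q)\mathcal Z'\subseteq\mathcal D(x,q)$: one must select a single constant $\bar y(x,q)$ so that \emph{every} scaled density $\bar y(x,q)Z$, $Z\in\mathcal Z'$, satisfies the dual budget constraint at some point of $\mathcal B(x,q)$, which is precisely why the denominator of \eqref{defBary} must carry the uniform bound $\sup_{\mathbb Q\in\mathcal M}\mathbb E^{\mathbb Q}[\int_0^T|e_s|d\kappa_s]$, finite by Lemma~\ref{lemma1}, item~$(iii)$; the remaining point requiring care is combining this with Lemma~\ref{finitenessOverZ'} to secure an element that is simultaneously admissible for the dual problem, strictly positive, and of finite $V$-integral.
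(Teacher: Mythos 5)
Your proposal is correct and follows the same route as the paper's (terse) proof: item $(i)$ via $(x-\varepsilon,q)\in\mathcal K$ and the constant consumption $\varepsilon/A$, item $(ii)$ via Corollary~\ref{cor1} to place $\bar y(x,q)Z$ into some $\mathcal Y(\bar y,\bar y r^Z)$ with $(\bar y,\bar y r^Z)\in\mathcal B(x,q)$, and the concluding statement via Lemma~\ref{finitenessOverZ'}. The extra care you take — explaining why the denominator of \eqref{defBary} is strictly positive, including the degenerate case $e=0$ $(d\kappa\times\mathbb P)$-a.e. where $\mathring{\mathcal K}=\{x>0\}\times\mathbb L^\infty(dK)$, and spelling out the budget inequality $\bar y\left(x+\mathbb E^{\mathbb Q}\left[\int_0^Tq_se_s\,d\kappa_s\right]\right)\leq 1$ — are details the paper leaves implicit, but they do not change the argument.
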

\begin{proof}
In order to show $(i)$, we observe that the existence of a positive process in $\mathcal A(x,q)$ follows from the fact that $(x-\delta, q)\in\mathcal K$ for a sufficiently small $\delta.$
Now the constant-valued consumption $\delta/A>0$, where $A$ is the constant that dominates the terminal value of the stochastic clock $\kappa$ in \eqref{finClock}, is in $\mathcal A(x,q)$.

In order to prove $(ii)$, 
let us consider $\bar y(x,q)$ given by \eqref{defBary}.
It follows from Lemma \ref{lemma1}, item $(iii)$, that $\bar y (x,q)\in (0,\infty)$. 
For this $\bar y(x,q)$, using 
Corollary \ref{cor1}, one can show~(\ref{6-18-1}). This and Lemma~\ref{finitenessOverZ'}  (note that finiteness of $\tilde w$ follows directly from \eqref{finValue})
imply that for every $z>0$, there exists a positive $Y\in z\mathcal D(x,q)$, such that (\ref{6-16-2}) holds.

\end{proof}
\begin{Lemma}\label{lemFinu}
Under the conditions of Theorem~\ref{mainTheorem}, for every $(x,q)\in\mathring{\mathcal K}$ we have
\begin{displaymath}
-\infty < u(x,q) < \infty.
\end{displaymath}
and $u<\infty$ on $\mathbb R\times \mathbb L^\infty(dK)$.
\end{Lemma}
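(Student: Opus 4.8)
The plan is to prove the two bounds separately: weak duality gives finiteness of $u$ from above, and an explicit admissible construction gives its finiteness from below on $\mathring{\mathcal K}$.

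\textit{Upper bound.} For $(x,q)\in\mathring{\mathcal K}$ I would invoke Lemma~\ref{6-12-1}$(ii)$ with $z=1$: the set $\mathcal D(x,q)=\bigcup_{(y,r)\in\mathcal B(x,q)}\mathcal Y(y,r)$ contains a strictly positive $Y$ with $\mathbb E\left[\int_0^TV(s,Y_s)d\kappa_s\right]<\infty$, hence $Y\in\mathcal Y(y,r)$ for some $(y,r)\in\mathcal B(x,q)\subseteq\mathcal L$ and $v(y,r)\le\mathbb E\left[\int_0^TV(s,Y_s)d\kappa_s\right]<\infty$. Lemma~\ref{6-17-1} then yields $u(x,q)\le v(y,r)+xy+\int_0^Tr_sq_sdK_s\le v(y,r)+1<\infty$, the last step being the defining inequality of $\mathcal B(x,q)$. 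For a general $(x,q)\in\mathbb R\times\mathbb L^\infty(dK)$: if $(x,q)\notin\mathcal K$ then $u(x,q)=-\infty$ by \eqref{3313}; otherwise I would fix once and for all a pair $(y',r')\in\mathcal L$ with $v(y',r')<\infty$ (produced by the argument above applied to $(1,0)\in\mathring{\mathcal K}$, cf. Lemma~\ref{lemma1}$(i)$), and Lemma~\ref{6-17-1} gives $u(x,q)\le v(y',r')+xy'+\int_0^Tr'_sq_sdK_s<\infty$, the last two terms being finite since $q\in\mathbb L^\infty(dK)$ and $r'\in\mathbb L^1(dK)$.

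\textit{Lower bound.} Let $(x,q)\in\mathring{\mathcal K}$. Using a positive constant consumption (available by Lemma~\ref{6-12-1}$(i)$) would only reduce the problem to finiteness from below of $\mathbb E\left[\int_0^TU(t,a)d\kappa_t\right]$ for constants $a>0$, which is not assumed; instead I would exploit the ``interior room'' of $\mathring{\mathcal K}$ together with the standing hypothesis \eqref{finValue}. Pick $\delta>0$ with $(x-\delta,q)\in\mathcal K$ and $X^{q,\delta}\in\mathcal X(x-\delta,q)$. Since $w(\delta/2)=u(\delta/2,0)>-\infty$ by \eqref{defw} and \eqref{finValue}, there is $\hat c\in\mathcal A(\delta/2,0)$ with $\mathbb E\left[\int_0^TU(t,\hat c_t)d\kappa_t\right]>-\infty$, witnessed by some $X^0\in\mathcal X(\delta/2,0)$ with $\int_0^\tau\hat c_sd\kappa_s\le X^0_\tau$ and $X^0_\tau\ge0$ for every $\tau\in\Theta$. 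Then $X:=X^{q,\delta}+X^0$ is acceptable (sums of acceptable processes are acceptable, \cite{DS97}), starts at $x-\delta/2$, and $X_\tau+\int_0^\tau q_se_sd\kappa_s=\left(X^{q,\delta}_\tau+\int_0^\tau q_se_sd\kappa_s\right)+X^0_\tau\ge0$ for $\tau\in\Theta$, so $X\in\mathcal X(x-\delta/2,q)$; moreover $\int_0^\tau\hat c_sd\kappa_s\le X^0_\tau\le X_\tau+\int_0^\tau q_se_sd\kappa_s$ on $\Theta$, whence $\hat c\in\mathcal A(x-\delta/2,q)\subseteq\mathcal A(x,q)$ by monotonicity of $\mathcal A(\cdot,q)$ in the wealth argument. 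Therefore $u(x,q)\ge\mathbb E\left[\int_0^TU(t,\hat c_t)d\kappa_t\right]>-\infty$.

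I expect the delicate point to be the lower bound: one must resist using the constant consumption and recognize that a fraction of the slack $\delta$ available at an interior point can be diverted to a pure investment-consumption subproblem with initial capital $\delta/2$, to which \eqref{finValue} applies directly; the remaining verifications are bookkeeping, once one uses that sums of acceptable processes are acceptable and that $\mathcal A(\cdot,q)$ is nondecreasing in the first argument. The upper bound, by contrast, is essentially a packaging of Lemmas~\ref{6-12-1} and \ref{6-17-1}.
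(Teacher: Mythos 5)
Your proof is correct and follows essentially the same route as the paper. For the upper bound the paper uses Lemma~\ref{finitenessOverZ'} and Corollary~\ref{cor1} directly instead of the packaged Lemma~\ref{6-12-1}(ii), and for the lower bound the paper writes $(x,q)=\lambda(x_1,q_1)+(1-\lambda)(x_2,0)$ and uses convexity of $\mathcal A(\cdot,\cdot)$ rather than your additive splitting of $\delta$ and summing of acceptable processes, but since $\mathcal K$ is a convex cone these are the same argument in mildly different clothing.
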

\begin{proof}
Let us fix an arbitrary $(x,q)\in\mathring{\mathcal K}$.  Since $\mathring{\mathcal K}$ is an open convex cone, there exists $\lambda\in(0,1)$, $(x_1, q_1)\in\mathring{\mathcal K}$, and $x_2>0$, such that
 $$(x,q) =\lambda (x_1, q_1) + (1- \lambda)(x_2,0).$$
Note that $(x_2, 0)\in\mathring{\mathcal K}$ by Lemma \ref{lemma1}. By \eqref{finValue}, there $c\in\mathcal A(x_2,0)$, such that 
\begin{equation}\label{2141}
\mathbb E\left[\int_0^TU\left(t, (1 - \lambda) c_t\right)d\kappa_t  \right] > -\infty.
\end{equation}
As $\mathcal A(x_1,q_1)\neq\emptyset$ (see Remark \ref{remNonEmpty}), there exists $\tilde c\in\mathcal A(x_1,q_1)$. As $U(t, \cdot)$ is nondecreasing, we get
\begin{displaymath}
u(x,q) \geq \mathbb E\left[\int_0^TU(t, \lambda \tilde c_t + (1 - \lambda) c_t)d\kappa_t  \right] 
\geq \mathbb E\left[\int_0^TU(t, (1 - \lambda) c_t)d\kappa_t  \right] 
> -\infty,
\end{displaymath}
where the last inequality follows from \eqref{2141}. This implies finiteness of $u$ on $\mathring{\mathcal K}$ from below. 

In order to show finiteness from above, let us fix a process $c\in\mathcal A(x,q)$, such~that 
\begin{equation}\nonumber
\mathbb E\left[\int_0^TU(t, c_t)d\kappa_t  \right] 
> -\infty.
\end{equation}
By Lemma \ref{finitenessOverZ'}, there exists $Y\in\mathcal Z'$, such that 
\begin{equation}\label{4103}
\mathbb E\left[\int_0^TV(t,Y_t)d\kappa_t \right]<\infty.
\end{equation}
It follows from Lemma \ref{6-16-1} that $Y\in\mathcal Y(1,\rho)$ for some $(1,\rho)\in \mathcal L$. Therefore, by Proposition \ref{prop1}, we get
\begin{equation}\label{4101}
\begin{array}{rcl}
\mathbb E\left[\int_0^T U (s, c_s)d\kappa_s \right]& \leq& \mathbb E\left[\int_0^T U(s,c_s)d\kappa_s \right] + x + \int_0^T\rho_sq_sdK_s - \mathbb E\left[\int_0^T c_sY_sd\kappa_s \right]\\
&\leq& \mathbb E\left[\int_0^T V(s,Y_s)d\kappa_s \right] + x + \int_0^T\rho_sq_sdK_s.  \\
\end{array}
\end{equation}
As $Y$ satisfies (\ref{4103}), we conclude  that $u(x,q) < \infty$. 
Moreover, for $(x,q)\in\mathcal K$, as $\mathcal A(x,q)\neq\emptyset$ by Remark \ref{remNonEmpty}, every $c\in\mathcal A(x,q)$ satisfies \eqref{4101} (with the same $Y$). 
This implies that $u<\infty$ on $\mathcal K$ and therefore, by \eqref{3313}, on $\mathbb R\times \mathbb L^\infty(dK)$. This completes the proof of the lemma.
\end{proof}
We recall that, for every $(x,q)\in\mathcal K$, $\mathcal D(x,q)$ is defined in \eqref{auxiliarySets}. Let $cl \mathcal D(x,q)$ denote the closure of $\mathcal D(x,q)$ in $\mathbb L^0(d\kappa \times \mathbb P).$ The following lemma proves a delicate point that, for  $(x,q)\in\mathring{\mathcal K}$, by passing from $\mathcal D(x,q)$ to $cl\mathcal D(x,q)$, we do not change the auxiliary dual value function. 
\begin{Lemma}\label{auxiliaryLemma}
Let the conditions of Theorem~\ref{mainTheorem} hold and $(x,q)\in\mathring{\mathcal K}$. Then, for every $z>0$, we have
\begin{equation}\label{2143}
-\infty<\inf\limits_{Y \in\cl \mathcal D(x,q)}\mathbb E\left[\int_0^TV(s, zY_s)d\kappa_s \right] =
\inf\limits_{Y \in \mathcal D(x,q)}\mathbb E\left[\int_0^TV(s, zY_s)d\kappa_s \right]<\infty.
\end{equation}
\end{Lemma}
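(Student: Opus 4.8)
The content of this lemma is that taking the $\mathbb L^0$-closure of $\mathcal D(x,q)$ does not lower the auxiliary dual value. The inequality $\le$ is automatic since $\mathcal D(x,q)\subseteq\cl\mathcal D(x,q)$, so the work is in the reverse inequality, and the finiteness bounds. The plan is to first dispose of finiteness: the upper bound $<\infty$ follows from Lemma~\ref{6-12-1}$(ii)$, which gives, for each $z>0$, a strictly positive $Y\in z\mathcal D(x,q)$ with $\mathbb E[\int_0^TV(s,Y_s)d\kappa_s]<\infty$; scaling through $\bar y(x,q)$ this supplies an element of $\mathcal D(x,q)$ achieving a finite value, and since $\mathcal D(x,q)\subseteq\cl\mathcal D(x,q)$ the closed infimum is $\le$ that finite number. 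The lower bound $-\infty<$ is the genuinely delicate half.

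**Lower bound via $\mathcal Y(y)$ and Lemma~\ref{6-17-1}.** The idea is to pass through the primal value function $u$, which we already know is finite on $\mathring{\mathcal K}$ by Lemma~\ref{lemFinu}. Every $Y\in\mathcal D(x,q)$ lies in some $\mathcal Y(y,r)$ with $(y,r)\in\mathcal B(x,q)$, and by Lemma~\ref{6-17-1} we have, for every $c\in\mathcal A(x,q)$,
\begin{displaymath}
\mathbb E\left[\int_0^T V(s,Y_s)d\kappa_s\right] \;\ge\; \mathbb E\left[\int_0^T U(s,c_s)d\kappa_s\right] - \left(xy + \int_0^T r_sq_sdK_s\right) \;\ge\; \mathbb E\left[\int_0^T U(s,c_s)d\kappa_s\right] - 1,
\end{displaymath}
using $(y,r)\in\mathcal B(x,q)$. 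Fixing one admissible $c\in\mathcal A(x,q)$ with $\mathbb E[\int_0^T U(s,c_s)d\kappa_s]>-\infty$ (which exists by the finiteness-from-below part of Lemma~\ref{lemFinu}), this bounds $\mathcal D(x,q)$-infimum below by a finite constant, uniformly in $z$ after rescaling. The point then is that this lower bound, being a $\kappa\times\mathbb P$-integral inequality against a \emph{fixed} $c$, is stable under $\mathbb L^0$-limits: if $Y^n\to Y$ in $\mathbb L^0(d\kappa\times\mathbb P)$ with $Y^n\in\mathcal D(x,q)$, Fatou's lemma for $V^{+}$ (using that $V$ is bounded below — $-V$ satisfies the Inada-type Assumption~\ref{assumptionOnU}, so $V\ge -U(\cdot,x_0)-x_0 y$-type bounds hold, or more directly $V(s,y)\ge U(s,x)-xy$ for any fixed $x$) gives the same lower bound for $Y$, hence for all of $\cl\mathcal D(x,q)$.

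**The reverse inequality $\ge$.** For the equality of the two infima, the cleanest route is: given $Y\in\cl\mathcal D(x,q)$, pick $Y^n\in\mathcal D(x,q)$ with $Y^n\to Y$ $(d\kappa\times\mathbb P)$-a.e.\ along a subsequence. We want $\liminf_n \mathbb E[\int_0^T V(s,zY^n_s)d\kappa_s]\le \mathbb E[\int_0^T V(s,zY_s)d\kappa_s]$ — but $V$ is convex decreasing, not continuous-with-good-one-sided-bounds in the direction we need, so a naive Fatou goes the wrong way. The trick (as in the Kramkov–Schachermayer / Mostovyi machinery) is to combine the two inequalities: we have shown $\inf_{\mathcal D}\ge\inf_{\cl\mathcal D}$ trivially and a finite lower bound $L$ for $\inf_{\mathcal D}$; it then suffices to show $\inf_{\cl\mathcal D}\ge\inf_{\mathcal D}$. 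For this, use the pointwise bound $V(s,zY_s)\ge U(s,c_s) - zc_sY_s$ for the fixed $c$ above, rearrange to $zc_sY_s \ge U(s,c_s)-V(s,zY_s)$, and note that along $Y^n\to Y$ the quantities $\mathbb E[\int_0^T c_sY^n_s d\kappa_s]$ are uniformly bounded (by $x+\int q_s r_s dK_s\le$ const, from membership in $\mathcal Y(y,r)$ with $(y,r)\in\mathcal B$) — a uniform-integrability input that, together with Fatou applied to the nonnegative part $V^{+}(s,zY^n_s)$ and dominated convergence arguments on the controlled part, forces convergence of the integrals, or at least $\limsup_n$ of the values to dominate the value at $Y$. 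I expect the main obstacle to be exactly this last point: getting the direction of the limit right for the concave/convex conjugate $V$ along a merely a.e.-convergent sequence, which typically requires invoking the uniform boundedness of $\mathbb E[\int c Y^n d\kappa]$ on $\mathcal D(x,q)$ (from Lemma~\ref{7-17-1}'s boundedness of $\mathcal B(x,q)$) to control the ``bad'' tail of $V$, exactly as in \cite[Lemma~4.2]{MostovyiNec}-type arguments, and I would structure the proof so that all the closure-stability is reduced to that single uniform bound plus Fatou for $V^{+}$.
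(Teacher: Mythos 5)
Your treatment of the two finiteness bounds matches the paper's: the upper bound via Lemma~\ref{6-12-1}, the lower bound by testing against a fixed $c\in\mathcal A(x,q)$ with $\mathbb E[\int_0^T U(s,c_s)d\kappa_s]>-\infty$, using the Fenchel inequality $U(s,c_s)\leq V(s,Y_s)+c_sY_s$ and the bound $\mathbb E[\int_0^T c_sY_sd\kappa_s]\leq 1$ which passes to $\cl\mathcal D(x,q)$ by Fatou (applied to the nonnegative integrand $c Y^n$, not to $V$). That part is fine.

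The gap is in the equality of the two infima. You correctly flag that a naive Fatou argument goes the wrong way, but the fix you sketch does not close the gap. To show $\inf_{\cl\mathcal D}\geq\inf_{\mathcal D}$ you need, for $Y^n\to\bar Y$ a.e.\ with $Y^n\in\mathcal D(x,q)$, an inequality of the type $\limsup_n\mathbb E[\int V(s,zY^n_s)d\kappa_s]\leq\mathbb E[\int V(s,z\bar Y_s)d\kappa_s]$, and the obstacle is the \emph{positive} part $V^{+}(s,zY^n_s)$, which can blow up precisely where $Y^n_s$ is close to $0$ (by the Inada condition on $-V$). The uniform bound on $\mathbb E[\int c_sY^n_sd\kappa_s]$ from Lemma~\ref{7-17-1} controls a linear functional of $Y^n$ and has nothing to say about the singularity of $V^{+}$ near zero; it cannot supply the domination reverse Fatou needs. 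The paper's resolution is a two-step device that you do not use: first, pick $\delta>0$ and a process $\bar y(x,q)Z'\in\mathcal D(x,q)$ (with $Z'\in\mathcal Z'$) satisfying $\mathbb E[\int V(t,\delta\bar y(x,q)Z'_t)d\kappa_t]<\infty$; then the convex combination $\frac{z}{z+\delta}Y^n+\frac{\delta}{z+\delta}\bar y(x,q)Z'$ stays in $\mathcal D(x,q)$, and since $V$ is nonincreasing, $V^{+}(t, zY^n_t+\delta\bar y(x,q)Z'_t)\leq V^{+}(t,\delta\bar y(x,q)Z'_t)$ gives the domination needed for reverse Fatou, yielding $\phi(z+\delta)\leq\inf_{\cl\mathcal D}\mathbb E[\int V(t,z\bar Y_t)d\kappa_t]$ where $\phi(z):=\inf_{\mathcal D}\mathbb E[\int V(t,zY_t)d\kappa_t]$. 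Second, one removes $\delta$ by observing that $\phi$ is finite-valued and convex on $(0,\infty)$ (by the first half of the lemma and convexity of $V$), hence continuous, so $\delta\downarrow 0$ gives $\phi(z)\leq\inf_{\cl\mathcal D}[\cdots]$. Both the additive perturbation by $\delta\bar y(x,q)Z'$ and the continuity-of-$\phi$ step are essential and absent from your sketch; without them the argument as written does not go through.
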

\begin{proof}
Finiteness from above follows from Lemma \ref{6-12-1}. To show finiteness of both infima in \eqref{2143} from below, by Lemma~\ref{lemFinu} we deduce
the existence of $c\in\mathcal A(x,q)$, such that
\begin{equation}\label{6-18-2}
\mathbb E\left[\int_0^TU(s, c_s)d\kappa_s \right] > -\infty.
\end{equation}
Let $Y\in{\cl \mathcal D(x,q)}$ and let $Y^n\in\mathcal Y(y^n,r^n)$, $n\geq 1$, be a sequence in $\mathcal D(x,q)$ that converges to $Y$ in $\mathbb L^0(d\kappa\times\mathbb P)$. By Fatou's lemma, Proposition~\ref{prop1}, and the definition of the set $\mathcal B(x,q)$ in \eqref{auxiliarySets}, we get
\begin{displaymath}
\mathbb E\left[\int_0^TY_sc_sd\kappa_s \right] \leq 
\liminf\limits_{n\to\infty}\mathbb E\left[\int_0^TY^n_sc_sd\kappa_s \right] \leq
\sup\limits_{n\geq 1}\left(xy^n + \int_0^Tr^n_sq_sdK_s\right) \leq 1. 
\end{displaymath}
Therefore, we obtain
\begin{displaymath}
\begin{array}{rcl}
\mathbb E\left[ \int_0^TU(s, c_s)d\kappa_s\right]&\leq& \mathbb E\left[ \int_0^TU(s, c_s)d\kappa_s\right] + 1 - \mathbb E\left[\int_0^TY_sc_sd\kappa_s \right] \\
&\leq& \mathbb E\left[ \int_0^TV(s, Y_s)d\kappa_s\right] + 1,\\
\end{array}
\end{displaymath}
which together with (\ref{6-18-2}) implies finiteness of both infima in \eqref{2143} from below.

Let us show equality of two infima in \eqref{2143}. It follows from Lemma 
\ref{6-12-1} that for every $z> 0$ there exists a process $Y\in z\mathcal D(x,q)$, such that
$$\mathbb E\left[\int_0^TV(s, Y_s)d\kappa_s \right] <\infty.$$
Let us fix $z>0$ and let $\bar Y\in\cl\mathcal D(x,q)$. Also, let $(Y^n)_{n\in\mathbb N}$ be a sequence in $\mathcal D(x,q)$ that converges to $\bar Y$ $(d\kappa\times\mathbb P)$-a.e. 
Let us fix $\delta >0$, then by Lemma \ref{finitenessOverZ'}, there exists $Z'\in\mathcal Z'$, such that 
$$\mathbb E\left[\int_0^TV(t, \delta\bar y(x,q) Z'_t)d\kappa_t \right]<\infty,$$
where $\bar y(x,q)$ is defined in \eqref{defBary}. Note that $\bar y(x,q) Z'\in \mathcal D(x,q)$ by Lemma \ref{6-12-1} (see \eqref{6-18-1}). 
Therefore, using Fatou's lemma and monotonicity of $V$ in the spatial variable, we obtain
\begin{displaymath}
\begin{array}{rcl}
\inf\limits_{Y \in \mathcal D(x,q)}\mathbb E\left[\int_0^TV(t, (z+\delta)Y_s)d\kappa_s \right]&\leq&
\limsup\limits_{n\to\infty}\mathbb E\left[\int_0^TV(t, zY^n_s+\delta \bar y(x,q)Z'_s)d\kappa_s \right]\\ 
&\leq  &\mathbb E\left[\int_0^TV(t, z\bar Y_s + \delta\bar y(x,q) Z'_s)d\kappa_s \right] \\
&\leq &\mathbb E\left[\int_0^TV(t, z\bar Y_s )d\kappa_s \right].\\
\end{array}
\end{displaymath}
Taking the infimum over $Y\in\cl\mathcal D(x,y)$, we deduce that
\begin{equation}\label{21410}
 \inf\limits_{Y \in \mathcal D(x,q)}\mathbb E\left[\int_0^TV(t, (z+\delta)Y_s)d\kappa_s \right] \leq \inf\limits_{Y\in\cl\mathcal D(x,y)}\mathbb E\left[\int_0^TV(t, zY_s )d\kappa_s \right].
\end{equation}
Let us consider 
$$\phi(z){:=} \inf\limits_{Y \in \mathcal D(x,q)}\mathbb E\left[\int_0^TV(t, zY_s)d\kappa_s \right], \quad z>0.$$
By the first part of the proof (finiteness of both infima), $\phi$ is finite-valued on $(0,\infty)$.  Convexity of $V$ in the spatial variable implies that $\phi$ is also convex.  Therefore, $\phi$ is continuous. As \eqref{21410} holds for every $\delta>0$, by taking the limit as $\delta \downarrow 0$ in \eqref{21410}, we conclude that both infima in \eqref{2143} are equal. This completes the proof of the lemma.

\end{proof}

Let us define
\begin{equation}\label{defE}
\mathcal E {:=} \{(y,r)\in \mathcal L: ~v(y,r) < \infty \}.
\end{equation}
\begin{Lemma}\label{lemFinv}
Under the conditions of Theorem~\ref{mainTheorem}, for every $(y,r)\in \mathcal L$, we have
$$v(y,r) > -\infty.$$
Therefore, $v>-\infty$ on $\mathbb R\times\mathbb L^1(dK)$. The set 
 $\mathcal E$ is a nonempty convex subset of $\mathcal L$, whose closure in $\mathbb R\times\mathbb L^1(dK)$ equals to $\mathcal L$, and such that
\begin{equation}\label{6-17-2}
\mathcal E= \bigcup\limits_{\lambda \geq 1 }\lambda \mathcal E.
\end{equation}

\end{Lemma}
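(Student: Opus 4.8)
The plan is to assemble the statement from the preliminary lemmas already established. First, the bound $v(y,r) > -\infty$ for every $(y,r) \in \mathcal L$: fix $(y,r) \in \mathcal L$ and take any $Y \in \mathcal Y(y,r)$. I would pick a point $(x,q) \in \mathring{\mathcal K}$ (which is nonempty by Lemma \ref{lemma1}), say $(x,q) = (1,0)$ after rescaling, and use Lemma \ref{lemFinu} to choose $c \in \mathcal A(x,q)$ with $\mathbb E[\int_0^T U(t,c_t)\,d\kappa_t] > -\infty$. By Proposition \ref{prop1}, $\mathbb E[\int_0^T c_s Y_s\,d\kappa_s] \leq xy + \int_0^T q_s r_s\,dK_s =: b < \infty$, and by the definition \eqref{defV} of the conjugate, $U(t,c_t) - c_t Y_t \leq V(t,Y_t)$ pointwise; integrating and taking expectations gives $\mathbb E[\int_0^T V(t,Y_t)\,d\kappa_t] \geq \mathbb E[\int_0^T U(t,c_t)\,d\kappa_t] - b > -\infty$. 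Taking the infimum over $Y \in \mathcal Y(y,r)$ yields $v(y,r) > -\infty$; and for $(y,r) \notin \mathcal L$ we have $v = +\infty > -\infty$ by convention \eqref{3313}. This handles the first two sentences.

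Next, $\mathcal E \neq \emptyset$: by Lemma \ref{finitenessOverZ'}, $\tilde w(1) = \inf_{Y \in \mathcal Z'}\mathbb E[\int_0^T V(t,Y_s)\,d\kappa_s] < \infty$, so there exists $Z \in \mathcal Z'$ with $\mathbb E[\int_0^T V(t,Z_s)\,d\kappa_s] < \infty$; by Corollary \ref{cor1} (or Lemma \ref{6-16-1}), $Z \in \mathcal Y(1,\rho)$ for a suitable $\rho$ with $(1,\rho) \in \mathcal L$, hence $v(1,\rho) < \infty$ and $(1,\rho) \in \mathcal E$. Convexity of $\mathcal E$ follows from convexity of $\mathcal L$ together with joint convexity of $(y,r) \mapsto v(y,r)$: for $(y^i,r^i) \in \mathcal E$ and $\lambda \in [0,1]$, picking $Y^i \in \mathcal Y(y^i,r^i)$ with nearly optimal value, the convex combination $\lambda Y^1 + (1-\lambda) Y^2$ lies in $\mathcal Y(\lambda y^1 + (1-\lambda) y^2, \lambda r^1 + (1-\lambda) r^2)$ (this membership is immediate from the linear constraint defining $\mathcal Y(y,r)$ in \eqref{defY} and convexity of $\mathcal Y(y)$), and Jensen/convexity of $V(t,\cdot)$ gives the bound on the value. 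The scaling identity \eqref{6-17-2}, $\mathcal E = \bigcup_{\lambda \geq 1} \lambda \mathcal E$, reduces to showing $\mathcal E$ is absorbing along rays through larger scalars: if $(y,r) \in \mathcal E$ and $\lambda \geq 1$, then $\lambda Y \in \mathcal Y(\lambda y, \lambda r)$ for $Y \in \mathcal Y(y,r)$, and since $V(t,\lambda z) \leq V(t,z)$ for $\lambda \geq 1$ (as $V(t,\cdot)$ is decreasing, being $-V$ a utility satisfying Assumption \ref{assumptionOnU}), we get $v(\lambda y, \lambda r) \leq v(y,r) < \infty$, so $\lambda(y,r) \in \mathcal E$; the reverse inclusion is trivial.

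Finally, the density statement: $\cl \mathcal E = \mathcal L$ in the $\mathbb R \times \mathbb L^1(dK)$-norm. Since $\mathcal E \subseteq \mathcal L$ and $\mathcal L$ is norm-closed (see the Remark following \eqref{defL}), $\cl \mathcal E \subseteq \mathcal L$. For the reverse, fix $(y,r) \in \mathcal L$; I would approximate it by elements of $\mathcal E$ by taking convex combinations with a fixed point $(y_0,r_0) \in \mathcal E$ — namely $(y_\varepsilon, r_\varepsilon) := (1-\varepsilon)(y,r) + \varepsilon (y_0,r_0)$, which lies in $\mathcal L$ by convexity. The key point is that $v(y_\varepsilon, r_\varepsilon) < \infty$: by convexity of $v$, $v(y_\varepsilon,r_\varepsilon) \leq (1-\varepsilon) v(y,r) + \varepsilon v(y_0,r_0)$, and the right side is finite precisely when $v(y,r) < \infty$ — which is not guaranteed. \textbf{This is the main obstacle.} To get around it, I would instead use Lemma \ref{auxiliaryLemma}: for a base point $(x^*,q^*) \in \mathring{\mathcal K}$, the auxiliary dual value over $\cl \mathcal D(x^*,q^*)$ is finite, and by Lemma \ref{6-12-1}, $\bar y(x^*,q^*)\mathcal Z' \subseteq \mathcal D(x^*,q^*)$, so finiteness there forces $v$ to be finite at the corresponding pairs $(1,\rho)$; combining these with convex combinations with $(y,r)$ and using that $v$ is finite on an open-in-$\mathcal L$ neighborhood obtained this way, one deduces $v(y_\varepsilon,r_\varepsilon)<\infty$ for $\varepsilon > 0$, hence $(y_\varepsilon,r_\varepsilon) \in \mathcal E$ and $(y_\varepsilon,r_\varepsilon) \to (y,r)$ in norm as $\varepsilon \downarrow 0$. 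Thus $\mathcal L \subseteq \cl \mathcal E$, completing the proof.
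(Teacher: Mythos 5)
Your handling of $v > -\infty$, the convexity of $\mathcal E$, and the scaling identity \eqref{6-17-2} is correct and matches the paper's route (the paper simply cites Lemma~\ref{6-17-1} together with \eqref{finValue} for the first point, whereas you re-derive that inequality; this is fine). The difficulty comes at the density claim, where you correctly flag the obstruction — convexity of $v$ gives $v(y_\varepsilon,r_\varepsilon)\leq (1-\varepsilon)v(y,r)+\varepsilon v(y_0,r_0)$, which is useless when $v(y,r)=\infty$ — but the workaround you propose (``using that $v$ is finite on an open-in-$\mathcal L$ neighborhood \dots one deduces $v(y_\varepsilon,r_\varepsilon)<\infty$'') does not constitute an argument; it is exactly the step that requires a new idea, and nothing in your sketch supplies the mechanism. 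In particular, if $(y_0,r_0)\in\mathcal E$ is fixed and $Y_0\in\mathcal Y(y_0,r_0)$ has finite dual value, the natural dual element in $\mathcal Y(y_\varepsilon,r_\varepsilon)$ that comes from the domain-convexity, namely $(1-\varepsilon)\cdot 0 + \varepsilon Y_0 = \varepsilon Y_0$, is \emph{smaller} than $Y_0$, and since $V(t,\cdot)$ is decreasing the value $\mathbb E[\int V(t,\varepsilon Y_{0,t})\,d\kappa_t]$ can very well be $+\infty$. So the naive convex combination really does fail, for a second reason you did not note.

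The paper's resolution has two ingredients you are missing. First, it establishes that the \emph{origin} lies in $\cl\mathcal E$: it identifies $\cl\mathcal D(1,0)=\mathcal Y(1)$, invokes \cite[Theorem~3.2]{MostovyiNec} to produce $Y(x)=U'(\cdot,\widehat c(x))\in w'(x)\,\cl\mathcal D(1,0)$ with finite dual value, and uses $w'(x)\downarrow 0$ (Inada) together with Lemmas~\ref{6-12-1} and~\ref{auxiliaryLemma} to push the associated pair $(y^0,r^0)$ to the origin. Second — and this is the step that substitutes for the failed convexity argument — for a given $(y,r)\in\mathcal L$ it picks such a $(y^0,r^0)\in\mathcal E$ close to $0$ with a witness $Y\in\mathcal Y(y^0,r^0)$ of finite dual value, chooses $\alpha$ large, and sets $\varepsilon'=1/\alpha$, $(\tilde y,\tilde r)=(1-\varepsilon')(y,r)+\varepsilon'(\alpha y^0,\alpha r^0)$. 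Since $0\in\mathcal Y(y,r)$ and $\alpha Y\in\mathcal Y(\alpha y^0,\alpha r^0)$, domain-convexity (Proposition~\ref{prop1}) gives $(1-\varepsilon')\cdot 0+\varepsilon'\cdot\alpha Y = Y\in\mathcal Y(\tilde y,\tilde r)$ — the very same $Y$, not a shrunken copy — so $v(\tilde y,\tilde r)\leq\mathbb E[\int V(t,Y_t)\,d\kappa_t]<\infty$. The two free parameters (smallness of $(y^0,r^0)$ and largeness of $\alpha$) then make $(\tilde y,\tilde r)$ arbitrarily close to $(y,r)$ in $\mathbb R\times\mathbb L^1(dK)$. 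You will want to add both the origin-in-closure step and this scaling trick to make your argument complete.
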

\begin{proof}
Let us  fix $(y,r)\in \mathcal L$, then finiteness of $v(y,r)$ from below follows from  \eqref{finValue} and Lemma~\ref{6-17-1}.
To establish the properties of $\mathcal E$, we observe that the convexity of $\mathcal E$  and (\ref{6-17-2}) follow from convexity and monotonicity of $V$, respectively.

In remains to show that the closure of $\mathcal E$ in $\mathbb R\times\mathbb L^1(dK)$ contains the origin. 
In \eqref{auxiliarySets}, let us consider $(x,q) = (1,0)\in\mathcal K$. In this case, we have 
 $$\mathcal D(1,0) = \bigcup\limits_{(y,r)\in\mathcal L: y\leq 1}\mathcal Y(y,r)\subseteq \mathcal Y(1),$$
where the last inclusion follows from the  very  definition of $\mathcal Y(y,r)$'s in \eqref{defY}.
As, by \eqref{oldY}, $\mathcal Y(1)$ is closed in $\mathbb L^0(d\kappa\times\mathbb P)$  and $\mathcal D(1,0)\subseteq \mathcal Y(1)$, we deduce that 
\begin{equation}\label{2146}
\cl \mathcal D(1,0)\subseteq \mathcal Y(1).
\end{equation}
 By Lemma \ref{6-12-1}, $\mathcal Z'\subset \mathcal D(1,0)$, as $\bar y(1,0) = 1$. Therefore, by the bipolar theorem of Brannath and Schachermayer, \cite[Theorem 1.3]{BranSchach}, we get
\begin{equation}\label{2147}
(\mathcal Z')^{oo} \subseteq\cl\mathcal D(1,0).
\end{equation}
On the other hand, Lemma \ref{finitenessOverZ'} asserts that 
\begin{equation}\label{2148}
(\mathcal Z')^{oo} = \mathcal Y(1).
\end{equation}
Combining \eqref{2146}, \eqref{2147}, and \eqref{2148}, we conclude
\begin{equation}\nonumber
\cl \mathcal D(1,0)= \mathcal Y(1).
\end{equation}
Therefore, the sets $\cl\mathcal D(1,0)= \mathcal Y(1)$ and $\mathcal A(1,0)$ satisfy the  precise technical  assumptions of \cite[Theorem 3.2]{MostovyiNec}, which, for every $x>0$, grants  the existence of $\widehat c(x)\in\mathcal A(x,0)$, the unique maximizer to $w(x)$, where $w$ is defined in \eqref{defw}.
For every $x>0$, we set
$$
Y_{\cdot}(x){:=} U'({\cdot}, \widehat c_{\cdot}(x)), \quad 
(d\kappa\times\mathbb P)-a.e.
$$
By \cite[Theorem 3.2]{MostovyiNec}, for every $x>0$,  $Y(x)$  satisfies
$$Y(x) \in w'(x)\cl\mathcal D(1,0)\quad and\quad \mathbb E\left[ \int_0^TV(t,  Y_t(x))d\kappa_t\right]<\infty,\quad x>0,$$
where by \cite[Theorem 3.2]{MostovyiNec}, $w$ is a strictly concave, differentiable function on $(0,\infty)$ that satisfies the Inada conditions. 
Therefore, as $w'(x)$ can be arbitrary close to $0$ (by taking $x$ large enough an by using the Inada conditions) and by Lemmas \ref{6-12-1} and \ref{auxiliaryLemma}, we conclude that the closure of $\mathcal E$ in $\mathbb R\times \mathbb L^1(dK)$ contains origin. 

In order to prove that the closure of $\mathcal E$ in $\mathbb R\times \mathbb L^1(dK)$ equals to $\mathcal L$, let $(y,r)\in\mathcal L\backslash (0, 0)$ be fixed. Let us take $\varepsilon>0$. We want to find $(\tilde y, \tilde r)$, such that 
\begin{equation}
\label{4171}
|\tilde y - y| + || \tilde r- r ||_{\mathbb L^1(dK)}<\varepsilon,
\end{equation}
and
\begin{equation}\label{4172}
(\tilde y, \tilde r)\in\mathcal E.
\end{equation}
As the closure of $\mathcal E$ in $\mathbb R\times \mathbb L^1(dK)$ contains origin, we can pick $(y^0, r^0)\in\mathcal E$, such that 
\begin{equation}\label{4173}
|y^0| + ||r^0||_{\mathbb L^1(dK)} \leq \varepsilon/3
\end{equation}
and $Y\in\mathcal Y(y^0,r^0)$, such that 
\begin{equation}\label{4174}
\mathbb E\left[\int_0^T V(t, Y_t)d\kappa_t \right]<\infty.
\end{equation}
Let us fix $\alpha >1$, such that 
\begin{equation}\label{4177}
\frac{|y| + ||r||_{\mathbb L^1(dK)}}{\alpha} \leq \varepsilon/3
\end{equation}
 and set $\varepsilon' {:=} \frac{1}{\alpha}\in(0,1)$. By \eqref{6-17-2}, $(\alpha y^0, \alpha r^0)\in\mathcal E.$ Let 
\begin{equation}\nonumber
\tilde y {:=}  (1-\varepsilon')y + \varepsilon'\alpha y^0,\quad \tilde r {:=} (1-\varepsilon')r + \varepsilon'\alpha r^0.
\end{equation} 
Then
\begin{displaymath}
\begin{array}{rcl}
|y - \tilde y| + || r - \tilde r||_{\mathbb L^1(dK)} &= & \varepsilon' \alpha |\frac{y}{\alpha} - y^0| + \varepsilon'\alpha || \frac{r}{\alpha} - r^0||_{\mathbb L^1(dK)}\\
&\leq & \frac{|y| + ||r||_{\mathbb L^1(dK)}}{\alpha} + |y^0| + ||r^0||_{\mathbb L^1(dK)}
\\
&\leq & \frac{2\varepsilon}{3},
\end{array}
\end{displaymath}
where in the last inequality we have used \eqref{4173} and \eqref{4177}. Thus $(\tilde y, \tilde r)$ satisfies \eqref{4171}. Further, as $0\in\mathcal Y(y,r)$, by convexity of $\mathcal L$ and using Proposition \ref{prop1}, we get
\begin{displaymath}
Y=(1-\varepsilon') 0 + \varepsilon' \alpha Y \in\mathcal Y(\tilde y, \tilde r),
\end{displaymath}
which by \eqref{4174} implies \eqref{4172}. This completes the proof of the lemma.
\end{proof}

\subsection{Existence and uniqueness of solutions to \eqref{primalProblem} and \eqref{dualProblem}; semicontinuity and biconjugacy of $u$ and $v$}

\begin{Lemma}\label{existenceUniqueness}
Under the conditions of Theorem~\ref{mainTheorem},
the value function $v$ is convex, proper, and lower semicontinuous with respect to the  topology of $\mathbb R\times \mathbb L^1(dK)$. For every $(y,r)\in\mathcal E$, there exists a unique solution to 
(\ref{dualProblem}). Likewise, $u$ is concave, proper, and upper semicontinuous  with respect to  the  strong  topology of $\mathbb R\times \mathbb L^\infty(dK)$. For every $(x,q)\in\{u>-\infty\}$ there exists a unique solution to~(\ref{primalProblem}).
\end{Lemma}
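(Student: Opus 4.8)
The plan is to derive all four conclusions---convexity, properness and lower semicontinuity of $v$ on $\mathbb R\times\mathbb L^1(dK)$, the mirror statements for $u$, and the existence and uniqueness of the two optimizers---from one compactness scheme: extraction, by a Koml\'os/Delbaen--Schachermayer argument, of $(d\kappa\times\mathbb P)$-a.e.\ convergent convex combinations from (near-)optimal sequences, combined with the budget characterizations of Proposition~\ref{prop1}, the conjugate inequality $U(t,x)\le V(t,y)+xy$, and the finiteness/perturbation results of Lemmas~\ref{finitenessOverZ'}, \ref{6-12-1} and \ref{auxiliaryLemma}. Convexity of $v$ and concavity of $u$ are immediate: $V(t,\cdot)$ is convex, $U(t,\cdot)$ concave, $\mathcal Y(y)$ is convex (densities of local martingale measures form a convex set) and the inequalities in \eqref{defY} are affine, so $\lambda Y^1+(1-\lambda)Y^2\in\mathcal Y(\lambda y^1+(1-\lambda)y^2,\lambda r^1+(1-\lambda)r^2)$ whenever $Y^i\in\mathcal Y(y^i,r^i)$, and symmetrically $\mathcal A(x^1,q^1)+\mathcal A(x^2,q^2)\subseteq\mathcal A(x^1+x^2,q^1+q^2)$. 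Properness of $v$ ($v>-\infty$ everywhere, $\{v<\infty\}=\mathcal E\neq\emptyset$) is Lemma~\ref{lemFinv}; properness of $u$ follows from Lemma~\ref{lemFinu} ($u$ finite on $\mathring{\mathcal K}\neq\emptyset$) and the conventions \eqref{3313}.

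For the dual minimizer, fix $(y,r)\in\mathcal E$ and a minimizing sequence $Y^n\in\mathcal Y(y,r)\subseteq\mathcal Y(y)$. Since $\mathcal Y(y)$ is convex and closed in $\mathbb L^0(d\kappa\times\mathbb P)$ by \eqref{oldY}, pass to convex combinations $\widetilde Y^n\in\conv(Y^n,Y^{n+1},\dots)$ converging a.e.\ to some $\widehat Y$; each $\widetilde Y^n$ remains in $\mathcal Y(y,r)$ by convexity, and so does $\widehat Y$ --- it lies in $\mathcal Y(y)$ by closedness, and the budget inequalities of Proposition~\ref{prop1} pass to the limit by Fatou. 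It remains to show
\[
\mathbb E\left[\int_0^TV(t,\widehat Y_t)d\kappa_t\right]\le\liminf_n\mathbb E\left[\int_0^TV(t,\widetilde Y^n_t)d\kappa_t\right]\le v(y,r),
\]
the right inequality following from convexity of $V$ (the $n$-th integral is at most a convex combination of the numbers $\mathbb E[\int_0^TV(t,Y^k_t)d\kappa_t]\to v(y,r)$). The left inequality --- lower semicontinuity of the integral functional along a.e.-convergent sequences --- is the crux, because $V(t,\cdot)$ need not be bounded below (the paper permits $U(t,0)=-\infty$) and Fatou does not apply directly. I would remove this obstacle exactly as in the proof of Lemma~\ref{auxiliaryLemma}: choose $(x,q)\in\mathring{\mathcal K}$ with $\mathcal Y(y,r)\subseteq z_0\mathcal D(x,q)$ for a suitable constant $z_0$, and for each $\delta>0$ a process $\bar Y^{\delta}:=\delta\,\bar y(x,q)\,Z'_{\delta}$ with $Z'_{\delta}\in\mathcal Z'$ and $\mathbb E[\int_0^TV(t,\bar Y^{\delta}_t)d\kappa_t]<\infty$ (Lemma~\ref{finitenessOverZ'}); then $\widetilde Y^n+\bar Y^{\delta}$ is bounded below by the strictly positive $\bar Y^{\delta}$ and, under $V$, bounded above by the integrable $V(\cdot,\bar Y^{\delta})$ (monotonicity of $V$), so a reverse-Fatou step applies to $\widetilde Y^n+\bar Y^{\delta}\to\widehat Y+\bar Y^{\delta}$; combining this with $\widehat Y+\bar Y^{\delta}\in\mathcal Y(y+o(1),r+o(1))$ (Corollary~\ref{cor1} and Lemma~\ref{lemma1}(iii)), monotonicity of $V$ in the spatial variable, and the same continuity-from-convexity argument used in Lemma~\ref{auxiliaryLemma} for the perturbed value function, and letting $\delta\downarrow0$, yields the left inequality. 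Uniqueness is forced by strict convexity of $V(t,\cdot)$: two distinct optimizers $Y^1,Y^2$ would give $\tfrac12(Y^1+Y^2)\in\mathcal Y(y,r)$ with $V(t,\tfrac12(Y^1_t+Y^2_t))<\tfrac12V(t,Y^1_t)+\tfrac12V(t,Y^2_t)$ on $\{Y^1\neq Y^2\}$, a set of positive $d\kappa\times\mathbb P$-measure, contradicting optimality.

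Lower semicontinuity of $v$ on $\mathbb R\times\mathbb L^1(dK)$ is the same argument applied to a sequence $(y^n,r^n)\to(y,r)$: if $\liminf_nv(y^n,r^n)=\infty$ there is nothing to prove; otherwise, after passing to a subsequence realizing the liminf, pick $Y^n\in\mathcal Y(y^n,r^n)$ with $\mathbb E[\int_0^TV(t,Y^n_t)d\kappa_t]\le v(y^n,r^n)+1/n$ and extract a.e.-convergent convex combinations $\widetilde Y^n\to\widehat Y$; now $\widetilde Y^n\in\mathcal Y(\widetilde y^n,\widetilde r^n)$ with $(\widetilde y^n,\widetilde r^n)\to(y,r)$ (convex combinations, over tails, of a convergent sequence), so $\widehat Y\in\mathcal Y(y,r)$ by Fatou in the budget inequalities and $v(y,r)\le\mathbb E[\int_0^TV(t,\widehat Y_t)d\kappa_t]\le\liminf_n\mathbb E[\int_0^TV(t,\widetilde Y^n_t)d\kappa_t]\le\liminf_nv(y^n,r^n)$ by the perturbation estimate above. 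Since $v=\infty$ off the norm-closed set $\mathcal L$, lower semicontinuity holds at every point.

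For $u$: existence and uniqueness of the maximizer on $\{u>-\infty\}$ follow the mirror pattern --- fix $(x,q)$ with $u(x,q)>-\infty$ and a maximizing sequence $c^n\in\mathcal A(x,q)$, extract convex combinations $\widetilde c^n\to\widehat c$ a.e., use Proposition~\ref{prop1}(i) and Fatou to get $\widehat c\in\mathcal A(x,q)$, and observe that convexity of $U$ gives $\liminf_n\mathbb E[\int_0^TU(t,\widetilde c^n_t)d\kappa_t]\ge u(x,q)$, so it remains to bound $\limsup_n\mathbb E[\int_0^TU(t,\widetilde c^n_t)d\kappa_t]$ from above by $\mathbb E[\int_0^TU(t,\widehat c_t)d\kappa_t]$; the required control on the positive parts is obtained by fixing $Y\in\mathcal Y(y',r')$ with $(y',r')\in\mathcal E$ and $\mathbb E[\int_0^TV(t,Y_t)d\kappa_t]<\infty$, writing $U(t,\widetilde c^n_t)-\widetilde c^n_tY_t\le V(t,Y_t)$ (a fixed integrable upper bound permitting reverse Fatou on $U(t,\widetilde c^n_t)-\widetilde c^n_tY_t$) and controlling $\mathbb E[\int_0^T\widetilde c^n_tY_td\kappa_t]$ through the budget bound $xy'+\int_0^Tq_sr'_sdK_s$ --- the primal counterpart of the obstacle above --- while strict concavity of $U(t,\cdot)$ gives uniqueness. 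Upper semicontinuity of $u$ with respect to the $\mathbb R\times\mathbb L^\infty(dK)$-norm topology is obtained by showing $\{u\ge a\}$ is closed: for $(x^n,q^n)\to(x,q)$ with $u(x^n,q^n)\ge a$, take near-optimal $c^n\in\mathcal A(x^n,q^n)$, extract $\widetilde c^n\to\widehat c$ a.e.\ with $\widetilde c^n\in\mathcal A(\widetilde x^n,\widetilde q^n)$, $(\widetilde x^n,\widetilde q^n)\to(x,q)$, obtain $\widehat c\in\mathcal A(x,q)$ from Proposition~\ref{prop1}(i) and Fatou, and combine the convexity lower bound with the reverse-Fatou upper bound (same conjugate-inequality device) to conclude $u(x,q)\ge\mathbb E[\int_0^TU(t,\widehat c_t)d\kappa_t]\ge a$; since this uses nothing about $(x,q)\in\mathring{\mathcal K}$, it covers boundary points of $\mathcal K$ as well, while off $\cl\mathcal K=\mathcal K$ one has $u\equiv-\infty$ on a neighborhood and the property is trivial. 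Assembling these pieces with Lemmas~\ref{lemFinv} and \ref{lemFinu} gives all the assertions of the lemma.
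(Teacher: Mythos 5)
There is a genuine gap at the crux you yourself identify: the lower semicontinuity of $Y\mapsto\mathbb E\!\left[\int_0^TV(t,Y_t)\,d\kappa_t\right]$ along $(d\kappa\times\mathbb P)$-a.e.\ convergent sequences when $V$ is not bounded below. Your replacement for Fatou is a perturbation/reverse-Fatou scheme borrowed from the proof of Lemma~\ref{auxiliaryLemma}, but the inequalities it produces go the wrong way. With $g_n:=V(\widetilde Y^n+\bar Y^\delta)\le V(\bar Y^\delta)$ integrable, reverse Fatou gives
\[
\limsup_n\mathbb E\!\left[\int_0^T V(t,\widetilde Y^n_t+\bar Y^\delta_t)\,d\kappa_t\right]\;\le\;\mathbb E\!\left[\int_0^T V(t,\widehat Y_t+\bar Y^\delta_t)\,d\kappa_t\right],
\]
i.e.\ an \emph{upper} bound on a $\limsup$ of the perturbed quantities, whereas what you need is a \emph{lower} bound on $\liminf_n\mathbb E[\int V(t,\widetilde Y^n_t)d\kappa_t]$ by $\mathbb E[\int V(t,\widehat Y_t)d\kappa_t]$. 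Since also $V(\widetilde Y^n+\bar Y^\delta)\le V(\widetilde Y^n)$, everything you control is dominated by the quantity you want to bound from below, and the chain does not close; the ``continuity-from-convexity'' device in Lemma~\ref{auxiliaryLemma} is designed to compare two infima over nested feasible sets, not to establish sequential lower semicontinuity of the integral functional. The paper closes this gap with a different tool: after passing to convex combinations $\widetilde Y^n$ that all lie in $\mathcal Y(\bar y)$ with $\bar y:=\sup_n y^n$, it invokes the standing finiteness hypothesis $\widetilde w(\cdot)<\infty$ through \cite[Lemma 3.5]{MostovyiNec} to conclude that $\bigl(V^{-}(t,\widetilde Y^n_t)\bigr)_n$ is uniformly integrable, and then the usual (forward) Fatou argument applies with the negative parts under control. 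You do not invoke this uniform integrability, and the proposal gives no substitute for it.

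The same directional difficulty reappears in your primal argument: after writing $U(t,\widetilde c^n_t)-\widetilde c^n_tY_t\le V(t,Y_t)$ and applying reverse Fatou, recombining requires handling $\limsup_n\mathbb E[\int\widetilde c^n_tY_td\kappa_t]$; what Fatou gives for the nonnegative integrands $\widetilde c^nY$ is $\mathbb E[\int\widehat c_tY_td\kappa_t]\le\liminf_n\mathbb E[\int\widetilde c^n_tY_td\kappa_t]$, which is again the wrong inequality for the decomposition you need. The remaining parts of your proposal (convexity of $v$ and concavity of $u$ from the affine structure of the budget constraints, properness from Lemmas~\ref{lemFinu}--\ref{lemFinv}, uniqueness from strict convexity/concavity, and the reduction of existence to the lower semicontinuity argument) are fine and broadly match the paper's strategy, but they all hinge on the semicontinuity step, which as written is not established.
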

\begin{proof}
Let $(y^n, r^n)_{n\in\mathbb N}$ be a sequence in $\mathcal L$ that converges to $(y,r)$ in $\mathbb R\times\mathbb L^1(dK)$.  
Passing if necessary to a subsequence, we will assume that 
\begin{equation}\label{2151}
\lim\limits_{n\to\infty} v(y^n,r^n)= \liminf\limits_{n\to\infty}v(y^n, r^n).
\end{equation}
Let $Y^n\in\mathcal Y(y^n, r^n)$, $n\in\mathbb N$, be such that 
\begin{equation}\label{2152}
\mathbb E\left[ \int_0^TV(t,Y^n_t)d\kappa_t\right]\leq v(y^n, r^n) +\frac{1}{n}, \quad n\in\mathbb N.
\end{equation}
By passing to convex combinations and applying Komlos'-type lemma, see e.g. \cite[Lemma A1.1]{DS}, we may suppose that $\widetilde Y^n\in\conv\left(Y^n, Y^{n+1},\dots\right)$, $n\in\mathbb N$, converges $(d\kappa\times\mathbb P)$-a.e. to some $\widehat Y$. 

For every $(x,q)\in\mathcal K$ and $c\in\mathcal A(x,q)$, by Fatou's lemma, we have
\begin{displaymath}
\begin{array}{c}
\mathbb E\left[ \int_0^T c_t\widehat Y_td\kappa_t\right]\leq \liminf\limits_{n\to\infty}\mathbb E\left[ \int_0^T c_t \widetilde Y^n_td\kappa_t\right]\leq 
xy + \int_0^Tq_sr_sdK_s.
\end{array}
\end{displaymath} 
Therefore, by Proposition \ref{prop1}, $\widehat Y\in\mathcal Y(y,r)$. With $\bar y {:=} \sup\limits_{n\geq 1}y^n$, we have $(\widetilde Y^n)_{n\in\mathbb N}\subseteq \mathcal Y(\bar y).$ Therefore, by \cite[Lemma 3.5]{MostovyiNec}, we deduce that 
$V^{-}(t,\widetilde Y^n_t)$, $n\in\mathbb N$, is a uniformly integrable sequence.  
Combining uniform integrability with the convexity of $V$ in the spatial variable, we get
\begin{equation}\label{4141}
\begin{array}{c}
v(y,r)\leq \mathbb E\left[ \int_0^TV(t,\widehat Y_t)d\kappa_t\right]\leq 
\liminf\limits_{n\to\infty}
\mathbb E\left[ \int_0^TV(t, \widetilde Y^n_t)d\kappa_t\right]
\\
\leq 
\liminf\limits_{n\to\infty}
\mathbb E\left[ \int_0^TV(t,  Y^n_t)d\kappa_t\right] = \liminf\limits_{n\to\infty}v(y^n,r^n),
\end{array}
\end{equation}
where in the last equality we have used \eqref{2151} and \eqref{2152}. Since $(y^n, q^n)$ was an arbitrary sequence that converges to $(y,r)$, lower semicontinuity of $v$ in strong  topology of $\mathbb R\times \mathbb L^1(dK)$ follows. Since $\mathcal L$ is closed and $v = \infty$ outside of $\mathcal L$, we deduce that $v$ is lower semicontinuous on $\mathbb R\times \mathbb L^1(dK)$. The  function $v$ is proper by Lemma \ref{lemFinv}. Note that \eqref{4141} also implies that $\mathcal E$ defined in \eqref{defE} is $\mathbb R\times\mathbb L^1(dK)$-norm closed. For $(y,r)\in\mathcal E$, by taking $(y^n, r^n)= (y,r)$, $n\in\mathbb N$, we deduce the existence of a minimizer to \eqref{dualProblem}. Strict convexity of $V$ results in the uniqueness of the minimizer to \eqref{dualProblem}. Convexity of $v$ follows.  Upper semicontinuity of $u$  with respect to the norm-topology of   $\mathbb R\times \mathbb L^{\infty}(dK)$ can be proven similarly, first proving semi-continuity on 
 $\mathcal K$  by a Fatou-type argument, then using the closedness  of $\mathcal{K}$ and the definition of $u$ outside it. 
\end{proof}
\begin{Corollary}
Under the conditions of Theorem~\ref{mainTheorem},
$-u$ and $v$ are  also  lower semicontinuous with respect to the weak topologies $\sigma(\mathbb R\times\mathbb L^{\infty}(dK), (\mathbb R\times\mathbb L^{\infty})^*(dK))$ and $\sigma(\mathbb R\times\mathbb L^1(dK),\mathbb R\times\mathbb L^{\infty}(dK))$, respectively.
\end{Corollary}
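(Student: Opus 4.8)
The plan is to derive the weak lower semicontinuity purely from the norm lower semicontinuity already established in Lemma~\ref{existenceUniqueness}, using the elementary fact from convex analysis that on a normed (more generally, locally convex) space a \emph{convex} function is lower semicontinuous for the norm topology if and only if it is lower semicontinuous for the weak topology $\sigma(X,X^{*})$.

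First I would restate the input: by Lemma~\ref{existenceUniqueness}, $v$ is convex, proper, and lower semicontinuous for the norm topology of $\mathbb R\times\mathbb L^1(dK)$, and $-u$ is convex, proper, and lower semicontinuous for the norm topology of $\mathbb R\times\mathbb L^{\infty}(dK)$ (concavity and upper semicontinuity of $u$ being equivalent to convexity and lower semicontinuity of $-u$). Next, I would recast lower semicontinuity as closedness of the epigraph: $\mathrm{epi}(v)=\{(y,r,a):v(y,r)\le a\}$ is convex, and it is closed in the product of the norm topology of $\mathbb R\times\mathbb L^1(dK)$ with the Euclidean topology of $\mathbb R$; likewise $\mathrm{epi}(-u)$ inside $\mathbb R\times\mathbb L^{\infty}(dK)\times\mathbb R$.

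Then I would invoke Hahn--Banach separation (equivalently, Mazur's theorem): a convex subset of a locally convex space that is closed for the initial topology is closed for the weak topology $\sigma(X,X^{*})$, being an intersection of weakly closed half-spaces. Since $(\mathbb R\times\mathbb L^1(dK)\times\mathbb R)^{*}=\mathbb R\times\mathbb L^{\infty}(dK)\times\mathbb R$ and the weak topology of the product is the product of the weak topologies, it follows that $\mathrm{epi}(v)$ is closed for $\sigma(\mathbb R\times\mathbb L^1(dK),\mathbb R\times\mathbb L^{\infty}(dK))$ times the Euclidean topology, i.e.\ $v$ is lower semicontinuous for $\sigma(\mathbb R\times\mathbb L^1(dK),\mathbb R\times\mathbb L^{\infty}(dK))$. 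Running the same argument with $X=\mathbb R\times\mathbb L^{\infty}(dK)$ and its full dual $(\mathbb R\times\mathbb L^{\infty})^{*}(dK)$ yields lower semicontinuity of $-u$ for $\sigma(\mathbb R\times\mathbb L^{\infty}(dK),(\mathbb R\times\mathbb L^{\infty})^{*}(dK))$.

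There is no genuine obstacle here; the argument is a soft consequence of convexity. The only point requiring a line of care is the compatibility of epigraph closedness with the product topology --- that weakening the topology on the domain $X$ to $\sigma(X,X^{*})$ does not touch the $\mathbb R$-factor, and that $(X\times\mathbb R)^{*}=X^{*}\times\mathbb R$ --- after which everything is automatic. Equivalently, one may phrase the whole proof in terms of the sublevel sets $\{v\le a\}$ and $\{-u\le a\}$, which are convex and norm-closed, hence weakly closed, with the same conclusion.
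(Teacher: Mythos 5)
Your proposal is correct and matches the paper's approach. The paper's proof is a one-liner citing the standard convex-analysis fact (Proposition~2.2.10 of Barbu--Precupanu, Corollary~I.2.2 of Ekeland--Temam) that a proper convex function which is lower semicontinuous for the initial topology of a locally convex space is lower semicontinuous for $\sigma(X,X^{*})$; your argument via Mazur's theorem applied to epigraphs (or sublevel sets) is exactly the proof of that cited result, so the two routes coincide.
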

\begin{proof}
The assertions of the corollary is a consequence of \cite[Proposition 2.2.10]{BarbuPrec}, see also \cite[Corollary I.2.2]{EkelandTemam}.
\end{proof}

We recall that ${\cl \mathcal D(x,q)}$ denotes the closure of $\mathcal D(x,q)$ in $\mathbb L^0(d\kappa\times\mathbb P)$. 
\begin{Lemma}\label{5181}
Under the conditions of Theorem \ref{mainTheorem}, for every $(x,q)$ in $\mathring{\mathcal K}$, and a nonnegative optional process $c$, we have
$$
c\in\mathcal A(x,q)\quad{if~and~only~if}\quad \mathbb E\left[\int_0^T c_sY_sd\kappa_s \right]\leq 1\quad {for~every~}Y\in{\cl \mathcal D(x,q)}.
$$
\end{Lemma}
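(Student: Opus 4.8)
The plan is to read off both implications from the bipolar-type characterization of admissible consumption in Proposition~\ref{prop1}(i): a nonnegative optional $c$ belongs to $\mathcal A(x,q)$ if and only if $\mathbb E\!\left[\int_0^T c_sY_sd\kappa_s\right]\le xy+\int_0^Tq_sr_sdK_s$ for every $(y,r)\in\mathcal L$ and every $Y\in\mathcal Y(y,r)$. Throughout I will use the homogeneity $\mathcal Y(\lambda y,\lambda r)=\lambda\,\mathcal Y(y,r)$ for $\lambda>0$ (immediate from \eqref{oldY} and \eqref{defY}, since scaling by a positive constant commutes with the $\mathbb L^0$-closure) and the elementary fact $\mathcal Y(0,0)\subseteq\mathcal Y(0)=\{0\}$.

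For the ``only if'' direction, suppose $c\in\mathcal A(x,q)$. If $Y\in\mathcal D(x,q)$, then $Y\in\mathcal Y(y,r)$ for some $(y,r)\in\mathcal B(x,q)$, so Proposition~\ref{prop1}(i) gives $\mathbb E\!\left[\int_0^T c_sY_sd\kappa_s\right]\le xy+\int_0^Tq_sr_sdK_s\le 1$. For a general $Y\in\cl\mathcal D(x,q)$, I would pick a sequence in $\mathcal D(x,q)$ converging to $Y$ in $\mathbb L^0(d\kappa\times\mathbb P)$, pass to a $(d\kappa\times\mathbb P)$-a.e.\ convergent subsequence, and apply Fatou's lemma (all integrands are nonnegative) to conclude $\mathbb E\!\left[\int_0^T c_sY_sd\kappa_s\right]\le 1$.

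For the ``if'' direction, assume the inequality holds against every $Y\in\cl\mathcal D(x,q)$; by Proposition~\ref{prop1}(i) it suffices to check $\mathbb E\!\left[\int_0^T c_sY_sd\kappa_s\right]\le\beta:=xy+\int_0^Tq_sr_sdK_s$ for an arbitrary $(y,r)\in\mathcal L$ and $Y\in\mathcal Y(y,r)$, noting that $\beta\ge 0$ because $(x,q)\in\mathcal K$. When $\beta>0$, the pair $(y/\beta,r/\beta)$ lies in $\mathcal B(x,q)$ and, by homogeneity, $Y/\beta\in\mathcal Y(y/\beta,r/\beta)\subseteq\mathcal D(x,q)$, so the hypothesis applied to $Y/\beta$ yields exactly $\mathbb E\!\left[\int_0^T c_sY_sd\kappa_s\right]\le\beta$. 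When $\beta=0$, I would use that $(x,q)\in\mathring{\mathcal K}$: perturbing $x$ by $\pm\varepsilon$ and $q$ within the $\mathbb L^\infty(dK)$-ball of radius $\varepsilon$ keeps $(x',q')$ in $\mathcal K$, so the defining inequality of $\mathcal L$, together with $\beta=0$, forces $\varepsilon|y|+\varepsilon\int_0^T|r_s|dK_s\le 0$, i.e.\ $y=0$ and $r=0$ in $\mathbb L^1(dK)$; hence $Y\in\mathcal Y(0,0)=\{0\}$ and the bound $0\le 0$ is trivial.

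The only genuinely delicate point is the $\beta=0$ case: this is precisely where the hypothesis $(x,q)\in\mathring{\mathcal K}$ (rather than merely $(x,q)\in\mathcal K$) enters, ensuring that the only dual direction $(y,r)\in\mathcal L$ annihilated by $(x,q)$ is the trivial one, so that $\mathcal D(x,q)$ ``sees'' enough of $\mathcal Y(y,r)$ for all $(y,r)\in\mathcal L$. Everything else is a routine combination of Proposition~\ref{prop1}, positive scaling, and Fatou's lemma.
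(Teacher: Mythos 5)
Your proof is correct. The "only if" direction is identical to the paper's (reduce from $\cl\mathcal D(x,q)$ to $\mathcal D(x,q)$ by an a.e.\ subsequence and Fatou, then invoke Proposition~\ref{prop1}(i)). For the "if" direction your route differs mildly: the paper goes back to Lemma~\ref{9-5-1} and verifies the budget inequality against the generating pairs $Z\Lambda$ with $Z\in\mathcal Z'$, $\Lambda\in\Upsilon$, while you verify the equivalent bipolar characterization of Proposition~\ref{prop1}(i) against all $(y,r)\in\mathcal L$ and $Y\in\mathcal Y(y,r)$, using the positive homogeneity $\mathcal Y(\lambda y,\lambda r)=\lambda\mathcal Y(y,r)$ to rescale into $\mathcal D(x,q)$. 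The other noticeable divergence is the degenerate case $\beta=0$: the paper (with $y=1$ fixed, coming from a density) handles it by scaling $y\Lambda Z$ and letting $y\to\infty$, whereas you observe that for $(x,q)\in\mathring{\mathcal K}$ the equality $\beta=0$ together with $(y,r)\in\mathcal L$ forces $(y,r)=(0,0)$ via small $\mathbb L^\infty(dK)$-perturbations, so $\mathcal Y(0,0)=\{0\}$ and the bound is trivial. Your observation in fact shows that the paper's "$y'=0$" sub-case is vacuous when $(x,q)\in\mathring{\mathcal K}$, since there $y=1\neq 0$. Both arguments are sound; yours makes the role of the interior hypothesis more transparent and avoids the limiting step, at the cost of appealing to Proposition~\ref{prop1}(i) rather than the slightly more primitive Lemma~\ref{9-5-1} on which it rests.
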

\begin{proof}
Let $(x,q)$ in $\mathring{\mathcal K}$, $c$ is a nonnegative optional process such that 
\begin{equation}\label{5191}
\mathbb E\left[ \int_0^T c_sY_sd\kappa_s \right]\leq 1\quad {for~every}\quad Y\in{\cl \mathcal D(x,q)}.
\end{equation}
Consider arbitrary $Z\in\mathcal Z'$ and $\Lambda\in\Upsilon$. Let the corresponding $r$ be given by \eqref{6-25-1} and we set $$y'{:=} x + \int_0^Tq_sr_sdK_s.$$

If $y' = 0$, then $y\Lambda Z\in\mathcal D(x,q)$ for every $y>0$. Thus,  by \eqref{5191}, we obtain that $\mathbb E\left[\int_0^T c_sy\Lambda_sZ_sd\kappa_s \right]\leq 1$. Taking the limit as $y\to\infty$, we get
\begin{equation}\label{5183}\mathbb E\left[\int_0^T c_s\Lambda_sZ_sd\kappa_s \right] = 0=  x + \int_0^Tq_sr_sdK_s= x + \mathbb E\left[\int_0^Tq_se_s\Lambda_sZ_sd\kappa_s \right],
\end{equation}
where in the last equality we have used \eqref{6-25-1}. 

If $y'>0$, 
then $\frac{1}{y'}\Lambda Z\in\mathcal D(x,q)$ and thus by \eqref{5191}, we obtain $$\mathbb E\left[ \int_0^T c_s\frac{1}{y'}\Lambda_sZ_sd\kappa_s\right]\leq 1 = \frac{ x + \int_0^Tq_sr_sdK_s}{y'} = \frac{1}{y'}\left( x + \mathbb E\left[\int_0^T q_se_s\Lambda_sZ_sd\kappa_s\right]\right),$$
where in the last equality, we have used \eqref{6-25-1} again. Consequently, we deduce
$$\mathbb E\left[ \int_0^T c_s\Lambda_sZ_sd\kappa_s\right]\leq  x + \mathbb E\left[\int_0^T q_se_s\Lambda_sZ_sd\kappa_s\right],$$
which together with \eqref{5183}, by Lemma \ref{9-5-1}, imply that $c\in\mathcal A(x,q)$.

Conversely, let $(x,q)\in\mathring{\mathcal K}$, $c\in\mathcal A(x,q)$ and $Y\in {\rm cl}\mathcal D(x,q)$. Then there exists a sequence $Y^n\in\mathcal Y(y^n, r^n)$ convergent to $Y$, $(d\kappa\times\mathbb P)$-a.e., where $(y^n,r^n)\in\mathcal B(x,q)$. As, 
$$\mathbb E\left[\int_0^T c_sY^n_sd\kappa_s \right]\leq 1,\quad n\in\mathbb N,$$
by Fatou's lemma, we get
$$\mathbb E\left[\int_0^T c_sY_sd\kappa_s \right]\leq \liminf\limits_{n\to\infty}\mathbb E\left[\int_0^T c_sY^n_sd\kappa_s \right]\leq 1.$$
This completes the proof of the lemma.
\end{proof}
\begin{Lemma}\label{conjugacy}
Under the conditions of Theorem \ref{mainTheorem}, for every $(x,q)$ in $\mathring{\mathcal K}$, we have
\begin{equation}\label{conjugacyOneDirection}
u(x,q) = \inf\limits_{(y,r)\in \mathcal L}\left(v(y,r) + xy + \int_0^Tr_sq_sdK_s\right).
\end{equation}
\end{Lemma}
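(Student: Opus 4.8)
The plan is to get the inequality ``$\le$'' for free from Lemma \ref{6-17-1} and to obtain ``$\ge$'' by collapsing the infinite-dimensional conjugacy to the one-dimensional abstract duality of \cite{MostovyiNec}. For ``$\le$'': Lemma \ref{6-17-1} gives $u(x,q)\le v(y,r)+xy+\int_0^Tr_sq_sdK_s$ for every $(y,r)\in\mathcal L$, so the infimum over $\mathcal L$ dominates $u(x,q)$. All the work is in the reverse bound.

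First I would recognize, for $(x,q)\in\mathring{\mathcal K}$, that $\bigl(\mathcal A(x,q),\,\cl\mathcal D(x,q)\bigr)$ is a polar pair of subsets of $\mathbb L^0(d\kappa\times\mathbb P)$ satisfying the structural hypotheses of the abstract theory. Indeed, Lemma \ref{5181} says $\mathcal A(x,q)=\bigl(\cl\mathcal D(x,q)\bigr)^o$; and since $\cl\mathcal D(x,q)$ is convex, solid, contains $0$, is closed in $\mathbb L^0$ and, via Lemma \ref{7-17-1}, bounded in $\mathbb L^0$, the bipolar theorem of Brannath--Schachermayer \cite[Theorem 1.3]{BranSchach} gives conversely $\cl\mathcal D(x,q)=\bigl(\mathcal A(x,q)\bigr)^o$. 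Furthermore $\mathcal A(x,q)$ contains a strictly positive process (Lemma \ref{6-12-1}$(i)$), $\cl\mathcal D(x,q)$ contains the strictly positive process $\bar y(x,q)Z$ for $Z\in\mathcal Z'$ (by \eqref{6-18-1}), $u(x,q)$ is finite (Lemma \ref{lemFinu}), and the function
\[
\phi(z):=\inf_{Y\in z\cl\mathcal D(x,q)}\mathbb E\left[\int_0^TV(t,Y_t)d\kappa_t\right]
\]
is finite on $(0,\infty)$ (Lemma \ref{auxiliaryLemma}). These are precisely the ingredients to which the biconjugacy conclusion of \cite[Theorem 3.2]{MostovyiNec} applies --- the same theorem that is used, in the case $(x,q)=(1,0)$, inside the proof of Lemma \ref{lemFinv} --- and it delivers $u(x,q)=\inf_{z>0}\bigl(\phi(z)+z\bigr)$.

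Next I would unfold $\phi$ in terms of the functions $v(y,r)$. Put $a(y,r):=xy+\int_0^Tr_sq_sdK_s$. Using that $\mathcal L$ is a cone and the definitions \eqref{auxiliarySets} of $\mathcal B(x,q)$ and $\mathcal D(x,q)$, one has $z\mathcal D(x,q)=\bigcup\{\mathcal Y(y,r):(y,r)\in\mathcal L,\ a(y,r)\le z\}$, so that, by Lemma \ref{auxiliaryLemma} (replacing $\cl\mathcal D(x,q)$ by $\mathcal D(x,q)$ costs nothing) and the definition of $v$,
\[
\phi(z)=\inf\bigl\{\,v(y,r):(y,r)\in\mathcal L,\ a(y,r)\le z\,\bigr\},\qquad z>0.
\]
Because $(x,q)\in\mathcal K$, the definition \eqref{defL} of $\mathcal L$ forces $a(y,r)\ge0$ for every $(y,r)\in\mathcal L$; hence for any $z>0$ and any $(y,r)\in\mathcal L$ with $a(y,r)\le z$ one has $z+v(y,r)\ge a(y,r)+v(y,r)\ge\inf_{(y',r')\in\mathcal L}\bigl(v(y',r')+a(y',r')\bigr)$. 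Taking the infimum over such $(y,r)$ and then over $z>0$ yields $u(x,q)=\inf_{z>0}\bigl(\phi(z)+z\bigr)\ge\inf_{(y,r)\in\mathcal L}\bigl(v(y,r)+a(y,r)\bigr)$, which combined with ``$\le$'' proves \eqref{conjugacyOneDirection}.

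The hard part, and the reason for the preceding string of lemmas, is checking that $\cl\mathcal D(x,q)$ is a legitimate abstract dual domain: that enlarging $\mathcal D(x,q)$ to its $\mathbb L^0$-closure does not change the value $\phi(z)$ --- this is the delicate Lemma \ref{auxiliaryLemma} --- and that this closure is exactly the bipolar of $\mathcal A(x,q)$ (so it is ``not too large''), which rests on Lemma \ref{5181} together with solidity, convexity and closedness. It is worth noting that the non-reflexivity of $\mathbb L^1(dK)$, which would block a direct compactness argument on the family of parameters $(y,r)$, is circumvented here: one never compactifies in the $(y,r)$ variable, but collapses the conjugacy to the scalar variable $z$, where the abstract machinery applies verbatim; the remaining manipulations (the identification of $z\mathcal D(x,q)$ and the rearrangement of the nested infima) are routine.
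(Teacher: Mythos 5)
Your proof is correct and follows essentially the same route as the paper's: apply \cite[Theorem 3.2]{MostovyiNec} to the polar pair $(\mathcal A(x,q),\cl\mathcal D(x,q))$, use Lemma \ref{auxiliaryLemma} to pass between $\cl\mathcal D(x,q)$ and $\mathcal D(x,q)$, identify $z\mathcal D(x,q)$ with the union of $\mathcal Y(y,r)$ over $z\mathcal B(x,q)$, and combine the resulting lower bound with the Fenchel inequality from Lemma \ref{6-17-1}. The extra detail you give in verifying the bipolar structure (Lemma \ref{5181} plus Brannath--Schachermayer) is exactly what the paper leaves implicit behind the phrase ``satisfy the assumptions of \cite[Theorem 3.2]{MostovyiNec}''.
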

\begin{proof} Let us fix $(x,q)\in\mathring{\mathcal K}$. 
By Lemma \ref{6-12-1}, $\mathcal A(x,q)$ and ${\cl \mathcal D(x,q)}$ contain strictly positive elements. 
Therefore, using Lemma \ref{5181}
we deduce that the sets $\mathcal A(x,q)$ and ${\cl \mathcal D(x,q)}$ satisfy the assumptions 
of \cite[Theorem 3.2]{MostovyiNec}. From this theorem, Lemma~\ref{auxiliaryLemma}, and the definition of the set $\mathcal B(x,q)$, we get
\begin{equation}\nonumber
\begin{array}{rcl}
u(x,q) &=& \inf\limits_{z>0}\left(\inf\limits_{Y\in{\cl \mathcal D(x,q)}}\mathbb E\left[ \int_0^TV(t, zY_s)d\kappa_s\right] + z \right) \\
 &=& \inf\limits_{z>0}\left(\inf\limits_{Y\in\mathcal D(x,q)}\mathbb E\left[ \int_0^TV(t, zY_s)d\kappa_s\right] + z \right) \\
&=& \inf\limits_{z>0}\left(\inf\limits_{(y,r)\in z\mathcal B(x,q)}v(y,r) + z \right) \\
&\geq& \inf\limits_{(y,r)\in \mathcal L}\left(v(y,r) + xy + \int_0^Tq_sr_sdK_s \right).\\
\end{array}
\end{equation}
Combining this with the conclusion of Lemma~\ref{6-17-1}, we deduce that \eqref{conjugacyOneDirection} holds for every $(x,q)\in\mathring{\mathcal K}$.

%
\end{proof}

Before proving the biconjugacy relations of item $(iii)$, Theorem \ref{mainTheorem}, we need a preliminary lemma. 
Essentially following the notations in \cite{EkelandTemam}, we define 
\begin{equation}\label{v*}
v^*(x,q){:=} \inf\limits_{(y,r)\in\mathbb R\times \mathbb L^1(dK)}\left(v(y,r) + xy + \int_0^Tq_sr_sdK_s \right),\quad (x,q)\in\mathbb R\times \mathbb L^{\infty}(dK).
\end{equation}
\begin{equation}\label{v**}
v^{**}(y,r){:=} \sup\limits_{(x,q)\in\mathbb R\times \mathbb L^\infty(dK)}\left(v^*(x,q) - xy - \int_0^Tq_sr_sdK_s \right),\quad (y,r)\in\mathbb R\times \mathbb L^1(dK).
\end{equation}
\begin{Remark}In \cite{EkelandTemam}, conjugate convex functions are considered on general spaces $V$ and $V^*$ supplied with $\sigma(V,V^*)$ and $\sigma(V^*, V)$ topologies, which in our case are $V= \mathbb R\times\mathbb L^1(dK)$, $V^* = \mathbb R\times\mathbb L^\infty(dK)$. Thus, the starting point of our analysis is $v$, not $u$. We remind the reader we have already proved that the dual value function $v$ is convex, proper and lower-semicontinuous on the space $V= \mathbb R\times\mathbb L^1(dK)$. 
\end{Remark}
\begin{Lemma}\label{lemv**}
Under the conditions of Theorem \ref{mainTheorem}, we have 
\begin{equation}
\label{2166}
v^{**} = v, 
\end{equation}
\begin{equation}
\label{4151}
v^*(x,q) = -\infty, \quad for~every\quad (x,q)\in\mathbb R\times\mathbb L^\infty(dK)\backslash \mathcal K.
\end{equation}

\end{Lemma}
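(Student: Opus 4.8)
The plan is to derive \eqref{2166} from the Fenchel--Moreau biconjugacy theorem, and \eqref{4151} by exhibiting, for each $(x,q)$ outside $\mathcal K$, an unbounded ray contained in $\mathcal L$ along which $v$ stays bounded above while the affine functional $(y,r)\mapsto xy+\int_0^T q_sr_s\,dK_s$ tends to $-\infty$.

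For \eqref{2166}: we have already established (Lemma \ref{existenceUniqueness} and the corollary that follows it) that $v$ is convex, proper, and lower semicontinuous on $\mathbb R\times\mathbb L^1(dK)$ with respect to the weak topology $\sigma(\mathbb R\times\mathbb L^1(dK),\mathbb R\times\mathbb L^\infty(dK))$. Working in the dual system $V=\mathbb R\times\mathbb L^1(dK)$, $V^*=\mathbb R\times\mathbb L^\infty(dK)$ paired by $\langle(y,r),(x,q)\rangle=xy+\int_0^T q_sr_s\,dK_s$, the operations \eqref{v*} and \eqref{v**} coincide, after the change of variables $(x,q)\mapsto(-x,-q)$, with the classical Fenchel conjugate and biconjugate of $v$; hence $v^{**}=v$ by the Fenchel--Moreau theorem (see \cite[Chapter I]{EkelandTemam}). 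I would only need to write out this sign bookkeeping.

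For \eqref{4151}: fix $(x,q)\in\mathbb R\times\mathbb L^\infty(dK)\setminus\mathcal K$. By Proposition \ref{prop1} we have $\mathcal K=(-\mathcal L)^{o}$, and since $\mathcal L$ is a convex cone this produces $(y^0,r^0)\in\mathcal L$ with $xy^0+\int_0^T q_sr^0_s\,dK_s<0$. Pick $(y^1,r^1)\in\mathcal E$ (nonempty by Lemma \ref{lemFinv}) and $Y^1\in\mathcal Y(y^1,r^1)$ with $\exE{\int_0^T V(t,Y^1_t)\,d\kappa_t}<\infty$ (possible since $v(y^1,r^1)\in\mathbb R$). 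The key observation is that $Y^1\in\mathcal Y(y^1+ny^0,\,r^1+nr^0)$ for every $n\in\mathbb N$: for any $(\xi,\eta)\in\mathcal K$ and $c\in\mathcal A(\xi,\eta)$ one has $\xi y^0+\int_0^T \eta_sr^0_s\,dK_s\ge 0$ (as $(y^0,r^0)\in\mathcal L$), so that
\[
\exE{\int_0^T c_sY^1_s\,d\kappa_s}\le \xi y^1+\int_0^T \eta_sr^1_s\,dK_s\le \xi(y^1+ny^0)+\int_0^T \eta_s(r^1_s+nr^0_s)\,dK_s,
\]
and Proposition \ref{prop1}(ii) then gives $Y^1\in\mathcal Y(y^1+ny^0,r^1+nr^0)$. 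Consequently $v(y^1+ny^0,r^1+nr^0)\le\exE{\int_0^T V(t,Y^1_t)\,d\kappa_t}$ uniformly in $n$, and plugging $(y,r)=(y^1+ny^0,r^1+nr^0)$ into \eqref{v*} yields
\[
v^*(x,q)\le \exE{\int_0^T V(t,Y^1_t)\,d\kappa_t}+\rbr{xy^1+\int_0^T q_sr^1_s\,dK_s}+n\rbr{xy^0+\int_0^T q_sr^0_s\,dK_s},
\]
so letting $n\to\infty$, the last coefficient being strictly negative, gives $v^*(x,q)=-\infty$.

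The crux is the uniform-in-$n$ upper bound along the ray, i.e.\ that a single dual element $Y^1$ remains feasible for all $(y^1+ny^0,r^1+nr^0)$; this rests on the conic structure of $\mathcal L$ and on the dual characterization of $\mathcal Y(y,r)$ in Proposition \ref{prop1}. Given the semicontinuity already in hand, the Fenchel--Moreau identity \eqref{2166} is then merely bookkeeping.
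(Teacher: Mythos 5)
Your proof of \eqref{2166} coincides with the paper's: it is exactly lower semicontinuity of $v$ (from Lemma~\ref{existenceUniqueness} and its corollary) followed by Fenchel--Moreau biconjugacy, which the paper invokes as \cite[Proposition~I.4.1]{EkelandTemam}. Your proof of \eqref{4151} is correct but takes a genuinely different route than the paper's. The paper manufactures, through Steps~1--4, a single pair $(y',r')\in\mathcal E$ whose inner product with $(x,q)$ is strictly negative: it takes a direction $(ay,ar)\in\mathcal L$ with $aC<0$, a pair $(1,\rho)$ from Corollary~\ref{cor1} carrying the feasible deflator $\tfrac12 Z$ (supplied by Lemma~\ref{finitenessOverZ'}), averages them, and tunes the scalar $a$ so that $\tfrac12(aC+D)<0$; then it sends $\lambda\to\infty$ along the ray $\lambda(y',r')$, bounding $v$ there via $\lambda Y\in\mathcal Y(\lambda y',\lambda r')$ together with the monotonicity of $V$. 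You instead keep the two ingredients on separate vectors: a fixed $Y^1\in\mathcal Y(y^1,r^1)$ with finite dual integral, and a direction $(y^0,r^0)\in\mathcal L$ strictly negative against $(x,q)$. The decisive step --- that adding $n(y^0,r^0)$ only relaxes the linear constraints in Proposition~\ref{prop1}(ii), so the single deflator $Y^1$ remains feasible at $(y^1+ny^0,\,r^1+nr^0)$ for every $n$, making $v$ uniformly bounded above along an affine, not conic, ray --- is a clean shortcut that dispenses with Corollary~\ref{cor1} and the paper's choice of $a$. Both arguments ultimately rest on the conic structure of $\mathcal L$ and the characterization in Proposition~\ref{prop1}; yours trades the paper's rescaling of the dual element for fixing it while translating the $(y,r)$-coordinate, which is somewhat more economical in preliminaries.
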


\begin{proof}
To show \eqref{2166}, we observe that by Lemma \ref{existenceUniqueness}, $v$ is lower semicontinuous in the $\mathbb R\times\mathbb L^1(dK)$-norm topology (and therefore, by \cite[Corollary I.2.2]{EkelandTemam}, also in the weak topology $\sigma(\mathbb R\times\mathbb L^1(dK),\mathbb R\times\mathbb  L^\infty(dK))$). As a result, by \cite[Proposition I.4.1]{EkelandTemam}, we get~\eqref{2166}.

The proof of \eqref{4151} will be done in several steps. 

{\it Step 1.} Let $(x,q)\in\mathbb R\times\mathbb L^\infty(dK)\backslash \mathcal K$.  According to  Proposition \ref{prop1}, there exists $(y,r)\in\mathcal L$, such that
\begin{equation}\nonumber
C{:=} xy + \int_0^T q_sr_sdK_s <0.
\end{equation}
Therefore, as $\mathcal L$ is a cone, for every $a>0$, $(ay, ar)\in\mathcal L$, and we have
\begin{equation}\label{4153}
xay + \int_0^Tq_sar_sdK_s = aC<0.
\end{equation}
Note that $0\in\mathcal Y(ay, ar)$, $a>0$.

{\it Step 2.} Let us consider $Z\in\mathcal Z'$, such that 
\begin{equation}\label{4154}
\mathbb E\left[\int_0^TV\left(t,\tfrac{1}{2}Z_t\right)d\kappa_t\right]<\infty.
\end{equation}
The existence of such a $Z$ is granted by Lemma \ref{finitenessOverZ'}.
Further, by Corollary \ref{cor1}, there exists $\rho\in\mathbb L^1(dK)$, such that $(1, \rho)\in\mathcal L$, $Z\in\mathcal Y(1,\rho)$,  and
\begin{equation}\nonumber
\int_0^t\rho_sdK_s = \mathbb E\left[\int_0^t Z_se_sd\kappa_s \right],\quad t\in[0,T].
\end{equation}
Let us set 
\begin{equation}\nonumber
D{:=} x + \int_0^Tq_s\rho_sdK_s\in\mathbb R.
\end{equation}

{\it Step 3.} In \eqref{4153}, let us pick 
\begin{equation}\nonumber
a = \frac{|D| + 1}{-C}>0.
\end{equation}
Then, we have 
\begin{equation}\label{4158}
aC + D = -|D| + D -1<0.
\end{equation}

{\it Step 4.} Let us define 
\begin{equation}\label{4159}
Y{:=} \tfrac{1}{2} Z, \quad y'{:=} \tfrac{1}{2}ay + \tfrac{1}{2}, \quad and \quad r'{:=} \tfrac{1}{2}ar + \tfrac{1}{2}\rho.
\end{equation}
Then by \eqref{4154}, we obtain
\begin{equation}\label{41510}
\mathbb E\left[\int_0^TV\left(t,Y_t\right)d\kappa_t\right]<\infty.
\end{equation}
As $Z\in\mathcal Y(1,\rho)$ and $0\in\mathcal Y(ay, ar)$, by convexity of $\mathcal L$ and Proposition \ref{prop1}, we have
\begin{equation}\label{41511}
Y\in\mathcal Y(y',r'),
\end{equation}
where $y'$ and $r'$ are defined in \eqref{4159}. Now, it follows from  \eqref{41510} and \eqref{41511} that $(y', r')\in\mathcal E$ (where $\mathcal E$ is defined in \eqref{defE}). 
Therefore, we obtain
\begin{displaymath}
\begin{array}{c}
2(xy' + \int_0^Tq_sr'_sdK_s) = (ay + 1)x + \int_0^T(ar_s + \rho_s)q_sdK_s \\= a(xy + \int_0^Tq_sr_sdK_s) + x + \int_0^Tq_s\rho_sdK_s = aC + D <0,\\
\end{array}
\end{displaymath}
where the last inequality follows from \eqref{4158}.
To  recapitulate, we have shown the existence of  $(y', r')$, such that
\begin{equation}\label{41512}
(y', r')\in\mathcal E\quad and\quad xy' + \int_0^Tq_sr'_sdK_s<0.
\end{equation}

{\it Step 6.} For $y'$ and $r'$ defined in \eqref{4159}, as $v(y', r') <\infty$ and $xy' + \int_0^Tq_sr'_sdK_s<0$ by \eqref{41512}, from the monotonicity of $V$, we get
$$\infty>v(y',r')\geq v(\lambda y', \lambda r'),\quad \lambda \geq 1.$$
As $\bigcup\limits_{\lambda\geq 1}(\lambda y', \lambda r')\subset \mathcal L$, we conclude via \eqref{41512} that
$$v^*(x,q) \leq \lim\limits_{\lambda \to\infty}\left( v(\lambda y', \lambda r') + \lambda \left(xy' + \int_0^Tq_sr'_sdK_s\right)\right) = -\infty.$$
Therefore, \eqref{4151} holds. This completes the proof of the lemma.
\end{proof}
\begin{Lemma}\label{lemBiconjugacy}
Under the conditions of Theorem \ref{mainTheorem}, we have 
\begin{equation}\label{2161}
v(y,r) = \sup\limits_{(x,q)\in\mathcal K}\left(u(x,q) - xy - \int_0^Tr_sq_sdK_s\right),\quad (y,r)\in\mathcal L,
\end{equation}
\begin{equation}\label{21610}
u(x,q) = \inf\limits_{(y,r)\in\mathcal L}\left(v(y,r) + xy + \int_0^Tq_sr_sdK_s\right),\quad (x,q)\in \mathcal K.
\end{equation}
\end{Lemma}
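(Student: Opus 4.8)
The plan is to deduce both identities from results already established, the only genuine task being to promote equalities known on $\mathring{\mathcal K}$ to all of $\mathcal K$ by a one-dimensional concavity argument along line segments. The ingredients are: the one-sided bound $u(x,q)\le v(y,r)+xy+\int_0^Tr_sq_sdK_s$ (Lemma~\ref{6-17-1}); the conjugacy identity \eqref{conjugacyOneDirection} valid on $\mathring{\mathcal K}$ (Lemma~\ref{conjugacy}); the biconjugacy $v^{**}=v$ and the identification $v^*\equiv-\infty$ on $(\mathbb R\times\mathbb L^\infty(dK))\setminus\mathcal K$ (Lemma~\ref{lemv**}); $\mathcal K=\cl\mathring{\mathcal K}$ (Proposition~\ref{prop1}); and the concavity, properness and upper semicontinuity of $u$ (Lemma~\ref{existenceUniqueness}).

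First I would prove \eqref{2161}. Writing out \eqref{v**} and using $v=v^{**}$, one has $v(y,r)=\sup_{(x,q)}\bigl(v^*(x,q)-xy-\int_0^Tq_sr_sdK_s\bigr)$; by \eqref{4151} the supremand is $-\infty$ off $\mathcal K$, so the supremum runs over $\mathcal K$. The map $(x,q)\mapsto v^*(x,q)-xy-\int_0^Tq_sr_sdK_s$ is concave (an infimum of affine functions plus an affine term), everywhere $<\infty$, and, by Lemma~\ref{conjugacy} together with the finiteness of $u$ on $\mathring{\mathcal K}$ (Lemma~\ref{lemFinu}), finite on $\mathring{\mathcal K}$. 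For $(x,q)\in\mathcal K$ and any fixed $(x_0,q_0)\in\mathring{\mathcal K}$ the segment $(1-\lambda)(x,q)+\lambda(x_0,q_0)$ lies in $\mathring{\mathcal K}$ for $\lambda\in(0,1]$, and concavity forces the $\liminf$ of the supremand along this segment, as $\lambda\downarrow0$, to dominate its value at $(x,q)$; hence the supremum over $\mathcal K$ equals the supremum over $\mathring{\mathcal K}$. On $\mathring{\mathcal K}$ I replace $v^*$ by $u$ via Lemma~\ref{conjugacy}, so $v(y,r)=\sup_{(x,q)\in\mathring{\mathcal K}}\bigl(u(x,q)-xy-\int_0^Tq_sr_sdK_s\bigr)\le\sup_{(x,q)\in\mathcal K}\bigl(u(x,q)-xy-\int_0^Tq_sr_sdK_s\bigr)\le v(y,r)$, the last step by Lemma~\ref{6-17-1}; equality throughout is \eqref{2161}.

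Then for \eqref{21610}: since $v\equiv+\infty$ off $\mathcal L$, its right-hand side is exactly $v^*(x,q)$ from \eqref{v*}, so it suffices to check $u=v^*$ on $\mathcal K$. Lemma~\ref{6-17-1} gives $u\le v^*$ there, and Lemma~\ref{conjugacy} gives equality on $\mathring{\mathcal K}$. For $(x,q)\in\mathcal K\setminus\mathring{\mathcal K}$: if $v^*(x,q)=-\infty$ then $u(x,q)=-\infty$ and we are done; otherwise $-\infty<v^*(x,q)<\infty$ (the upper bound because $v^*(x,q)\le v(y,r)+xy+\int_0^Tq_sr_sdK_s<\infty$ for any $(y,r)\in\mathcal E$, which is nonempty by Lemma~\ref{lemFinv}). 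Fixing $(x_0,q_0)\in\mathring{\mathcal K}$ (so $v^*(x_0,q_0)=u(x_0,q_0)$ is finite) and putting $p_\lambda:=(1-\lambda)(x,q)+\lambda(x_0,q_0)\in\mathring{\mathcal K}$ for $\lambda\in(0,1]$, concavity of $v^*$ gives $\liminf_{\lambda\downarrow0}v^*(p_\lambda)\ge v^*(x,q)$, while the norm-upper-semicontinuity of $u$ and $p_\lambda\to(x,q)$ give $\limsup_{\lambda\downarrow0}u(p_\lambda)\le u(x,q)$; since $u(p_\lambda)=v^*(p_\lambda)$ for $\lambda\in(0,1]$, chaining these yields $v^*(x,q)\le u(x,q)$, hence equality. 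With the conventions \eqref{3313}, both identities then extend trivially off $\mathcal K$ and off $\mathcal L$.

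I expect the delicate point to be not analytic but organizational: keeping the conjugation conventions of \cite{EkelandTemam} straight and handling the boundary cases where $u$ or $v^*$ equals $-\infty$. All the hard analysis — existence of dual minimizers, lower semicontinuity, and the bipolar computations behind $v^{**}=v$ and $v^*\equiv-\infty$ off $\mathcal K$ — is already packaged in the preceding lemmas, so this statement reduces to a convex-analysis bookkeeping exercise.
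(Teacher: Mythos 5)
Your proof is correct and follows essentially the same route as the paper, using the same ingredients (Lemma~\ref{conjugacy}, Lemma~\ref{lemv**}, and the concavity and norm-upper-semicontinuity of $u$ and $v^*$). The only difference is cosmetic: you carry out the extension of $u=v^*$ from $\mathring{\mathcal K}$ to the boundary of $\mathcal K$ explicitly via a line-segment argument, whereas the paper delegates this step to \cite[Corollary I.2.1]{EkelandTemam}.
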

\begin{proof}

Lemma \ref{conjugacy} and \eqref{3313} imply that on $\mathring{\mathcal K}$, for $v^*$ defined in \eqref{v*}, we have
\begin{equation}\label{2164}
v^{*} = u.
\end{equation}
By Lemma \ref{lemv**}, $v^* = -\infty$ on $\mathbb R\times\mathbb L^\infty(dK)\backslash\mathcal K$. 
From \cite[Definition I.4.1]{EkelandTemam} and Lemma \ref{existenceUniqueness}, respectively, we deduce that both $v^*$ and $u$ are upper semicontinuous in the topology of $\mathbb R\times\mathbb L^\infty(dK)$
. Consequently, from \eqref{2164}, using \cite[Corollary I.2.1]{EkelandTemam},   
we get
\begin{equation}\label{2201}v^* = u,\quad on\quad\mathbb R\times\mathbb L^\infty(dK).
\end{equation}
As a result, with $v^{**}$ being defined in \eqref{v**}, for every $(y,r)\in\mathbb R\times \mathbb L^1(dK)$, we obtain
\begin{equation}\label{2165}
u^{*}(y,r){:=} \sup\limits_{(x,q)\in\mathbb R\times \mathbb L^\infty(dK)}\left(u(x,q) - xy - \int_0^Tq_sr_sdK_s \right)=v^{**}(y,r).
\end{equation}
Therefore, from \eqref{2166} in Lemma \ref{lemv**} and \eqref{2165}, we get
\begin{equation}\nonumber
v = u^{*},\quad on\quad\mathbb R\times\mathbb L^1(dK).
\end{equation}
As a result, applying Lemma \ref{lemv**} again and since $u = -\infty$ outside of $\mathcal K$ by \eqref{3313}, we deduce 
\begin{equation}\nonumber
\begin{array}{rcl}
v(y,r)  &=& \sup\limits_{(x,q)\in\mathbb R\times\mathbb L^\infty(dK)}\left( u(x,q)- xy - \int_0^Tr_sq_sdK_s\right)\\
&=& \sup\limits_{(x,q)\in\mathcal K}\left( u(x,q) - xy - \int_0^Tr_sq_sdK_s\right),\quad(y,r)\in\mathcal L,\\
\end{array}
\end{equation}
Thus, \eqref{2161} holds. 

In turn, from \eqref{2201} using \eqref{3313}, 
we conclude that

\begin{equation}\nonumber
\begin{array}{rcl}
u(x,q) &=& \inf\limits_{(y,r)\in\mathbb R\times \mathbb L^1(dK)}\left(v(y,r) + xy + \int_0^Tq_sr_sdK_s\right),\\
&=& \inf\limits_{(y,r)\in\mathcal L}\left(v(y,r) + xy + \int_0^Tq_sr_sdK_s\right),\quad (x,q)\in\mathcal K,\\
\end{array}
\end{equation}
which proves \eqref{21610} and extends the assertion of Lemma \ref{conjugacy} to the boundary of $\mathring{\mathcal K}$.

\end{proof}

\begin{proof}[Proof of Theorem \ref{mainTheorem}]
The assertions of item $(i)$ follow from Lemmas \ref{lemFinu} and \ref{lemFinv}, item $(ii)$ results from Lemma \ref{existenceUniqueness}, whereas the validity of item $(iii)$ come from Lemma 
\ref{lemBiconjugacy}. This completes the proof of the theorem.
\end{proof}


\section{Subdifferentiability of $u$}\label{secSubdifferentiability}
In order to establish subdifferentiability of $u$, we need to strengthen 
Assumption~\ref{asEnd} and to impose the following condition. 
\begin{Assumption}\label{asACclock} 
There exists an a.s. bounded away from $0$ and $\infty$  
process $\varphi$, such that
\begin{equation}\nonumber
d\kappa(\omega) =\varphi d K, \quad for\quad\mathbb P-a.e.\quad\omega \in\Omega.
\end{equation}

\end{Assumption}


%
%
%
%

Let 
\begin{equation}\label{defPi}
\begin{array}{rcl}
\mathcal P &{:=} &\left\{ \rho:~(1,\rho)\in\mathcal L\right\},\\
\end{array}
\end{equation}
\begin{Remark} 

  $\mathcal P$ defined in \eqref{defPi} needs to be uniformly integrable with respect to the measure $dK$ in order  for the proof of subdifferentiability of $u$ to go through.
 Assumption \ref{asEnd} through Lemma \ref{lemma1} only implies  
 that $\mathcal P$ is $\mathbb{L}^1$ bounded. A stronger condition on the stochastic clock and income stream is, therefore, needed to obtain uniform integrability. 
 

\end{Remark}
The following theorem characterizes subdifferentiability of $u$  over  $\mathring{\mathcal K}$, 
where we are looking for an  $\mathbb R\times\mathbb L^1(dK)$-valued subgradient.   Under our assumptions,  we can find elements of the sub gradient which both  belong to the  effective domain of $v$, $\mathcal E$, and are bounded, i.e. in $\mathbb{R}\times \mathbb{L}^{\infty} (dK)$.
\begin{Theorem}\label{mainTheorem2} 
Let the conditions of Theorem~\ref{mainTheorem} and Assumption \ref{asACclock} hold. Then for every $(x,q)\in\mathring{\mathcal K}$, the subdifferential of $u$ at $(x,q)$ is a nonempty and contains an element of $\mathcal E$, i.e.,
\begin{equation}\label{subdifNonempty}
\partial u(x,q)\cap \mathcal E\neq \emptyset.
\end{equation}
Moreover, for $(x,q)\in\mathring{\mathcal K}$ and $(y,r)\in\mathcal L$, $(y,r)\in\partial u(x,q)$ if and only if the following conditions hold:
\begin{equation}\label{2191}
 |v(y,r)|<\infty,
 \end{equation}
 thus, $(y,r)\in\mathcal E$, 
 \begin{equation}\label{2192}
 \mathbb E\left[\int_0^T\widehat Y_t(y,r) \widehat c_t(x,q)d\kappa_t\right] = xy + \int_0^Tq_sr_sdK_s,
 \end{equation}
 \begin{equation}\label{2193}
 \widehat Y_t(y,r) = U'(t,\widehat c_t(x,q)), \quad (d\kappa\times\mathbb P)- a.e.,
 \end{equation}
 where $\widehat c(x,q)$ and $\widehat Y(y,r)$ are the unique optimizers to \eqref{primalProblem} and \eqref{dualProblem}, respectively.
\end{Theorem}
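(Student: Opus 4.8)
The plan is to read the subdifferential off the biconjugacy relation of Theorem~\ref{mainTheorem}$(iii)$, combined with the existence of the primal and dual optimizers and the pointwise Fenchel--Young relation for the pair $U,V$. \emph{Step 1 (the characterization, assuming attainment).} Fix $(x,q)\in\mathring{\mathcal K}$. Since $u$ is finite there (Lemma~\ref{lemFinu}) and $u(x,q)=\inf_{(y,r)\in\mathcal L}\bigl(v(y,r)+xy+\int_0^T q_s r_s\,dK_s\bigr)$ by Theorem~\ref{mainTheorem}$(iii)$, a pair $(y,r)\in\mathcal L$ belongs to $\partial u(x,q)$ exactly when it attains this infimum, i.e.\ $u(x,q)=v(y,r)+xy+\int_0^T q_s r_s\,dK_s$. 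As $u(x,q)$ is finite and $v>-\infty$ everywhere (Lemma~\ref{lemFinv}), this forces $|v(y,r)|<\infty$, which is \eqref{2191} and places $(y,r)$ in $\mathcal E$; then the dual optimizer $\widehat Y(y,r)$ exists (Theorem~\ref{mainTheorem}$(ii)$), while the primal optimizer $\widehat c(x,q)$ exists and is $(d\kappa\times\mathbb P)$-a.e.\ strictly positive (Theorem~\ref{mainTheorem}$(ii)$ together with the Inada condition at $0$). I would then run the usual sandwich, using $U(t,a)\le V(t,b)+ab$ pointwise, optimality of $\widehat Y(y,r)$, Proposition~\ref{prop1} applied to $\widehat c\in\mathcal A(x,q)$ and $\widehat Y\in\mathcal Y(y,r)$, and finally the Fenchel equality just obtained:
\[
u(x,q)=\mathbb E\!\left[\int_0^T U(t,\widehat c_t)\,d\kappa_t\right]\le v(y,r)+\mathbb E\!\left[\int_0^T \widehat c_t\widehat Y_t\,d\kappa_t\right]\le v(y,r)+xy+\int_0^T q_s r_s\,dK_s = u(x,q),
\]
so both inequalities are equalities. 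The second equality is precisely \eqref{2192}; equality in the (integrated) Fenchel--Young inequality makes the nonnegative integrand $V(t,\widehat Y_t)+\widehat c_t\widehat Y_t-U(t,\widehat c_t)$ vanish $(d\kappa\times\mathbb P)$-a.e., which by strict concavity of $U(t,\cdot)$ and $\widehat c_t>0$ is equivalent to $\widehat Y_t=U'(t,\widehat c_t)$, i.e.\ \eqref{2193}. Conversely, if $(y,r)\in\mathcal L$ satisfies \eqref{2191}--\eqref{2193}, then $(y,r)\in\mathcal E$, $\widehat Y(y,r)$ exists, \eqref{2193} gives $V(t,\widehat Y_t)=U(t,\widehat c_t)-\widehat c_t\widehat Y_t$ a.e., and integrating and using \eqref{2192} and optimality of $\widehat c(x,q)$ recovers the Fenchel equality $v(y,r)=u(x,q)-xy-\int_0^T q_s r_s\,dK_s$, hence $(y,r)\in\partial u(x,q)$.

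\emph{Step 2 (attainment: nonemptiness and the element of $\mathcal E$).} It remains to exhibit one $(y,r)\in\mathcal L$ attaining the infimum in the biconjugacy relation. I would take a minimizing sequence $(y^n,r^n)\subset\mathcal L$, which --- arguing as in Lemma~\ref{7-17-1}, exploiting $(x,q)\in\mathring{\mathcal K}$ and $v(y,r)\ge\widetilde w(y)$ --- may be assumed bounded in $\mathbb R\times\mathbb L^1(dK)$ and contained in $\mathcal E$. At this point Assumption~\ref{asACclock} is used to conclude that $\mathcal P$, and hence the bounded family containing the $r^n$'s, is uniformly integrable with respect to $dK$, so the Dunford--Pettis theorem provides a subsequence along which $r^n$ converges weakly in $\mathbb L^1(dK)$; passing to a further subsequence, $(y^n,r^n)\rightharpoonup(y,r)$ in $\sigma(\mathbb R\times\mathbb L^1(dK),\mathbb R\times\mathbb L^\infty(dK))$, and $(y,r)\in\mathcal L$ by weak closedness of $\mathcal L$. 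Lower semicontinuity of $v$ in this weak topology (the corollary following Lemma~\ref{existenceUniqueness}) together with $xy^n+\int_0^T q_s r^n_s\,dK_s\to xy+\int_0^T q_s r_s\,dK_s$ (since $q\in\mathbb L^\infty(dK)$) gives $v(y,r)+xy+\int_0^T q_s r_s\,dK_s\le u(x,q)$, while the reverse inequality is the weak duality of Lemma~\ref{6-17-1}. Hence $(y,r)$ attains the infimum, so $(y,r)\in\partial u(x,q)$, and by Step~1 it lies in $\mathcal E$; this gives \eqref{subdifNonempty}.

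\emph{Main obstacle.} The delicate point is the compactness in Step~2: because $\mathbb L^1(dK)$ is not reflexive, mere $\mathbb L^1$-boundedness of the minimizing sequence does not produce a weak limit, and a minimizing net could in principle drift to a purely finitely additive (``singular'') functional in $(\mathbb L^\infty(dK))^\ast$, so that the subgradient would fail to be representable inside $\mathbb R\times\mathbb L^1(dK)$. Ruling this out is exactly what the uniform integrability of $\mathcal P$ --- hence Assumption~\ref{asACclock} --- is for, as flagged in the remark preceding the theorem; establishing that uniform integrability (using $d\kappa=\varphi\,dK$ with $\varphi$ bounded away from $0$ and $\infty$, together with the domination of $|e|$ by a uniformly integrable martingale coming from $\mathcal M'$) is the technical heart of the argument, while everything else is bookkeeping built on Theorem~\ref{mainTheorem}.
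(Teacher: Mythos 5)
Your proof is correct, but it takes a genuinely different route to the attainment step (the nonemptiness claim \eqref{subdifNonempty}). The paper starts from the primal optimizer $\widehat c(x,q)$, forms $\widehat Y=U'(\cdot,\widehat c)$, and places it in $z\,\cl\mathcal D(x,q)$ via the abstract theorem of \cite{MostovyiNec}; the crucial step is then Lemma~\ref{closednessD} (closedness of $\mathcal D(x,q)$ in $\mathbb L^0$), which identifies $\widehat Y$ as an element of $\mathcal Y(zy,zr)$ for some $(y,r)\in\mathcal B(x,q)$, so that $(zy,zr)$ is the sought subgradient. You instead work directly on the biconjugacy formula: you take a minimizing sequence for $\inf_{(y,r)\in\mathcal L}\bigl(v(y,r)+xy+\int q\,r\,dK\bigr)$, show it is bounded and uniformly integrable (via Lemma~\ref{uiP}), invoke Dunford--Pettis to extract a $\sigma(\mathbb L^1,\mathbb L^\infty)$-convergent subsequence, and close the argument with the weak lower semicontinuity of $v$ (the corollary following Lemma~\ref{existenceUniqueness}). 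Both routes rely on the same ingredient --- uniform integrability of $\mathcal P$, hence Assumption~\ref{asACclock} --- but use it in different places: the paper through $\mathbb L^0$-closedness of the abstract dual domain, you through weak relative compactness of the minimizing sequence. Your approach has the minor advantage of bypassing Lemma~\ref{closednessD} altogether, and it makes the role of non-reflexivity of $\mathbb L^1(dK)$ explicit; the paper's approach has the advantage of being more constructive, exhibiting the subgradient as the $(y,r)$ for which the dual optimizer $\widehat Y$ lives in $\mathcal Y(zy,zr)$. Your Step~1 (the if-and-only-if characterization via the Fenchel sandwich) is essentially the same as the paper's.

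One small point worth tightening: your citation of Lemma~\ref{7-17-1} for boundedness of the minimizing sequence is only a heuristic pointer, since that lemma bounds $\mathcal B(x,q)$ rather than an arbitrary minimizing sequence. To make it rigorous, note that for $\varepsilon>0$ small enough that the $\varepsilon$-ball around $(x,q)$ lies in $\mathring{\mathcal K}$, weak duality (Lemma~\ref{6-17-1}) applied to $(x-\tfrac{\varepsilon}{2},q)$ gives $v(y^n,r^n)\ge u(x-\tfrac{\varepsilon}{2},q)-(x-\tfrac{\varepsilon}{2})y^n-\int q\,r^n\,dK$; inserting this into $v(y^n,r^n)+xy^n+\int q\,r^n\,dK\le u(x,q)+1$ yields $\tfrac{\varepsilon}{2}y^n\le u(x,q)+1-u(x-\tfrac{\varepsilon}{2},q)<\infty$, and the analogous perturbation $q\mapsto q-\tfrac{\varepsilon}{2}\mathrm{sgn}(r^n)$ (together with local boundedness below of the concave, upper semicontinuous, finite $u$ on $\mathring{\mathcal K}$) bounds $\|r^n\|_{\mathbb L^1(dK)}$. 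After that, since $\mathcal L$ is a cone, each $r^n$ with $y^n>0$ equals $y^n$ times an element of $\mathcal P$, so Lemma~\ref{uiP} gives uniform integrability and your Dunford--Pettis step goes through.
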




\subsection{Uniform integrability of $\mathcal P$}
\begin{Lemma}\label{uiP}
Let the conditions of Theorem~\ref{mainTheorem2} hold. Then $\mathcal P$ is $\mathbb{L}^{\infty}(dK)$-bounded, and, therefore,   a uniformly integrable family.
\end{Lemma}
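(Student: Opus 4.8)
The goal is to show that $\mathcal P = \{\rho : (1,\rho)\in\mathcal L\}$ is bounded in $\mathbb L^\infty(dK)$; once that is established, $\mathbb L^\infty(dK)$-boundedness trivially gives uniform integrability against the finite measure $dK$ (note $K_T\le A<\infty$). So the whole content is the sup-norm bound. The plan is to produce, for a fixed $\rho\in\mathcal P$, a uniform pointwise ($dK$-a.e.) bound on $|\rho|$ in terms of $A$ and of the sup-norm bound $\|\varphi\|_{\mathbb L^\infty}$, $\|1/\varphi\|_{\mathbb L^\infty}$ coming from Assumption~\ref{asACclock}, together with the $\mathbb L^1$ control of $\mathcal P$ already available from Lemma~\ref{lemma1}. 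The key extra ingredient relative to the $\mathbb L^1$-boundedness proof in Lemma~\ref{7-17-1} is that, under Assumption~\ref{asACclock}, we may test the defining inequality of $\mathcal L$ against $q$'s that are \emph{small in the $dK$-measure of their support} rather than merely small in sup-norm, and this converts an integrated bound into a density bound.

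First I would recall, from the definition \eqref{defL} of $\mathcal L$ specialized to $y=1$, that $(1,\rho)\in\mathcal L$ means
\[
x + \int_0^T q_s\rho_s\,dK_s \ge 0 \quad\text{for every } (x,q)\in\mathcal K .
\]
By Lemma~\ref{lemma1}$(ii)$, for every bounded Borel $q$ there is an $x(q)$ with $(x(q),q)\in\mathcal K$, and tracking the proof of Lemma~\ref{lemma1} one has the explicit choice $x(q)=(\|q\|_{\mathbb L^\infty(dK)}+2\varepsilon)X''_0$ with $X''$ a fixed maximal process dominating $\int_0^\cdot|e_s|\,d\kappa_s$; sending $\varepsilon\downarrow 0$ this yields $(c\|q\|_{\mathbb L^\infty(dK)},\,q)\in\cl\mathcal K=\mathcal K$ for a constant $c=X''_0$ depending only on the data. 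Hence for every bounded Borel $q$,
\[
\int_0^T q_s\rho_s\,dK_s \ge -\,c\,\|q\|_{\mathbb L^\infty(dK)}.
\]
Replacing $q$ by $-q$ gives the two-sided estimate $\bigl|\int_0^T q_s\rho_s\,dK_s\bigr|\le c\,\|q\|_{\mathbb L^\infty(dK)}$; taking the supremum over $q$ with $\|q\|_{\mathbb L^\infty(dK)}\le 1$ recovers exactly $\|\rho\|_{\mathbb L^1(dK)}\le c$, i.e.\ the $\mathbb L^1$-bound — but this is \emph{not} yet a sup-norm bound because the set $\mathcal K$ only controls $q$ through its $dK$-essential sup, so testing against indicators of thin sets gives no improvement. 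This is where Assumption~\ref{asACclock} enters.

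The plan for the sup-norm bound is therefore: fix $\rho\in\mathcal P$, let $B=\{\rho>M\}$ for $M>0$, and test the inequality with $q=-\mathbf 1_B$, using $\|{-\mathbf 1_B}\|_{\mathbb L^\infty(dK)}\le 1$ to get $\int_B\rho\,dK\le c$. Hmm — this again only gives $M\,K(B)\le c$, which bounds $K(B)$ but not $M$. The genuine mechanism must instead use that $d\kappa=\varphi\,dK$ is comparable to $dK$ with \emph{two-sided} bounds, so that $\mathbb L^\infty(dK)=\mathbb L^\infty(d\kappa\times\mathbb P)$-type spaces coincide and, crucially, that Lemma~\ref{lemma1}$(ii)$ can be sharpened: since $X''_t\ge\int_0^t|e_s|\,d\kappa_s$ and $d\kappa=\varphi\,dK$ with $\varphi\le\Lambda:=\|\varphi\|_{\mathbb L^\infty}$, for \emph{any} bounded Borel $q$ with support in a set $B$, the process $(\|q\|_\infty+\varepsilon)X''/\Lambda$ superreplicates $-\int_0^\cdot q_s e_s\,d\kappa_s$ using only the ``$dK$-mass on $B$'', giving $\bigl(\tfrac{1}{\Lambda}\|q\|_{\mathbb L^\infty(dK)}X''_0,\,q\bigr)\in\mathcal K$ — wait, that is still a sup-norm coefficient. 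So I would instead go back to the dual description: by Corollary~\ref{cor1} every $Z\in\mathcal Z'$ furnishes an explicit $\rho^Z\in\mathcal P$ via $\int_0^t\rho^Z_s\,dK_s=\mathbb E[\int_0^t Z_s e_s\,d\kappa_s]$, so $\rho^Z_s\,dK_s = \mathbb E[Z_s e_s\,d\kappa_s]$ as measures, hence (using $d\kappa=\varphi\,dK$ and the disintegration) $r^Z_s = \mathbb E[Z_s e_s\varphi_s\mid\text{time }s]$ in an appropriate sense, which is bounded by $\Lambda\cdot\mathbb E[Z_s|e_s|\mid s]$; and $Z|e|\le Z X''$, a $\mathbb Q$-uniformly-integrable martingale, whose conditional expectations are controlled in $\mathbb L^1(\mathbb P)$, not $\mathbb L^\infty$. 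That path gives $\mathbb L^1$ again, not $\mathbb L^\infty$.

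The correct route — and the one I would commit to writing — is to observe that $\mathcal P$ is the projection onto the $r$-coordinate of $(-\mathring{\mathcal K})^o\cap(\{1\}\times\mathbb L^1(dK))$, and that $\mathring{\mathcal K}$ \emph{contains a ball} in the $\mathbb R\times\mathbb L^\infty(dK)$-norm around $(x_0,0)$ for suitable $x_0$ (Lemma~\ref{lemma1}$(i)$ plus openness). Thus there is $\varepsilon>0$ with $(x_0+x',q')\in\mathcal K$ whenever $|x'|\le\varepsilon$ and $\|q'\|_{\mathbb L^\infty(dK)}\le\varepsilon$; plugging into $(1,\rho)\in\mathcal L$ gives $x_0+x'+\int_0^T q'_s\rho_s\,dK_s\ge 0$, so $\int_0^T q'_s\rho_s\,dK_s\ge -x_0-\varepsilon$ for all $\|q'\|_{\mathbb L^\infty(dK)}\le\varepsilon$, whence $\|\rho\|_{\mathbb L^1(dK)}\le (x_0+\varepsilon)/\varepsilon$ — still $\mathbb L^1$. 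I now believe the statement's ``$\mathbb L^\infty(dK)$-bounded'' must in fact be read through the lens of the preceding remark: Assumption~\ref{asACclock} forces a \emph{specific} structure on admissible $q$'s via the comparison $d\kappa=\varphi\,dK$, and the actual proof is short: one shows $\mathcal P$ is contained in a set of the form $\{|\rho|\le \Lambda\,\bar X''_0\}$ $dK$-a.e., where $\bar X''$ is the maximal process from Lemma~\ref{lemma1}, by testing $(1,\rho)\in\mathcal L$ against $(x(q),q)$ with $q=-\mathrm{sgn}(\rho)\mathbf 1_B$ for \emph{arbitrary} Borel $B$ and $x(q)$ chosen proportional to the $\kappa$-mass $\mathbb E[\int_0^T\mathbf 1_B\,d\kappa]$ rather than to $\|q\|_\infty$ — which is possible precisely because, with $d\kappa=\varphi\,dK$, superreplicating $-\int\mathbf 1_B e\,d\kappa$ costs, in initial capital, at most $\Lambda\cdot(\text{the }dK\text{-integral of }|e|\text{-type data over }B)$, i.e.\ something that vanishes as $K(B)\to 0$. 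Dividing the resulting inequality $\int_B|\rho|\,dK\le \Lambda\,\bar X''_0\,K(B)$ by $K(B)$ and letting $K(B)\downarrow 0$ around a Lebesgue point of $\rho$ yields the pointwise bound $|\rho|\le\Lambda\,\bar X''_0$, $dK$-a.e.

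\textbf{Main obstacle.} The crux — and the step I expect to absorb almost all the work — is exactly the sharpening of Lemma~\ref{lemma1}$(ii)$ under Assumption~\ref{asACclock}: replacing the sup-norm-proportional initial-capital cost $x(q)\propto\|q\|_{\mathbb L^\infty(dK)}$ by a cost proportional to the $\kappa$-mass (equivalently $dK$-mass, by the two-sided bounds on $\varphi$) of the support of $q$. Without Assumption~\ref{asACclock} this is simply false — the cost of holding capital to cover a liability that can be large on a thin time-set is genuinely $O(\|q\|_\infty)$, not $O(K(\mathrm{supp}\,q))$ — so the assumption must be used to absorb the time-integration. Once this ``$\mathbb L^1$-cost'' version of membership in $\mathcal K$ is in hand, the passage $\int_B|\rho|\,dK\le C\,K(B)$ for all Borel $B$ $\Rightarrow$ $\|\rho\|_{\mathbb L^\infty(dK)}\le C$ is the standard Radon–Nikodym/Lebesgue-differentiation argument, and uniform integrability over $\mathcal P$ is then immediate since a sup-norm-bounded family on the finite measure space $([0,T],dK)$ is trivially uniformly integrable.
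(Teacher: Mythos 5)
Your final committed argument lands on the same route as the paper: the whole point is to upgrade the sup-norm-proportional initial-capital cost in Lemma~\ref{lemma1}$(ii)$ to a cost proportional to $\int_0^T q\,dK$, and then read off a $dK$-a.e.\ pointwise bound on $\rho$ from $\int_0^T q\rho\,dK \le (\mathrm{const})\int_0^T q\,dK$ for all $q$ with values in $[0,1]$. You correctly flag this sharpening as the crux; you also correctly note that $\mathbb L^\infty(dK)$-boundedness on the finite measure $dK$ trivially yields uniform integrability. The several false starts you work through and discard are harmless, though they make the write-up longer than the paper's.

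What is left vague is precisely the crux estimate, and there you misidentify the process. The paper does not use $X''$ from Lemma~\ref{lemma1} (which dominates the \emph{cumulative} income $\int_0^\cdot|e_s|\,d\kappa_s$); it uses $X'$ from Assumption~\ref{asEnd}, which dominates $|e_t|$ \emph{pointwise}. With $d\kappa=\varphi\,dK$ and $|\varphi|\le C$ (Assumption~\ref{asACclock}), for $q:[0,T]\to[0,1]$ one bounds
$$\beta(q):=\sup_{\mathbb Q\in\mathcal M}\mathbb E^{\mathbb Q}\!\left[\int_0^T q(s)|e_s|\,d\kappa_s\right]
\le \int_0^T q(s)\sup_{\mathbb Q\in\mathcal M}\mathbb E^{\mathbb Q}\!\left[CX'_s\right]dK_s
\le CX'_0\int_0^T q(s)\,dK_s,$$
the last step using that $X'$ is a nonnegative $\mathbb Q$-supermartingale (indeed a maximal process), so $\mathbb E^{\mathbb Q}[X'_s]\le X'_0$. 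It is this supermartingale/maximality mechanism that makes the cost scale with $\int q\,dK$; your proposal asserts the scaling but does not identify how it is obtained. The second thing your sketch glosses over is how a bound on the superreplication cost converts into actual membership $(\beta(q),-q)\in\mathcal K$: the paper invokes the Snell-envelope argument of \cite[Proposition 4.3]{K96} together with the optional decomposition theorem \cite[Theorem 3.1]{FK} to produce a nonnegative wealth process started at $\beta(q)$ that dominates $\int_0^\cdot q|e|\,d\kappa$ on $\Theta$. Once $(\beta(q),-q)\in\mathcal K$ is in hand, $\int_0^T q\rho\,dK\le\beta(q)\le CX'_0\int_0^T q\,dK$ for all $[0,1]$-valued Borel $q$ gives $\rho\le CX'_0$ $dK$-a.e.\ directly (no Lebesgue differentiation needed, though that works too), and the symmetric argument with $q:[0,T]\to[-1,0]$ gives the lower bound. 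So: right idea, right route, but the two load-bearing steps (the supermartingale bound giving the $\int q\,dK$-scaling, and the K96/FK passage from cost bound to $\mathcal K$-membership) need to be supplied, and $X'$ should replace $X''$.
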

\begin{proof}
{\it Step 1.}  For an arbitrary $q:~[0,T] \to [0,1]$, let us define
\begin{displaymath}
\beta(q) {:=} \sup\limits_{\mathbb Q\in\mathcal M} \mathbb {E^Q}\left[ \int_0^T q(s) |e_s|d\kappa_s\right] = \sup\limits_{\mathbb Q\in\mathcal M, \tau\in\Theta} \mathbb {E^Q}\left[ \int_0^{\tau} q(s) |e_s|d\kappa_s\right].
\end{displaymath}
Note that by Assumption \eqref{asEnd}, 
we have
\begin{equation}\nonumber
\begin{array}{rcl}
\beta(q) &=&\sup\limits_{\mathbb Q\in\mathcal M} \mathbb {E^Q}\left[ \int_0^T q(s) |e_s|d\kappa_s\right]\\
&\leq&\sup\limits_{\mathbb Q\in\mathcal M} \mathbb {E^Q}\left[ \int_0^T q(s) X'_s\varphi d K_s\right]\\
&\leq&\int_0^T q(s)\sup\limits_{\mathbb Q\in\mathcal M} \mathbb {E^Q}\left[  CX'_s\right]d K_s\\
&\leq&CX'_0\int_0^T q(s)d K_s,\\
\end{array}
\end{equation}
where $C\in\mathbb R$ is such that $|\varphi| \leq C$. 
It follows from \cite[Proposition 4.3]{K96} and \cite[Theorem 3.1]{FK} that there exists a nonnegative c\`adl\`ag process $V = \beta(q) + H\cdot S$, such that
\begin{displaymath}
V_t \geq \esssup\limits_{\mathbb Q\in \mathcal M, \tau \in\Theta, \tau\geq t}\mathbb {E^Q}\left[\int_0^{\tau}q(s)|e_s|d\kappa_s | \mathcal F_t \right],\quad t\in[0,T].
\end{displaymath} 
Consequently, $V$ satisfies
$$V_\tau \geq \int_0^\tau q(s)|e_s|d\kappa_s \geq  \int_0^\tau q(s)e_sd\kappa_s,\quad \Pas,\quad\tau \in\Theta,$$
and thus, $(\beta(q), -q)\in\mathcal K$.  As a result, from the definitions of $\mathcal L$ and $\mathcal P$, we get
\begin{equation}\nonumber
\beta(q)\geq \sup\limits_{\rho \in\mathcal P}\int_0^T q(s)\rho(s)dK_s.
\end{equation}
One can see that
 for every $\rho\in\mathcal P$, we have
$\rho \leq f{:=} {CX'_0}$, $d K$-a.e. 

{\it Step 2.} For an arbitrary $q:~[0,T] \to [-1,0]$, let us set 
\begin{displaymath}
\tilde \beta(q) {:=} \sup\limits_{\mathbb Q\in\mathcal M}\mathbb {E^Q}\left[ \int_0^T q(s) (-|e_s|) d\kappa_s \right].
\end{displaymath}
As in {\it Step 1}, we can construct a c\`adl\`ag process $\tilde V = \tilde \beta(q) + H\cdot S$, s.t.
\begin{displaymath}
\tilde V_t \geq \esssup\limits_{\mathbb Q\in\mathcal M, \tau \in\Theta, \tau \geq t}\mathbb {E^Q}\left[ \int_0^T q(s)(-|e_s|) d\kappa_s |\mathcal F_t\right] \geq \esssup\limits_{\mathbb Q\in\mathcal M, \tau \in\Theta, \tau \geq t}\mathbb {E^Q}\left[ \int_0^T q(s)e_s d\kappa_s |\mathcal F_t\right].
\end{displaymath}
This implies that $(\tilde \beta(q), -q)\in\cl\mathcal K$. Therefore, 
\begin{displaymath}
\beta(q) \geq \int_0^T q(s)\rho (s) dK_s,\quad for~every~\rho \in\mathcal P.
\end{displaymath}
Similarly to {\it Step 1}, 
one can see that $\rho \geq -f$, $d K$-a.e.  for every $\rho \in\mathcal P$.

{\it Step 3.}In view of {\it Steps 1 and 2}, uniform integrability of $\mathcal P$ under $dK$ follows from the integrability of $f$ under $d K$.
\end{proof}

\begin{Lemma}\label{uiB}
Let the assumption of Theorem \ref{mainTheorem2} hold and $(x,q)\in\mathring{\mathcal K}$. Then 
$$\left\{r:(y,r)\in\mathcal B(x,q)\right\}$$ is 
 $\mathbb{L}^{\infty}$-bounded, so  a uniformly integrable subset of 
$\mathbb L^1(dK)$.
\end{Lemma}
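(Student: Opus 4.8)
The plan is to bootstrap the $\mathbb{L}^\infty(dK)$-bound on $\mathcal P$ obtained in Lemma~\ref{uiP} up to the full set $\{r:(y,r)\in\mathcal B(x,q)\}$, using only that $\mathcal L$ is a convex cone and that the $y$-components of elements of $\mathcal B(x,q)$ are bounded. First I would record two ingredients. Since $(x,q)\in\mathring{\mathcal K}$, Lemma~\ref{7-17-1} provides a constant $c=c(x,q)<\infty$ such that $0\le y\le c$ for every $(y,r)\in\mathcal B(x,q)$, the lower bound $y\ge 0$ being immediate from $(1,0)\in\mathcal K$ (Lemma~\ref{lemma1}, item~$(i)$) and the definition~\eqref{defL} of $\mathcal L$. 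Secondly, $\mathcal P$ is non-empty --- Corollary~\ref{cor1} exhibits a $\rho_0$ with $(1,\rho_0)\in\mathcal L$ --- and, by Lemma~\ref{uiP}, there is a deterministic constant $f$ (equal to $CX'_0$) with $|\rho|\le f$, $dK$-a.e., for every $\rho\in\mathcal P$.

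The core of the argument would then go as follows. Fix $\rho_0\in\mathcal P$ and an arbitrary $(y,r)\in\mathcal B(x,q)$. Because $\mathcal L$, being the polar cone of $-\mathcal K$, is a convex cone, it is stable under addition, so $(y+1,\,r+\rho_0)=(y,r)+(1,\rho_0)\in\mathcal L$; as $y+1\ge 1>0$ and $\mathcal L$ is a cone, scaling by $(y+1)^{-1}$ gives $(1,\,(r+\rho_0)/(y+1))\in\mathcal L$, i.e. $(r+\rho_0)/(y+1)\in\mathcal P$. Lemma~\ref{uiP} then yields
\begin{displaymath}
|r+\rho_0|\le (y+1)f\le (c+1)f,\qquad dK\text{-a.e.},
\end{displaymath}
and hence $|r|\le|r+\rho_0|+|\rho_0|\le (c+2)f$, $dK$-a.e. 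Since $(y,r)$ was arbitrary and $f$ is a fixed constant, $\{r:(y,r)\in\mathcal B(x,q)\}$ is bounded in $\mathbb{L}^\infty(dK)$. Because $dK$ is a finite measure by \eqref{finClock}, the constant function $(c+2)f$ belongs to $\mathbb{L}^1(dK)$, so this family is dominated by a fixed integrable function and is therefore a uniformly integrable subset of $\mathbb{L}^1(dK)$.

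There is no real obstacle here; essentially all the substance is carried by Lemma~\ref{uiP}, and the present lemma is a short deduction from it. The only step that needs a moment's care is that one cannot directly rescale $(y,r)$ into $(1,r/y)\in\mathcal L$ when $y=0$ (or is small relative to the desired uniformity), which is exactly why I would add the fixed element $(1,\rho_0)$ of $\mathcal L$ before rescaling; this costs only the harmless extra summand $\rho_0$ in the final estimate. One should also double-check the two structural facts invoked in passing: that $\mathcal L$ is closed under sums (a consequence of its convexity together with its being a cone) and that the dominating constant $f=CX'_0$ from Lemma~\ref{uiP} is genuinely deterministic, which holds since $\mathcal F_0$ is trivial.
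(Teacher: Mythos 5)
Your proof is correct and follows essentially the same route as the paper: obtain the bound $0\le y\le M$ from Lemma~\ref{7-17-1}, then transfer the $\mathbb L^\infty(dK)$-bound on $\mathcal P$ from Lemma~\ref{uiP} to the $r$-components via the convex-cone structure of $\mathcal L$. The paper simply asserts the inclusion $\{r:(y,r)\in\mathcal B(x,q)\}\subseteq\bigcup_{0\le\lambda\le M}\lambda\mathcal P$ and stops there, whereas your device of adding a fixed $(1,\rho_0)\in\mathcal L$ before rescaling is a minor refinement that cleanly covers the degenerate case $y=0$, which the paper's one-line inclusion leaves implicit.
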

\begin{proof}
By Lemma \ref{7-17-1}, we deduce the existence of a constant $M>0$, 
such that 
\begin{equation}\nonumber
y\leq M,\quad for~every~(y,r)\in\mathcal B(x,q).
\end{equation}
We conclude that $$\left\{r:(y,r)\in\mathcal B(x,q)\right\}\subseteq \bigcup\limits_{0\leq \lambda \leq M}{\lambda\mathcal P},$$ and thus by Lemma \ref{uiP}, $\left\{r:(y,r)\in\mathcal B(x,q)\right\}$ is a uniformly integrable family.
\end{proof}
\subsection{Closedness of $\mathcal D(x,q)$ for every $(x,q)\in\mathring{\mathcal K}$}
\begin{Lemma}\label{closednessD}
Under the conditions of Theorem~\ref{mainTheorem2}, for every $(x,q)\in \mathring{\mathcal K}$, the set $\mathcal D(x,q)$ is closed in $\mathbb L^0(d\kappa\times\mathbb P)$.
\end{Lemma}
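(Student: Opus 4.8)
The plan is to show that $\mathcal D(x,q)$ is already closed in $\mathbb L^0(d\kappa\times\mathbb P)$, so that $\cl\mathcal D(x,q)=\mathcal D(x,q)$. Take a sequence $Y^n\in\mathcal Y(y^n,r^n)$ with $(y^n,r^n)\in\mathcal B(x,q)$ and $Y^n\to Y$ in $\mathbb L^0(d\kappa\times\mathbb P)$; we must produce $(y,r)\in\mathcal B(x,q)$ with $Y\in\mathcal Y(y,r)$. First I would control the parameters: by Lemma \ref{7-17-1} the set $\mathcal B(x,q)$ is bounded in $\mathbb R\times\mathbb L^1(dK)$, and by Lemma \ref{uiB} the family $\{r:(y,r)\in\mathcal B(x,q)\}$ is uniformly integrable in $\mathbb L^1(dK)$; hence, passing to a subsequence, $y^n\to y$ in $\mathbb R$ and $r^n\to r$ weakly in $\mathbb L^1(dK)$ (using the Dunford–Pettis theorem), and by convexity and norm-closedness of $\mathcal L$ together with Mazur's lemma, $(y,r)\in\mathcal L$. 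The inequality defining $\mathcal B(x,q)$ passes to the limit since $xy^n+\int_0^Tq_sr^n_sdK_s\to xy+\int_0^Tq_sr_sdK_s$ (here $q\in\mathbb L^\infty(dK)$ is exactly the dual test function for weak $\mathbb L^1$ convergence), so $(y,r)\in\mathcal B(x,q)$.

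Next I would verify the defining inequalities of $\mathcal Y(y,r)$ in \eqref{defY}. Membership in $\mathcal Y(y)$: since $y^n\to y$, for any $\varepsilon>0$ eventually $Y^n\in\mathcal Y(y^n)\subseteq\mathcal Y(y+\varepsilon)$, and $\mathcal Y(y+\varepsilon)$ is closed in $\mathbb L^0$, so $Y\in\bigcap_{\varepsilon>0}\mathcal Y(y+\varepsilon)=\mathcal Y(y)$ (the last equality following from the definition \eqref{oldY} and a standard argument, or by citing the analogous step in \cite{MostovyiNec}). For the budget inequalities: fix $(\tilde x,\tilde q)\in\mathcal K$ and $c\in\mathcal A(\tilde x,\tilde q)$. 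By Fatou's lemma,
\begin{displaymath}
\mathbb E\left[\int_0^Tc_sY_sd\kappa_s\right]\leq\liminf_{n\to\infty}\mathbb E\left[\int_0^Tc_sY^n_sd\kappa_s\right]\leq\liminf_{n\to\infty}\left(\tilde x y^n+\int_0^T\tilde q_sr^n_sdK_s\right)=\tilde x y+\int_0^T\tilde q_sr_sdK_s,
\end{displaymath}
where the last step uses $y^n\to y$ and the weak $\mathbb L^1(dK)$ convergence $r^n\to r$ tested against $\tilde q\in\mathbb L^\infty(dK)$. Hence $Y\in\mathcal Y(y,r)$, and therefore $Y\in\mathcal D(x,q)$, giving closedness.

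The main obstacle is the weak $\mathbb L^1(dK)$ compactness of the parameters $(r^n)$: plain $\mathbb L^1$-boundedness from Lemma \ref{7-17-1} is not enough to extract a weakly convergent subsequence, which is precisely why Assumption \ref{asACclock} and the uniform integrability established in Lemmas \ref{uiP}–\ref{uiB} are invoked here — this is the point where the stronger hypothesis on the clock and income stream is genuinely used. A secondary technical point is making sure that weak convergence of $r^n$ is compatible with the Fatou/Mazur manipulations: since the test functions $q,\tilde q$ are in $\mathbb L^\infty(dK)$ the pairing $\int_0^T\tilde q_sr^n_sdK_s$ is continuous for the weak topology, so no further argument is needed there, but one should be careful that the $\liminf$ in the Fatou step and the convergence of the right-hand side are applied along the same subsequence. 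Everything else is routine manipulation with the closedness properties of $\mathcal Y(\cdot)$ and $\mathcal L$ already recorded in the excerpt.
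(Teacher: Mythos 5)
Your proof is correct and reaches the same conclusion, but via a genuinely different compactness mechanism. The paper passes to a.e.\ convergent tail convex combinations $(\tilde y^n,\tilde r^n)$ of $(y^n,r^n)$ using Komlos' lemma (\cite[Lemma A1.1]{DS}), then uses the uniform integrability from Lemma \ref{uiB} to upgrade $\tilde r^n\to r$ to \emph{strong} $\mathbb L^1(dK)$ convergence; the matching tail convex combinations $\tilde Y^n$ of the processes $Y^n$ inherit a.e.\ convergence to $Y$, and then the Fatou plus strong-$\mathbb L^1$ argument closes the proof via Proposition \ref{prop1}. You instead invoke Dunford--Pettis directly: the same uniform integrability yields relative weak compactness, hence a weakly $\mathbb L^1(dK)$-convergent subsequence $r^n\rightharpoonup r$, which suffices because the only functionals you need ($q$ and $\tilde q$) live in $\mathbb L^\infty(dK)$. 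This avoids forming convex combinations of both the parameters and the processes and is arguably a touch cleaner, at the cost of getting only weak convergence of $r^n$ rather than the stronger convergence Komlos delivers. Both arguments hinge on the same key ingredient — the uniform integrability from Lemmas \ref{uiP}–\ref{uiB} — which is where Assumption \ref{asACclock} enters. Two small bookkeeping points you should make explicit: (1) first pass to a subsequence of $Y^n$ giving $(d\kappa\times\mathbb P)$-a.e.\ convergence before applying Dunford--Pettis, so that the Fatou step and the weak limit run along a common subsequence (you flag this, but it bears stating); (2) to conclude $(y,r)\in\mathcal L$ you can also just cite the paper's remark that $\mathcal L$ is closed in both the norm and the $\sigma(\mathbb R\times\mathbb L^1(dK),\mathbb R\times\mathbb L^\infty(dK))$ topologies, which makes the Mazur step unnecessary. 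Finally, your separate verification that $Y\in\mathcal Y(y)$ (via $\mathcal Y(y)=\bigcap_{\varepsilon>0}\mathcal Y(y+\varepsilon)$) is fine, but it duplicates what Proposition \ref{prop1}(ii) already provides once the budget inequalities are established, which is how the paper closes the argument.
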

\begin{proof}
Let $(x,q)\in\mathring{\mathcal K}$ 
and $Y$ be an arbitrary element of $\cl \mathcal D(x,q)$.  We claim that there exists $(y,r)\in \mathcal B(x,q)$, such that $Y \in \mathcal Y(y,r)$. Let $Y^n\in\mathcal Y(y^n,r^n)$, $n\geq 1$, be a sequence in $\mathcal D(x,q)$, such that $\lim\limits_{n\to \infty}Y^n = Y$, $(d\kappa\times\mathbb P)$-a.e. 
Since $(y^n,r^n)_{n\geq 1}\subset\mathcal B(x,q)$, which is bounded in the sense of Lemma~\ref{7-17-1}, Komlos' lemma implies the existence of a subsequence of convex combinations
$(\tilde y^n,\tilde r^n)\in \conv((y^n,r^n), (y^{n+1}, r^{n+1}),\dots)$, $n\geq 1$, such that $(\tilde y^n)_{n\geq 1}$ converges to $y$ and $(\tilde r^n)_{n\geq 1}$ converges to $r$, $dK$-a.e. 
Lemma~\ref{uiB} implies that $(\tilde r^n)_{n\geq 1}$ is uniformly integrable. Therefore $(\tilde r^n)_{n\geq 1}$ converges to $r$ in $\mathbb L^1(dK)$. Note that the corresponding sequence of convex combinations of $(Y^n)_{n\geq 1}$, $(\tilde Y^n)_{n\geq 1}$  converges to $Y$, $(d\kappa\times\mathbb P)$-a.e.
Then we have
$$ 
1 \geq \lim\limits_{n\to\infty}\left( x\tilde y^n + \int_0^T q_s\tilde r^n_sdK_s \right) = xy + \int_0^T q_s r_sdK_s.
$$
Therefore, $(y,r)\in \mathcal B(x,q)$.

Let us fix an arbitrary $(x',q')\in\mathcal K$ and $c\in\mathcal A(x',q')$. 
Using Proposition~\ref{prop1}, Fatou's lemma, and Lemma \ref{uiB}, we obtain
\begin{displaymath}
\begin{array}{c}
0\leq \mathbb E\left[\int_0^TY_sc_sds \right] \leq 
\liminf\limits_{n\to\infty} \mathbb E\left[\int_0^T \tilde Y^n_sc_sds \right] \\\leq 
 \lim\limits_{n\to\infty}\left( x'\tilde y^n + \int_0^T q'_s\tilde r^n_sdK_s \right)
 =
  x'y + \int_0^T q'_s r_sdK_s,\\
\end{array}
\end{displaymath} 
where the uniform integrability of $\mathcal B(x,q)$ in needed once again in the last equality.
From Proposition~\ref{prop1}, we conclude that that $Y\in \mathcal Y(y,r)$. 
\end{proof}

\begin{proof}[Proof of Theorem~\ref{mainTheorem2}.] 
Let $(x,q)\in\mathcal K$, $\widehat c(x,q)$ be the minimizer to (\ref{primalProblem}), whose existence and uniqueness are established in Lemma~\ref{existenceUniqueness}. Let us also set
\begin{equation}\label{defHatY}
\widehat Y_{t} {:=} U'({t},\widehat c_{t}(x,q)),\quad (t,\omega)\in[0,T]\times\Omega,\quad and \quad z {:=} \mathbb E\left[\int_0^T\widehat c_s(x,q)\widehat Y_s d\kappa_s \right].
\end{equation} 
Note that the sets $\mathcal A(x,q)$ and ${\cl \mathcal D(x,q)}$ satisfy the conditions of \cite[Theorem~3.2]{MostovyiNec}, which implies that $\widehat Y\in z{\cl \mathcal D(x,q)}$ is the unique solution to the optimization problem
\begin{displaymath}
\inf\limits_{Y\in z{\cl \mathcal D(x,q)}}\mathbb E\left[\int_0^TV(t, Y_s)d\kappa_s\right] = \mathbb E\left[\int_0^TV(t,\widehat Y_s)d\kappa_s\right]\in\mathbb R,
\end{displaymath}
where finiteness follows from Lemma \ref{auxiliaryLemma}.
Note that by Lemma~\ref{closednessD}, $\widehat Y\in\mathcal Y(zy, zr)$ for some $(y,r)\in\mathcal B(x,q)$.
It follows from the definition of $\widehat Y$ in \eqref{defHatY} that 
for $\widehat c(x,q)$ and $\widehat Y$ we have the following relation
\begin{displaymath}
U({t}, \widehat c_{t}(x,q)) = V({t}, \widehat Y_{t}) + \widehat c_{t}(x,q)\widehat Y_{t},
\quad (t,\omega)\in[0,T]\times\Omega,
\end{displaymath}
which together with Lemma~\ref{conjugacy} implies that
\begin{equation}\label{7-17-3}
u(x,q) = v(zy,zr) + z\left(xy + \int_0^Tq_sr_sdK_s\right),
\end{equation}
\begin{displaymath}
\widehat Y(zy,zr)  = \widehat Y 
,\quad (d\kappa\times\mathbb P)-a.e.,
\end{displaymath}
where $\widehat Y(zy,zr)$ is the unique minimizer to the dual problem~(\ref{dualProblem}). 
By (\ref{7-17-3}), $(zy,zr)\in\mathcal E.$ (\ref{7-17-3}) and \cite[Proposition I.5.1]{EkelandTemam} assert that $(zy,zr)\in\partial u(x,q)$. In particular, we get
\begin{displaymath} 
\partial u(x,q) \cap \mathcal E \neq \emptyset,
\end{displaymath}
i.e.,  \eqref{subdifNonempty}. 
Note that even though, e.g., \cite[Corollary 2.2.38 and Corollary 2.2.44]{BarbuPrec} imply that $\partial u(x,q)\neq \emptyset$, 
 over the interior of the effective domain, 
its elements are in $\mathbb R\times (\mathbb L^\infty)^*(dK)$. Relation \eqref{subdifNonempty} shows that $\partial u(x,q)$ contains  at least a bounded element  of $\mathcal E\subseteq\mathcal L\subset \mathbb R\times\mathbb L^1(dK).$

Let $(x,q)\in\mathcal K$ and $(y,r)\in\mathcal L$. Suppose that \eqref{2191}, \eqref{2192}, and \eqref{2193} hold. Then by conjugacy of $U$ and $V$, we get 

\begin{equation}\label{2194}
0 = v(y,r) - u(x,q) + xy + \int_0^T q_sr_sdK_s.
\end{equation}
Lemma \ref{lemBiconjugacy} and 
\cite[Proposition I.5.1]{EkelandTemam} imply that $(y,r)\in\partial u(x,q) \cap\mathcal E$.
Conversely, let $(x,q)\in\mathcal K$ and $(y,r)\in\mathcal L\cap \partial u(x,q).$ Then by \cite[Proposition I.5.1]{EkelandTemam}  and Lemma \ref{lemBiconjugacy}, we deduce that \eqref{2194} holds. Lemma \ref{lemFinu} implies the finiteness of $u(x,q)$, which together with \eqref{2194} results in the finiteness of $v(y,r)$, thus \eqref{2191} holds and $(y,r)\in\mathcal E$. By Lemma \ref{existenceUniqueness}, there exists a unique optimizer $\widehat c(x,q)$, for \eqref{primalProblem}, and $\widehat Y(y,r)$, for \eqref{dualProblem}, respectively. Therefore, from conjugacy of $U$ and $V$, Proposition \ref{prop1}, and \eqref{2194}, we obtain
\begin{displaymath}
\begin{array}{c}
0\leq
 \mathbb E\left[\int_0^T \left(V(t,\widehat Y_t(y,r)) - U(t,\widehat c_t(x,q)) + \widehat Y_t(y,r) \widehat c_t(x,q)\right)d\kappa_t\right] \\
\leq v(y,r) - u(x,q) + xy + \int_0^T q_sr_sdK_s = 0.\\
\end{array}
\end{displaymath}
This implies \eqref{2192} and \eqref{2193}. This completes the proof of the theorem.
\end{proof}

\section{Structure of the dual feasible set}\label{secStructureOfDualDomain}
By Assumption \ref{asACclock}, there exists at most countable subset $(s_k)_{k\in\mathbb N}$ of $[0,T]$, where $\kappa$ has jumps. 
We define $\mathcal{D}'$ the set of non-increasing, left-continuous and adapted processes $D$ that start at $1$ and 
with the property that 
$D_{\theta _0} = 1$,  $ D_T\geq 0$
and that, there exists some $n\in \mathbb N$ such that  $D$ is constant off the discrete grid
$\mathcal T_n{:=} \bigcup\limits_{j = 1}^{2^n}\{s_j\}\cup\left\{\frac{k}{2^n}T, k = 0, \dots,2^n\right\}$.


\begin{Lemma}\label{lemSt2}
 Let $\mathcal G'{:=} \{ZD = (Z_tD_t)_{t\in[0,T]}:~Z\in\mathcal Z', D\in\mathcal D' \}$. Assume    the conditions of Proposition \ref{prop1} hold. Then $\mathcal G'$ is convex.

\end{Lemma}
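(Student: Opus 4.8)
The plan is to show that $\mathcal G'$ is stable under convex combinations by writing, for $Z^1 D^1, Z^2 D^2 \in \mathcal G'$ and $\lambda \in [0,1]$, the convex combination as a single product $Z D$ with $Z \in \mathcal Z'$ and $D \in \mathcal D'$. The natural candidates are the ``weighted average'' martingale density and the corresponding weighted average decreasing process:
\begin{equation}\nonumber
Z_t := \lambda Z^1_t + (1-\lambda) Z^2_t, \qquad
D_t := \frac{\lambda Z^1_t D^1_t + (1-\lambda) Z^2_t D^2_t}{\lambda Z^1_t + (1-\lambda) Z^2_t},
\end{equation}
so that $Z_t D_t = \lambda Z^1_t D^1_t + (1-\lambda) Z^2_t D^2_t$ by construction (with the usual convention on the null set where the denominator vanishes — which, since $Z^1, Z^2 > 0$, is actually empty). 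First I would check that $Z \in \mathcal Z'$: the class $\mathcal M'$ of martingale measures under which $X''$ is a uniformly integrable martingale is convex (as noted after the definition of $\mathcal Z'$, via \cite[Theorem 5.2]{DS97}), and a convex combination of two such densities is again the c\`adl\`ag density of such a measure, so $Z \in \mathcal Z'$.

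The substance of the argument is verifying $D \in \mathcal D'$. I would treat the required properties one at a time. \emph{Adaptedness and c\`adl\`ag/left-continuity}: $D$ is a ratio of adapted c\`adl\`ag (in fact the $D^i$ are left-continuous, the $Z^i$ c\`adl\`ag) processes with strictly positive denominator, hence adapted; the regularity of $D$ needs a small remark, since a ratio of a left-continuous and a c\`adl\`ag process is not automatically well-behaved, but because $D$ is constant off the finite grid $\mathcal T_{n_1} \cup \mathcal T_{n_2} \subseteq \mathcal T_n$ for $n = \max(n_1,n_2)$, it is a finite sum of terms of the form (constant on $(s_{j}, s_{j+1}]$) $\times$ (ratio evaluated appropriately), and one reads off left-continuity from the left-continuity of the $D^i$ and the $Z^i_{t-}$ hmm — more carefully, since $D$ is constant between consecutive grid points, on each such interval $D$ agrees with a process that is left-continuous, and at the grid points the value is pinned by the left-continuity convention. \emph{Normalization}: $D_0 = 1$ and $D_{\theta_0} = 1$ because $D^1_{\theta_0} = D^2_{\theta_0} = 1$ forces the numerator to equal the denominator at $\theta_0$; similarly at $0$. \emph{Values in $[0,1]$ and monotonicity}: for fixed $\omega$, $D_t$ is a convex combination of $D^1_t$ and $D^2_t$ with weights $\lambda Z^1_t / Z_t$ and $(1-\lambda) Z^2_t / Z_t$ summing to one, hence $D_t \in [D^1_t \wedge D^2_t, D^1_t \vee D^2_t] \subseteq [0,1]$, giving $D_T \geq 0$ as well. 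Monotonicity in $t$ does \emph{not} follow from this pointwise-in-$t$ observation alone, since the weights themselves vary with $t$; this is the delicate point.

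The main obstacle, then, is showing $D$ is non-increasing in $t$. Because $D$ is piecewise constant on the finite grid $\mathcal T_n$, it suffices to show $D_{t'} \le D_t$ for two consecutive grid points $t < t'$, i.e. to compare $D$ across a single jump of the $D^i$'s. Writing $a := Z^1_t$, $b := (1-\lambda/\lambda)\cdots$ — better: set $p := \lambda Z^1_t$, $q := (1-\lambda) Z^2_t$ and $p' := \lambda Z^1_{t'}$, $q' := (1-\lambda) Z^2_{t'}$ (note the $Z^i$ need not be monotone, so $p', q'$ are unconstrained positive numbers), and $d_i := D^i_t \ge D^i_{t'} =: d_i'$. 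One must verify
\begin{equation}\nonumber
\frac{p' d_1' + q' d_2'}{p' + q'} \;\le\; \frac{p\, d_1 + q\, d_2}{p + q}.
\end{equation}
This is \emph{false} for arbitrary positive $p,q,p',q'$, so the proof must use the martingale structure: $\mathbb E[Z^i_{t'} \mid \mathcal F_t] = Z^i_t$, and the $D^i$ are decreasing, so one should first pass to conditional expectations rather than argue pathwise at a fixed $\omega$. Concretely, I expect the correct route is to check the supermartingale property of $Z D$ directly — show $\mathbb E[Z_{t'} D_{t'} \mid \mathcal F_t] \le Z_t D_t$, which is immediate since $Z D = \lambda Z^1 D^1 + (1-\lambda) Z^2 D^2$ is a convex combination of supermartingales (each $Z^i D^i$ being a product of a martingale and a bounded decreasing process, hence a supermartingale under $\mathbb P$) — and then invoke that $D = (ZD)/Z$ with $Z$ a strictly positive martingale; a supermartingale divided by a strictly positive martingale that \emph{starts the quotient at a maximum}, combined with the fact that everything is constant off a finite deterministic grid, yields that $D$ can be taken non-increasing. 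If a literal pathwise monotonicity fails, the resolution is that one is free to \emph{modify} $D$ on the constancy intervals: define $\bar D_t := \min_{s \le t,\ s \in \mathcal T_n}$ of the relevant ratio, extended to be constant between grid points; this $\bar D$ is non-increasing by construction, still satisfies $\bar D_{\theta_0} = 1$, still has $\bar D_t \in [0,1]$, and — this is the point to nail down — still satisfies $Z \bar D \le Z D$ with $\bar D$ adapted, which combined with the characterization of $\mathcal G'$ as a set of processes (not necessarily with equality) suffices. I would therefore organize the write-up as: (1) reduce to consecutive grid points; (2) establish the supermartingale inequality for $ZD$; (3) deduce the monotone representative lives in $\mathcal D'$ and its product with $Z$ equals the convex combination, completing the proof that $\mathcal G'$ is convex.
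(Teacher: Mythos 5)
Your instinct that the monotonicity of $D$ is ``the delicate point'' is correct, and the counterexample you give (that $\frac{p'd_1'+q'd_2'}{p'+q'}\le\frac{pd_1+qd_2}{p+q}$ can fail for arbitrary positives) is exactly the obstruction. But the conclusion you should draw from it is that the starting ansatz $Z=\lambda Z^1+(1-\lambda)Z^2$ is the wrong one, not that a patch is needed afterward. There are in fact two gaps. First, your $D_t = (\lambda Z^1_tD^1_t + (1-\lambda)Z^2_tD^2_t)/(\lambda Z^1_t + (1-\lambda)Z^2_t)$ is \emph{not} constant off the grid $\mathcal T_n$ and is not left-continuous: on an interval $(t_k,t_{k+1}]$ the $D^i$ are constant but the $Z^i$ move, so $D_t$ is a genuinely time-varying (c\`adl\`ag, not c\`agl\`ad) ratio unless $D^1_{t_k+}=D^2_{t_k+}$. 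The assertion in your proposal that $D$ agrees on each interval with a constant process is false. Second, your repair --- replacing $D$ by the running minimum $\bar D$ over grid points --- destroys the identity $Z\bar D = \lambda Z^1D^1+(1-\lambda)Z^2D^2$; you only get an inequality $Z\bar D\le\lambda Z^1D^1+(1-\lambda)Z^2D^2$, which shows the convex combination \emph{dominates} an element of $\mathcal G'$ but not that it \emph{is} one. Convexity of $\mathcal G'$ requires equality; there is no ``characterization of $\mathcal G'$ up to domination'' at this point in the paper.

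The missing idea is to make the convex weight in the definition of $Z$ \emph{time-dependent and recursively updated at the grid points}, rather than a fixed $\lambda$. The paper sets $Z_0=D_0=1$ and, on each $(t_k,t_{k+1}]$, pastes the two densities with the $\mathcal F_{t_k}$-measurable weight
$$\alpha_{t_k+}=\frac{\lambda Z^1_{t_k}D^1_{t_k+}}{\lambda Z^1_{t_k}D^1_{t_k+}+(1-\lambda)Z^2_{t_k}D^2_{t_k+}}\,,\qquad
Z_t:=Z_{t_k}\Bigl(\alpha_{t_k+}\tfrac{Z^1_t}{Z^1_{t_k}}+(1-\alpha_{t_k+})\tfrac{Z^2_t}{Z^2_{t_k}}\Bigr),\qquad
D_t:=\frac{\lambda Z^1_{t_k}D^1_{t_k+}+(1-\lambda)Z^2_{t_k}D^2_{t_k+}}{Z_{t_k}}\,.$$
Because $\alpha_{t_k+}$ and the numerator of $D$ are $\mathcal F_{t_k}$-measurable and constant on $(t_k,t_{k+1}]$, the process $Z$ is a genuine density in $\mathcal Z'$ (pasted martingale) and $D$ is piecewise constant and left-continuous by construction. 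A short algebraic check then gives $Z_tD_t=\lambda Z^1_tD^1_{t_k+}+(1-\lambda)Z^2_tD^2_{t_k+}=\lambda Z^1_tD^1_t+(1-\lambda)Z^2_tD^2_t$ on $(t_k,t_{k+1}]$, and the monotonicity $D_{t_k+}\le D_{t_k}$ follows directly from the monotonicity of $D^1,D^2$ by a one-line telescoping computation. The recursive reweighting is what simultaneously secures piecewise constancy, left-continuity, monotonicity, and the exact product identity --- none of which your fixed-weight ansatz delivers.
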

\begin{proof}
Let $Z^1D^1$ and $Z^2D^2$ are the elements of $\mathcal G'$ and let $\lambda \in(0,1)$
We need to show that $\lambda Z^1D^1 + (1-\lambda)Z^2D^2 = ZD$
for some $Z\in\mathcal Z'$ and $D\in\mathcal D'$. There exists $n\in\mathbb N$, such that $D^1$ and $D^2$ decrease at most on $\mathcal T_n$. Let  $t_k$'s be the elements of $\mathcal T_n$ arranged in an increasing order. 
Let us define $Z_0 = D_0 = 1$ and for every $k \in\{0,\dots, 2^n-1\}$,~with 
$$
A_t {:=} \lambda Z^1_{t_k}D^1_{t} +(1-\lambda)Z^2_{t_k}D^2_{t}
\quad and\quad
\alpha_t {:=} \frac{\lambda Z^1_{t_k}D^1_{t}}{A_t}1_{\{A_t\neq 0\}} + \frac{\lambda Z^1_{t_k}}{\lambda Z^1_{t_k} + (1-\lambda) Z^2_{t_k}}1_{\{A_t= 0\}},$$
(note that $A_t = 0$ if and only if both $D^1_t=0$ and $D^2_t=0$) we  set
\begin{equation}\label{351}
\begin{array}{rclcl}
Z_t &{:=}& Z_{t_k}\left(\alpha_{t_k+}\frac{Z^1_t}{Z^1_{t_k}} + (1 - \alpha_{t_k+})\frac{Z^2_t}{Z^2_{t_k}} \right), &for& t\in(t_k, t_{k+1}],\\
D_t &{:=}& \frac{A_{t_k+}}{Z_{t_k}},& for& t\in(t_k, t_{k+1}].\\
\end{array}
\end{equation}
One can see that $ZD = \lambda Z^1D^1 + (1-\lambda)Z^2D^2$ and that $Z\in\mathcal Z'$, see e.g., \cite{FK}, \cite{Rohlin10}, \cite{Kardaras_emery}, and \cite{ChristaJosef15} for discussions of the sets of processes with similar convexity-type properties to the one given in \eqref{351}. To show that $D\in\mathcal D'$, for $k\geq 1$, 
we observe that 
\begin{displaymath}
\begin{array}{rcl}
D_{t_k+} &=& \frac{A_{t_k+}}{Z_{t_k}} \\
&=& \frac{{\lambda Z^1_{t_k}D^1_{t_k+} +(1-\lambda)Z^2_{t_k}D^2_{t_k+}}}{Z_{t_{k-1}}\left(\alpha_{t_{k-1}+}\frac{Z^1_{t_k}}{Z^1_{t_{k-1}}} + (1 - \alpha_{t_{k-1}+})\frac{Z^2_{t_k}}{Z^2_{t_{k-1}}} \right)} \\
&\leq &\frac{{\lambda Z^1_{t_k}D^1_{t_k} +(1-\lambda)Z^2_{t_k}D^2_{t_k}}}{Z_{t_{k-1}}\left(\alpha_{t_{k-1}+}\frac{Z^1_{t_k}}{Z^1_{t_{k-1}}} + (1 - \alpha_{t_{k-1}+})\frac{Z^2_{t_k}}{Z^2_{t_{k-1}}} \right)}\\
&= &\frac{{\lambda Z^1_{t_k}D^1_{t_k} +(1-\lambda)Z^2_{t_k}D^2_{t_k}}}{Z_{t_{k-1}}\left(\frac{\lambda Z^1_{t_{k-1}}D^1_{t_k}}{A_{t_{k-1}+}}\frac{Z^1_{t_k}}{Z^1_{t_{k-1}}} 
+ \frac{(1-\lambda) Z^2_{t_{k-1}}D^2_{t_k}}{A_{t_{k-1}+}}\frac{Z^2_{t_k}}{Z^2_{t_{k-1}}} \right)}1_{\{A_{t_{k-1}+}> 0\}} +0\cdot1_{\{A_{t_{k-1}+}= 0\}}\\
&= &\frac{A_{t_{k-1}+}}{Z_{t_{k-1}}}\frac{{\lambda Z^1_{t_k}D^1_{t_k} +(1-\lambda)Z^2_{t_k}D^2_{t_k}}}{\lambda D^1_{t_k}Z^1_{t_k}
+ (1-\lambda) D^2_{t_k}Z^2_{t_k}}1_{\{A_{t_{k-1}+}> 0\}}\\
&= &\frac{A_{t_{k-1}+}}{Z_{t_{k-1}}}1_{\{A_{t_{k-1}+}> 0\}}\\
&= &D_{t_k}.\\
\end{array}
\end{displaymath}
Above, the inequality  follows from the monotonicity of $D^1$ and $D^2$.
Therefore, $D$ is nonincreasing. Also, clearly $D$ is nonnegative. Thus  $ZD\in \mathcal G'$. 
\end{proof}
The following lemma is an extension of Lemma \ref{9-5-1}  and amounts to a first layer of convexification of the  set $\Upsilon$, i.e., of the budget constraints.

\begin{Lemma}\label{lemSt1}Let   the conditions of Proposition \ref{prop1} hold,  $(x,q)\in\mathcal K$, and $c$ is a nonnegative optional process. Then $c\in\mathcal A(x,q)$ if and only if
\begin{equation}\label{331}
\mathbb E\left[\int_0^Tc_sD_sZ_sd\kappa_s \right]\leq
x + \mathbb E\left[\int_0^Tq_se_sD_sZ_sd\kappa_s \right],\quad
for~every~Z\in\mathcal Z'~and~D \in \mathcal D'.
\end{equation}
\end{Lemma}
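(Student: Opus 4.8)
The plan is to deduce the equivalence from Lemma \ref{9-5-1}, which already characterizes membership in $\mathcal A(x,q)$ via testing against products $\Lambda Z$ with $\Lambda \in \Upsilon$ and $Z \in \mathcal Z'$. The point is that $\mathcal D'$ is (up to the finite-grid approximation) the convex hull of $\Upsilon$, and that all the quantities appearing in \eqref{331} are linear in $D$, so the inequality over $\mathcal D'$ reduces to the inequality over $\Upsilon$. First I would treat the ``only if'' direction: if $c \in \mathcal A(x,q)$, then by the definition of $\mathcal A(x,q)$ there is $X \in \mathcal X(x,q)$ with $\int_0^\tau c_s d\kappa_s \le X_\tau + \int_0^\tau q_s e_s d\kappa_s$ for every $\tau \in \Theta$; every such $X$ is a $\mathbb Q$-supermartingale for $\mathbb Q \in \mathcal M'$ by \cite[Lemma 4]{HK04}, exactly as in the proof of Lemma \ref{6-16-1}. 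Since $D \in \mathcal D'$ is left-continuous, nonincreasing, adapted, starts at $1$ and is constant off a finite grid $\mathcal T_n$, it can be written as a finite convex combination $D = \sum_j \lambda_j \mathbf 1_{[0,\tau_j]}$ of elements of $\Upsilon$ with stopping times $\tau_j \in \Theta$ taking values in $\mathcal T_n$ (this is just the layer-cake / ``stacking'' decomposition of a left-continuous nonincreasing step function starting at $1$; the constraint $D_{\theta_0} = 1$ guarantees every $\tau_j \ge \theta_0$). Then \eqref{331} for this $D$ is the corresponding convex combination of the inequalities \eqref{241} in Lemma \ref{9-5-1} (one for each $(\Lambda, Z) = (\mathbf 1_{[0,\tau_j]}, Z)$), all of which hold; finiteness of the integrals involved is inherited from \eqref{281}--\eqref{283} in the proof of Lemma \ref{6-16-1}, again using Lemma \ref{lemma1}(iii).

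For the ``if'' direction, suppose \eqref{331} holds for every $Z \in \mathcal Z'$ and $D \in \mathcal D'$. Taking in particular $D = \mathbf 1_{[0,\tau]}$ for $\tau \in \Theta$ taking finitely many values in some dyadic grid refined by the atoms of $\kappa$ — which is an element of $\mathcal D'$ — recovers \eqref{241} for all such $\tau$ and all $Z \in \mathcal Z'$. It remains to upgrade from these finitely-valued stopping times to arbitrary $\tau \in \Theta$: approximate a general $\tau \in \Theta$ from above by $\tau_n \downarrow \tau$ with $\tau_n$ taking values in the grid $\mathcal T_n$; since $c \ge 0$ and $\int_0^{\cdot} q_s e_s d\kappa_s$ is of finite variation with $|e|$ dominated by the maximal process $X'$ (Assumption \ref{asEnd}), the right-continuity of the processes $\int_0^{\cdot} c_s d\kappa_s$ and $\int_0^{\cdot} q_s e_s d\kappa_s$ together with dominated convergence under each $\mathbb Q \in \mathcal M'$ (using Lemma \ref{lemma1}(iii) for the dominating integrable bound) lets one pass $\mathbb E^{\mathbb Q}[\int_0^{\tau_n}(c_s - q_s e_s)d\kappa_s] \le x$ to the limit. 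This yields \eqref{241} for every $\tau \in \Theta$ and $Z \in \mathcal Z'$, and then Lemma \ref{9-5-1} gives $c \in \mathcal A(x,q)$.

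The main obstacle I anticipate is the bookkeeping in the first, ``only if'', direction: one must check that the layer-cake decomposition of $D \in \mathcal D'$ genuinely lands in $\Upsilon$ — i.e. that each level set $\{D > \lambda\}$ is of the form $[0,\tau]$ for a stopping time $\tau \in \Theta$ — which uses left-continuity of $D$ (so the level sets are closed on the right), adaptedness (so $\tau$ is a stopping time), and $D_{\theta_0} = 1$ (so $\tau \ge \theta_0$); and that the convex combination is genuinely finite, which is exactly why $\mathcal D'$ is defined with the finite-grid restriction rather than as all left-continuous nonincreasing adapted processes. Once that decomposition is in hand, the rest is linearity plus the integrability estimates already established, with the second direction being a routine right-continuity/dominated-convergence limiting argument.
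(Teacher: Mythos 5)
Your ``only if'' direction rests on the claim that any $D\in\mathcal D'$ can be written as a \emph{finite} convex combination $D=\sum_j \lambda_j\,\mathbf 1_{[0,\tau_j]}$ of elements of $\Upsilon$ with deterministic weights $\lambda_j$; this is where the proposal breaks down. The finite-grid restriction in the definition of $\mathcal D'$ only constrains the \emph{times} at which $D$ is allowed to change, not the \emph{values} it takes: at a grid point $t_j$ the level $D_{t_j+}$ is an $\mathcal F_{t_j}$-measurable $[0,1]$-valued random variable, generically with a nonatomic distribution. Take the simplest example $D_t=1$ on $[0,t_1]$ and $D_t=X$ on $(t_1,T]$ with $X$ uniformly distributed and $\mathcal F_{t_1}$-measurable. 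A finite convex combination $\sum_j\lambda_j\mathbf 1_{[0,\tau_j]}$ with $\tau_j=T\mathbf 1_{A_j}+t_1\mathbf 1_{A_j^c}$ equals $\sum_j\lambda_j\mathbf 1_{A_j}$ on $(t_1,T]$, which is a simple (finitely-valued) random variable, so it cannot equal $X$. The pathwise layer-cake formula $D_t=\int_0^1\mathbf 1_{\{D_t\ge\lambda\}}\,d\lambda=\int_0^1\mathbf 1_{[0,\tau_\lambda]}(t)\,d\lambda$ is indeed correct and each $\tau_\lambda$ is a stopping time in $\Theta$ (that part of your bookkeeping is fine), but the mixture over $\lambda$ is a continuous one, not a finite sum; for fixed $\omega$ only finitely many distinct $\tau_\lambda(\omega)$ occur, but the weights $\{\lambda:\tau_\lambda(\omega)=\tau\}$ are $\omega$-dependent. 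Random weights destroy the linearity step you need to pass the inequality of Lemma~\ref{9-5-1} through the sum.

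The paper avoids this by \emph{discretizing the random levels first}: for each $j$ it defines events $A_{k,n,j}=\{D_{t_j}>0,\ D_{t_j+}/D_{t_j}\in((k-1)2^{-n},k2^{-n}]\}$, stopping times $\tau^{k,n,j}=T\mathbf 1_{A_{k,n,j}}+t_j\mathbf 1_{A^c_{k,n,j}}$, and builds $D^n$ as a finite linear combination of $\Lambda$'s (using $\Lambda^\tau\Lambda^\sigma=\Lambda^{\tau\wedge\sigma}$ to flatten the product over $j$) with \emph{deterministic} dyadic coefficients $\tfrac{k-1}{2^n}$; these $D^n$ increase monotonically to $D$ at all jump points. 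Lemma~\ref{9-5-1} is applied to each $D^n$, and the inequality \eqref{331} for $D$ itself is recovered by the monotone convergence theorem, applied separately to $e^+$ and $e^-$. So the missing idea in your proposal is precisely this dyadic-level discretization followed by a monotone-convergence passage. Your ``if'' direction, by contrast, is fine: taking $D=\mathbf 1_{[0,\tau]}$ for grid-valued $\tau\in\Theta$ recovers \eqref{241} on the grid, and the extension to general $\tau\in\Theta$ via right-continuity of $t\mapsto\int_0^t(\cdot)\,d\kappa_s$ and the domination from Lemma~\ref{lemma1}(iii) is a standard and correct argument (the paper glosses over this half).
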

\begin{proof}
The idea is to use the assertion of Lemma \ref{9-5-1} and to approximate a given $D\in\mathcal D'$ by (finite) linear combinations of the elements of $\Upsilon$, where $\Upsilon$ is defined in \eqref{defUpsilon}.

%
%
%
For a stopping time $\tau$, 
let us denote $$\Lambda^{\tau} {:=} 1_{[0,\tau]}\in\Upsilon,$$
and fix $D\in\mathcal D'$. Then there exists $l\in\mathbb N$, such that $D$ has has jumps at most 
on $\{t_0,t_1,\dots, 
t_l\}$ for some increasing $t_i$'s. 
For every $j \in\{ 0, \dots, l
\}$, $k \in\{0,\dots 2^n\}$, and $n\in\mathbb N$, let us set
\begin{equation}\nonumber
\begin{array}{rcl}
A_{k, n, j} &{:=}& \left\{ \omega:  ~D_{t_j}(\omega) >0~and~\frac{D_{t_j+}(\omega)}{D_{t_j}(\omega)} \in \left(\frac{k-1}{2^n}, \frac{k}{2^n}\right]\right\},\\
\tau^{k, n, j}&{:=}&T1_{A_{k,n,j}} + t_j1_{A^c_{k,n,j}},\\
\end{array}
\end{equation}
Note that $D_0 = 1$ by definition of $\mathcal D'$, $A_{k, n, j}\in\mathcal F_{t_j}$, and 
$$\frac{k-1}{2^n}\Lambda^{\tau^{k,n,j}}_{t_j+} = \left\{
\begin{array}{lcl}
\frac{k-1}{2^n}&on&A_{k,n,j}\\
0&on& A^c_{k,n,j}\\
\end{array}
 \right..$$
 By construction, we have $$D_{t_j+} = D_{t_j}\lim\limits_{n\to\infty}\sum\limits_{k = 1}^{2^n}\frac{k-1}{2^n}\Lambda^{\tau^{k,n,j}}_{t_j+},$$ where the sequence 
 $$K^n_{t_j+}{:=} \sum\limits_{k = 1}^{2^n}\frac{k-1}{2^n}\Lambda^{k,n,j}_{t_j+},\quad n\in\mathbb N,$$
 is increasing on $\{D_{t_j}>0\}$, i.e.,
 $$K^n_{t_j+}1_{\{D_{t_j}>0\}}\uparrow \frac{D_{t_j+}}{D_{t_j}}1_{\{D_{t_j}>0\}}.$$
 Thus, for an arbitrary $j \in\{0, \dots, l\}$, we have constructed a sequence of elements of $\Upsilon$, whose finite linear combinations monotonically increase to $\frac{D_{t_j+}}{D_{t_j}}$ on $\{D_{t_j}>0\}$ (i.e., if $j<l$, we have approximated $D$ on the interval $(t_{j}, t_{j+1}]$).

 In order to construct a sequence that approximates $D$ at every point of its potential jumps, we first observe that for two stopping times $\tau$ and $\sigma$, we have
 $$\Lambda^\tau\Lambda^\sigma = 1_{[0, \tau]}1_{[0,\sigma]} = 1_{[0,\tau\wedge\sigma]} = \Lambda^{\tau\wedge \sigma}.$$
Therefore, for every $n\in\mathbb N$,
\begin{equation}\label{361}
K^n_{t_j+}K^n_{t_{j+1}+} =\left(\sum\limits_{k = 1}^{2^n}\frac{k-1}{2^n}\Lambda^{k,n,j}_{t_j+}\right)\left(\sum\limits_{k = 1}^{2^n}\frac{k-1}{2^n}\Lambda^{k,n,j+1}_{t_{j+1}+}\right) =\sum\limits_{i=1}^{4^n}\lambda^{n,i} \Lambda^{\sigma_{n,i}}_{t_{j+1}+},
\end{equation}
for some  finite sequences of stopping times $(\sigma_{n,i})_{i=1}^{4^n}$ and $[0,1)$-valued constants $(\lambda^{n,i})_{i=1}^{4^n}$. 
Here $\Lambda^{\sigma_{n,i}}$ are such that for {\it both} $t = t_j$ and $t=t_{j+1}$ on $\{D_{t_{j+1}}>0\}$, we have
$$\lim\limits_{n\to\infty}\sum\limits_{i=1}^{4^n}\lambda^{n,i} \Lambda^{\sigma_{n,i}}_{t+} = \frac{D_{t+}}{D_t}.$$
Similarly,
%
 with $r(t){:=} \max\{i:~t_i< t\}$, let us define
$$D^n_t {:=} \prod\limits_{j = 0}^{r(t)}K^n_{t_j+}1_{\{D_{t_j}>0\}},\quad t\in[0,T], n\in\mathbb N.$$ 
As in \eqref{361}, for every $n\in\mathbb N$, $D^n$ can be written as a finite linear combination of $\Lambda$'s, such that $D^n_{t+}\uparrow D_{t+}$ for every $t\in\{t_0, \dots, t_{l}\}.$

Finally, \eqref{331} can be obtained from Lemma \ref{9-5-1} by the approximation of $D$ by $D^n$'s as above and via  the monotone convergence theorem (applied separately to $(e_t)^{+}$ and $(e_t)^{-}$). 
\end{proof}
\begin{Corollary}\label{corRzd}
Let   the conditions of Proposition \ref{prop1} hold. Then, for every pair $Z\in\mathcal Z'$ and $D\in\mathcal D'$, 
there exists $r^{ZD}\in\mathbb L^1(dK)$, such that $ZD\in\mathcal Y(1,r^{ZD})$, where $(1,r^{ZD})\in\mathcal L$. 
\end{Corollary}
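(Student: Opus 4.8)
The plan is to mirror the proof of Lemma \ref{6-16-1}, with two substitutions: a general $D\in\mathcal D'$ in place of a single indicator $\Lambda\in\Upsilon$, and Lemma \ref{lemSt1} in place of the supermartingale/integration-by-parts argument used there. So I would fix $\mathbb Q\in\mathcal M'$, let $Z=Z^{\mathbb Q}\in\mathcal Z'$ be the associated c\`adl\`ag density process, and fix $D\in\mathcal D'$; note that $0\le D_t\le 1$ for every $t$, since $D$ is nonincreasing with $D_0=1$. First I would set
\begin{equation}\nonumber
R_t := \mathbb E\left[\int_0^t D_s Z_s e_s\, d\kappa_s\right],\qquad t\in[0,T],
\end{equation}
which is well defined and of finite variation because $|D_s Z_s e_s|\le Z_s|e_s|$ and $\mathbb E\left[\int_0^T Z_s|e_s|\,d\kappa_s\right]=\mathbb E^{\mathbb Q}\left[\int_0^T|e_s|\,d\kappa_s\right]<\infty$ by Lemma \ref{lemma1}$(iii)$, the change of measure being legitimized by the localization, integration by parts, and optional sampling argument already carried out for \eqref{281} and \eqref{283}.

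Next I would show that $dR\ll dK$. By the $\pi-\lambda$ theorem, for every Borel $A\subseteq[0,T]$ one has $K(A)=\mathbb E\left[\int_0^T 1_A(t)\,d\kappa_t\right]$ and $R(A)=\mathbb E\left[\int_0^T 1_A(t)D_t Z_t e_t\,d\kappa_t\right]$, so $K(A)=0$ forces $\int_0^T 1_A(t)\,d\kappa_t=0$ $\mathbb P$-a.s., and hence $R(A)=0$. This produces a unique $r^{ZD}\in\mathbb L^1(dK)$ with $\int_0^t r^{ZD}_s\,dK_s=R_t$ for all $t$, and a monotone class argument then upgrades the identity to
\begin{equation}\nonumber
\mathbb E\left[\int_0^T q_s e_s D_s Z_s\,d\kappa_s\right]=\int_0^T q_s r^{ZD}_s\,dK_s\qquad\text{for every bounded Borel }q.
\end{equation}

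With $r^{ZD}$ in hand, I would apply the ``only if'' direction of Lemma \ref{lemSt1} to the fixed pair $(Z,D)$: for every $(x,q)\in\mathcal K$ and every $c\in\mathcal A(x,q)$,
\begin{equation}\nonumber
\mathbb E\left[\int_0^T c_s D_s Z_s\,d\kappa_s\right]\le x+\mathbb E\left[\int_0^T q_s e_s D_s Z_s\,d\kappa_s\right]=x+\int_0^T q_s r^{ZD}_s\,dK_s.
\end{equation}
This is precisely the inequality appearing in Proposition \ref{prop1}$(ii)$ for $(y,r)=(1,r^{ZD})$ and $Y=ZD$; since $r^{ZD}\in\mathbb L^1(dK)$, that proposition then yields simultaneously $(1,r^{ZD})\in\mathcal L$ and $ZD\in\mathcal Y(1,r^{ZD})$, which is the claim. (Alternatively, one obtains $(1,r^{ZD})\in\mathcal L$ directly from \eqref{defL} by specializing to $c\equiv 0$, and $ZD\in\mathcal Y(1)$ from $0\le ZD\le Z\in\mathcal Z$, so that $ZD\in\mathcal Y(1,r^{ZD})$ follows from \eqref{defY}.)

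The step I expect to be the main obstacle is the first one: carefully reproducing the localization and optional-sampling bookkeeping from the proof of Lemma \ref{6-16-1} to justify $\mathbb E\left[\int_0^T Z_s|e_s|\,d\kappa_s\right]=\mathbb E^{\mathbb Q}\left[\int_0^T|e_s|\,d\kappa_s\right]$ and the $\pi-\lambda$/monotone class identities defining $r^{ZD}$. Once the density $r^{ZD}$ and its defining identity are available, everything else is an immediate consequence of Lemma \ref{lemSt1} and Proposition \ref{prop1}$(ii)$.
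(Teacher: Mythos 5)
Your proof is correct and, when compared with the paper's brief argument, takes a slightly more direct route to the construction of $r^{ZD}$. The paper derives the existence of $r^{ZD}$ by invoking the approximation procedure inside the proof of Lemma~\ref{lemSt1} — approximating $D$ by finite linear combinations $D^n$ of elements of $\Upsilon$, applying Lemma~\ref{6-16-1} to each $\Lambda$ to obtain the corresponding $r$'s, and passing to the limit via monotone convergence applied separately to $(e_t)^{+}$ and $(e_t)^{-}$. You instead define $R_t=\mathbb E\left[\int_0^t D_sZ_se_s\,d\kappa_s\right]$ directly (which is well defined since $|D_sZ_se_s|\le Z_s|e_s|$ and $Z|e|$ is integrable by Lemma~\ref{lemma1}$(iii)$), and rerun verbatim the $\pi$--$\lambda$/Radon--Nikodym argument from Lemma~\ref{6-16-1} with $D$ in place of $\Lambda$ to obtain $r^{ZD}\in\mathbb L^1(dK)$ without any approximation of $D$ and without splitting $e$ by sign. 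The two approaches become essentially equivalent once the paper's sketch is filled in — both ultimately rest on the same integration-by-parts/localization computation that proves \eqref{281}--\eqref{283} and on the same absolute continuity argument — but your direct construction is arguably tidier, since the Radon--Nikodym theorem for signed measures sidesteps both the approximation step and the monotone convergence bookkeeping. Your subsequent use of Lemma~\ref{lemSt1} (only-if direction) followed by Proposition~\ref{prop1}$(ii)$ to obtain simultaneously $(1,r^{ZD})\in\mathcal L$ and $ZD\in\mathcal Y(1,r^{ZD})$ is sound, and is exactly the intended conclusion.
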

\begin{proof}
The existence of $r^{ZD}$, such that $ZD\in\mathcal Y(1, r^{ZD})$ follows from Lemma \ref{lemSt1} (equation \eqref{331}) and the approximation procedure in Lemma \ref{lemSt1} (again, applied separately to $(e_t)^{+}$ and $(e_t)^{-}$) combined with the monotone convergence theorem.
As, the left-hand side in \eqref{331} is nonnegative, 
$(1,r^{ZD})\in\mathcal L$. 

\end{proof}

For a given $Z\in\mathcal Z'$ and $D\in\mathcal D'$, let us recall that $r^{ZD}$ is given in Corollary \ref{corRzd}. 
We set $$\mathcal {B'}(x,q) {:=} \left\{ (y, yr^{ZD})\in\mathcal B(x,q):~y> 0, Z\in\mathcal Z', D\in\mathcal D'\right\},$$

$$\mathcal G(x,q) {:=} \left\{yZ'D'\in\mathcal D(x,q):~(y, yr^{ZD})\in \mathcal {B'}(x,q)\right\},\quad (x,q)\in\mathring{\mathcal K},$$ 


\begin{Lemma}\label{lemSt3}
Let   the conditions of Proposition \ref{prop1} hold, then for every $(x,q) \in\mathring{\mathcal K}$, the closure of the convex, solid hull of $\mathcal G(x,q)$ in $\mathbb L^0$ coincides with $\cl\mathcal D(x,q)$. 
\end{Lemma}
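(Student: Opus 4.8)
The plan is to show a two-sided inclusion between $\cl\mathcal D(x,q)$ and $\overline{\mathrm{sol\,conv}}\,\mathcal G(x,q)$ (closure of the convex solid hull, taken in $\mathbb L^0(d\kappa\times\mathbb P)$). The easy inclusion is $\overline{\mathrm{sol\,conv}}\,\mathcal G(x,q)\subseteq\cl\mathcal D(x,q)$: by Corollary~\ref{corRzd} every element $yZ'D'\in\mathcal G(x,q)$ lies in $\mathcal Y(y, yr^{ZD})$ with $(y,yr^{ZD})\in\mathcal B(x,q)$, hence $\mathcal G(x,q)\subseteq\mathcal D(x,q)$; since $\mathcal D(x,q)$ is convex (each $\mathcal Y(y,r)$ is convex and $\mathcal B(x,q)$ is convex) and solid (each $\mathcal Y(y,r)$ is solid by definition and is downward closed), and since $\mathcal D(x,q)$ is closed in $\mathbb L^0$ by Lemma~\ref{closednessD} (here we use the full strength of Theorem~\ref{mainTheorem2}'s hypotheses, in particular Assumption~\ref{asACclock}), taking convex solid hull and then $\mathbb L^0$-closure keeps us inside $\cl\mathcal D(x,q)=\mathcal D(x,q)$.

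The substantive inclusion is $\cl\mathcal D(x,q)\subseteq\overline{\mathrm{sol\,conv}}\,\mathcal G(x,q)$. Since both sides are $\mathbb L^0$-closed, convex and solid subsets of $\mathbb L^0_+$, by the bipolar theorem of Brannath--Schachermayer \cite[Theorem~1.3]{BranSchach} it suffices to compare polars (with respect to the pairing $c\mapsto\mathbb E[\int_0^T c_sY_s\,d\kappa_s]$ on $\mathbb L^0_+$), i.e.\ to show
\begin{equation}\nonumber
\big(\mathcal G(x,q)\big)^o\subseteq\big(\cl\mathcal D(x,q)\big)^o=\big(\mathcal D(x,q)\big)^o.
\end{equation}
Equivalently: if a nonnegative optional $c$ satisfies $\mathbb E[\int_0^T c_sY_s\,d\kappa_s]\le 1$ for every $Y\in\mathcal G(x,q)$, then $c\in\mathcal A(x,q)$ (using Lemma~\ref{5181}, which characterizes $\mathcal A(x,q)$ as the polar of $\cl\mathcal D(x,q)$ for $(x,q)\in\mathring{\mathcal K}$). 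To do this, fix such a $c$, fix arbitrary $Z\in\mathcal Z'$ and $D\in\mathcal D'$, and let $r^{ZD}$, $(1,r^{ZD})\in\mathcal L$, be as in Corollary~\ref{corRzd}. Set $y':=x+\int_0^T q_s r^{ZD}_s\,dK_s$. Exactly as in the proof of Lemma~\ref{5181}: if $y'>0$ then $\tfrac1{y'}ZD\in\mathcal G(x,q)$, so $\mathbb E[\int_0^T c_s Z_sD_s\,d\kappa_s]\le y'=x+\mathbb E[\int_0^T q_se_sZ_sD_s\,d\kappa_s]$; if $y'=0$ then $yZD\in\mathcal G(x,q)$ for all $y>0$, and letting $y\to\infty$ forces $\mathbb E[\int_0^T c_sZ_sD_s\,d\kappa_s]=0=x+\mathbb E[\int_0^T q_se_sZ_sD_s\,d\kappa_s]$. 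In both cases we obtain the inequality \eqref{331} of Lemma~\ref{lemSt1} for this $Z,D$; since $Z\in\mathcal Z'$ and $D\in\mathcal D'$ were arbitrary, Lemma~\ref{lemSt1} gives $c\in\mathcal A(x,q)$, which is what we needed.

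The main obstacle is the bookkeeping in the $y'=0$ case and, more importantly, making sure the polar computation is legitimate: one must check that $(x,q)\in\mathring{\mathcal K}$ guarantees $\mathcal G(x,q)$ (and $\cl\mathcal D(x,q)$) contains a strictly positive element — this follows from Lemma~\ref{6-12-1}(ii), since $\bar y(x,q)\mathcal Z'\subseteq\mathcal D(x,q)$ and each $Z\in\mathcal Z'$ is $ZD$ with $D\equiv 1\in\mathcal D'$, so $\bar y(x,q)\mathcal Z'\subseteq\mathcal G(x,q)$ — so that the Brannath--Schachermayer bipolar theorem applies to subsets of $\mathbb L^0_+$ bounded away from extinction in the required sense. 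A secondary point is verifying that $\mathcal G(x,q)$ is nonempty and that its convex solid hull is indeed what pairs against $\mathcal A(x,q)$; both are immediate once one notes $\mathcal Z'$ is closed under the convex operation in Lemma~\ref{lemSt2} and $\mathcal D'$ is stable under the products used in Lemma~\ref{lemSt1}. No genuinely new estimate is needed beyond what Lemmas~\ref{5181}, \ref{lemSt1}, \ref{closednessD} and \ref{6-12-1} already provide.
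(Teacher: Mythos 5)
Your route is the same as the paper's: reduce to the bipolar theorem of Brannath--Schachermayer with respect to the pairing $c\mapsto\mathbb E[\int_0^T c_sY_s\,d\kappa_s]$, and carry out the polar computation by showing that $c\in(\mathcal G(x,q))^o$ forces the inequalities \eqref{331} for all $Z\in\mathcal Z'$, $D\in\mathcal D'$, so that Lemma~\ref{lemSt1} gives $c\in\mathcal A(x,q)$; the paper records this as $(\mathcal G(x,q))^o=\mathcal A(x,q)$, hence $(\mathcal G(x,q))^{oo}=(\mathcal A(x,q))^o=\cl\mathcal D(x,q)$ via Lemma~\ref{5181}.

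One blemish in the easy inclusion: you invoke Lemma~\ref{closednessD} (hence Assumption~\ref{asACclock}) to get $\mathcal D(x,q)=\cl\mathcal D(x,q)$, but Lemma~\ref{lemSt3} is stated only under the conditions of Proposition~\ref{prop1}, which do not include Assumption~\ref{asACclock}. This extra hypothesis is in fact unnecessary: you do not need $\mathcal D(x,q)$ itself to be closed, only that $\cl\mathcal D(x,q)$ is a closed, convex, solid set containing $\mathcal G(x,q)$. Closedness is automatic, convexity is preserved under closure, and solidity passes to the closure by a standard subsequence argument in $\mathbb L^0$ (if $0\le Y'\le Y$ and $Y^n\to Y$ a.e.\ with $Y^n\in\mathcal D(x,q)$, then $Y^n\wedge Y'\to Y'$ a.e.\ and $Y^n\wedge Y'\in\mathcal D(x,q)$ by solidity of each $\mathcal Y(y,r)$). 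With that replacement, your proof is valid under exactly the lemma's hypotheses and matches the paper's argument.
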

\begin{proof}
Let $(x,q)\in\mathring{\mathcal K}$ be fixed. 
Along the lines of the proof of Lemma \ref{5181}, one can show that
\begin{equation}\nonumber
(\mathcal G(x,q))^o = \mathcal A(x,q).
\end{equation}
Therefore, 
$$(\mathcal G(x,q))^{oo} = (\mathcal A(x,q))^o = \cl\mathcal D(x,q),$$
where in the last equality we have used the conclusion of Lemma \ref{5181}. 
 As $\mathcal G(x,q)\subset \cl\mathcal D(x,q)$, the assertion of the lemma follows from  the bipolar theorem of Brannath and Schchermayer, \cite[Theorem 1.3]{BranSchach}.
\end{proof}
%

\begin{Corollary}\label{lemSt4}
Let the conditions of Proposition \ref{prop1} hold and $(x,q)\in\mathring{\mathcal K}$. Then for  
every \underline{maximal} element $Y$ of $\cl\mathcal D(x,q)$ there exists $y^n\geq 0$, $Z^n\in\mathcal Z'$, $D^n\in\mathcal D'$, $n\in\mathbb N$, such that $(y^nZ^nD^n)_{n\in\mathbb N}\subset\mathcal G(x,q)$ and 
\begin{equation}\nonumber
\begin{array}{rcll}
Y 
&=& \lim\limits_{n\to\infty} y^nZ^nD^n,& \quad (d\kappa\times \mathbb P)-a.e.~and~on~ \bigcup\limits_{n\in\mathbb N}\mathcal T_n.
\end{array}
\end{equation}
\end{Corollary}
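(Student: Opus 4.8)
The plan is to combine the bipolar identity for $\cl\mathcal D(x,q)$ from Lemma~\ref{lemSt3} with the maximality of $Y$ and a diagonalisation over the countable grid $\bigcup_{n}\mathcal T_n$.

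\textbf{Step 1 (approximation, then ``pushing up'').} By Lemma~\ref{lemSt3}, $\cl\mathcal D(x,q)$ is the $\mathbb L^0$-closure of the convex solid hull of $\mathcal G(x,q)$, so I would first choose a sequence $(W^n)_{n\in\mathbb N}$ in that convex solid hull with $W^n\to Y$ in $\mathbb L^0(d\kappa\times\mathbb P)$, and pass to a subsequence so that $W^n\to Y$ $(d\kappa\times\mathbb P)$-a.e. By solidity, $0\le W^n\le g^n$ for some $g^n$ in the convex hull of $\mathcal G(x,q)$, and I would note that this convex hull is $\mathcal G(x,q)$ itself: given $y_1Z_1D_1,y_2Z_2D_2\in\mathcal G(x,q)$ and $\lambda\in(0,1)$, convexity of $\mathcal G'$ (Lemma~\ref{lemSt2}) rewrites $\lambda y_1Z_1D_1+(1-\lambda)y_2Z_2D_2=\bar yZD$ with $ZD\in\mathcal G'$ and $\bar y=\lambda y_1+(1-\lambda)y_2$, while the defining relation $\int_0^t q_s r^{ZD}_s\,dK_s=\mathbb E\!\left[\int_0^t q_s e_s D_s Z_s\,d\kappa_s\right]$ behind Corollary~\ref{corRzd} makes $ZD\mapsto r^{ZD}$ affine in $ZD$, whence $(\bar y,\bar y r^{ZD})=\lambda(y_1,y_1 r^{Z_1D_1})+(1-\lambda)(y_2,y_2 r^{Z_2D_2})\in\mathcal B(x,q)$ by convexity of $\mathcal B(x,q)$, so $\bar yZD\in\mathcal G(x,q)$. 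Thus $(g^n)\subseteq\mathcal G(x,q)\subseteq\mathcal D(x,q)\subseteq\mathcal Y(\bar y)$ with $\bar y:=\sup\{y:(y,r)\in\mathcal B(x,q)\}<\infty$ by Lemma~\ref{7-17-1}, a family bounded in $\mathbb L^0(d\kappa\times\mathbb P)$. A Koml\'os-type lemma (\cite[Lemma~A1.1]{DS}) then produces convex combinations $\widetilde g^n\in\conv(g^n,g^{n+1},\dots)\subseteq\mathcal G(x,q)$ converging $(d\kappa\times\mathbb P)$-a.e. to some $\widehat Y$, necessarily in the $\mathbb L^0$-closed set $\cl\mathcal D(x,q)$; the matching convex combinations of $(W^n)$ are $\le\widetilde g^n$ and still converge to $Y$, so $Y\le\widehat Y$, and maximality of $Y$ in $\cl\mathcal D(x,q)$ forces $\widehat Y=Y$. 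Writing $\widetilde g^n=y^nZ^nD^n$, we have a sequence in $\mathcal G(x,q)$ converging to $Y$ $(d\kappa\times\mathbb P)$-a.e.

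\textbf{Step 2 (convergence on the grid).} For each fixed $t\in\bigcup_n\mathcal T_n$, since $0\le D^n\le1$, $Z^n$ is a martingale with $\mathbb E[Z^n_t]=1$, and $y^n\le M$ for some $M<\infty$ (again Lemma~\ref{7-17-1}), the variables $y^nZ^n_tD^n_t$ form an $\mathbb L^1(\mathbb P)$-bounded family. Enumerating $\bigcup_n\mathcal T_n=\{\tau_1,\tau_2,\dots\}$, I would apply Koml\'os successively at $\tau_1,\tau_2,\dots$: at each stage pass to convex combinations of the current sequence, which remain in $\mathcal G(x,q)$ (convexity, as above), still converge to $Y$ $(d\kappa\times\mathbb P)$-a.e., and now converge $\mathbb P$-a.s. at all grid points treated so far; the diagonal sequence $(y^nZ^nD^n)\subseteq\mathcal G(x,q)$ then converges $\mathbb P$-a.s. at every point of $\bigcup_n\mathcal T_n$ and $(d\kappa\times\mathbb P)$-a.e. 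The two limits agree wherever $d\kappa$ charges a time, i.e. at the jump points $s_k$; since the remaining grid points form a $(d\kappa\times\mathbb P)$-null set (under Assumption~\ref{asACclock}, $\kappa$ has the same fixed discontinuities as $K$), redefining $Y$ there to equal the grid limit produces a representative of $Y$ for which $Y=\lim_n y^nZ^nD^n$ holds $(d\kappa\times\mathbb P)$-a.e. and on $\bigcup_n\mathcal T_n$, as claimed.

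\textbf{Main obstacle.} The routine-looking but delicate part is Step~2: one must keep the approximating processes inside $\mathcal G(x,q)$ throughout the diagonalisation — which is exactly why the convexity of $\mathcal G(x,q)$, hence Lemma~\ref{lemSt2} together with the affine dependence of $r^{ZD}$ on $ZD$, is needed — while simultaneously forcing convergence along the countable grid, preserving $(d\kappa\times\mathbb P)$-a.e. convergence to the \emph{maximal} element $Y$, and reconciling the pointwise-on-the-grid limit with the $(d\kappa\times\mathbb P)$-a.e. limit through a careful choice of representative of $Y$.
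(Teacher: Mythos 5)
Your proposal is correct and follows the same route the paper intends: Lemma~\ref{lemSt3} together with Koml\'os-type convex combinations, then a further diagonalisation over the grid $\bigcup_n\mathcal T_n$. The paper's proof is extremely terse (two sentences), and you supply precisely the details that make it work — in particular the ``push-up'' step, where solidity gives domination $0\le W^n\le g^n$ with $g^n\in\conv\mathcal G(x,q)=\mathcal G(x,q)$ (your convexity check via Lemma~\ref{lemSt2} and the linearity of $ZD\mapsto r^{ZD}$ is the right observation), and maximality of $Y$ then forces the Koml\'os limit $\widehat Y\ge Y$ to equal $Y$; this is exactly what lets one land in $\mathcal G(x,q)$ rather than merely in its solid hull.
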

\begin{proof}
The $(d\kappa\times \mathbb P)$-a.e. convergence follows from Lemma \ref{lemSt3}.  By passing to subsequences of convex combinations, we also deduce the convergence on $\bigcup\limits_{n\in\mathbb N}\mathcal T_n$.
\end{proof}
\begin{Remark}
It follows from Corollary \ref{lemSt4} and Fatou's lemma that the maximal elements of $\cl\mathcal D(x,q)$ are strong supermartingales. Moreover, every maximal element of $\cl\mathcal D(x,q)$ is an optional strong supermartingale deflator, which is optional strong supermartingale $Y$, such that $XY$ is an optional strong supermartingale for every $X\in\mathcal X(1)$. We refer to \cite[Appendix 1]{DelMey82} for a general characterization and to \cite{CzichSchachOSSup} for results on strong optional supermartingales as limits of martingales. The following section gives a more refined characterization of the dual minimizer.
\end{Remark}

\section{Complementary slackness}\label{secSlackness}
For better readability of this section, we recall some notations and results that will be used below. 
Throughout this section, $(x,q)\in \mathring{\mathcal{K}}$ will be fixed, $\widehat c = \widehat c(x,q)$ is the optimizer to \eqref{primalProblem}, $\widehat V$ is the corresponding wealth process, i.e., 
 \begin{equation}\label{5211}
\widehat V=x+\int_0^{\cdot}\widehat H_sdS_s-\int_0^{\cdot}\widehat c_sd\kappa_s+\int_0^{\cdot} q_se_sd\kappa_s,
\end{equation}
where $\widehat H$ is some $S$-integrable process, 
$\widehat Y$ be such that $\widehat Y_t = U'(t,\widehat c_t)$, $(d\kappa\times\mathbb P)$-a.e., i.e., $\widehat Y$ is the optimizer to \eqref{dualProblem} for some 
$(y,r)\in\mathcal E\cap \partial u(x,q)$, $\widehat Y\in\mathcal Y(y, r)$ and 
$$\mathbb E\left[\int_0^TV(t,\widehat Y_t)d\kappa_t\right] = \inf\limits_{Y\in \mathcal Y(y,r)}\mathbb E\left[\int_0^TV(t, Y_t)d\kappa_t\right].$$
By Corollary \ref{lemSt4}, $\widehat Y$  can be approximated by a sequence $y^nD^nZ^n$, $n\in\mathbb N$, where $y^n$ is a nonnegative constant, $D^n\in\mathcal D'$, and $Z^n\in\mathcal Z'$, $n\in\mathbb N$.
In what follows, for any right-continuous  increasing process $A$ satisfying $$A_t=0,\quad 0-\leq t\leq \theta _{0}-,\quad A_T=1,$$ 
i.e., for any probability measure $dA$ on  the  closed interval $[\theta _0, T]$ (extended to $[0,T]$) we will associate a process $D$ which is left-continuous  and decreasing
$$D_t{:=} 1-A_{t-}=dA([t,T]), \ \ 0\leq t\leq T.$$
One can also think that $ D_{T+}=0,$ although  this is not necessary.  It is clear that  such $A\leftrightarrow D$ are in  bijective  correspondence.  Below, all processes $A$'s and $D$'s (with indexes) will be in such bijective correspondence, except for the case of  the limiting  process $\widehat A$ (which is right-continuous) and  the limiting process $\widehat D$ (that may be not  left-continuous). The  will be in a similar but more subtle correspondence. More precisely:
\begin{Theorem}\label{ExactSlackness}
Let the conditions of Theorem \ref{mainTheorem2}  hold. Let  $\widehat V$ be the optimal wealth process, $\widehat c$  the optimal consumption, and $\widehat Y$ be the dual minimizer satisfying the assertions of Theorem \ref{mainTheorem2}. Then, there exists a strong supermartingale $\hat Z=\lim\limits_{n\to\infty} Z^n$ (for $Z_n\in \mathcal{D}'$, where the limit is in the sense of \cite{CzichSchachOSSup}) and a right-continuous  increasing process $\widehat A$ with
$$ \widehat A_t=0\ \  \forall 0-\leq t \leq \theta _0-,\ \ \  \widehat A_T= 1, $$
and a decreasing process $\widehat D$ with $\widehat D_t=1,\ \ 0\leq t\leq {\theta _0}$ and  satisfying the complementary slackness condition 
\begin{equation}\label{comp-slack}
 \mathbb{P}\left (\widehat D_t\in \left[1-\widehat A_t, 1-\widehat A_{t-}\right], \  \forall \ \theta _0 \leq t\leq T)\right)=1,\ \ 
 \mathbb{P}\left (\int _ {[\theta _0,T)}1_{\{\widehat V_{t-}\not= 0, \widehat V_t \not =0\}} d\widehat A_t\right)=0,
 \end{equation}
such that the dual minimizer can be decomposed as
$$\hat Y= y \hat Z\hat D.$$
\end{Theorem}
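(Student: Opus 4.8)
The plan is to drive the approximating sequence of Corollary~\ref{lemSt4} to the limit, separating its local‑martingale part from its decreasing part, and then to read off the complementary slackness from the first‑order conditions of optimality. To begin, I would note that, after normalization by $z:=\mathbb E\big[\int_0^T\widehat c_s\widehat Y_s\,d\kappa_s\big]$, the process $\widehat Y/z$ is a \emph{maximal} element of $\cl\mathcal D(x,q)$: a strictly larger element of $\cl\mathcal D(x,q)$ would, by monotonicity and strict monotonicity of $V$ (both integrals being finite by Lemma~\ref{auxiliaryLemma}), strictly lower the dual functional, contradicting the minimality established in Theorem~\ref{mainTheorem2}. Corollary~\ref{lemSt4} then furnishes constants $y^n\ge0$, $Z^n\in\mathcal Z'$ and $D^n\in\mathcal D'$ with $\widehat Y_t=\lim_n y^nZ^n_tD^n_t$ both $(d\kappa\times\mathbb P)$-a.e.\ and for every $t$ in the countable grid $\mathcal D:=\bigcup_n\mathcal T_n$, the pairs attached to the products $y^nZ^nD^n$ living in a bounded subset of $\mathcal B(x,q)$; evaluating at $t=0$ gives $y^n\to\widehat Y_0=:y$. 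Since $\mathcal G'$ is convex (Lemma~\ref{lemSt2}) and $r^{ZD}$ depends linearly on the product $ZD$, I may freely replace the sequence by convex combinations, which both legitimizes Komlós‑type extractions inside $\mathcal G'$ and, via Lemma~\ref{closednessD} and the uniform integrability of Lemma~\ref{uiB}, upgrades the convergence of these pairs to a limit in $\mathbb R\times\mathbb L^1(dK)$, identified with the datum $(y,r)$ attached to $\widehat Y$.

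Next I would extract the three limiting objects simultaneously. Associate to each $D^n$ the right‑continuous increasing process $A^n_t:=1-D^n_{t+}$, i.e.\ a probability measure $dA^n$ on $[\theta_0,T]\subseteq[0,T]$; the probability measures on the compact metric interval $[0,T]$ form a compact metrizable space, so $\{A^n\}$ is automatically tight. Along a subsequence: $A^n$ converges weakly to some $\widehat A$; the $D^n$ converge pointwise on $\mathcal D$; and, after passing to convex combinations using Lemma~\ref{lemSt2}, the non‑negative martingales $Z^n$ converge, in the sense of \cite{CzichSchachOSSup}, to an optional strong supermartingale $\widehat Z$. I would invoke Skorokhod's representation theorem to realize all of these convergences $\mathbb P$-a.s.\ on a single probability space --- taking care, as in the discussion after Corollary~\ref{lemSt4}, that $\widehat Z$ and the limiting decreasing process remain adapted --- which produces a decreasing $\widehat D$ with $\widehat D\equiv1$ on $[0,\theta_0]$ and a right‑continuous increasing $\widehat A$ with $\widehat A\equiv0$ on $[0,\theta_0)$ and $\widehat A_T=1$. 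Since $\widehat Y_t=U'(t,\widehat c_t)$ is finite $(d\kappa\times\mathbb P)$-a.e., necessarily $\widehat c>0$ and hence $\widehat Y>0$ $(d\kappa\times\mathbb P)$-a.e., so passing to the limit in $y^nZ^n_tD^n_t=\widehat Y_t$ along $\mathcal D$ (where no factor degenerates) identifies $\widehat D$ with $\widehat Y/(y\widehat Z)$ there and gives the factorization $\widehat Y=y\widehat Z\widehat D$, both sides vanishing where $\widehat D=0$. Finally, the identity $D^n_t=1-A^n_{t-}$ together with the weak convergence of the increasing processes $A^n$ forces, by a Helly‑type argument, every subsequential limit of $D^n_t$ into $[1-\widehat A_t,\,1-\widehat A_{t-}]$; this is the first half of \eqref{comp-slack} and also explains why $\widehat D$ need not be left‑continuous, the value at a jump of $\widehat A$ being pinned only to that gap.

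For the genuine slackness $\int_{[\theta_0,T)}1_{\{\widehat V_{t-}\neq0,\ \widehat V_t\neq0\}}\,d\widehat A_t=0$, I would revisit the inequalities behind Lemma~\ref{lemSt1}. With $X:=x+\widehat H\cdot S$ (acceptable, hence a $\mathbb Q^n$-supermartingale for the measure $\mathbb Q^n$ of $Z^n$), the constraint $\widehat V_\tau\ge0$ gives, for every $\tau\in\Theta$,
\[
\mathbb E^{\mathbb Q^n}\!\Big[\int_0^{\tau}(\widehat c_s-q_se_s)\,d\kappa_s\Big]\ \le\ \mathbb E^{\mathbb Q^n}[X_\tau]\ \le\ x ,
\]
and integrating this chain against the probability measure $dA^n$ (through the approximation of Lemma~\ref{lemSt1}) reproduces \eqref{331} for $(Z^n,D^n)$. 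Multiplying by $y^n$, letting $n\to\infty$ with the $\mathbb R\times\mathbb L^1(dK)$-convergence of the pairs, and comparing with the optimality equality \eqref{2192} of Theorem~\ref{mainTheorem2}, I expect both inequalities in the display to collapse to equalities $d\widehat A$-a.e.\ in the limit; equality in the first one forces $\widehat V_\tau=0$, $d\widehat A$-a.e., and tracking whether the integral $\int_0^{\tau}$ captures a jump of $\kappa$ (equivalently which one‑sided version of the binding time is used) yields the condition phrased through both $\widehat V_{t-}$ and $\widehat V_t$. Conceptually, optimality turns the supermartingale assembled from $\widehat V$, $\widehat Y$ and the consumption/endowment terms into a true martingale; since the decreasing part of $d\widehat Y$ is driven by $-y\widehat Z\,d\widehat A$ (because $\widehat D$ and $1-\widehat A_-$ share the same decreasing structure), the martingale identity leaves no room for the contribution $-\widehat V_{t-}\,y\widehat Z_{t-}\,d\widehat A_t$ unless $d\widehat A$ is carried by $\{\widehat V_-=0\}$, a parallel jump computation adjoining the alternative $\{\widehat V=0\}$.

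The main obstacle is the simultaneous passage to the limit: coupling $\widehat Z$, $\widehat D$ and $\widehat A$ on one probability space with the correct joint law and adaptedness --- this is precisely where the Czichowsky--Schachermayer theory of strong optional supermartingales and Skorokhod's representation theorem are indispensable, and where the reconciliation of the $Z$-part limit (whose construction naturally wants convex combinations) with the product structure $Z^nD^n$ (which Lemma~\ref{lemSt2} preserves only as a pasting, not a linear combination) must be handled with care, as must the genuine non‑uniqueness of $\widehat D$ at jumps of $\widehat A$. The second delicate point is rigorously collapsing the chain of budget inequalities to equalities and carrying out the integration‑by‑parts/It\^o bookkeeping that pins $d\widehat A$ to $\{\widehat V_-=0\}\cup\{\widehat V=0\}$, which is subtle because $\widehat Z$ is only an optional strong supermartingale and $\widehat V$ has jumps.
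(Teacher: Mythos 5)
Your outline tracks the paper's strategy at a high level: identify $\widehat Y$ (normalized) as a maximal element of $\cl\mathcal D(x,q)$, invoke Corollary~\ref{lemSt4} to get $y^nZ^nD^n\to\widehat Y$, associate the $D^n$ with probability measures $dA^n$ on $[\theta_0,T]$ and extract a weak limit $d\widehat A$, use the Czichowsky--Schachermayer convergence for the $Z$-part, and finally use Skorokhod's representation and optimality to localize $d\widehat A$ on the set where $\widehat V$ vanishes. But there is a genuine gap in the crucial step, and you flag it yourself without resolving it.

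\emph{The product structure is not compatible with separate Komlós extractions.} You write that ``after passing to convex combinations using Lemma~\ref{lemSt2}, the non-negative martingales $Z^n$ converge to $\widehat Z$,'' while also noting that Lemma~\ref{lemSt2} preserves products only as a pasting, not linearly. The second remark is decisive and defeats the first: Lemma~\ref{lemSt2} shows $\lambda Z^1D^1+(1-\lambda)Z^2D^2=ZD$ for a \emph{new} pair $(Z,D)$ built by switching, not for $(\lambda Z^1+(1-\lambda)Z^2,\ \lambda D^1+(1-\lambda)D^2)$. Consequently, running a Komlós extraction on the $Z^n$ (to get $\widehat Z$) and a separate extraction on the $D^n$ or the $A^n$ (to get $\widehat D$, $\widehat A$), with \emph{different} convex weights, leaves you with no control on $\big(\sum\alpha_{n,k}Z^k\big)\big(\sum\beta_{n,k}D^k\big)$ in terms of $Z^nD^n\to\widehat Y/y$, so the factorization $\widehat Y=y\widehat Z\widehat D$ does not follow. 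The paper closes exactly this gap with a specific device you do not use: apply a double Komlós argument simultaneously to the pair $(Z^n,\,(D^n)^{-1})$ with the \emph{same} convex weights $\alpha_{n,k}$ (as in Lemma~\ref{5219}), and then exploit the elementary sandwich
\[
\min_{k}(Z^kD^k)\ \le\ \frac{\sum_k\alpha_{n,k}Z^k}{\sum_k\alpha_{n,k}(D^k)^{-1}}=\tilde Z^n\tilde D^n\ \le\ \max_{k}(Z^kD^k),
\]
which squeezes $\tilde Z^n\tilde D^n$ between two quantities both converging to $\widehat Y/y$, because $Z^nD^n\to\widehat Y/y$. This identification of $\widehat Z\widehat D$ with $\widehat Y/y$ in the product space (then upgraded via a truncation/monotone class argument over optional sets, using that $\varphi$ in Assumption~\ref{asACclock} is bounded) is the heart of the proof, and it is absent from your proposal.

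\emph{The passage from soft to exact slackness is also only gestured at.} The paper does not argue ``both inequalities collapse to equalities $d\widehat A$-a.e.'' at the level of expectations. It first proves the quantitative estimate of Lemma~\ref{lem5211}, namely $\mathbb E\big[Z^n_T\int_{\theta_0}^T\widehat V_t\,dA^n_t\big]\to 0$, via a careful integration-by-parts/localization computation that decomposes $\int_0^T(\int_0^t\widehat c\,d\kappa)\,dA^n_t$ into pieces handled by the $\mathbb Q^n$-supermartingale property and the uniform integrability of $\bar X$; then it selects a fast subsequence (summable, \eqref{fast}), forms the nonnegative right-continuous submartingale $L^n_t=\sum_{k\ge n}^{N(n)}Z^k_t\int_0^t\widehat V_u\,dA^k_u$, and invokes the maximal inequality plus Borel--Cantelli to get $\sup_t L^n_t\to0$ $\mathbb P$-a.s. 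Only then, via the same min/max sandwich (applied now to $\Delta\tilde A^n_s$ against $\Delta A^k_s/D^k_s$) and a pathwise Skorokhod representation of $d\tilde A^n(\omega)\rightharpoonup d\widehat A(\omega)$, does one conclude that $d\widehat A$ charges only the set where $\widehat V_-$ or $\widehat V$ vanishes. Your ``optimality turns the supermartingale into a true martingale, so the integrand must vanish'' heuristic is a plausible guiding intuition, but as stated it does not cope with $\widehat Z$ being merely a strong optional supermartingale and $\widehat V$ having jumps; the maximal-inequality/fast-subsequence route is what makes the argument rigorous, and it is missing.

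So the outline is directionally right, and your instincts about where the difficulties lie are good, but the two decisive devices --- the double Komlós on $(Z^n,(D^n)^{-1})$ with the ratio sandwich, and the maximal-inequality argument on the auxiliary submartingale $L^n$ built from a summable subsequence --- are not present, and they are precisely what makes the theorem a theorem rather than a heuristic.
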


The proof of the Theorem \ref{ExactSlackness} is split in several results. 
\begin{Lemma}\label{lem5211} Let the conditions of Theorem \ref{mainTheorem2}  hold. With  
$$z{:=} xy+\int_0^T q_sr_sdK_s,$$ there exist $y^n>0$,  $Z^n\in\mathcal Z'$, and $D^n\in\mathcal D'$, such that $y^nZ^nD^n\in z\mathcal G(x,q)$, $n\in\mathbb N$, and
\begin{equation}\label{5231}
\begin{array}{c}
\widehat Y 
= \lim\limits_{n\to\infty} y^nZ^nD^n,\quad (d\kappa\times \mathbb P)-a.e.\\
y^n\to y>0;\\
\end{array}
\end{equation}
For $A^n$, $n\in\mathbb N$, being  in relation to $D^n$ exactly as described before Theorem \ref{ExactSlackness} we have
\begin{equation}\label{86*}\mathbb{E}\left [\int _{\theta _0}^T \widehat V_t Z^n_t dA^n _t \right]=\mathbb{E}\left [Z^n_T\int _{\theta _0}^T \widehat V_t dA^n _t \right]\rightarrow 0.
\end{equation}
\end{Lemma}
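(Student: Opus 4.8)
The plan is to exploit the approximation of $\widehat Y$ by products $y^nZ^nD^n\in z\mathcal{G}(x,q)$ provided by Corollary~\ref{lemSt4} and Lemma~\ref{lemSt3}, extract the numerical convergence $y^n\to y$ from a renormalization, and then identify the quantity $\mathbb{E}[\int_{\theta_0}^T\widehat V_tZ^n_t\,dA^n_t]$ with a defect in the budget inequality along the dual element $y^nZ^nD^n$, which must vanish in the limit because $\widehat Y$ attains the dual optimum and the primal constraint is saturated.

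\textbf{Step 1: the approximating sequence and $y^n\to y$.} First I would invoke Corollary~\ref{lemSt4} applied to the maximal element $\widehat Y\in z\cl\mathcal{D}(x,q)$ (it is maximal since it is the dual minimizer, by the conjugacy relation $U(t,\widehat c_t)=V(t,\widehat Y_t)+\widehat c_t\widehat Y_t$ and Lemma~\ref{conjugacy}; equivalently, scaling $\widehat Y$ up violates the budget constraint). This yields $\bar y^n\geq 0$, $Z^n\in\mathcal Z'$, $D^n\in\mathcal D'$ with $\bar y^nZ^nD^n\in z\mathcal{G}(x,q)$ and $\bar y^nZ^nD^n\to\widehat Y$ both $(d\kappa\times\mathbb P)$-a.e. and on $\bigcup_n\mathcal T_n$. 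Since $Z^n_0=1$ and $D^n_{\theta_0}=1$, evaluating at times before $\theta_0$ (where $\mathcal F_0$ is trivial and $D^n\equiv 1$) together with the a.e. convergence and the structure of $\mathcal D'$ forces $\bar y^n\to y$; here $y>0$ because $(y,r)\in\mathcal E\cap\partial u(x,q)$ with $(x,q)\in\mathring{\mathcal K}$, so by Lemma~\ref{7-17-1} and Lemma~\ref{lemma1}(i) the first coordinate $y$ of any $(y,r)\in\mathcal B(x,q)$ is strictly positive once $(x,q)\in\mathring{\mathcal K}$ (a strictly positive $c\in\mathcal A(x,q)$ from Lemma~\ref{6-12-1} forces $\mathbb E[\int_0^Tc_s\widehat Y_s\,d\kappa_s]=z>0$, hence $y>0$). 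Discarding the finitely many indices with $\bar y^n=0$ and setting $y^n:=\bar y^n$ gives \eqref{5231}.

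\textbf{Step 2: the defect identity.} Each $y^nZ^nD^n$ lies in $z\mathcal G(x,q)\subseteq z\cl\mathcal D(x,q)$, so by Corollary~\ref{corRzd} and the scaling, $y^nZ^nD^n\in\mathcal Y(y^n z', y^n z' r^{Z^nD^n})$ for the appropriate normalization, and the budget constraint of Lemma~\ref{lemSt1} gives, for the optimal $\widehat c$ and its wealth $\widehat V$ in \eqref{5211},
\begin{equation}\nonumber
\mathbb E\left[\int_0^T\widehat c_sD^n_sZ^n_s\,d\kappa_s\right]\leq x+\mathbb E\left[\int_0^Tq_se_sD^n_sZ^n_s\,d\kappa_s\right].
\end{equation}
The plan is now to rewrite the gap in this inequality as $\mathbb E[\int_{\theta_0}^T\widehat V_tZ^n_t\,dA^n_t]$ via integration by parts: because $D^n_t=1-A^n_{t-}$ with $A^n$ a right-continuous increasing process supported on $[\theta_0,T]$, and because $\widehat VZ^n$ behaves like a $Z^n$-supermartingale (indeed $X Z^n$ is a supermartingale for every acceptable $X$, by \cite[Lemma 4]{HK04}), a stochastic Fubini / integration-by-parts computation turns $\mathbb E[\int_0^T(q_se_s-\widehat c_s)D^n_sZ^n_s\,d\kappa_s]-x$ into $\mathbb E[Z^n_T\int_{\theta_0}^T\widehat V_t\,dA^n_t]=\mathbb E[\int_{\theta_0}^T\widehat V_tZ^n_t\,dA^n_t]$, using the optional sampling theorem to move $Z^n_T$ inside (this is exactly the mechanism already used in the proof of Lemma~\ref{lemSt1}, approximating $D^n$ by indicators $\Lambda^\tau$ and noting $dA^n$ is a convex combination of point masses). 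Since $\widehat V\geq 0$ on $\Theta$ (it is in $\mathcal X(x,q)$) and $A^n$ is increasing, both expressions in \eqref{86*} are nonnegative and equal.

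\textbf{Step 3: passage to the limit.} It remains to show the common value tends to $0$. Along $y^nZ^nD^n\to\widehat Y$, Fatou's lemma applied to $\mathbb E[\int_0^T\widehat c_sD^n_sZ^n_s\,d\kappa_s]$ and dominated/monotone convergence (separately on $(e_s)^\pm$, using the uniform $\mathbb Q$-integrability bound of Lemma~\ref{lemma1}(iii) and the closedness arguments of Lemma~\ref{closednessD}) show the gap in the budget inequality for $y^nZ^nD^n$ converges to $\tfrac1{y}$ times the gap for $\widehat Y$, namely $z-\mathbb E[\int_0^T\widehat c_s\widehat Y_s\,d\kappa_s]$ up to the bookkeeping of the $q$-term; but by Theorem~\ref{mainTheorem2}, equation \eqref{2192}, this gap is exactly zero since $\mathbb E[\int_0^T\widehat Y_t\widehat c_t\,d\kappa_t]=xy+\int_0^Tq_sr_s\,dK_s=z$. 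Hence $\mathbb E[\int_{\theta_0}^T\widehat V_tZ^n_t\,dA^n_t]\to 0$, which is \eqref{86*}. \textbf{The main obstacle} I anticipate is Step 2: justifying the integration-by-parts identity rigorously requires care with the left-continuity of $D^n$ versus right-continuity of $A^n$, the jumps of $\kappa$ at the grid points $s_k$, and the localization needed to apply optional sampling to the local martingale $\widehat VZ^n$ — one must check the relevant terms are genuinely integrable (which follows from Lemma~\ref{lemma1}(iii), Assumption~\ref{asEnd}, and the supermartingale property), and that the approximation of $D^n$ by finite combinations of $\Lambda^\tau$'s from the proof of Lemma~\ref{lemSt1} passes cleanly through the identity rather than merely the inequality.
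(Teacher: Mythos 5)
Your proposal follows essentially the same route as the paper: invoke Corollary~\ref{lemSt4} for the approximating sequence $y^nZ^nD^n\to\widehat Y$ of the maximal element $\widehat Y\in z\,\cl\mathcal D(x,q)$, transform the budget integral $\mathbb E[\int_0^T\widehat c_sD^n_sZ^n_s\,d\kappa_s]$ by integration by parts and localization so that the wealth process $\widehat V$ and the measure $dA^n$ appear, and then squeeze using Fatou's lemma on the left against the saturation identity \eqref{2192} on the right to force the defect $\mathbb E[Z^n_T\int\widehat V_t\,dA^n_t]$ to zero.

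One technical point worth flagging in your Step~2: the integration-by-parts manipulation does not produce an \emph{equality} of the form ``budget gap $=\mathbb E[\int\widehat V_tZ^n_t\,dA^n_t]$''. Once you substitute the algebraic identity $\int_0^t\widehat c_s\,d\kappa_s=\bigl(x+\int_0^t\widehat H_s\,dS_s+\bar X_t\bigr)+\bigl(\int_0^tq_se_s\,d\kappa_s-\bar X_t\bigr)-\widehat V_t$ and pass through the localization, the term coming from $x+\int\widehat H\,dS$ is a $\mathbb Q^n$-\emph{supermartingale}, not a martingale, so you only obtain an \emph{upper bound} $T_1+T_2\le x+\int_0^Tq_s\rho^n_s\,dK_s$; the paper handles the acceptable (not nonnegative) process $x+\int\widehat H\,dS$ by adding and subtracting $\bar X:=\|q\|_{\mathbb L^\infty(dK)}X''$ so both pieces $T_1,T_2$ are integrals of nonnegative processes and monotone convergence applies along the localizing sequence. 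Since you only need the one-sided inequality together with $y^n(x+\int q\rho^n\,dK)\le z$ (membership in $z\mathcal B(x,q)$) to conclude $\limsup y^n\mathbb E[Z^n_T\int\widehat V\,dA^n]\le 0$, this does not break your argument, but your phrasing ``the gap in the budget inequality equals $\mathbb E[\int_{\theta_0}^T\widehat V_tZ^n_t\,dA^n_t]$'' and ``converges to $\tfrac1y$ times the gap for $\widehat Y$'' overstate what is proved and what is needed. You correctly anticipate this as the delicate step.
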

\begin{proof} Optimality of $\widehat Y$ and Corollary \ref{lemSt4} imply \eqref{5231}.  
By \eqref{2192} and Fatou's lemma, we get
\begin{equation}\label{eqndefz}
z = \mathbb E\left[\int_0^T \widehat Y_s\widehat c_sd\kappa_s \right] \leq \liminf\limits_{n\to\infty}y^n\mathbb E\left[ \int_0^T Z^n_sD^n_s\widehat  c_sd\kappa_s\right].
\end{equation}
Let us fix $n\in\mathbb N$ and consider $\mathbb E\left[ \int_0^T Z^n_sD^n_s\widehat  c_sd\kappa_s\right]$. Using localization and integration by parts (along the lines of the proof of Lemma \ref{6-16-1}), we have
\begin{displaymath}\begin{array}{rcl}
\mathbb E\left[ \int_0^T Z^n_sD^n_s\widehat  c_sd\kappa_s\right] &=& \mathbb E\left[Z^n_T \int_0^T D^n_{s}\widehat  c_sd\kappa_s\right] \\
&=& \mathbb E\left[Z^n_T \int_0^T \left(1 - A^n_{s-}\right)\widehat  c_sd\kappa_s\right] \\
&=& \mathbb E\left[Z^n_T \left( \int_0^T\widehat  c_sd\kappa_s - \int_0^T A^n_{s-} \widehat c_sd\kappa_s\right)\right] \\
&=& \mathbb E\left[Z^n_T \left( \int_0^T\widehat  c_sd\kappa_s - A^n_{T} \int_0^T \widehat c_sd\kappa_s + \int_0^T(\int_0^t\widehat c_sd\kappa_s)dA^n_t \right)\right] \\
&=& \mathbb E\left[Z^n_T \int_0^T(\int_0^t\widehat c_sd\kappa_s)dA^n_t \right].\\
\end{array}
\end{displaymath}
The latter expression, using \eqref{5211} and with   $\bar X {:=} ||q||_{\mathbb L^\infty(dK)}X''$ (where in turn $X''$ is given by the assertion $(v)$ of Lemma \ref{lemma1}), we can rewrite as
\begin{equation}\label{5212}
\mathbb E\left[Z^n_T \int_0^T\left((x + \int_0^{t}\widehat H_sdS_s +\bar X_t) + \left(\int_0^tq_se_sd\kappa_s - \bar X_t\right) - \widehat V_t\right)dA^n_t \right].
\end{equation}
Let us denote
\begin{equation}\nonumber
\begin{array}
{rcl}
T_1 &{:=} & \mathbb E\left[Z^n_T \int_0^T\left(x + \int_0^{t}\widehat H_sdS_s +\bar X_t\right)dA^n_t \right] ,\\
T_2 &{:=} & \mathbb E\left[Z^n_T \int_0^T\left(\int_0^tq_se_sd\kappa_s - \bar X_t\right)dA^n_t \right].\\
\end{array}
\end{equation}
It follows from nonnegativity of $\widehat V$ in \eqref{5211}, Lemma \ref{lemma1}, and nonnegativity of $\widehat c$ that 
\begin{equation}\label{5232}
x + \int_0^{t}\widehat H_sdS_s + \bar X_t \geq \int_0^t\widehat c_sd\kappa_s + \bar X_t -\int_0^t q_se_sd\kappa_s \geq \bar X_t -\int_0^t q_se_sd\kappa_s \geq0,\quad t\in[0,T],
\end{equation}
i.e., the integrand in $T_1$ is nonnegative. Let ${\mathbb Q^n}$ be the probability measure, whose density process with respect to $\mathbb P$ is $Z^n$. As $(x + \int_0^{\cdot}\widehat H_sdS_s + \bar X)$ and $\bar X$ are local martingales under ${\mathbb Q^n}$, by \cite[Theorem III.27, p. 128]{Pr}, $A^n_{-}\cdot(x + \int_0^{\cdot}\widehat H_sdS_s + \bar X)$ and $A^n_{-}\cdot\bar X$ are local martingales. Let $(\tau_k)_{k\in\mathbb N}$ be a localizing sequence for both $A^n_{-}\cdot(x + \int_0^{\cdot}\widehat H_sdS_s + \bar X)$ and $A^n_{-}\cdot\bar X$. 
By the monotone convergence theorem and integration by parts, we get
\begin{equation}\label{5214}
\begin{array}{rcl}
T_1 &=& \lim\limits_{k\to\infty}\mathbb E^{\mathbb Q^n}\left[\int_0^{\tau_k}\left(x + \int_0^{t}\widehat H_sdS_s +\bar X_t\right)dA^n_t \right] \\
&=& \lim\limits_{k\to\infty}\left(\mathbb E^{\mathbb Q^n}\left[ \int_0^{\tau_k}(-A^n_{t-})d(x + \int_0^{t}\widehat H_sdS_s + \bar X_t) + A^n_{\tau_k} \left( x + \int_0^{\tau_k}\widehat H_sdS_s +\bar X_{\tau_k}\right)\right]\right) \\
&=&\lim\limits_{k\to\infty}\mathbb E^{\mathbb Q^n}\left[A^n_{\tau_k} \left( x + \int_0^{\tau_k}\widehat H_sdS_s +\bar X_{\tau_k}\right)\right].\\
\end{array}
\end{equation}
Let us consider $T_2$. With $\mathcal E^q {:=} \int_0^{\cdot} q_se_sd\kappa_s$, Lemma \ref{lemma1} implies positivity of $\mathcal E^q_t - \bar X_t$, $t\in[0,T]$, which in turn allows to invoke the monotone convergence theorem, and we obtain 
\begin{equation}\nonumber
\begin{array}{rcl}
T_2 &=& \mathbb E^{\mathbb Q^n}\left[ \int_0^T\left(\mathcal E^q_t - \bar X_t\right)dA^n_t \right] \\
&=&\lim\limits_{k\to\infty}\mathbb E^{\mathbb Q^n}\left[ \int_0^{\tau_k}\mathcal E^q_t dA^n_t - \int_0^{\tau_k}\bar X_tdA^n_t \right].\\
\end{array}
\end{equation} 
By positivity of $\bar X$ and the monotone convergence theorem, we have $$\lim\limits_{k\to\infty}\mathbb E^{\mathbb Q^n}\left[\int_0^{\tau_k}\bar X_tdA^n_t \right]=\mathbb E^{\mathbb Q^n}\left[\int_0^{T}\bar X_tdA^n_t \right].$$
Therefore, $\lim\limits_{k\to\infty}\mathbb E^{\mathbb Q^n}\left[ \int_0^{\tau_k}\mathcal E^q_t dA^n_t\right]=\mathbb E^{\mathbb Q^n}\left[ \int_0^{T}\mathcal E^q_t dA^n_t\right]$. We deduce that 
\begin{displaymath}
\mathbb E^{\mathbb Q^n}\left[\int_0^{\tau_k}\bar X_tdA^n_t \right] = \mathbb E^{\mathbb Q^n}\left[-(A^n_{-}\cdot\bar X)_{\tau_k} +\bar X_{\tau_k}A^n_{\tau_k}\right] = \mathbb E^{\mathbb Q^n}\left[\bar X_{\tau_k}A^n_{\tau_k}\right].
\end{displaymath}
Whereas, in the other term in $T_2$, we get
\begin{displaymath}
\begin{array}{rcl}
\mathbb E^{\mathbb Q^n}\left[ \int_0^{T}\mathcal E^q_t dA^n_t\right] &= &\mathbb E^{\mathbb Q^n}\left[\mathcal E^q_{T}A^n_{T} - (A^n_{-}\cdot \mathcal E^q)_{T}\right] \\
&= &\mathbb E^{\mathbb Q^n}\left[\mathcal E^q_{T} - (A^n_{-}\cdot \mathcal E^q)_{T}\right] \\
&= &\mathbb E^{\mathbb Q^n}\left[((1- A^n_{-})\cdot \mathcal E^q)_{T}\right] \\
&= &\mathbb E^{\mathbb Q^n}\left[(D^n\cdot \mathcal E^q)_{T}\right].\\
\end{array}
\end{displaymath}
using integration by parts and localization, as in the proof of Lemma \ref{6-16-1}, we can rewrite the latter expression as
$$
\mathbb E\left[\int_0^T D^n_sZ^n_sq_se_sd\kappa_s\right].
$$
We conclude that
$$T_2 = -\lim\limits_{k\to\infty}\mathbb E^{\mathbb Q^n}\left[\bar X_{\tau_k}A^n_{\tau_k}\right] +\mathbb E\left[\int_0^T D^n_sZ^n_sq_se_sd\kappa_s\right].$$
Combining this with \eqref{5214}, we obtain 
$$T_1 + T_2  = \mathbb E\left[\int_0^T D^n_sZ^n_sq_se_sd\kappa_s\right] + \lim\limits_{k\to\infty}\mathbb E^{\mathbb Q^n}\left[A^n_{\tau_k}\left(x + \int_0^{\tau_k}\widehat H_sdS_s +\bar X_{\tau_k} \right)\right] - \lim\limits_{k\to\infty}\mathbb E^{\mathbb Q^n}\left[A^n_{\tau_k}\bar X_{\tau_k} \right].$$
As both limits in the right-hand side exist and by positivity of $\left(x + \int_0^{\tau_k}\widehat H_sdS_s +\bar X_{\tau_k} \right)$, established in \eqref{5232}, we can bound the difference of the limits as
\begin{displaymath}\begin{array}{c}
 \lim\limits_{k\to\infty}\mathbb E^{\mathbb Q^n}\left[A^n_{\tau_k}\left(x + \int_0^{\tau_k}\widehat H_sdS_s +\bar X_{\tau_k} \right)\right] - \lim\limits_{k\to\infty}\mathbb E^{\mathbb Q^n}\left[A^n_{\tau_k}\bar X_{\tau_k} \right] \\
 \leq 
  \lim\limits_{k\to\infty}\mathbb E^{\mathbb Q^n}\left[x + \int_0^{\tau_k}\widehat H_sdS_s +\bar X_{\tau_k}\right] - \lim\limits_{k\to\infty}\mathbb E^{\mathbb Q^n}\left[A^n_{\tau_k}\bar X_{\tau_k} \right] \\
   \leq 
   \lim\limits_{k\to\infty}\mathbb E^{\mathbb Q^n}\left[x + \int_0^{\tau_k}\widehat H_sdS_s\right] + 
 \lim\limits_{k\to\infty}\mathbb E^{\mathbb Q^n}\left[(1-A^n_{\tau_k})\bar X_{\tau_k} \right].\\
\end{array}
\end{displaymath}
By definition of $\mathcal M'$, $\bar X$ is a uniformly integrable martingale under $\mathbb Q^n$. Therefore, as $(1-A^n_{\tau_k})$ is bounded, we can pass the limit inside of the expectation to obtain $\lim\limits_{k\to\infty}\mathbb E^{\mathbb Q^n}\left[(1-A^n_{\tau_k})\bar X_{\tau_k} \right] = 0$. In turn $x + \int_0^{\cdot}\widehat H_sdS_s$ is a supermartingale under $\mathbb Q^n$ (see the argument in the proof of Lemma \ref{6-16-1}). We conclude that 
$$
T_1 + T_2 \leq x + \mathbb E\left[\int_0^T D^n_sZ^n_sq_se_sd\kappa_s\right] = x + \int_0^T q_s\rho^n_sdK_s,
$$
for some $\rho^n$, which is well-defined by Corollary \ref{corRzd}, and such that $y^n(1,\rho^n)\in z\mathcal B(x,q)$, since $y^nZ^nD^n\in z\mathcal G(x,q)$.   
Thus, from \eqref{eqndefz} and \eqref{5212}, we get
\begin{displaymath}
\begin{array}{rcl}
z&\leq &(x + \int_0^T q_s\rho^n_sdK_s)y^n  -\lim\limits_{n\to\infty}y^n\mathbb E\left[Z^n_T \int_0^T\widehat V_tdA^n_t \right].\\
\end{array}
\end{displaymath}
By optimality of $\widehat Y$, $z\geq (x + \int_0^T q_s\rho^n_sdK_s)y^n\geq 0$. Therefore, by nonnegativity of $\mathbb E\left[Z^n_T \int_0^T\widehat V_tdA^n_t \right]$, and since $y^n$ converges to a strictly positive limit, we conclude that
$$\lim\limits_{n\to\infty}\mathbb E\left[Z^n_T \int_0^T\widehat V_tdA^n_t \right] = 0.$$
Applying integration by parts and localization we deduce the assertion of the lemma.
\end{proof}
\begin{Remark} We emphasize again that  $dA^n$ are probability measures on $[\theta _0,T]$, which can have mass at the endpoints, and 
 $D^n_t=1-A^n_{t-}, \theta _0\leq t\leq T$, $D^n_{T+}=0$. 
\end{Remark}

In the subsequent part, we will follow the notations of Lemma \ref{lem5211} and we will work with a further subsequence, still denoted by $n$, such that
$y_n\rightarrow  y>0$, 
\begin{equation}
\label{cone-product}
Z^nD^n\rightarrow \frac{\widehat Y}{y}, \quad (d\kappa\times\mathbb P)-a.e, 
\end{equation} and
\begin{equation}\label{fast}
\sum _{k=n}^{\infty}\mathbb E\left[Z^n_T \int_0^T\widehat V_tdA^n_t \right]\leq \frac{2^{-n}}{n},
\end{equation}
where the existence of a subsequence satisfying \eqref{fast} follows from \eqref{86*}.
The next results is a Komlos-type lemma, largely based on the results in  \cite{CzichSchachOSSup}, applied to the \emph{double-sequence} of processes $(Z^n, \left (D^{n}\right)^{-1})$.   We observe that the process $\left ( D^n\right )^{-1}$ takes values in $[1, \infty]$ is increasing,  left-continuous,  and satisfies
$$\left ( D^n _t\right )^{-1}=1,\quad  0\leq t\leq \theta _0.$$
\begin{Lemma}\label{5219} 
Let the conditions of Theorem \ref{mainTheorem2}  hold. In the notations of Lemma \ref{lem5211},
for each $n$, there exist  a finite index $N(n)$ and convex weights
$$\alpha _{n,k}\geq 0,\quad   k=n,..., N(n),\quad \sum _{k=n}^{N(n)}\alpha _{n,k}=1,$$ 
and there exists a strong optional super-martingale $\widehat Z$ and a non-decreasing  (not necessarily left-continuous) process $\widehat D$ 
with $\widehat D_t=0, \ \forall 0\leq t\leq \theta_0$, 
such that, simultaneously, 
\begin{enumerate}
\item $\tilde Z^n{:=} \sum _{k=n}^{N(n)}\alpha _{n,k}Z^k\rightarrow \widehat Z$ in the sense of \cite{CzichSchachOSSup} i.e. for any stopping time $0\leq \tau\leq T$ we have 
$$\tilde Z^n _{\tau} {\longrightarrow}^{\mathbb{P}}  \widehat Z_{\tau},$$
and 
\item 
$$ \mathbb{P}\left (\sum _{k=n}^{N(n)}\alpha _{n,k}(D^k_t)^{-1}\rightarrow (\widehat D_t )^{-1}, \quad  \forall  0\leq t \leq T+ \right)=1$$

 We have set $D_{T+}=D^k_{T+}=0$. \end{enumerate}
Furthermore,  with the notation
\begin{equation}\label{a-hat}
 \widehat A _t {=} 1-\widehat D_{t+}, \quad  0\leq t\leq T,\quad \widehat A_{0-}=0, 
\end{equation}
we have the probability measure  $d\widehat A$ on $[\theta _0,T]$ such that
$$\mathbb{P}\left (\widehat D_t\in \left[1-\widehat A_t, 1-\widehat A_{t-}\right], \  \forall \ \theta _0 \leq t\leq T)\right)=1.$$
Denoting by 
\begin{equation}\label{a-tilde-n}
\tilde A_t^n{:=} 1- \left (\sum _{k=n}^{N(n)}\alpha _{n,k}(1-A^k_t)^{-1}\right)^{-1}=1-
\left (\sum _{k=n}^{N(n)}\alpha _{n,k}(D^k_{t+})^{-1}\right)^{-1}, \quad  0\leq t\leq T, \quad \tilde A ^n_{0-}=0,
\end{equation}
the point-wise convergence in time (at all times where there is continuity)  additionally implies 
$$d\tilde A^n\rightharpoonup d\widehat A,\quad \mathbb{P}-a.e.$$ in the sense of weak convergence of probability measures on $[\theta _0, T]$.  \end{Lemma}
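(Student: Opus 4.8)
The plan is to run a two-stage Koml\'os argument producing a \emph{single} family of forward convex weights $(\alpha_{n,k})_{k=n}^{N(n)}$ that simultaneously regularizes the densities $Z^n$ and the reciprocal discount processes $(D^n)^{-1}$. First I would record the elementary structure: each $D^n\in\mathcal D'$ is left-continuous, non-increasing, $[0,1]$-valued, constant off a finite grid $\mathcal T_{m_n}$, with $D^n\equiv 1$ on $[0,\theta_0]$ and $D^n_{T+}=0$; hence $(D^n)^{-1}$ is left-continuous, non-decreasing, $[1,\infty]$-valued, $\equiv1$ on $[0,\theta_0]$, and equal to $+\infty$ on $(\zeta^n,T]$, where $\zeta^n:=\inf\{t:D^n_t=0\}$. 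For the densities, since each $Z^n\in\mathcal Z'$ is a strictly positive uniformly integrable martingale with $Z^n_0=1$, the Koml\'os-type theorem for nonnegative supermartingales of \cite{CzichSchachOSSup} gives forward convex combinations $\bar Z^n=\sum_{k\ge n}\beta_{n,k}Z^k$ and a nonnegative optional strong supermartingale $\widehat Z$ with $\bar Z^n_\tau\to\widehat Z_\tau$ in probability for every $[0,T]$-valued stopping time $\tau$. The feature I would reuse is that this convergence mode is stable under taking further forward convex combinations of the $\bar Z^m$ (a convex combination of random variables converging in probability to a fixed limit still converges to that limit). Carrying the \emph{same} weights to the discount side I set $\bar W^n:=\sum_{k\ge n}\beta_{n,k}(D^k)^{-1}$, which is again adapted, left-continuous, non-decreasing, $[1,\infty]$-valued, $\equiv1$ on $[0,\theta_0]$, and constant off a finite grid.

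The heart of the proof is the second stage: extracting from the $\bar W^m$ a further sequence of forward convex combinations $\tilde W^n=\sum_{m\ge n}\gamma_{n,m}\bar W^m$ converging $\mathbb P$-a.s.\ \emph{at every} $t\in[0,T+]$. Since $(D^k)^{-1}$ need not be integrable I will not apply an $L^1$/$L^2$ Koml\'os lemma to it directly; instead I would localize below the stopping times $\zeta^n$ (on the complementary region the value is $+\infty$, hence monotone and harmless), apply a Koml\'os/Banach--Saks argument to the bounded transforms $1-1/\bar W^m\in[0,1]$ to obtain $(dt\times\mathbb P)$-a.e.\ convergence, combine once more and diagonalize over a fixed countable dense set $\mathbb D$ of times containing $\bigcup_m\mathcal T_m$ so as to get $\mathbb P$-a.s.\ convergence at every point of $\mathbb D$, and then close the remaining times by the monotonicity sandwich $\sup_{s<t,\,s\in\mathbb D}\lim_n\tilde W^n_s\le\liminf_n\tilde W^n_t\le\limsup_n\tilde W^n_t\le\inf_{s>t,\,s\in\mathbb D}\lim_n\tilde W^n_s$, whose two outer members coincide at every continuity point of the monotone limit. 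The common value defines $(\widehat D_t)^{-1}\in[1,\infty]$; equivalently $\widehat D_t$ is the pointwise reciprocal, a non-increasing $[0,1]$-valued process (not necessarily left-continuous), with $\widehat D\equiv1$ on $[0,\theta_0]$ and $\widehat D_{T+}=0$. Taking $(\alpha_{n,k})$ to be the composite weights obtained from $\gamma$ and $\beta$, item (1) follows from the first stage together with the stability remark, while item (2) is precisely what was just built.

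It then remains to read off the statements about $\widehat A$. Putting $\widehat A_t:=1-\widehat D_{t+}$ and $\widehat A_{0-}:=0$, the relations $\widehat D\equiv1$ on $[0,\theta_0]$ and $\widehat D_{T+}=0$ give $\widehat A_{0-}=0$ and $\widehat A_T=1$, so $d\widehat A$ is a probability measure on $[\theta_0,T]$; and since $\widehat D$ is non-increasing one has $\widehat D_{t+}\le\widehat D_t\le\widehat D_{t-}$, that is $\widehat D_t\in[1-\widehat A_t,1-\widehat A_{t-}]$ for all $\theta_0\le t\le T$, $\mathbb P$-a.s. Finally, for $\tilde A^n$ as in \eqref{a-tilde-n} one has $1-\tilde A^n_t=(\tilde W^n_{t+})^{-1}$, so by item (2) and monotonicity $\tilde A^n_t\to1-\widehat D_{t+}=\widehat A_t$ at every continuity point of $\widehat A$, while the total masses match at the endpoints; the Helly--Bray (Portmanteau) theorem then yields $d\tilde A^n\rightharpoonup d\widehat A$, $\mathbb P$-a.s.

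The main obstacle is exactly the second stage: one must use a \emph{single} weight family for two different convergences (the stopping-time convergence of $\widehat Z$ and the everywhere-in-time convergence of $(\widehat D)^{-1}$) while $(D^k)^{-1}$ is bounded only after a nonlinear transformation, and one must upgrade almost-everywhere-in-time convergence to convergence at \emph{every} $t$ even though the jump set of the limit is path-dependent. I expect this to be handled by keeping all recombinations at the process level (never on fixed paths), exploiting the stability of the relevant convergence modes under forward convex combinations, and filling in the remaining times with the monotonicity sandwich above; a secondary, purely bookkeeping point is arranging the two rounds of forward convex combinations so that the final indices run from $n$ to a finite $N(n)$ as required.
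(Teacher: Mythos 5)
Your overall strategy — a two-stage Koml\'os argument with a single family of forward convex weights, applying the Czichowsky--Schachermayer result to $(Z^n)$ and a monotone-process argument to $((D^n)^{-1})$, then intertwining the two extractions so that the same weights work for both — is exactly what the paper does. Your treatment of the ``Furthermore'' part (the probability measure $d\widehat A$, the sandwich $\widehat D_t\in[1-\widehat A_t,1-\widehat A_{t-}]$, the Helly--Bray weak convergence at continuity points) is also correct. Where the paper simply invokes \cite[Proposition~3.4]{campi-schachermayer} (together with the unbounded Koml\'os lemma \cite[Lemma~A1.1]{DS} to handle infinite values, and replacing predictability by optionality), you instead try to re-derive that proposition from scratch, and this is where a genuine gap appears.

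The gap is concrete: item (2) of the lemma asserts $\mathbb P$-a.s.\ convergence of $\sum_k\alpha_{n,k}(D^k_t)^{-1}$ to $(\widehat D_t)^{-1}$ \emph{for every} $t\in[0,T+]$, but your monotonicity sandwich
$\sup_{s<t,\,s\in\mathbb D}\lim_n\tilde W^n_s\le\liminf_n\tilde W^n_t\le\limsup_n\tilde W^n_t\le\inf_{s>t,\,s\in\mathbb D}\lim_n\tilde W^n_s$
only pins down the limit at \emph{continuity} points of the monotone envelope. At the (random, countable) jump times of the limit the two outer members differ, so the ``common value'' you use to define $(\widehat D_t)^{-1}$ does not exist there and convergence of $\tilde W^n_t$ is not established. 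This matters downstream: the very next corollary uses $\widehat Z_\tau\widehat D_\tau=\mathbb P\text{-}\lim_n\tilde Z^n_\tau\tilde D^n_\tau$ for \emph{every} $[0,T]$-valued stopping time $\tau$, which requires the everywhere-in-time convergence of item (2), not just convergence at continuity points. Upgrading from the dense set plus continuity points to all of $[0,T+]$ — in the face of a path-dependent jump set that you cannot diagonalize over — is precisely the technical content of Campi--Schachermayer's Proposition~3.4, and your sketch stops short of supplying it (you acknowledge as much in your closing paragraph). The remedy is to either cite that proposition, as the paper does, or to reproduce its argument for forcing convergence at the random jump times by a further round of convex combinations; the sandwich alone does not suffice.

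A minor secondary point: you handle the unboundedness of $(D^n)^{-1}$ by passing to the bounded transform $1-1/\bar W^m\in[0,1]$, whereas the paper invokes the unbounded Koml\'os lemma (with Remark~1 after \cite[Lemma~A1.1]{DS}, which allows infinite limits). Both devices work; just be aware that your transform is nonlinear, so the convex-combination weights you extract for $1-1/\bar W^m$ are not automatically the convex-combination weights of $\bar W^m$ itself, and this needs a short justification (monotonicity of $x\mapsto 1-1/x$ plus the fact that convergence of $1-1/\bar W^m_t$ in $[0,1]$ is equivalent to convergence of $\bar W^m_t$ in $[1,\infty]$ pointwise, which is fine for the a.s.\ pointwise statement but is not a linear operation commuting with convex combinations).
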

\begin{proof} 
The proof reduces to applying  the  Komlos-type results in \cite{CzichSchachOSSup} to the sequence $Z^n$ and  \cite[Proposition 3.4]{campi-schachermayer} to the sequence $(D^{n})^{-1}$, simultaneously. We observe that: 
\begin{enumerate}
\item first, no bounds are needed    for $D^{-1}$'s  since we can apply the unbounded Komlos lemma in \cite[Lemma A1.1]{DS} (and Remark 1 following it) to the proof from \cite[Propositions 3.4]{campi-schachermayer}, and this works even for infinite values (according to  Remark 1 after \cite[Lemma A1.1]{DS}). Also,  predictability  can be replaced by optionality, without any change to the proof.
\item the Komlos arguments can be applied to both sequences $Z^n$ and $D^n$ simultaneously, with the same convex weights. In order to do this,   we first apply Komlos  to one sequence, then replace both original sequences by their  convex combinations with the weights just  obtained, and then apply Komlos again for the other sequence, and update the convex weight to both sequences.
\end{enumerate}

\end{proof}

\begin{Corollary} Let the conditions of Theorem \ref{mainTheorem2}  hold. In the notations of Lemma \ref{5219}, we have the representation
$$\widehat Y= y\widehat Z\widehat D,\quad (d\kappa\times\mathbb P)-a.e.$$
\end{Corollary}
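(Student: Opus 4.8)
The plan is to identify the $(d\kappa\times\mathbb P)$-a.e.\ limit $\widehat Y/y=\lim_{n}Z^{n}D^{n}$ from \eqref{cone-product} with the product $\widehat Z\widehat D$ of the limiting objects built in Lemma~\ref{5219}, by passing to the limit along the \emph{common} convex weights $\alpha_{n,k}$ used there. First I would note that passing to convex combinations does not spoil \eqref{cone-product}: since for $(d\kappa\times\mathbb P)$-a.e.\ $(t,\omega)$ the scalars $Z^{k}_{t}(\omega)D^{k}_{t}(\omega)$ converge to $\widehat Y_{t}(\omega)/y$ and the weights $\alpha_{n,k}$ are supported on $k\geq n$, a Toeplitz-type averaging gives $\sum_{k=n}^{N(n)}\alpha_{n,k}Z^{k}_{t}D^{k}_{t}\to\widehat Y_{t}/y$ for $(d\kappa\times\mathbb P)$-a.e.\ $(t,\omega)$. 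Combined with Lemma~\ref{5219} this leaves three facts to use: $\sum_{k}\alpha_{n,k}Z^{k}_{t}D^{k}_{t}\to\widehat Y_{t}/y$ $(d\kappa\times\mathbb P)$-a.e.; $\sum_{k}\alpha_{n,k}(D^{k}_{t})^{-1}\to(\widehat D_{t})^{-1}$ for all $t$, $\mathbb P$-a.s.; and $\tilde Z^{n}=\sum_{k}\alpha_{n,k}Z^{k}\to\widehat Z$ in the Czichowsky--Schachermayer sense.

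The heart of the matter is the elementary identity
\begin{equation}\nonumber
\sum_{k=n}^{N(n)}\alpha_{n,k}Z^{k}_{t}=\sum_{k=n}^{N(n)}\bigl(\alpha_{n,k}(D^{k}_{t})^{-1}\bigr)\bigl(Z^{k}_{t}D^{k}_{t}\bigr),
\end{equation}
valid for every $t$ with the convention $0\cdot\infty=0$. Fix $(t,\omega)$ in the $(d\kappa\times\mathbb P)$-full set where the first two facts hold. On $\{\widehat D_{t}>0\}$, i.e.\ $(\widehat D_{t})^{-1}<\infty$, the masses $\mu_{n,k}:=\alpha_{n,k}(D^{k}_{t})^{-1}$ are finite for large $n$ with $\sum_{k}\mu_{n,k}\to(\widehat D_{t})^{-1}\geq 1$; normalising the $\mu_{n,k}$ to convex weights (still supported on $k\geq n$) and applying the Toeplitz averaging to $Z^{k}_{t}D^{k}_{t}\to\widehat Y_{t}/y$ gives $\sum_{k}\alpha_{n,k}Z^{k}_{t}\to(\widehat D_{t})^{-1}\widehat Y_{t}/y$. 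On $\{\widehat D_{t}=0\}$, if $\widehat Y_{t}>0$ then $Z^{k}_{t}\geq\tfrac12(\widehat Y_{t}/y)(D^{k}_{t})^{-1}$ for large $k$, so $\sum_{k}\alpha_{n,k}(D^{k}_{t})^{-1}\to\infty$ would force $\sum_{k}\alpha_{n,k}Z^{k}_{t}\to\infty$, which is impossible since $\widehat Z$, being a strong supermartingale, is $\mathbb P$-a.s.\ finite; hence $\widehat Y_{t}=0=\widehat Z_{t}\widehat D_{t}$ there. Thus, once the convergence $\tilde Z^{n}\to\widehat Z$ is known $(d\kappa\times\mathbb P)$-a.e., comparing limits yields $\widehat Z_{t}=(\widehat D_{t})^{-1}\widehat Y_{t}/y$ on $\{\widehat D_{t}>0\}$, i.e.\ $\widehat Y=y\widehat Z\widehat D$ $(d\kappa\times\mathbb P)$-a.e.

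The step I expect to be the genuine obstacle is exactly this upgrade: the Czichowsky--Schachermayer convergence of $\tilde Z^{n}$ to $\widehat Z$ is a priori only convergence in probability at each stopping time, and it must be promoted, along a further subsequence, to convergence for $(d\kappa\times\mathbb P)$-a.e.\ $(t,\omega)$, so that it lies on the same full set as the other two (genuinely a.e.) convergences. Here the randomness of the clock $\kappa$ matters, since ``$\mathbb P$-a.s.\ for each deterministic $t$'' and ``$(d\kappa\times\mathbb P)$-a.e.'' differ off the atoms $(s_{k})$ of $\kappa$; I would handle it by a diagonal extraction over a countable set of times containing the $s_{k}$ and the grids $\mathcal T_{n}$, using the path regularity of the strong supermartingales and a Fubini argument for $d\kappa\times\mathbb P$, and then read off $\widehat Y=y\widehat Z\widehat D$ from the pointwise computation above.
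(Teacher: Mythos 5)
Your route is genuinely different from the paper's, and it can be made to work, but the description of how to close the one gap you correctly flag is not right as written.

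Where you agree with the paper: you use the same algebraic observation, namely that $\tilde Z^n_t = \sum_k\alpha_{n,k}Z^k_t = \sum_k\bigl(\alpha_{n,k}(D^k_t)^{-1}\bigr)\bigl(Z^k_t D^k_t\bigr)$, and a Toeplitz-type averaging of $Z^k_t D^k_t\to\widehat Y_t/y$. (The paper phrases it as a min/max sandwich for $\tilde Z^n\tilde D^n$, which is the same computation written differently.) You also correctly identify the genuine obstacle: the Czichowsky--Schachermayer limit $\widehat Z$ is only known as a $\mathbb P$-limit at each stopping time, while the other two convergences hold $(d\kappa\times\mathbb P)$-a.e.; these must be put on the same footing before one can read off $\widehat Y = y\widehat Z\widehat D$ pointwise.

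Where you diverge: the paper \emph{does not} upgrade the convergence $\tilde Z^n\to\widehat Z$ to $(d\kappa\times\mathbb P)$-a.e. convergence. Instead, it fixes an optional set $O$ and a truncation level $M$, and shows that $\mathbb E\bigl[\int_0^T 1_O\min\{\tilde Z^n_t\tilde D^n_t,M\}\,d\kappa_t\bigr]$ has two different limits (one computed from the a.e. convergence of the product via dominated convergence, the other computed time-by-time from the in-probability convergence, using $d\kappa=\varphi\,dK$, $\varphi$ bounded, and Fubini). Equating the limits for all $O$ and $M$ gives the identity. This is a test-function/duality argument that completely sidesteps the upgrade you are worried about.

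The specific weakness in your plan: ``diagonal extraction over a countable set of times containing the $s_k$ and the grids $\mathcal T_n$, using the path regularity of the strong supermartingales'' is not a valid way to pass from pointwise-in-$t$ convergence in $\mathbb P$ to $(d\kappa\times\mathbb P)$-a.e. convergence. Optional strong supermartingales such as $\widehat Z$ need not be right-continuous, so there is no path regularity to bridge from a countable dense grid to all times; and $d\kappa$ generally carries mass off any fixed countable set. What does work is the Fubini half of your suggestion, done properly: for each fixed $t$, $\mathbb P(|\tilde Z^n_t - \widehat Z_t|>\varepsilon)\to 0$, so by bounded convergence (using $\varphi\leq C$ and $K_T\leq A$) $\int_0^T\mathbb E\bigl[1_{\{|\tilde Z^n_t-\widehat Z_t|>\varepsilon\}}\varphi_t\bigr]dK_t\to 0$, which is precisely $(d\kappa\times\mathbb P)$-measure convergence; then a further subsequence (along which the other two convergences persist trivially) gives the $(d\kappa\times\mathbb P)$-a.e. convergence you need. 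With that replacement, your pointwise identification on $\{\widehat D_t>0\}$ and your contradiction argument on $\{\widehat D_t=0\}$ (which needs $(d\kappa\times\mathbb P)$-a.e. finiteness of $\widehat Z$, again obtainable by Fubini) go through. So: correct target, correct recognition of the obstacle, a different and workable resolution, but the mechanism you sketch for the upgrade is the wrong one and should be replaced by the Fubini/measure-convergence argument.
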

\begin{proof} Consider an observation that, for non-negative 
numbers  $a_k, b_k$, $k=n,\dots, N(n)$, we have
$$\min _{k=n,...,N(n)} \left (\frac{a_k}{b_k}\right)\leq \frac{\sum _{k=n}^{N(n)}\alpha _{n,k}a^k}{\sum _{k=n}^{N(n)}\alpha _{n,k}b_k}\leq  \max _{k=n,...,N(n)} \left (\frac{a_k}{b_k}\right).$$\
We apply this to $a_k=Z^k, b_k=(D^k)^{-1},$ to obtain
$$\min _{k=n,...,N(n)} \left (Z^kD^k\right)\leq 
 \frac{\sum _{k=n}^{N(n)}\alpha _{n,k}Z^k}{\sum _{k=n}^{N(n)}\alpha _{n,k}(D^k)^{-1}}=\tilde Z^n \tilde D^n \leq
  \max _{k=n,...,N(n)} \left (Z^kD^k\right),$$
pointwise a.e.  in the product space, where $\tilde D^n {:=} \frac{1}{\sum _{k=n}^{N(n)}\alpha _{n,k}(D^k)^{-1}}$. 
  Since $Z^nD^n\rightarrow \widehat Y/  y$, $(d\kappa\times \mathbb P)$-a.e., we conclude that both
  $\left(\min _{k=n,...,N(n)} \left (Z^kD^k\right)\right)_{n\in\mathbb N}$ and  $\left(\max _{k=n,...,N(n)} \left (Z^kD^k\right)\right)_{n\in\mathbb N}$ converge to $\frac{\widehat Y}{y}$, $(d\kappa\times \mathbb P)$-a.e.,
  therefore
    \begin{equation}
    \label{conv-product}\tilde Z^n \tilde D^n    \rightarrow \frac{\widehat Y}{y},\quad (d\kappa\times \mathbb P)-a.e.
   \end{equation}
Using  \eqref{cone-product} above, if we did not plan to identify the limit $\widehat Z$   as a strong-supermartingale, but only as a $(d\kappa\times \mathbb P)$-a.e. limit in the product space,  we could only  apply Komlos arguments to the single  sequence $D^{-1}$,  to conclude convergence of the other  convex combination $\tilde Z^n$ to $\frac{\widehat Y}{y\widehat D}$, defined up to a.e. equality in the product space.

  With our  (stronger) double Komlos argument, we have that, in addition to \eqref{cone-product} we have   \begin{equation}
  \label{conv-tau}\widehat Z_{\tau}\widehat D_{\tau}=\mathbb{P}-\lim _n \tilde Z^n_{\tau}\tilde D^n _{\tau},\quad \textrm{for\ every~[0,T]-valued~stopping \ time \ }\tau,
  \end{equation}
  (where $\widehat Z$ is a strong supermartingale, and $\widehat D$ is well defined at all times).

  It remains to prove that $\widehat Y/y=\widehat Z\widehat D$, $(d\kappa\times\mathbb P)$-a.e. We point out that the convergence \eqref{conv-tau} is topological. Let us consider an arbitrary optional set $O\subset \Omega \times [0,T]$ and fix an upper bound $M$.  It follows from \eqref{conv-product} that
  $$1_O \min \left \{\tilde Z^n \tilde D^n, M \right \}\rightarrow  1_O \min \left \{\frac{\widehat Y}{y}, M \right \},\quad (d\kappa\times \mathbb P)-a.e.
$$
Therefore, we get
\begin{equation}\label{hatY}\mathbb{E}\left [\int _0^T  1_O(t, \cdot)  \min \left \{\tilde Z^n_t \tilde D^n _t, M \right \} d\kappa_t\right]\rightarrow  
\mathbb{E}\left [\int _0^T  1_O(t, \cdot)  \min \left \{\frac{\widehat Y_t}{y}, M \right \} d\kappa_t\right].
\end{equation} 
Recall that the stochastic clock has a density $d\kappa_t=\varphi  _t dK_t$ with respect to the deterministic clock $dK _t$.  For each fixed $t$, from \eqref{conv-tau} we have
$$1_O(t, \cdot)  \min \left \{\tilde Z^n_t \tilde D^n _t, M \right \} \varphi _t\rightarrow 1_O(t, \cdot)  \min \left \{\widehat Z_t \widehat D _t, M \right \} \varphi _t,\quad \textrm{in}-\mathbb{P}.$$
Recalling that $\mathbb{E}[\varphi _t]<\infty$ we have, for fixed $t$, that 
$$ M \times  \mathbb{E}[\varphi _t]\geq \mathbb{E}\left [1_O(t, \cdot)  \min \left \{\tilde Z^n_t \tilde D^n _t, M \right \}  \varphi _t \right ]\rightarrow \mathbb{E}\left [1_O(t, \cdot)  \min \left \{\widehat Z_t \widehat D _t, M \right \} \varphi _t\right ].
$$
We integrate the above with respect to the deterministic clock $dK_t$ to obtain
\begin{equation}
\begin{split}
\mathbb{E}\left [\int _0^T  1_O(t, \cdot)  \min \left \{\tilde Z^n_t \tilde D^n _t, M \right \} d\kappa_t\right] =\int _0^T \mathbb{E}\left [1_O(t, \cdot)  \min \left \{\tilde Z^n_t \tilde D^n _t, M \right \}  \varphi _t \right ]dK_t\rightarrow \\
\rightarrow \int _0^T \mathbb{E}\left [1_O(t, \cdot)  \min \left \{\widehat Z_t \widehat D _t, M \right \}  \varphi _t \right ]dK_t=
\mathbb{E}\left [\int _0^T  1_O(t, \cdot)  \min \left \{\widehat Z_t \widehat D _t, M \right \} d\kappa_t\right]\end{split}
\end{equation}
Together with \eqref{hatY} we have
$$\mathbb{E}\left [\int _0^T  1_O(t, \cdot)  \min \left \{\frac{\widehat Y_t}{\hat y}, M \right \} d\kappa_t\right]=\mathbb{E}\left [\int _0^T  1_O(t, \cdot)  \min \left \{\widehat Z_t \widehat D _t, M \right \} d\kappa_t\right],$$
which holds for any optional set $O$ and any bound $M$, therefore $\widehat Y= y\widehat Z\widehat D$, $(d\kappa\times \mathbb P)$-a.e.
  \end{proof}
\begin{proof}[Proof of the Theorem \ref{ExactSlackness}]
Since$$\sum _{k=n}^{N(n)}\alpha _{n,k}(D^k_t)^{-1}\rightarrow (\widehat D_t )^{-1}, \quad t\in[0,T], $$
recalling that $\widehat A$ was defined from $\widehat D$ and \eqref{a-tilde-n}. 
As the processes $A^n$, therefore $\tilde A^n$ only increase by jumps.
Therefore 
we get
 \begin{equation}\label{11291}
 \begin{array}{rcl}
 \Delta \tilde A^n_s &= & \frac{1}{\sum\limits_{k=n}^{N(n)}\alpha _{n,k}\frac{1}{D^k_{s}}} - \frac{1}{\sum\limits_{k=n}^{N(n)}\alpha _{n,k}\frac{1}{D^k_{s+}}}\\
  &= & \sum\limits_{k=n}^{N(n)}\underbrace{\frac{\alpha _{n,k}\frac{1}{D^k_{s+}}}{\sum\limits_{k=n}^{N(n)}\alpha _{n,k}\frac{1}{D^k_{s+}}}}_{\leq 1}\frac{\Delta A^k_s}{D^k_{s}}  
  \underbrace{\frac{1}{\sum\limits_{k=n}^{N(n)}\alpha _{n,k}\frac{1}{D^k_{s}}}}_{\leq 1}.\\
 \end{array}
 \end{equation}
 As $\frac{\alpha _{n,k}\frac{1}{D^k_{s+}}}{\left(\sum\limits_{k=n}^{N(n)}\alpha _{n,k}\frac{1}{D^k_{s+}}\right)}\leq 1$ and $\sum\limits_{k=n}^{N(n)}\alpha _{n,k}\frac{1}{D^k_{s}} \geq 1$, we can bound the latter term in \eqref{11291} by $\sum\limits_{k=n}^{N(n)}\frac{\Delta A^k_s}{D^k_{s}}$ for $s\in[0,T]$. 
 We deduce that
 $$\Delta \tilde A^n_s \leq \sum\limits_{k=n}^{N(n)}\frac{\Delta A^k_s}{D^k_{s}},\quad s\in[0,T].$$ 
Therefore, we have
$$\int_{0}^{t} \widehat V_u d\tilde A^n _u  \leq \sum _{k=n}^{N(n)}(D^k_{t})^{-1} \int_{0}^{t} \widehat V_udA^k_u,\quad t\in[0,T],$$
and thus, we obtain
\begin{equation}\label{main-inez}
\left(\min _{k=n, \dots, N(n)} (Z^k_tD^k_t) \right) \int_{0}^{t} \widehat V_u d\tilde A^n _u  \leq \sum _{k=n}^{N(n)}(Z^k_tD^k_t)(D^k_{t})^{-1} \int_{0}^{t} \widehat V_udA^k_u= \sum _{k=n}^{N(n)}Z^k_t\int_{0}^{t} \widehat V_udA^k_u,\quad t\in[0,T].
\end{equation}
Since the process
$$L^n_t:=\sum _{k=n}^{N(n)}Z^k_t\int_{0}^{t} \widehat V_udA^k_u, \quad 0\leq t\leq T$$
is a non-negative right-continuous  submartingale, the maximal inequality  and  \eqref{fast} together imply
 $$\mathbb{P}\left  (\sup _{0\leq t\leq T}L^n_t\geq \frac 1n \right )\leq n\mathbb{E}[L^n_T]\leq 2^{-n},
$$
so 
\begin{equation}
\label{l}\sup _{0\leq t\leq T}L^n_t\rightarrow 0,\quad \mathbb{P}-a.s.
\end{equation}
Since $Z^nD^n \rightarrow \frac{1}{y}\widehat Y>0$, $(d\kappa\times \mathbb P)$-a.e.,  consequenlty 
$$\min _{k=n, \dots, N(n)} (Z^kD^k)\rightarrow \frac{1}{y}\widehat Y>0\quad \left(d\kappa \times \mathbb{P}\right)-a.e.,$$
we obtain from\eqref{main-inez} and \eqref{l} that 
the increasing RC process
$$\tilde L^n_t:=\int _0^t \widehat V_u d\tilde A^n_u,\quad 0\leq t\leq T,$$
converges to zero in the product space.  Denoting by $\mathcal{O}\subset \Omega\times [0,T]$ the exceptional set where convergence to zero does not take place,  and taking into account that $\tilde L^n$ are increasing,  we have that for 
for $$T^{\mathcal{O}}(\omega)=\inf \{0\leq t\leq T: (\omega ,t)\in \mathcal{O}\},$$
we have 
$$(T^{\mathcal{O}},T]\subset \mathcal{O}.$$
Now 
$$(d\kappa\times \mathbb P)(\mathcal{O})=0, $$
implies that 
$T^{\mathcal{O}}\geq T,~\mathbb{P}-a.s.$ (here used an assumption that $T$ is the minimal time horizon in the sense that the deterministic clock $K$ is such that $K_t<K_T$, for every $t\in[0,T)$, this assumption does not restrict generality). Thus, there exists a 
 a set $\Omega ^*$  of full probability $\mathbb{P}(\Omega ^*)=1$  such that, for each $\omega \in \Omega ^*$ and $t<T$ we have
 $$\int _0^t \widehat V_u (\omega)d\tilde A^n_t (\omega)\rightarrow 0.$$

Let us fix an $\omega\in \Omega ^*$ and such that, 
for this $\omega$, the probability measure 
$d\tilde A^{n}(\omega)$ converges weakly to $d\widehat A(\omega)$ over the interval $[\theta _0(\omega), T]$.
The set of such $\omega$'s still has probability $1$. The Skorokhod representation theorem asserts that there exists a new probability space  $\Omega ^{\omega}$ and a  sequence of random times 
$(t^{n}(\omega))_{n\in\mathbb N}$ as well as $\widehat t (\omega)$ such that the distribution of $t^{n} (\omega)$ is $d\tilde A^{n}$,  the distribution of $\widehat t (\omega)$ is $d \widehat A (\omega)$ and
$$t^{n}(\omega)\rightarrow \widehat t (\omega),\quad \mathbb{P}^{\omega}-a.s. $$
on the new, artificial, probability space. Fix $t<T$. We have
$$\mathbb{E}^{\omega}[\widehat V _{t^{n}(\omega)} (\omega) 1_{\{t^n(\omega)\leq t\}}]=\int _{\theta _0}^t \widehat V_u (\omega)  d\tilde A^{n} _u(\omega) \rightarrow 0.$$
One can see that 
$$\xi (\omega):=\liminf _n \widehat V _{t^{n}(\omega)}(\omega)\in \{ \widehat V _{\widehat t(\omega)-}(\omega),  \widehat V _{\widehat t(\omega)}(\omega)\}, \quad \mathbb{P}^{\omega}-a.s.$$ 
and 
$$1_{\{t^n(\omega)\leq t\}}\rightarrow 1_{\{\widehat t (\omega)\leq t\}}\  \textrm{on} \ \ \{\widehat t (\omega)<t\},\ \mathbb{P}^{\omega}-a.s.$$ 
Therefore,  
applying Fatou's lemma on $\Omega ^{\omega}$, we obtain
$$\mathbb{E}^{\omega}[\xi (\omega) 1_{\{\widehat t(\omega)< t\}}]=0.$$
We recall that both $\widehat V (\omega)$ and $\widehat V_{-}(\omega)$ are nonnegative, consequently we have
$$0\leq \min \{ \widehat V _{\widehat t(\omega)-}(\omega),  \widehat V _{\widehat t(\omega)}(\omega) \}\leq \xi (\omega),$$ and therefore we get 
$$\mathbb{E}^{\omega}[ \min \{ \widehat V _{\widehat t(\omega)-}(\omega),  \widehat V _{\widehat t(\omega)}(\omega) \}1_{\{\widehat t(\omega)< t\}}]=0.$$
This means that the distribution  of $\widehat t (\omega)$ over the interval $[\theta _0(\omega), t)$ only charges the complement of the set of times 
$$\left\{u<t: \quad \widehat V_{u-}(\omega)=0\quad or\quad \widehat V_u (\omega)=0\right\}.$$
By taking $t\rightarrow T$, one completes the proof.
\end{proof}
\bibliographystyle{alpha}
\bibliography{finance2}
\end{document}